\newtheorem{thm}{Theorem}[section]
\newtheorem{cor}[thm]{Corollary}
\newtheorem{lem}[thm]{Lemma}
\newtheorem{que}[thm]{Question}
\newtheorem{prop}[thm]{Proposition}
\newtheorem{prop-def}[thm]{Proposition-Definition}
\theoremstyle{definition}
\newtheorem{Def}[thm]{Definition}
\theoremstyle{remark}
\newtheorem{rmk}[thm]{\bf Remark}
\newtheorem{exm}[thm]{\bf Example}
\numberwithin{equation}{section}
\def\N{\mathbb{N}}
\def\S{\mathbb{S}}
\def\sgn{{\rm sgn}}
\def\gd{\operatorname{Gdeg}}
\def\mdeg{\operatorname{mdeg}}
\newcommand{\lr}[1]{\langle #1 \rangle}
\newcommand{\ucr}[1]{\underset{#1}{\circ}}
\newcommand{\up}[1]{{^{#1}{\it \!\Upsilon}}}
\newcommand{\upa}[1]{{^{#1}{\it \!\Upsilon}}}
\def\la{\lambda}
\def \ot{\otimes}
\def \uas{u\mathcal{A}ss}
\def \Lie{\mathcal{L}ie}
\def \I{\mathcal{I}}
\def \J{\mathcal{J}}
\def \ip{{\mathcal{P}}}
\def \iq{\mathcal {Q}}
\def \iu{\!{\it{\Upsilon}}}
\def \dim{\operatorname{dim}}
\def \Inv{\operatorname{Inv}}
\def\gkdim{\operatorname{GKdim}}
\def\iend{\mathcal{E}{\rm nd}}
\def \Id{\operatorname{Id}}
\def \ker{\operatorname{Ker}}
\def \1{\mathbbm 1}
\def \k{\Bbbk}
\def \N{\mathbb{N}}
\def \S{\mathbb{S}}
\def\T{\mathcal{T}}
\def\sl{\operatorname{sl}}
\begin{document}
\title{Codimension sequence, grade, and generating degree:\\
an operadic approach}

\author{Y.-H. Bao}
\address{(Bao) School of Mathematical Sciences, Anhui University, Hefei 230601, China}
\email{baoyh@ahu.edu.cn}

\author{D.-X. Fu}
\address{(Fu) School of Mathematics and Statistics, Suzhou University, Suzhou 234000, China}
\email{dxfu@ahszu.edu.cn}

\author{J.-N. Xu}
\address{(Xu) School of Mathematical Sciences, Anhui University, Hefei 230601, China}
\email{j-n-xu@stu.ahu.edu.cn}

\author{Y. Ye}
\address{(Ye) School of Mathematical Sciences,
	University of Science and Technology of China, Hefei 230026, China}
\email{yeyu@ustc.edu.cn}

\author{J.J. Zhang}
\address{(Zhang) Department of Mathematics, Box 354350,
University of Washington, Seattle, Washington 98195, USA}
\email{zhang@math.washington.edu}

\author{Y.-F. Zhang}
\address{(Zhang)  School of Mathematical Sciences, Anhui University, Hefei 230601, China}
\email{yfzhang@stu.ahu.edu.cn}

\author{Z.-B. Zhao}
\address{(Zhao) School of Mathematical Sciences, Anhui University, Hefei 230601, China}
\email{zbzhao@ahu.edu.cn}

\subjclass[2010]{18M60, 16R10, 16R30, 18M65, 18M70.}


\keywords{Codimension series, generating function, PI-algebra, 
T-ideal, operad, truncation ideal, Gelfand-Kirillov dimension, 
grade}

\begin{abstract} 
We study several classes of operadic ideals of the unital associative algebra operad $\uas$. 
As an application, we classify quotient operads of $\uas$ of GK-dimension $\leq 6$.
This corresponds to a classification of all T-ideals of codimension growth $n^g$ with $g\leq 5$
 (or equivalently, varieties of grade $g$ with $g\leq 5$).
\end{abstract}

\maketitle

\setcounter{section}{-1}
\section{Introduction}
\label{yysec0}

\subsection{PI-algebras}
\label{yysec0.1}
Throughout let $\Bbbk$ be a base field of characteristic zero. 
In this paper an algebra means an associative algebra with unit (=identity element). 
A \textit{polynomial identity} of an algebra $A$ is a noncommuting polynomial $f(x_1, \cdots, x_n)$ 
such that $f(a_1, \cdots, a_n)=0$ for all $a_1, \cdots, a_n\in A$. 
An algebra satisfying a nontrivial polynomial identity is called a \textit{PI-algebra}. 
The PI-theory originated from the work of Dehn \cite{De} and Wagner \cite{Wa}. 
Many significant developments of the PI-theory and PI-algebras can be found in \cite{GZ6, KR, Ka1, Ja, Pr, Ro}. 
Some newer results were recorded in \cite{AGPR, DF}.

\subsection{Connection to the operad $\uas$}
\label{yysec0.2}
It is well-known that the PI-theory such as the study of multilinear polynomial identities is related to 
the ymmetric operad $\uas$ which encodes the unital associative algebras. 
A PI-algebra is equivalent to an algebra over a quotient operad $\uas/\I$ for some nonzero operadic ideal $\I$ \cite[Lemma 2.1]{BXYZZ}. 
Recall that $\uas(n)$ is isomorphic to the right regular module over $\Bbbk\S_n$ for all $n\geq 0$, 
where $\S_n$ is the symmetric group of degree $n$. 
Let $V_n$ be the space consisting of all multilinear polynomials in $n$ variables $x_1, \cdots, x_n$. 
Clearly, $V_n$ admits a right action of $\S_n$, 
which is naturally isomorphic to the regular representation of $\S_n$ (or the $\S_n$-module $\uas(n)$). 
Let $A$ be a PI-algebra and $V_n(A)$ the subspace of $V_n$ of those polynomials that are identities of $A$. 
Then we obtain that
\begin{align*}
V_n(A)\cong \I_A(n)
\end{align*}
where $\I_A$ is the operad ideal of $\uas$ determined by the algebra $A$ \cite[Proposition 2.3]{BXYZZ}. 
As a consequence,
\begin{equation}
\label{E0.0.1}\tag{E0.0.1}
V_n/V_n(A)\cong (\uas/\I_A)(n)
\end{equation}  
for the same operadic ideal $\I_A$ of $\uas$. 
In this situation, we also say that $A$ is a PI-algebra associated to the operadic ideal $\I_A$. 
Conversely, for every nonzero operadic ideal $\I$ of $\uas$, 
a $\uas/\I$-algebra is a PI-algebra since each nonzero element in $\I$ gives an identity of $A$.   

\subsection{Codimension sequences/series}
\label{yysec0.3}
Recall that the {\it codimension sequence} of multilinear identities of a PI-algebra $A$ is defined to be 
\begin{align*}
c_n(A)\colon =\dim (V_n/V_n(A)), \quad \forall \; n\geq 0,
\end{align*}
which gives a measurement of the polynomial identities vanishing in the PI-algebra $A$. 
The codimension sequences are considered as one of most important invariants in PI-theory. 
It follows from \eqref{E0.0.1} that $c_n(A)=\dim (\uas/\I_A)(n)$ for all $n$. 
The associated {\it codimension series} of $A$ is defined to be 
\begin{align*}
c(A,t)=\sum_{n\geq 0} c_n(A)t^n.
\end{align*}

One main object of this paper is the codimension sequences/series of associative PI-algebras with unit. 
We will not consider algebras without unit (nor other classes of algebras), 
as their codimension sequences/series behave differently. 
The exact values of the codimension sequences are known for very few PI-algebras, 
among them the Grassmann algebra $E$, 
the matrix algebra $M_2(\Bbbk)$ and the tensor square $E\otimes E$ of the Grassmann algebra, 
the algebras of $k\times k$ upper triangular matrices $U_k(\Bbbk)$ and $U_k(E)$ with entries from the field $\Bbbk$ 
and from the Grassmann algebra $E$, respectively, see \cite[Examples 9-13]{BD}. 
In this paper, we provide more families of algebras with the codimension series being described explicitly. 

The codimension series have been studied extensively, see, for example, 
\cite{AGPR, AJK, Be, BR, BR2, BD, DrR, GMP, GMZ, GZ1, GZ2, GZ3, GZ4, GZ5, GZ6, IKM, Ma} and references therein. 
In \cite{Re} Regev conjectured that, for every PI-algebra $A$, there are $g\in \frac{1}{2}{\mathbb Z}$, $p\in {\mathbb N}$, 
and $\lambda\in {\mathbb R}_{>0}$, such that
\begin{align}
\label{E0.0.2}\tag{E0.0.2}
c_n(A)\sim \lambda n^{g} p^n,\qquad  n\geq 0.
\end{align}
This conjecture was proved by Berele \cite{Be} and Berele-Regev \cite{BR}. 

\begin{Def}
\label{yydef0.1} 
Let $A$ be a PI-algebra.
\begin{enumerate}
\item[(1)]
The algebra $A$ or its codimension series $c(A,t)$ is called of {\it type $(g,p)$} with $g\in \frac{1}{2}{\mathbb Z}$ 
and $p\in {\mathbb N}$ if \eqref{E0.0.2} holds. 
In this case we say that $A$, or the associated T-ideal, 
or the associated operadic quotient of $\uas$, has {\it codimension growth} $n^g p^n$. 
\item[(2)] 
The integer $p$ in \eqref{E0.0.2} is called the {\it exponent} or {\it PI-exponent} of $A$ and denoted by $\exp(A)$. 
\item[(3)]
The number $g$ in \eqref{E0.0.2} is called the {\it $p$-grade} of $A$ and denoted by $g^p(A)$.
\item[(4)]
The $1$-grade of $A$ (when $p=1$) is simply called the {\it grade} of $A$ and denoted by $g(A)$.
\item[(5)]
For each fixed pair $(g,p)$, let $\Lambda_{(g,p)}$ be the set of real numbers $\lambda$ that appears in \eqref{E0.0.2}.
\end{enumerate} 
\end{Def}

We post the following question.

\begin{que}
\label{yyque0.2}
For every pair $(g,p)$ with $g\in \frac{1}{2}{\mathbb Z}$ and $p\in {\mathbb N}$, 
are there only finitely many distinct codimension series $c(A,t)$ of type $(g,p)$?
\end{que}

Note that there are infinitely many (and uncountably many if $\Bbbk$ is uncountable) operadic ideals of $\uas$ 
with given codimension series (of codimension growth $n^5$), 
see \cite[Section 5]{GMZ} and Corollary \ref{yycor4.7}.

\subsection{Some previously known results}
\label{yysec0.4}
The first positive evidence for Question \ref{yyque0.2} is the following well-known result.

\begin{prop}
\cite{BYZ, DrR, GMP}
\label{yypro0.3} 
Let $g\in \frac{1}{2}{\mathbb Z}$.
\begin{enumerate}
\item[(1)]
If $g$ is not an integer or $g<0$, then there is no codimension series of type $(g,1)$.
\item[(2)]
If $g$ is a nonnegative integer, then there are finitely many codimension series of type $(g,1)$ 
and a very rough upper bound is $\prod_{i=2}^{g+1} (i!-1)$.
\end{enumerate}
\end{prop}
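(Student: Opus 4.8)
The plan is to work entirely on the operadic side, using the identification $c_n(A)=\dim(\uas/\I_A)(n)$ and the fact (cited earlier) that PI-algebras of type $(g,1)$ correspond to operadic quotients $\uas/\I$ whose dimension sequence $d_n:=\dim(\uas/\I)(n)$ grows like $\lambda n^g$. The first step is to recall the structure theory of operadic ideals of $\uas$: each $\I(n)$ is a right $\k\S_n$-submodule of $\uas(n)\cong \k\S_n$, so it is a sum of isotypic components indexed by partitions $\lambda\vdash n$, and the operadic ideal condition forces strong compatibility between the multiplicities in consecutive degrees. The key combinatorial input is that the dimension of the $n$-th component of a proper nonzero quotient $\uas/\I$ is a quasi-polynomial (eventually polynomial, by the cited Berele--Regev results specialized to $p=1$) whose degree is exactly the grade $g$, and whose leading coefficient $\lambda$ is controlled by a finite amount of data, namely which hook/near-rectangular Young diagrams survive in the quotient.

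For part (1): if the quotient $\uas/\I$ has polynomial growth of degree $g$, I would argue $g$ must be a nonnegative integer. Polynomial growth already forces $g\geq 0$ trivially since $d_n\geq 1$ (the unit) for all $n$, so in fact $d_n\to\infty$ unless the quotient is finite-dimensional, in which case $g=0$. For integrality, the point is that once $\I$ contains some element, the surviving $\S_n$-representations in the quotient are confined to Young diagrams lying in a fixed ``hook-shaped'' region (a consequence of Kemer/Regev-type arguments, or directly from analyzing which partitions can appear when $\I\neq 0$); counting standard Young tableaux in such a region gives a dimension that is genuinely polynomial in $n$ with integer degree. This rules out half-integer or negative $g$ in the $p=1$ case. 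I would cite the structural results from \cite{BYZ, DrR, GMP} here rather than re-prove them, and simply assemble them into the stated dichotomy.

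For part (2): given that $g$ is a fixed nonnegative integer, I need a finite bound on the number of possible codimension series $c(A,t)$. The strategy is: the codimension series $c(A,t)=\sum d_n t^n$ of a type $(g,1)$ quotient is determined by finitely many coefficients together with the eventual polynomial behavior; more precisely, I would show that the set of ``stable patterns'' of surviving $\S_n$-isotypic components with growth degree $\leq g$ is finite, and each such pattern determines the series. The very rough bound $\prod_{i=2}^{g+1}(i!-1)$ arises from bounding, for each relevant small degree $i$ (from $2$ up to $g+1$), the number of nonzero proper right-$\S_i$-submodules of $\uas(i)\cong\k\S_i$ that can occur as $\I(i)$ in a quotient of grade exactly $g$: since $\uas(i)$ has dimension $i!$ and a proper nonzero submodule corresponds to a nonempty proper subset of the isotypic blocks, one gets at most $2^{(\text{number of partitions of }i)}$ choices, and the sharper count $i!-1$ comes from the fact that what matters is the dimension $\dim\I(i)\in\{1,\dots,i!-1\}$ (or an analogous refined count). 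Taking the product over $i=2,\dots,g+1$ gives the bound, once one shows that $\I$ restricted to degrees $>g+1$ is forced by its restriction to degrees $\leq g+1$ via the operadic ideal (composition) axioms.

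The main obstacle I expect is \emph{the implication that the grade $g$ caps the relevant degree range at $g+1$} — i.e., that the entire operadic ideal $\I$, hence the whole codimension series, is determined by the finite truncation $\I(2),\dots,\I(g+1)$. This requires an argument that if two grade-$g$ operadic ideals agree in all degrees $\leq g+1$ then they agree everywhere, which in turn rests on showing that the operadic composition maps $\circ_i$ from low degrees generate enough of the ideal, combined with the dimension estimate that any ideal not ``saturated'' by degree $g+1$ would force growth strictly larger than $n^g$. Making this precise — controlling how isotypic multiplicities propagate upward under operadic composition and tying the propagation to the polynomial degree — is the technical heart; the rest is bookkeeping with Young diagrams and hook-length counts, and the final product bound is deliberately crude so no optimization is needed.
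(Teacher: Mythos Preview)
Your approach has a genuine gap, and it stems from aiming at the wrong target. You set out to show that \emph{the ideal} $\I$ is determined by its truncation $\I(2),\dots,\I(g+1)$, in order to deduce that the codimension series is determined. But the statement only asks for finitely many \emph{series}, not finitely many ideals; indeed the paper explicitly notes (see the remark after Question~\ref{yyque0.2} and Corollary~\ref{yycor4.7}) that there can be infinitely many distinct ideals $\I$ with the same codimension series. So the implication ``two ideals agree in degrees $\leq g+1$ $\Rightarrow$ they agree everywhere'' is neither what you need nor obviously true, and your own identification of it as the main obstacle is apt. A second problem is your count: for $i\geq 4$ the regular module $\Bbbk\S_i$ has isotypic components of multiplicity $\geq 2$, so there are infinitely many $\Bbbk\S_i$-submodules, and your passage from ``submodules'' to ``at most $i!-1$ choices'' is not justified by the argument you sketch.

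The argument the paper has in mind (it cites \cite[Theorem~5.3]{BYZ}; the tool appears here as Lemma~\ref{yylem1.5}) bypasses both issues at once. For any $2$-unitary quotient $\ip=\uas/\I$ of GK-dimension $g+1$ one has the \emph{exact} formula
\[
\dim\ip(n)=\sum_{k=0}^{g}\gamma_\ip(k)\binom{n}{k}\qquad\text{for all }n\geq 0,
\]
where $\gamma_\ip(k)=\dim\up{k}_\ip(k)$ is a nonnegative integer bounded by $\gamma_k=\dim\up{k}_{\uas}(k)$, and $\gamma_\ip(g)\neq 0$. Part~(1) is then immediate: $c_n(A)$ is literally a polynomial in $n$ of degree $g$, so $g\in\mathbb{Z}_{\geq 0}$. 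Part~(2) follows because the entire series is encoded by the finite integer vector $(\gamma_\ip(0),\dots,\gamma_\ip(g))$ with each entry in a bounded range; the product $\prod_{i=2}^{g+1}(i!-1)$ is just one crude way to overcount these vectors. No classification of ideals, no propagation argument, and no hook-region analysis is required.
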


An operadic version of Proposition \ref{yypro0.3} holds 
for the set of quotient operads of any given locally finite 2-unitary operads, 
see \cite[Theorem 5.3]{BYZ} and its proof. 

When the grade $g\leq 3$, here is a complete list of codimension series $c(A,t)$ of type $(g,1)$:
\begin{enumerate}
\item[(1)]
There is only one codimension series of type $(0,1)$, namely,
$$c(A,t)=\frac{1}{1-t}.$$
Consequently, $\Lambda_{(0,1)}=\{1\}$.
\item[(2)]
There is no codimension series of type $(1,1)$. 
Consequently, $\Lambda_{(1,1)}=\emptyset$.
\item[(3)]
There is only one codimension series of type $(2,1)$, namely,
$$c(A,t)=\frac{1}{1-t}+\frac{t^2}{(1-t)^3}.$$
Consequently, $\Lambda_{(2,1)}=\{\frac{1}{2}\}$.
\item[(4)]
There is only one codimension series of type $(3,1)$, namely,
$$c(A,t)=\frac{1}{1-t}+\frac{t^2}{(1-t)^3}+\frac{2t^3}{(1-t)^4}.$$
Consequently, $\Lambda_{(3,1)}=\{\frac{1}{3}\}$.
\end{enumerate}

\begin{thm}
\label{yythm0.4} 
When the grade $g=4$, there are exactly 10 distinct codimension 
series of type $(4,1)$ as given below.
\begin{enumerate}
\item[(a)]
$$c(A,t)=\frac{1}{1-t}+\frac{t^2}{(1-t)^3}+
\frac{t^4}{(1-t)^5}$$
or
\[c_n(A)=1+{n\choose 2}+{n\choose 4}.\]
\item[(b)]
$$c(A,t)=\frac{1}{1-t}+\frac{t^2}{(1-t)^3}+
\frac{2t^3}{(1-t)^4}+\frac{ut^4}{(1-t)^5}$$
or
\[c_n(A)=1+{n\choose 2}+2{n\choose 3}+u{n\choose 4}\]
where $u$ is an integer in the set $\{1,2,3,4,5,6,7,8,9\}$.
\end{enumerate}
As a consequence, 
$\Lambda_{(4,1)}=\{\frac{u}{4!}\mid 1\leq u\leq 9, u\in \N\}$.
\end{thm}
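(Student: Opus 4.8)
The plan is to translate the problem into the operadic language via \eqref{E0.0.1} and work with quotient operads $\uas/\I$ of GK-dimension $5$, i.e.\ of the form $\dim(\uas/\I)(n)\sim \lambda n^4$. The first step is to establish the general shape of the Hilbert series. Since the codimension sequence of any unital PI-algebra is eventually a quasi-polynomial (indeed by Proposition \ref{yypro0.3} and its operadic refinement \cite[Theorem 5.3]{BYZ} the series is rational with denominator a power of $(1-t)$), and since the grade $g=4$ forces $\gkdim=5$, I would show that
\[
c(A,t)=\frac{1}{1-t}+\frac{a_2\,t^2}{(1-t)^3}+\frac{a_3\,t^3}{(1-t)^4}+\frac{a_4\,t^4}{(1-t)^5}
\]
for nonnegative integers $a_2,a_3,a_4$ with $a_4\geq 1$, where the $\frac{t^k}{(1-t)^{k+1}}$-basis records the contribution of the ``truncation ideals'' at level $k$. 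Matching with the cases $g\le 3$ already listed, the coefficient of $\frac{t^2}{(1-t)^3}$ is forced to be $1$ (this uses that in characteristic zero the only proper nonzero $\S_2$-submodule structure giving a unital operad at level $2$ is the one fixing $c_2=2$), so $a_2=1$ and we are reduced to determining the pairs $(a_3,a_4)$ that actually occur.

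The second step is the hard part: showing $a_3\in\{0,2\}$, and that $a_3=0$ forces $a_4=1$, while $a_3=2$ allows exactly $a_4\in\{1,\dots,9\}$. For the constraint on $a_3$ I would analyze the $\S_3$-module $(\uas/\I)(3)$: the regular representation of $\S_3$ decomposes as $\mathbf{1}\oplus\mathrm{sgn}\oplus 2\cdot\mathrm{std}$, and the operadic (composition) structure together with unitality severely restricts which subrepresentations can be killed; the upshot is that modulo the degree-$\le 2$ part one either kills nothing new (giving the jump $a_3=2$, as for the $g=3$ series) or one kills a $4$-dimensional piece and lands in the $a_3=0$ branch — there is no intermediate value because the relevant submodules of $\uas(3)$ closed under the operadic compositions come in only these two isomorphism types. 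The case $a_3=0$ then forces the operad to be (a quotient of) the one with $c_n=1+\binom n2+\binom n4$; one checks no further truncation at level $4$ is possible without dropping the grade, so $a_4=1$, yielding (a). For $a_3=2$, the remaining freedom is an $\S_4$-submodule of a fixed multiplicity-$9$ isotypic component of $(\uas/\I)(4)$ (after accounting for what is generated by lower levels), and the operadic relations impose \emph{no} further obstruction there beyond $1\le a_4\le 9$; the boundary value $a_4=9$ is the ``generic'' quotient and $a_4=1$ is the minimal one, giving (b) with $u\in\{1,\dots,9\}$.

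The third step is realization: for each claimed series I must exhibit an actual unital PI-algebra (equivalently a genuine operadic ideal $\I\subseteq\uas$) achieving it. For (a) the algebra $E$-like construction (or an explicit monomial algebra) with $c_n=1+\binom n2+\binom n4$ works; for (b) with $u=9$ one can take a suitable upper-triangular-type algebra, and for the intermediate $u$ one takes appropriate subalgebras / further quotients, reading off the codimension sequence directly. Existence for every $u$ in the range is what pins down $\Lambda_{(4,1)}=\{u/4!\mid 1\le u\le 9\}$, since the leading term of $c_n(A)=1+\binom n2+2\binom n3+u\binom n4$ is $\frac{u}{24}n^4$.

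I expect the genuine obstacle to be the \textbf{case analysis in step two} — specifically, proving the non-existence of a quotient operad with $a_3=1$ (or any value other than $0$ or $2$), and proving that in the $a_3=2$ branch every $a_4\le 9$ is attained while $a_4\ge 10$ is impossible. This is a finite but delicate representation-theoretic computation inside $\uas(3)$ and $\uas(4)$: one must identify all sub-$\S_n$-modules that are stable under all the partial-composition maps $\circ_i$ and the unit insertions, and the bookkeeping of which Young diagrams survive is where the real work lies. The other steps are either bookkeeping with generating functions or explicit constructions.
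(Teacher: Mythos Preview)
Your overall strategy---translate to quotient operads $\uas/\I$ of GK-dimension $5$, write the generating series in the basis $t^k/(1-t)^{k+1}$, and split on the behaviour at level $3$---is exactly what the paper does (see Theorem~\ref{yythm5.2}). But several of your key steps are stated incorrectly or left as assertions where the actual content lies.

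First, the level-$3$ analysis is muddled. You invoke the decomposition of the \emph{regular} representation $\Bbbk\S_3$ and speak of ``killing a $4$-dimensional piece''. The relevant object is not $\uas(3)$ but the truncation piece ${}^3\Upsilon(3)=\Lie(3)$, which is the $2$-dimensional simple standard $\S_3$-module (Lemma~\ref{yylem5.1}(1)). Simplicity is precisely why $\I\cap{}^3\Upsilon(3)$ is either $0$ or all of ${}^3\Upsilon(3)$, forcing $a_3\in\{0,2\}$; there is no $4$-dimensional piece anywhere. Your claim that operadic closure under $\circ_i$ rules out ``intermediate'' submodules is beside the point---it is pure $\S_3$-representation theory.

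Second, in the $a_3=0$ branch you assert $a_4=1$ with the phrase ``one checks no further truncation at level $4$ is possible''. The actual computation (Example~\ref{yyexa4.13}) is that $\langle{}^3\Upsilon(3)\rangle\cap{}^4\Upsilon(4)$ already has codimension $1$ in the $9$-dimensional ${}^4\Upsilon(4)$, so the only admissible $M_4$ strictly contained in ${}^4\Upsilon(4)$ is this $8$-dimensional piece. This is not automatic and needs the Specht-basis / $\tau_{2,2}$ argument.

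Third, and most importantly, in the $a_3=2$ branch you assert that every $u\in\{1,\dots,9\}$ is realized with no justification. Since ${}^4\Upsilon(4)$ is multiplicity-free with irreducible summands of dimensions $1,2,3,3$ (Lemma~\ref{yylem5.1}(2)), its proper submodules have dimensions exactly $\{0,1,\dots,8\}$, whence $u=9-\dim M$ hits every value in $\{1,\dots,9\}$. Without this decomposition your claim is unsupported; describing ${}^4\Upsilon(4)$ as a ``multiplicity-$9$ isotypic component'' is incorrect terminology and obscures the point. The ``realization by algebras'' step is then unnecessary: once the operadic ideals ${}^4\Upsilon^M$ exist (automatic since $\uas$ is connected), Corollary~\ref{yycor4.6} gives the series directly.
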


The above results can also be found in \cite[Section 5]{OV}.

These facts are well-known to many experts, see, for example, 
\cite{BYZ, DrR, GMP, GMZ, Ma, OV}. 
The second evidence for a 
positive answer to Question \ref{yyque0.2} is the following.

\begin{prop}
\label{xxpro0.5}
The set
$$\bigcup_{\Bbbk} \{ c(A,t)\mid {\text{all PI-algebras $A$ 
over $\Bbbk$}}\}
$$
is countable where the union $\bigcup_{\Bbbk}$ runs over 
all fields $\Bbbk$ of characteristic zero.
\end{prop}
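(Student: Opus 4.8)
The plan is to exploit the fact that a codimension series $c(A,t)$ is an invariant of the operadic quotient $\uas/\I_A$, and that such a quotient is determined by the operadic ideal $\I_A$, which in each arity $n$ is a sub-$\S_n$-module of the regular representation $\Bbbk\S_n$. The key observation is that although there may be uncountably many operadic ideals, each one is \emph{finitely generated} as an operadic ideal of $\uas$ (this is Kemer-type finite generation of T-ideals in characteristic zero, a cornerstone of the PI-theory cited in the introduction), and the generators can be taken from $\uas(n)=\Bbbk\S_n$ for finitely many $n$. Since each $\uas(n)$ is a finite-dimensional $\Bbbk$-vector space, I first reduce to the case where $\Bbbk$ is the prime field $\mathbb{Q}$: I would like to say that every T-ideal over an arbitrary field $\Bbbk$ of characteristic zero is "defined over $\mathbb{Q}$" in the sense that its intersection with $\mathbb{Q}\S_n$ already determines it, because the $\S_n$-submodule structure of $\Bbbk\S_n$ and the operadic composition maps are all defined over $\mathbb{Q}$, and base change $-\otimes_\mathbb{Q}\Bbbk$ is exact and preserves dimensions.

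Concretely, the main steps would be: (1) Fix a field $\Bbbk$ and a PI-algebra $A$ over $\Bbbk$ with operadic ideal $\I=\I_A\subseteq\uas$. Observe $c_n(A)=\dim_\Bbbk(\uas/\I)(n)=n!-\dim_\Bbbk\I(n)$, so $c(A,t)$ depends only on the sequence of dimensions $(\dim_\Bbbk\I(n))_{n\geq0}$. (2) Show that this sequence is unchanged if we replace $\I$ by an operadic ideal defined over $\mathbb{Q}$: namely, $\I$ is generated (as an operadic ideal) by $\I(2),\dots,\I(N)$ for some $N$ depending on $A$ — here one invokes that every T-ideal in characteristic zero is finitely generated, equivalently every operadic ideal of $\uas$ is finitely generated, which follows from the cited Specht-property results. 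Each $\I(k)$ for $k\leq N$ is an $\S_k$-submodule of $\Bbbk\S_k$, hence of the form $\Bbbk\otimes_\mathbb{Q} W_k$ for a unique $\mathbb{Q}\S_k$-submodule $W_k\subseteq\mathbb{Q}\S_k$, because submodules of a semisimple module over a group algebra are in bijection with subsets of the (rational!) isotypic components and these are rational for $\S_k$. Let $\I_\mathbb{Q}$ be the operadic ideal of the $\mathbb{Q}$-linear operad $u\mathcal{A}ss_\mathbb{Q}$ generated by $W_2,\dots,W_N$. (3) By exactness of $-\otimes_\mathbb{Q}\Bbbk$ applied to the free presentation of the operadic ideal, one gets $\I(n)=\Bbbk\otimes_\mathbb{Q}\I_\mathbb{Q}(n)$ for all $n$, hence $\dim_\Bbbk\I(n)=\dim_\mathbb{Q}\I_\mathbb{Q}(n)$, so $c(A,t)$ equals the codimension series attached to $\I_\mathbb{Q}$. (4) Finally, the set of operadic ideals of $u\mathcal{A}ss_\mathbb{Q}$ that arise this way is countable: each is determined by a finite tuple $(W_2,\dots,W_N)$ of submodules of the finite-dimensional spaces $\mathbb{Q}\S_k$, and for fixed $k$ there are only finitely many $\S_k$-submodules of $\mathbb{Q}\S_k$ (again by semisimplicity: a submodule is a choice of a subspace of each rational isotypic component, but in fact since we only need those submodules which are actually $\bigr.\I(k)$ for a T-ideal — and even disregarding that, the lattice of submodules of a fixed finite-dimensional semisimple module over a fixed finite-dimensional algebra, while possibly infinite if multiplicities exceed one, is cut down by the T-ideal condition). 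To be safe I would argue directly: the set of all operadic ideals of $u\mathcal{A}ss_\mathbb{Q}$ which are finitely generated is a countable union over $N$ of the set of ideals generated in arities $\leq N$, and for each $N$ this set is parametrized by the Grassmannian-type data of the $W_k$, but since we only care about the resulting integer sequence $(\dim\I_\mathbb{Q}(n))$ and each $\I_\mathbb{Q}$ produces one such sequence, and there are only countably many sequences that can arise as $(\dim\I_\mathbb{Q}(n))$ from finitely-generated-over-$\mathbb{Q}$ data once we note $\I_\mathbb{Q}(k)\cap\mathbb{Q}\S_k$ ranges over a \emph{countable} set of submodules when $k$ ranges over $\mathbb{N}$ — because although an isotypic component $S_\lambda^{\oplus m}$ with $m\geq2$ has infinitely many submodules over an infinite field, over $\mathbb{Q}$ we still have infinitely many, so the clean statement is that \emph{T-ideals} (closed under the substitution action) form a countable set, which is exactly Specht's problem plus the fact that the free algebra $\mathbb{Q}\langle x_1,x_2,\dots\rangle$ is countable, so it has only countably many finitely generated T-ideals.

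The honest streamlined argument is therefore: the free associative $\mathbb{Q}$-algebra on countably many generators is a countable set; hence it has only countably many finite subsets; every T-ideal in characteristic zero is generated as a T-ideal by a finite set of polynomials (Kemer/Specht-type finiteness, within the circle of results cited in \S\ref{yysec0.1}); hence there are only countably many T-ideals $T\subseteq\mathbb{Q}\langle x_1,x_2,\dots\rangle$; each such $T$ determines a sequence $c_n=\dim_\mathbb{Q}(V_n^\mathbb{Q}/(V_n^\mathbb{Q}\cap T))$ and hence a series $c(t)=\sum c_n t^n$; and by Step (3) every codimension series over every field $\Bbbk$ of characteristic zero coincides with $c(t)$ for one of these countably many $T$'s. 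Therefore the displayed union is a subset of a countable set, hence countable, and the proof is complete.

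\medskip

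\noindent\textbf{Main obstacle.} The delicate point — the one I would spend the most care on — is the descent to $\mathbb{Q}$ in Step (3): one must check that forming the operadic ideal generated by a given finite set of rational elements commutes with the base-change functor $-\otimes_\mathbb{Q}\Bbbk$ on each arity $\uas(n)$. This is true because the operadic composition maps $\gamma\colon\uas(n)\otimes\uas(k_1)\otimes\cdots\otimes\uas(k_n)\to\uas(k_1+\cdots+k_n)$ and the $\S_n$-actions are given over $\mathbb{Q}$ (indeed over $\mathbb{Z}$), and base change is right-exact and monoidal; combined with flatness of a field extension, one gets $\Bbbk\otimes_\mathbb{Q}\I_\mathbb{Q}(n)\cong\I_\Bbbk(n)$ as $\Bbbk$-subspaces of $\Bbbk\otimes_\mathbb{Q}\mathbb{Q}\S_n=\Bbbk\S_n$. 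The secondary subtlety is ensuring that the rational structure of the \emph{generators} is available: here one uses that $A$'s T-ideal, restricted to arity $k$, is an $\S_k$-submodule of $\Bbbk\S_k$, and because $\S_k$ is a group all of whose irreducible representations are realizable over $\mathbb{Q}$, every such submodule is $\Bbbk\otimes_\mathbb{Q}(-)$ of a rational submodule of $\mathbb{Q}\S_k$ — but in fact one needn't even pin down a canonical rational form, since the T-ideal it generates is the same regardless of which rational generators one picks. Once these two compatibilities are in place, the countability is immediate from the countability of $\mathbb{Q}\langle x_1,x_2,\dots\rangle$ and the finite-generation of T-ideals.
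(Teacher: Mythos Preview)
Your streamlined argument has a genuine gap at the descent step, and you already half--notice it when you hedge about isotypic components with multiplicity $\geq 2$. The claim ``because all irreducible $\S_k$-representations are realizable over $\mathbb{Q}$, every $\S_k$-submodule of $\Bbbk\S_k$ is $\Bbbk\otimes_{\mathbb Q} W_k$ for some $\mathbb{Q}\S_k$-submodule $W_k$'' is simply false. Rationality of the irreducibles tells you the \emph{isotypic decomposition} descends, but a submodule of an isotypic block $V_\lambda^{\oplus m_\lambda}$ is the graph of a point in a Grassmannian over $\Bbbk$, and for $m_\lambda\geq 2$ most such points are not rational. Concretely, by \eqref{E3.6.1} the module $\up{5}(5)$ contains $V_{(2,1^3)}$ with multiplicity $2$; picking a copy $M\subset\up{5}(5)$ of $V_{(2,1^3)}$ corresponding to an irrational line in $\Bbbk^2$, the generalized truncation ideal $\up{5}^M$ is a perfectly good operadic ideal of $\uas_\Bbbk$ with $\up{5}^M(5)=M$ not of the form $\Bbbk\otimes_{\mathbb Q}W$. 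Indeed the paper observes (after Question~\ref{yyque0.2} and in Corollary~\ref{yycor4.7}) that over an uncountable $\Bbbk$ there are uncountably many operadic ideals with a fixed codimension series, so T-ideals certainly do not all descend to $\mathbb{Q}$. Your Step~(3) as written therefore does not go through, and the ``honest streamlined argument'' still invokes Step~(3) in its last sentence, so the circularity is not resolved.

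The paper's proof avoids this by descending not to $\mathbb{Q}$ but to a \emph{finitely generated} (hence countable) subfield: it invokes Kemer's representability theorem to replace $A$ by a finite-dimensional algebra $R$, which is then visibly defined over some finitely generated field $F\supseteq\mathbb{Q}$, and there are only countably many such $F$ and countably many ideals of $\uas_F$ for each. Your approach is easily repaired along the same lines without representability: once you know the T-ideal is generated by finitely many polynomials (or even a single element, by \cite[Theorem 0.3(2)]{BXYZZ}), take $F\subseteq\Bbbk$ to be the subfield generated by the finitely many coefficients of those generators; then $\I=\Bbbk\otimes_F\I_F$ for the ideal $\I_F$ they generate over $F$, the base-change compatibility you describe in your ``Main obstacle'' paragraph is correct over any field extension, and countability follows since the set of finitely generated extensions of $\mathbb{Q}$ is countable and each is a countable field. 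What fails is only the insistence on $F=\mathbb{Q}$.
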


The third evidence for Question \ref{yyque0.2} is a result 
of Giambruno-Zaicev \cite{GZ7, GZ8} which states that there 
are only finitely many minimal varieties respect to the 
PI-exponent. Other evidences for Question \ref{yyque0.2} 
and related results can be found in \cite{Ma, Ma2}.

\subsection{Result I: GK-dimension 5 or grade 4}
\label{yysec0.5}
Our approach is to study the quotient operads of the operad
$\uas$. In \cite[Theorem 0.7]{BYZ}, the authors classified 
all quotient operads of $\uas$ of GK-dimension $4$ or less. 
Let $\uas/\I$ be a quotient operad of GK-dimension $d$. If 
$d=1$, $3$ or $4$, then $\I$ is just the $d$-th truncation 
ideal $\up{d}$ of $\uas$ \eqref{E1.0.7}. And the GK-dimension 
of $\uas/\I$ can not be $2$. Therefore, there is a unique
codimension sequence $\{c_n(A)\}$ (or equivalently, unique
codimension series $c(A,t)$) of multilinear identities 
when the grade of $A$ is either $0$, $2$ or $3$,
see the list given before Theorem \ref{yythm0.4} .

In \cite{GMP} authors said that ``{\it Unfortunately, a 
classification of the T-ideals $Id(A)$ for which $c_n(A) 
\cong q n^k$, $k \geq 4$ seems to be out of reach at present}'', 
see the paragraph after \cite[Theorem 3.6]{GMP}. 
On codimension series of type $(4, 1)$, Giambruno-La Mattina-Zeicev 
gave a classification of the minimal varieties of polynomial 
growth $n^4$ in \cite{GMZ}, which is equivalent to classify 
maximal operadic ideals $\I$ with $\gkdim(\uas/\I)=5$. Recall that 
a variety $\mathcal{V}$ is said to be {\it minimal} of polynomial 
growth $n^k$ if any proper subvariety of $\mathcal{V}$ has polynomial 
growth $n^t$ with $t<k$. Equivalently, we say that an operadic
ideal $\I$ of $\uas$ is {\it maximal} with respect to $\gkdim$ if 
$\gkdim(\uas/\I)=k$ and  $\gkdim \uas/\mathcal{J}<k$ for any 
operadic ideal $\mathcal{J}\supsetneq \I$. Based on 
\cite{GMZ}, Oliveira and Vieira obtained a complete list of 
varieties generated by unital algebras with growth 
$n^4$ and computed their codimension sequences in \cite{OV}.

One project of this paper is to find operadic ideals $\I$ 
of $\uas$ such that $\uas/\I$ is of GK-dimension 5. 
Here is the operadic version of the classification of varieties 
of grade 4 in \cite{OV}.

\begin{thm}
\label{yythm0.6}
Let $\I$ be an ideal of $\uas$ such that the GK-dimension of the quotient operad $\ip:=\uas/\I$ is $5$. 
Then either {\rm{(i)}} $\I=\up{4}^M$ {\rm{[Definition \ref{yydef4.1}]}} where $M$ is one of 15 proper submodules of $\up{4}(4)$ 
or {\rm{(ii)}} $\I=\upa{4}^{\mathbb M}$ {\rm{[Definition \ref{yydef4.10}]}} 
where ${\mathbb M}$ is the unique $4$-admissible pair $(\up{3}(3), \up{3}(4)\cap \up{4}(4))$. 
\end{thm}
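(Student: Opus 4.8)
The plan is to establish Theorem \ref{yythm0.6} by combining a dimension-count reduction with a structural analysis of operadic ideals $\I$ in the narrow window where $\gkdim(\uas/\I)=5$. Since $\gkdim(\uas/\up{5})=5$ and $\gkdim(\uas/\up{4})=4$, any ideal $\I$ with $\gkdim(\uas/\I)=5$ must satisfy $\up{5}\subseteq\I$ (otherwise the quotient would surject onto something of GK-dimension $\geq 5$ coming from $\up{5}$, forcing equality there and no larger) and $\I\not\supseteq\up{4}$ (otherwise $\gkdim\leq 4$). Hence $\I$ sits strictly between $\up{5}$ and some ideal not containing $\up{4}$, so $\I$ is determined by the ideal $\bar\I:=\I/\up{5}$ inside $\uas/\up{5}$, a locally finite operad whose components in arities $\leq 4$ are finite dimensional and completely known. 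The first step is therefore to record the explicit $\S_n$-module structure of $\uas/\up{5}$ in arities $3$ and $4$, and to observe that $\bar\I$ is supported in arities $3$ and $4$ only (arities $0,1,2$ are forced to be zero by $\gkdim=5>2$, and arity $\geq 5$ is zero by $\up{5}\subseteq\I$).

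The second step splits into two cases according to whether $\bar\I(3)=0$ or $\bar\I(3)\neq 0$. If $\bar\I(3)=0$, then the ideal-generation (operadic composition) constraints force $\bar\I(4)$ to be an arbitrary $\S_4$-submodule $M$ of $\up{4}(4)$ subject to the stability condition making $\I=\up{4}^M$ a genuine ideal — this is precisely Definition \ref{yydef4.1}, and one cites the enumeration there giving the 15 proper submodules $M\subsetneq\up{4}(4)$ with $\gkdim(\uas/\up{4}^M)=5$ (properness being exactly what keeps the GK-dimension from dropping to $4$). If $\bar\I(3)\neq 0$, then because $\up{3}(3)$ is a simple $\S_3$-module (or otherwise by direct inspection of the submodule lattice), $\bar\I(3)$ must be all of $\up{3}(3)$; but then the ideal generated by $\up{3}(3)$, namely $\T_3$, already forces a large piece of arity $4$ into $\I$, and what remains of the freedom in arity $4$ is governed by which submodule of $\up{3}(4)\cap\up{4}(4)$ one adjoins — the "$4$-admissible pair" bookkeeping of Definition \ref{yydef4.10}. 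One then checks that $\gkdim(\uas/(\T_3+\up{5}))=5$, which pins down the unique admissible pair $(\up{3}(3),\up{3}(4)\cap\up{4}(4))$, and verifies $\I=\T_3+\up{5}$ in this case by showing both ideals have the same components in every arity.

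The main obstacle I expect is the case $\bar\I(3)\neq 0$: one must control how the operadic compositions $\gamma:\uas(3)\otimes\uas(2)^{\otimes 3}\to\uas(4)$ (and the $\uas(2)\otimes\uas(3)\otimes\uas(2)$ insertions, etc.) propagate $\up{3}(3)$ into arity $4$, and show that the resulting forced submodule of $\up{4}(4)$ together with the remaining admissible choices yields exactly $\T_3+\up{5}$ and nothing else — in particular that no intermediate ideal strictly between $\T_3+\up{5}$ and $\up{4}^M$-type ideals can have $\gkdim=5$. This is a finite but delicate $\S_4$-representation-theoretic computation: decompose $\up{4}(4)$ into irreducibles, identify the image of the composition maps applied to $\up{3}(3)$, and match against the submodule lattice. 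The remaining steps — verifying $\gkdim$ of each candidate quotient equals $5$ (via the known Hilbert-series / codimension-series formulas, e.g. reading off the list before Theorem \ref{yythm0.4} and Theorem \ref{yythm0.4} itself) and confirming the count of $15$ — are routine given the framework of Definitions \ref{yydef4.1} and \ref{yydef4.10}, so I would defer them to the cited classifications rather than reprove them here.
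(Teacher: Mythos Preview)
Your overall strategy matches the paper's: sandwich $\I$ via $\up{5}\subset\I$ and $\up{4}\not\subset\I$ (Corollary~\ref{yycor1.7}), split on whether $\I(3)=0$, use simplicity of $\up{3}(3)$, and in each case identify $\I$ as a generalized truncation ideal of type I or II. That is exactly Theorem~\ref{yythm5.2} together with Example~\ref{yyexa4.13}.

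One genuine slip: your claim that $\bar\I:=\I/\up{5}$ is ``supported in arities $3$ and $4$ only'' is false. For $n\geq 5$ one has $\bar\I(n)=\I(n)/\up{5}(n)$, which is nonzero whenever $\I\neq\up{5}$ (e.g.\ for $\I=\up{4}^M$ with $M\neq 0$, the elements $\1_2\circ(\theta,\1_{n-4})\ast c_I$ with $\theta\in M$ lie in $\I(n)\setminus\up{5}(n)$). What you actually need, and what the paper uses, is that $\I$ is \emph{determined} by $\I(3)\cap\up{3}(3)$ and $\I(4)\cap\up{4}(4)$ together with the inclusion $\up{5}\subset\I$; this is the content of the Basis Theorem (Lemma~\ref{yylem1.12}(1)) and Theorem~\ref{yythm4.16}, not a support statement. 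Your argument goes through once you replace the support claim by this determination principle.

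On the ``delicate'' case $\I(3)\neq 0$: you propose decomposing $\up{4}(4)$ into irreducibles and tracking the image of compositions from $\up{3}(3)$. The paper (Example~\ref{yyexa4.13}) does this more directly via the Specht basis: since $\Lie(4)\subset N:=\langle\up{3}(3)\rangle(4)\cap\up{4}(4)$ and Lemma~\ref{yylem3.10} gives $\tau_{2,2}\ast(\sgn(\sigma)\sigma-1_4)\in N$, one sees $\dim(\up{4}(4)/N)\leq 1$; then $\tau_{2,2}\notin N$ (from the proof of Theorem~\ref{yythm3.14}(1)) forces $\dim(\up{4}(4)/N)=1$, so the only $M_4$ with $N\subset M_4\subsetneq\up{4}(4)$ is $M_4=N$. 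This bypasses the full $\S_4$-decomposition and immediately gives uniqueness of the admissible pair, which is cleaner than the route you sketch.
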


In this paper we propose an operadic approach to the 
classification project of varieties of grade larger than 
4. The main ingredient is the notation of a generalized 
truncation ideal of $\uas$ associated to an admissible 
sequence. In fact, there is a one-to-one correspondence 
between the set of generalized truncation ideals 
$\I \subseteq \uas$ and the set of admissible sequences 
[Theorem \ref{yythm4.16}]. Some basic properties of 
generalized truncation ideals are proven in Section 
\ref{yysec4}. Some classification results (see
Theorems \ref{yythm0.6} and \ref{yythm0.10}) are only 
dependent on generalized truncation ideals of types I
and II [Definitions \ref{yydef4.1} and \ref{yydef4.10}].

\subsection{Generating degree and generating element}
\label{yysec0.6}
By \cite[Theorem 0.3(2)]{BXYZZ}, every proper ideal of 
$\uas$ is generated by one element. Hence the following
definition was proposed.

\begin{Def}\cite[Definition 0.4]{BXYZZ}
\label{yydef0.7}
Let $\I$ be an ideal of $\uas$. Suppose that 
$\theta=\sum_{\sigma\in \S_n}c_\sigma \sigma\in \I(n)$ 
with the least $n$ such that $\I=\lr{\theta}$. Then 
$f_\theta:=\sum_{\sigma\in \S_n}c_\sigma 
x_{\sigma^{-1}(1)}\cdots x_{\sigma^{-1}(n)}$ is called a 
\textit{generating element} of $\I$, and $n$ is called 
the \textit{generating degree} of $\I$ and denoted by
$\gd(\I)$.

Let $A$ be a PI-algebra and $\I_A$ be the (operadic) ideal 
of $\uas$ associated to $A$. Suppose that 
$\theta=\sum_{\sigma\in \S_n}c_\sigma \sigma\in \I_A(n)$ 
with the least $n$ such that $\I_A=\lr{\theta}$. Then the 
polynomial 
$$f_\theta=\sum_{\sigma\in \S_n}c_\sigma 
x_{\sigma^{-1}(1)}\cdots x_{\sigma^{-1}(n)}$$ is called a 
\textit{generating identity} of $A$, and $n$ is called 
the \textit{generating degree} of $A$ and denoted by
$\gd(A)$. 
\end{Def}

Observe that the generating degree may not equal to the 
minimal degree, denoted by $\mdeg$, of a PI-algebra 
introduced in \cite{AL} (also see Definition \ref{yydef1.11}). 
For example, the third truncation ideal 
$\up{3}=\lr{\up{3}(4)}\neq \lr{\up{3}(3)}$, so
$\mdeg(\up{3})=3<4=\gd(\up{3})$. Note that $\gd-\mdeg$
can be arbitrary large, see Remark \ref{yyrem1.1}.
By definition, the generating 
degree of $A$ is uniquely defined, though the generating 
identity is not unique. 

\subsection{Result II: an upper bound of $\gd$ by 
$\gkdim$}
\label{yysec0.7}
Another main object of this paper is the generating degree 
(and generating identities). We have the following result.

\begin{thm}
\label{yythm0.8}
Let $\I$ be an ideal of $\uas$. Then 
$$\gd(\I)\leq \begin{cases}
\gkdim (\uas/\I)+1 & {\text{ if $\quad$ $\gkdim (\uas/\I)$ is odd;}}\\
\gkdim (\uas/\I)  & {\text{ if $\quad$  $\gkdim (\uas/\I)$ is even.}}
\end{cases}
$$
\end{thm}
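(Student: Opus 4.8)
The plan is to bound $\gd(\I)$ by analyzing the structure of $\uas/\I$ in the smallest arities and exploiting the fact (cited above from \cite{BXYZZ}) that every proper ideal of $\uas$ is generated by a single element $\theta$ of least arity $\gd(\I)$. The key numerical input is the relation $\gkdim(\uas/\I)=g+1$, where $g$ is the grade, i.e. the polynomial degree governing $\dim(\uas/\I)(n)$; so writing $d=\gkdim(\uas/\I)$, I want to show that if $\I$ contains \emph{no} nonzero element of arity $\leq d-1$ (when $d$ is odd) or $\leq d-2$ (when $d$ is even), then $\gkdim(\uas/\I)$ is forced to be strictly larger than $d$, a contradiction. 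Equivalently: I will show that an ideal $\J$ generated in arity $m$ that is ``as large as possible'' (the full truncation ideal $\up{m}$) already has $\gkdim(\uas/\up{m})$ equal to roughly $m$, and that the parity correction comes from the fact that $\dim \up{m}(m)$ — the space of multilinear polynomials of degree $m$ that are \emph{not} consequences of lower-degree ones — has a definite size that pushes the Hilbert series up by a term $t^m/(1-t)^{m+1}$ only when $m$ has the right parity relative to $d$.

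Concretely, the first step is to recall from Section \ref{yysec1} the description of the truncation ideals $\up{n}$ and the identity $\uas/\up{n}$ has a known codimension series; in particular $\gkdim(\uas/\up{n}) = n$ for $n \neq 2$ and one must track that $\gkdim(\uas/\up{2})$ behaves specially (there is no grade-$1$ quotient, which is exactly the parity phenomenon in Proposition \ref{yypro0.3}). The second step is the monotonicity observation: if $\I \subseteq \J$ then $\gkdim(\uas/\I) \geq \gkdim(\uas/\J)$, since $(\uas/\J)(n)$ is a quotient of $(\uas/\I)(n)$. Combining these, if $\gd(\I)=m$, then $\I \subseteq \up{m-1}$ is impossible (else $\I$ would be generated in arity $<m$ after all, using that $\up{m-1}$ is generated in arity $\leq m-1$ — here I need the precise generation arity of $\up{m-1}$, which by the $\up{3}$ example is $m-1$ for $m-1\neq 3$ and is $4$ for $m-1=3$; this is where the ``$+1$'' slack in the odd case enters). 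Hence $\I \not\subseteq \up{m-1}$, meaning $\I(m-1) \subsetneq \up{m-1}(m-1) = \uas(m-1)$ is false — i.e. $\I$ does \emph{not} contain all of arity $m-1$, but it is nonzero only in arities $\geq m$. So $(\uas/\I)(n) \supseteq (\uas/\up{m})(n)$ for all $n$, giving $\gkdim(\uas/\I) \geq \gkdim(\uas/\up{m}) = m$ (with the $m=3$ caveat), and then I unwind: $\gkdim(\uas/\I)=d$ forces $m \leq d$, and when $d$ is odd the arithmetic of $\up{d}$ being generated in arity $d$ (so that there is ``room'' at arity $d$) permits $m=d+1$ only if $\I$ sits strictly between $\up{d+1}$ and $\up{d}$, which is still consistent with $\gkdim = d$; when $d$ is even the analog of the ``$\gkdim \neq 2$'' obstruction (applied in a shifted range via the operadic Proposition \ref{yypro0.3}, cf.\ \cite[Theorem 5.3]{BYZ}) rules out $m = d+1$, leaving $m \leq d$.

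The cleanest way to organize the parity step is probably to separate the case $\gd(\I) = d+1$ and derive a contradiction when $d$ is even. If $\gd(\I)=d+1$ then $\I(d)=\{0\}$ would give $\gkdim(\uas/\I)\geq \gkdim(\uas/\up{d+1}) = d+1 > d$, contradiction; so $\I(d) \neq \{0\}$ but $\I(d) \subsetneq \uas(d)$ (as $\gd(\I)=d+1>d$ means $\I$ is \emph{not} generated in arity $d$, in particular $\I \neq \up{d}$ and $\I(d)$ is a proper $\Bbbk\S_d$-submodule). Then $\uas/\I$ restricted to arities $\leq d$ agrees with $\uas/\lr{\I(d)}$ up to arity $d$, and $\lr{\I(d)}$ is an ideal whose quotient has GK-dimension $d$ too; but now I claim $\lr{\I(d)}$ must in fact be a truncation-type ideal forcing $\gkdim$ to land on an \emph{odd} value (by the grade-$1$ classification list before Theorem \ref{yythm0.4} together with Theorem \ref{yythm0.6}, or by the operadic Proposition \ref{yypro0.3}(1)), contradicting that $d$ is even. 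I expect the main obstacle to be exactly this last claim: pinning down that a \emph{proper nonzero} submodule $\I(d) \subsetneq \uas(d)$ generating an ideal of the correct GK-dimension cannot occur when $d$ is even — i.e. making rigorous the ``no grade-$(d-1)$ quotient with $d-1$ odd... wait, $d$ even means grade $d-1$ odd'' dichotomy in arbitrary arity rather than the low-arity cases handled explicitly above. I would handle this by invoking the admissible-sequence machinery of Section \ref{yysec4} (Theorem \ref{yythm4.16}): every ideal is a generalized truncation ideal attached to an admissible sequence, and the parity constraint on where $\gkdim$ can jump is read off from the combinatorics of admissible pairs, exactly as the ``$\gkdim\neq 2$'' and ``no type $(1,1)$'' statements are the arity-$2$ shadow of that combinatorics.
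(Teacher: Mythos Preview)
Your proposal has genuine gaps and several incorrect steps.

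First, you repeatedly conflate the generating degree $\gd(\I)$ with the minimal degree $\mdeg(\I)$. The implication ``$\gd(\I)=m$ implies $\I\subseteq \up{m-1}$ is impossible'' is false: containment $\I\subseteq\J$ does \emph{not} bound $\gd(\I)$ by $\gd(\J)$. For a concrete counterexample, $\up{5}\subset \up{3}$ while $\gd(\up{3})=4$ and $\gd(\up{5})=6$. Likewise the claim $\up{m-1}(m-1)=\uas(m-1)$ is wrong for $m\geq 2$; the space $\up{k}(k)$ is always a proper submodule of $\uas(k)$. These errors make the monotonicity chain in your second paragraph collapse.

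Second, the parity mechanism you invoke is not the right one. The ``no grade $1$'' phenomenon (equivalently $\up{1}=\up{2}$) is a low-arity accident and does not propagate to a statement like ``$\gkdim$ cannot land on an even value for $\uas/\lr{\I(d)}$''; quotients of $\uas$ realize every integer GK-dimension $\geq 3$. Your final appeal to admissible sequences does not extract any parity constraint either, since $m$-admissible sequences exist for all $m$.

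The paper's proof goes in the opposite direction and uses two ingredients you never touch. The first is the containment $\up{m}\subset\I$ whenever $m=\gkdim(\uas/\I)$ (Corollary~\ref{yycor1.7}), which immediately reduces the problem to bounding $\gd$ for the pair $\up{m}\subset\I$. The second, and the real content, is Theorem~\ref{yythm3.14}: $\gd(\up{m})=m$ for $m$ even and $m+1$ for $m$ odd. The parity here comes from the element $\tau_{2,\ldots,2}\in\up{m}(m+1)$ (a product of $(m+1)/2$ commutators, available only when $m$ is odd), which is \emph{not} in $\lr{\up{m}(m)}$; this is proved via the explicit Specht basis analysis in Proposition~\ref{yypro3.11}. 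Once Theorem~\ref{yythm3.14} is in hand, one passes to the quotient $\uas/\up{m}$, where $\up{m}_{\uas/\up{m}}=0$ forces \emph{every} ideal to be generated in degree $m$ (Lemma~\ref{yylem1.12}(3)); applying this to $\I/\up{m}$ and then Lemma~\ref{yylem1.12}(4) gives the bound. Your outline never isolates $\up{m}\subset\I$, never computes $\gd(\up{m})$, and never reduces to the artinian quotient; without these the argument cannot close.
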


We also calculate explicitly the generating degree of 
$\up{k}$ for all $k\geq 2$ [Proposition \ref{yypro3.10}]
and the generating degree of $A$ when $A$ has grade at most 4
in Theorem \ref{yythm0.9}.

\subsection{Result III: Classification of $\gkdim\leq 6$
or equivalently grade $\leq 5$}
\label{yysec0.8}

\begin{thm}
\label{yythm0.9}
Let $A$ be a PI-algebra of grade $g$. Suppose  
$\I_A$ is the operadic ideal associated to $A$.
\begin{enumerate}
\item[(1)] 
If $g=0$, then $\I_A=\up{1}$, $\gd(A)=2$, and 
\[f(x_1, x_2)=[x_1, x_2]\]
is a generating identity. 
\item[(2)]	
The grade of $A$ can not be 1.
\item[(3)] 
If $g=2$, then $\I_A=\up{3}$, $\gd(A)=4$, and 
\begin{align*}
	f(x_1, x_2, x_3, x_4)
	=[x_2, x_1][x_4, x_3]+[x_4, x_3, x_2, x_1]+[x_3, x_2, x_1]x_4
\end{align*}
is a generating identity of $A$.
\item[(4)] 
If $g=3$, then $\I_A=\up{4}$, $\gd(A)=4$, and 
\[f(x_1, x_2, x_3, x_4)
=[x_2, x_1][x_4, x_3]+[x_4, x_3, x_2, x_1]\]
is a generating identity of $A$.
\item[(5)] 
If $g=4$, then the associated operadic ideals of $A$ 
is one of 16 cases given in Theorem \ref{yythm0.6}.
\begin{enumerate}
\item[(a)] 
$\I_A=\up{4}^M$ with $M$ a proper submodule of $\up{4}(4)$. 
Then the generating degrees and the generating 
identities are listed in Table \eqref{E5.7.2}. 
{\rm{(}}There are 15 cases.{\rm{)}}
\item[(b)] 
$\I_A=\upa{4}^{\mathbb M}=\lr{\up{3}(3)}+\up{5}$. In this case, 
$\gd(A)=6$, and 
\[f=[x_3, x_1, x_2]x_4x_5x_6+\sum_{\sigma\in \S_6}
\sgn(\sigma)x_{\sigma(1)}\cdots x_{\sigma(6)}\]
is a generating identity of $A$.
\end{enumerate}
\end{enumerate}
\end{thm}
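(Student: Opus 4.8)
The plan is to proceed case-by-case on the grade $g$, using the classification of quotient operads of $\uas$ of low GK-dimension as the backbone. By \cite[Theorem 0.3(2)]{BXYZZ} every proper ideal of $\uas$ is principal, so for each prescribed ideal $\I_A$ the task reduces to two subtasks: (a) identify $\I_A$ explicitly among the classified ideals of the appropriate GK-dimension, and (b) for that ideal, compute $\gd(\I_A)$ and exhibit an element $\theta\in \I_A(\gd(\I_A))$ with $\lr{\theta}=\I_A$, then translate $\theta$ into the polynomial $f_\theta$ via the dictionary $\sigma\mapsto x_{\sigma^{-1}(1)}\cdots x_{\sigma^{-1}(n)}$ from Definition \ref{yydef0.7}. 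For parts (1)--(4), the identification of $\I_A$ with a truncation ideal $\up{d}$ follows from the GK-dimension $\leq 4$ classification recalled in \S\ref{yysec0.5} (grade $g$ corresponds to $\gkdim(\uas/\I_A)=g+1$): grade $0$ gives $\up{1}$, grade $2$ gives $\up{3}$, grade $3$ gives $\up{4}$, and grade $1$ is impossible since $\gkdim$ cannot be $2$. The generating degrees $\gd(\up{1})=2$, $\gd(\up{3})=4$, $\gd(\up{4})=4$ are instances of the general computation of $\gd(\up{k})$ in Theorem \ref{yythm3.14}; the specific generating identities in (1)--(4) must then be checked to lie in the respective $\up{d}$ and to generate it, which amounts to verifying that the listed multilinear polynomial is a consequence of (and conversely implies, modulo the operad relations) the defining identities of $\up{d}$.

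For part (5), I would invoke Theorem \ref{yythm0.6}: when $g=4$ the ideal $\I_A$ has $\gkdim(\uas/\I_A)=5$, hence $\I_A$ is one of the $15$ ideals $\up{4}^M$ (type I, Definition \ref{yydef4.1}) or the single type-II ideal $\upa{4}^{\mathbb M}=\lr{\up{3}(3)}+\up{5}$. For case (5)(a), each $\up{4}^M$ is, by construction, generated in degree $4$ or $5$ by an element of $\up{4}(4)$ or by data at level $5$ (depending on the submodule $M$ of $\up{4}(4)$ and how $M$ propagates); the generating degree and a generating identity for each of the $15$ submodules $M$ are what Table \eqref{E5.7.2} records, so the proof here is a bookkeeping exercise: for each $M$, exhibit a lowest-degree generator and confirm principality. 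For case (5)(b), I would use the second description $\I_A=\lr{\up{3}(3)}+\up{5}=\T_3+\up{5}$ from Theorem \ref{yythm0.6}. Since $\T_3=\lr{\up{3}(3)}$ already contributes the degree-$3$ data and $\up{5}=\lr{\up{5}(6)}$ (again by Theorem \ref{yythm3.14}, $\gd(\up{5})=6$, the odd case of Theorem \ref{yythm0.8}) contributes a degree-$6$ element, one shows $\I_A$ cannot be generated below degree $6$ (the degree-$3$ piece alone generates a strictly larger ideal, namely $\up{3}$), and then that a single element combining the degree-$3$ generator of $\T_3$ with the antisymmetrizer $\sum_{\sigma\in\S_6}\sgn(\sigma)\sigma$ generating $\up{5}(6)$ does generate $\I_A$; the displayed $f=[x_3,x_1,x_2]x_4x_5x_6+\sum_{\sigma\in\S_6}\sgn(\sigma)x_{\sigma(1)}\cdots x_{\sigma(6)}$ is the polynomial translation of such an element.

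The main obstacle will be case (5)(a): verifying, for each of the $15$ proper submodules $M\subseteq \up{4}(4)$, both that the claimed polynomial in Table \eqref{E5.7.2} actually generates $\up{4}^M$ (not a smaller or larger ideal) and that no generator of strictly smaller degree exists. This requires understanding the $\Bbbk\S_4$-submodule structure of $\up{4}(4)$ together with how operadic composition propagates a chosen submodule $M$ upward to levels $5,6,\dots$ — in particular, deciding for which $M$ the generating degree jumps from $4$ to $5$. I expect this to rely on the representation-theoretic decomposition of $\up{k}(n)$ into Specht modules and on the partial-composition maps $\circ_i$, and it is precisely the content that must be developed in Section \ref{yysec5} before the theorem can be asserted; the remaining parts (1)--(4) and the lower bound arguments in (5)(b) are comparatively routine given Theorems \ref{yythm0.6}, \ref{yythm0.8}, and \ref{yythm3.14}.
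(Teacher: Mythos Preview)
Your overall architecture matches the paper's: parts (1)--(4) come from Proposition~\ref{yypro5.6} via the GK-dimension $\leq 4$ classification and Theorem~\ref{yythm3.14}, and part (5) from Proposition~\ref{yypro5.7} via Theorem~\ref{yythm0.6} and Proposition~\ref{yypro4.9}. However, there are two concrete errors in your reasoning for part (5).

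For (5)(b), your argument that $\gd(\T_3+\up{5})=6$ is wrong. You claim ``the degree-$3$ piece alone generates a strictly larger ideal, namely $\up{3}$'', but in fact $\lr{\up{3}(3)}=\T_3\subsetneq\up{3}$ (this is exactly the point of Remark~\ref{yyrem1.2} and the example after Definition~\ref{yydef0.7}: $\gd(\up{3})=4$, not $3$). The correct lower bound argument, carried out in Proposition~\ref{yypro5.5}(2) and Remark~\ref{yyrem5.3}, is that $(\T_3+\up{5})(n)=\T_3(n)$ for all $n\le 5$, so $\lr{(\T_3+\up{5})(5)}\subseteq\T_3$; but a computer check shows $\dim\T_3(6)=688<689=\dim(\T_3+\up{5})(6)$, whence $\up{5}(6)\not\subset\T_3(6)$ and the ideal cannot be generated in degree $\le 5$. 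Relatedly, the antisymmetrizer $\sum_{\sigma\in\S_6}\sgn(\sigma)\sigma$ does not generate $\up{5}(6)$ on its own; it only supplies the one missing dimension modulo $\T_3(6)$.

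For (5)(a), you anticipate $\gd(\up{4}^M)\in\{4,5\}$, but Table~\eqref{E5.5.1} shows the generating degree is $4$, $5$, or $6$ depending on $M$. By Proposition~\ref{yypro4.9}(1b) (since $m=4$ is even) the upper bound is $m+2=6$, and indeed eight of the fifteen submodules require degree $6$. The mechanism is not just ``how $M$ propagates to level $5$'' but also whether $\up{6}(6)\subset\lr{M,\up{5}(5)}(6)$, which again needs the explicit module decompositions and some computation.
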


Finally we have an operadic version of a partial classification of 
the generating degree of $A$ when $A$ has grade 5 (or equivalently, 
$\gkdim (\uas/\I_{A})=6$).

\begin{thm}
\label{yythm0.10}
Let $\I$ be an ideal of $\uas$ such that the 
GK-dimension of the quotient operad $\ip:=\uas/\I$ is $6$. 
Then either {\rm{(i)}} $\I=\up{5}^M$ 
{\rm{[Definition \ref{yydef4.1}]}} where $M$ is any
proper submodule of $\up{5}(5)$ or {\rm{(ii)}} 
$\I=\upa{5}^{\mathbb M}$ {\rm{[Definition \ref{yydef4.10}]}} 
where ${\mathbb M}$ is any $5$-admissible pair $(M_4,M_5)$. 
A partial classification of $5$-admissible pairs is given 
in Lemma {\rm{\ref{yylem5.10}}} and Remark {\rm{\ref{yyrem5.14}}}.
\end{thm}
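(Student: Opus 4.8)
The plan is to reduce the classification of ideals $\I\subseteq\uas$ with $\gkdim(\uas/\I)=6$ to a statement about generalized truncation ideals, exactly as was done for $\gkdim=5$ in Theorem \ref{yythm0.6}, and then invoke the structure theory of Section \ref{yysec4}. First I would recall that $\gkdim(\uas/\I)=6$ forces $\up{6}\subseteq\I\subsetneq\up{5}$, since the truncation ideals $\up{k}$ interpolate the possible GK-dimensions and $\gkdim(\uas/\up{k})=k$. Thus $\I$ sits between two consecutive truncation ideals, and the quotient $\I/\up{6}$ is a ``small'' piece living in degrees $5$ and $6$ only: concretely $\I$ is determined by the pair of $\S_5$- and $\S_6$-submodules $M_5:=\I(5)\subseteq\up{5}(5)$ and $M_6:=\I(6)\subseteq\up{5}(6)$, subject to the operadic compatibility that $M_5$ and $M_6$ generate an ideal whose degree-$5$ and degree-$6$ parts are again $M_5$ and $M_6$. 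By Theorem \ref{yythm4.16} (the one-to-one correspondence between generalized truncation ideals and admissible sequences), every such $\I$ is a generalized truncation ideal associated to an admissible sequence that is stationary equal to $\up{5}(k)$ below degree $5$ and equal to $\up{6}(k)$ from degree $6$ on.

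The dichotomy in the statement then comes from asking whether the degree-$5$ part $M_5$ is all of $\up{5}(5)$ or a proper submodule. If $M_5=\up{5}(5)$, then $\I$ agrees with $\up{5}$ in degree $5$ and differs only in degree $6$, so $\I=\up{5}^M$ for $M=M_6$ a proper submodule of $\up{5}(6)$ --- wait, I should be careful: the Type-I construction $\up{5}^M$ of Definition \ref{yydef4.1} is precisely the ideal obtained from $\up{5}$ by cutting down the top nontrivial graded piece, and I would check that the relevant piece here is $\up{5}(5)$, i.e.\ one verifies using the operad structure of $\uas$ that once $\I(5)$ is specified as a submodule $M\subseteq\up{5}(5)$, the rest of $\I$ is forced, giving $\I=\up{5}^M$. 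The properness of $M$ is equivalent to $\I\subsetneq\up{5}$, which is forced by $\gkdim=6<$ (something larger). In the complementary case $M_5\subsetneq\up{5}(5)$, the ideal generated by $M_5$ already reaches into degree $6$; writing $M_4$ for the (forced) degree-$4$ data --- here $M_4=\up{5}(4)=\up{4}(4)\cap\cdots$, one sees $\I$ is the Type-II generalized truncation ideal $\upa{5}^{\mathbb M}$ attached to a $5$-admissible pair ${\mathbb M}=(M_4,M_5)$, by Definition \ref{yydef4.10}. That the two cases are exhaustive and mutually exclusive is immediate from the trichotomy ``$M_5=\up{5}(5)$ or not'', once one knows (Theorem \ref{yythm4.16}) that $\I$ is determined by its low-degree data.

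The main obstacle is the second case: one must show that the compatibility conditions on $(M_4,M_5)$ are exactly those packaged in the definition of a \emph{$5$-admissible pair}, and that the resulting $\uas/\I$ genuinely has GK-dimension $6$ rather than dropping lower. This requires understanding how an $\S_4$-submodule $M_4$ and an $\S_5$-submodule $M_5$ interact under the operadic composition maps $\uas(4)\otimes\uas(2)\to\uas(5)$ and $\uas(5)\otimes\uas(2)\to\uas(6)$ --- in particular, that $M_5$ must contain the image of everything generated by $M_4$ in degree $5$, and that $M_5$ itself must not generate so much in degree $6$ as to collapse $\gkdim$ below $6$. The branching rules for $\S_n$-modules and the explicit module structure of $\up{5}(5)$ (and of the relevant subquotients of $\up{4}$ inherited in $\up{5}$) will be needed; this is the computational heart, and it is why the theorem only claims a \emph{partial} classification, deferring the complete enumeration of $5$-admissible pairs to Lemma \ref{yylem5.10} and Remark \ref{yyrem5.14}. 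Once admissibility is matched to the definition, the GK-dimension computation is routine: $\gkdim(\uas/\I)$ is read off from the eventual polynomial rate of $\dim(\uas/\I)(n)$, which is governed by the largest truncation ideal contained in $\I$, namely $\up{6}$, giving exactly $6$.
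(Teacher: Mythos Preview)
Your argument contains a fundamental structural error at the outset. You claim that $\gkdim(\uas/\I)=6$ forces $\up{6}\subseteq\I\subsetneq\up{5}$, but only the first containment is correct: Corollary \ref{yycor1.7} gives $\up{6}\subset\I$ and $\up{5}\not\subset\I$, which is \emph{not} the same as $\I\subset\up{5}$. Indeed, in case (ii) of the theorem the $5$-admissible pair $\mathbb M=(M_4,M_5)$ has $M_4\neq 0$ by definition, so $\I(4)=M_4\neq 0$ while $\up{5}(4)=0$; thus $\I\not\subset\up{5}$. Consequently your description of $\I$ as determined by $M_5:=\I(5)\subseteq\up{5}(5)$ and $M_6:=\I(6)\subseteq\up{5}(6)$ is already invalid for case (ii), and your proposed dichotomy ``$M_5=\up{5}(5)$ versus $M_5\subsetneq\up{5}(5)$'' does not match the theorem at all: in \emph{both} cases (i) and (ii) the degree-$5$ piece $\I(5)\cap\up{5}(5)$ is a proper submodule of $\up{5}(5)$. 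Your later remarks about $M_4$ being ``forced'' and equal to $\up{5}(4)$ compound the confusion, since $\up{5}(4)=0$.

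The paper's actual proof (Lemma \ref{yylem5.8}) organizes the argument by the minimal degree $\mdeg(\I)$. One first rules out $\mdeg(\I)\le 3$: the case $\mdeg(\I)=2$ forces $\I=\up{2}$, and the case $\mdeg(\I)=3$ forces $\I(3)=\up{3}(3)$ (since $\up{3}(3)$ is a simple $\Bbbk\S_3$-module), whence $\lr{\tau_3}\subset\I$; but the Specht basis of $\up{5}(5)$ consists of elements $\tau_{2,3}^\sigma$ and $\tau_5^\sigma$ all lying in $\lr{\tau_3}$, so $\up{5}(5)\subset\I$ and hence $\up{5}=\lr{\up{5}(5)}+\up{6}\subset\I$, contradicting $\gkdim=6$. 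This step---excluding $\mdeg(\I)=3$---is entirely absent from your proposal and is not automatic. Once $\mdeg(\I)\in\{4,5,6\}$, the dichotomy is: if $\mdeg(\I)\ge 5$ then $\up{6}\subset\I\subset\up{5}$ and Corollary \ref{yycor4.5} gives $\I=\up{5}^M$; if $\mdeg(\I)=4$ then Theorem \ref{yythm4.16}(1) gives $\I=\upa{5}^{\mathbb M}$ for a $5$-admissible pair.
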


By the proof of Corollary \ref{yycor4.7}, there are 
infinitely many submodules $M\subset \up{5}(5)$. For 
each case its codimension series can be determined
[Corollary \ref{yycor4.6}(1,2)]. By Lemma \ref{yylem5.10}, 
there are infinitely many $5$-admissible pairs $(M_4,M_5)$ 
and its codimension series can be determined, see Lemma 
\ref{yylem4.14}.

\begin{rmk}
\label{yyrem0.11}
The classification given in Lemma \ref{yylem5.10} is not 
complete, especially for the infinite family in Lemma 
\ref{yylem5.10}(4). On the other hand, a complete list of 
codimension series is given in Theorem \ref{yythm5.15}.
The generating degree is also determined except for a 
few cases in the infinite family in Lemma 
\ref{yylem5.10}(4), see Lemma \ref{yylem5.11} and 
Table \eqref{E5.14.1}. Most of these computations are based on computer algebra, and 
non-essential details are skipped in order to save space. 
For a classification project of varieties of higher grades,
Theorem \ref{yythm4.16}(3) will be useful. 
\end{rmk}

\subsection{Organization}
\label{yysec0.9}
This paper is organized as follows. We collect some 
basic notions, notation and facts about operads in 
Section 1. In Section 2, we give some relations among 
PI-algebras, T-ideals and (operadic) ideals of $\uas$. 
In Section 3, we deal with the truncation ideals of $\uas$
and prove Theorem \ref{yythm0.8}. In Section 4, we study 
generalized truncation ideals of 2-unitary operads $\ip$. 
In section 5, we give a classification of (operadic) 
ideals of $\uas$ of GK-dimension 5 and 6, and study the 
related invariants of PI-algebra of low grades.

\section{Preliminaries}
\label{yysec1}

Throughout this paper, $\k$ is a fixed field of characteristic 
zero, and operads are $\Bbbk$-linear and symmetric. All 
unadorned $\ot$ will be $\ot_\k$. For convenience, we denote 
$[n]=\{1, 2, \cdots, n\}$, and $\lambda\vdash n$ means that 
$\la$ is a partition of $n$. In this section, we recall some 
notation and basic facts about operads and, in particular, the 
unital associative algebra operad $\uas$. We refer \cite{BYZ} 
for more details. 

\subsection{The unital associative algebra operad $\uas$}
\label{yysec1.1}
Let $\S_n$ be the symmetric group of degree $n$. We follow 
that convention in \cite{BYZ} and use the sequence 
$(\sigma^{-1}(1), \sigma^{-1}(2), \cdots, \sigma^{-1}(n))$ 
to denote an element $\sigma\in \S_n$. Equivalently, each 
$(i_1, i_2, \cdots, i_n)$ of $[n]$ corresponds to the 
permutation $\sigma\in \S_n$ given by $\sigma(i_k)=k$ 
for all $1\le k\le n$. We also use $1_n$ to denote the 
identity element in $\S_n$.

Let $\uas$ be the operad encoding unital associative algebras.
Recall that $\uas(n)\cong \Bbbk\S_n$ is the right regular 
$\Bbbk\S_n$-module for all $n\geq 0$, and the composition map 
of $\uas$ is linearly extended by the following maps: for $n>0$, 
$k_1, k_2, \cdots, k_n\ge 0$,
\begin{align*}
\S(n) \times \S(k_1)\times \cdots \times \S(k_n)
   &\to \S(\sum_{i=1}^n k_i),\\
(\sigma, \sigma_1, \cdots, \sigma_n) 
  &\mapsto (\tilde{B}_{\sigma^{-1}(1)}, \cdots, 
	\tilde{B}_{\sigma^{-1}(n)})
\end{align*}
for all $\sigma\in \S_n$ and $\sigma_i\in \S_{k_i}$, 
$1\le i\le n$, where 
\[\tilde{B}_{i}=(\sum_{j=1}^{i-1}k_j+\sigma_i^{-1}(1), 
\cdots, \sum_{j=1}^{i-1}k_j+\sigma_i^{-1}(k_i))\]
for all $i=1, \cdots, n$. The partial composition 
\[\uas(m) \ucr{i} \uas(n)  \to \uas(m+n-1)\]
is given by 
\[\mu\ucr{i}  \nu=\mu\circ (1_1, \cdots, 
\underset{i}{\nu}, \cdots, 1_1)\]
for $\mu\in \uas(m), \nu\in \uas(n)$, $m\ge 1, n\ge 0$ 
and $1\le i\le m$.

The operad $\uas$ encodes unital associative algebras, namely, a unital associative algebra is exactly a $\uas$-algebra. 
Let $(A, \mu, u)$ be a unital associative algebra. One can define an operad morphism 
$\gamma=(\gamma_n)\colon \uas \to \mathcal{E}nd_A$
given by $\gamma_0(1_0)=u$ and $\gamma_2(1_2)=\mu$, 
where $\mathcal{E}nd_A$ is the endomorphism operad of the vector space $A$, see \cite[Section 5.2.11]{LV}.
Each $\theta=\sum_{\sigma\in \S_n} c_\sigma \sigma\in \Bbbk\S_n$ gives an $n$-ary operation on $A$, 
\begin{align}\notag
\gamma_n(\theta)\colon A^{\otimes n} \to A, \quad 
\gamma_n(\theta)(r_1\otimes\cdots\otimes r_n)
=\sum_{\sigma\in \S_n} c_\sigma r_{\sigma^{-1}(1)}
\cdots r_{\sigma^{-1}(n)}.
\end{align}

\subsection{The Lie operad $\Lie$}
\label{yysec1.2}
Let $\Lie$ be the Lie operad that encodes the category 
of Lie algebras. It is well known that $\Lie(0)=0$ and 
$\Lie(n)$ can be viewed as a $\Bbbk\S_n$-submodule of 
$\uas(n)\cong\Bbbk\S_n$ and $\dim\Lie(n)=(n-1)!$ for each 
$n\ge 1$. Applying the $\S_n$-module isomorphism between 
$\Lie(n)$ and the multilinear part of 
$\Lie(\bigoplus_{i=1}^n\Bbbk x_i)$, one can construct a 
basis of $\Lie(n)$ by the Dynkin elements
\begin{align}
\label{E1.0.1}\tag{E1.0.1}
[[\cdots[[x_n, x_{\sigma^{-1}(1)}], x_{\sigma^{-1}(2)}], 
\cdots], x_{\sigma^{-1}(n-1)}]
\end{align}
for all $\sigma\in \S_{n-1}$, see \cite[Lemma 5.1.1 and 
Section 13.2.4]{LV} or \cite{Reu}. In fact, we denote 
$\tau=(1)-(12)\in \Bbbk\S_2$ and 
\begin{align}\label{E1.0.2}\tag{E1.0.2}
\tau_n=\underbrace{\tau\ucr{1}
(\cdots (\tau\ucr{1}\tau))}_{n-1\, 
{\footnotesize \rm copies}}\quad (n\ge 2).
\end{align}
Then the corresponding element of \eqref{E1.0.1} is 
\[\tau_n\ast (n, \sigma^{-1}(1), \cdots, \sigma^{-1}(n-1))\]
with $\sigma\in \S_{n-1}$. Therefore, we obtain a basis 
$\{\tau_n\ast (n, \sigma^{-1}(1), \cdots, \sigma^{-1}(n-1))
\mid \sigma\in \S_{n-1}\}$ of $\Lie(n)$ for each $n\ge 2$.

%
%

\subsection{Exponent and GK-dimension of an operad}
\label{yysec1.3}
Let $\ip=(\ip(n))$ be an operad. Assume that $\ip$ is 
locally finite,  i.e. each $\ip(n)$ is finite dimensional 
for all $n\in \N$. Recall that the \textit{exponent} of 
$\ip$ is defined to be 
\[\exp(\ip)\colon =\limsup_{n\to \infty} \sqrt[n]{\dim \ip(n)}.\]
An operad is said to \textit{have exponential growth} if 
$\exp(\ip)>1$. We say that an operad \textit{has polynomial 
growth} if there exists constants $C, k>0$ such that $\dim 
\ip(n)<Cn^k$ for all $n\in \N_+$. The \textit{Gelfand-Kirillov 
dimension} (or \textit{GK-dimension} for short) of $\ip$ is 
defined to be 
\[\gkdim(\ip)\colon=\limsup_{n\to \infty} 
\log_n(\sum_{i=0}^n \dim \ip(i)),\]
and the \textit{generating series} of $\ip$ is defined to be 
\[G_\ip(t)=\sum_{n=0}^\infty (\dim\ip(n))t^n.\]

Recall from Definition \ref{yydef0.1}(2) that the 
\textit{PI-exponent} of a PI-algebra $A$ is defined to be
\begin{equation}
\label{E1.0.3}\tag{E1.0.3}
\exp(A)=\lim_{n\to \infty}\sqrt[n]{c_n(A)}
\end{equation}
(also denoted by $\Inv(A)$) \cite[Definition, p.222]{GZ3}. 
One very nice result of Giambruno and Zaicev states that 
$\exp(A)$ always exists and is an integer \cite[Theorem 1]{GZ3}. 
If $\ip=\uas/\I_A$, then $\exp(\ip)=\exp(A)$. The 
\textit{grade} of a PI-algebra was introduced in 
Definition \ref{yydef0.1}(4). Clearly, the grade of $A$ is 
exactly 1 less than the GK-dimension of the corresponding 
quotient operad $\uas/\I_A$. 

\subsection{Some operators on unitary operads}
\label{yysec1.4}
Recall that an operad $\ip$ is said to be \textit{unitary} 
if $\ip(0)=\Bbbk \1_0$ with a basis element $\1_0$ (called a 
\textit{$0$-unit}), and a unitary operad is said to be 
\textit{2-unitary} if there exists an element $\1_2\in \ip(2)$ 
(called a \textit{$2$-unit}) such that 
\begin{align}
\label{E1.0.4}\tag{E1.0.4}
\1_2\circ(\1_1, \1_0)=\1_1=\1_2\circ(\1_0, \1_1),
\end{align}
where $\1_1\in \ip(1)$ is the identity of $\ip$. If in addition, 
\begin{align}
\label{E1.0.5}\tag{E1.0.5}
\1_2\ucr{1}\1_2=\1_2 \ucr{2}\1_2
\end{align} we say that $\ip$ is \textit{2a-unitary}. 
Clearly, $\uas$ is a 2a-unitary operad. We refer to 
\cite[Section 2.3]{BYZ} for more details. A non-symmetric 
operad with a 2-unit satisfying \eqref{E1.0.4}-\eqref{E1.0.5}
is called an \textit{operad with a multiplication} in 
\cite{Me, Ko}, and also called a \textit{strict unital comp 
algebra} in \cite{GS}.

Let $\ip$ be a unitary operad and $I$ a subset of $[n]$.
Recall that a {\it restriction operator} 
\cite[Section 2.2.1]{Fr} means
\begin{equation}
\label{E1.0.6}\tag{E1.0.6}
\pi^{I}\colon \ip(n)\to\ip(s),\qquad \pi^{I}(\theta)=
\theta\circ(\1_{\delta_{I}(1)}, \cdots, \1_{\delta_{I}(n)})
\end{equation}
for all $\theta\in \ip(n)$, where $\delta_{I}$ is the 
characteristic function of $I$, i.e. $\delta_{I}(x)=1$ 
for $x\in I$ and $\delta_{I}(x)=0$ otherwise. If 
$I=\{i_1, \cdots, i_s\}\subset [n]$ with 
$1\le i_1<\cdots<i_s\le n$, we also denote
$\pi^I$ as $\pi^{i_1, \cdots, i_s}$. 

 
Let $\ip$ be a 2-unitary operad with a 2-unit $\1_2$. 
The \textit{extension operator} 
$\Delta_I\colon \ip(n)\to \ip(n+s)$
is defined by 
\[\Delta_I(\theta)=\theta\circ 
(\1_{\delta_I(1)+1}, \cdots, \1_{\delta_I(n)+1})\]
for all $\theta\in \ip(n)$, and the function 
$\iota_r^l\colon \ip(n)\to \ip(l+n+r)$ is defined by
\[\iota_r^l(\theta)\colon= \1_3\circ (\1_l, \theta, \1_r),\]
where $\1_n$ is defined by $\1_n=\1_2\circ (\1_{n-1}, \1_1)$ 
inductively. If $l=0$ or $r=0$, then we have 
$\iota^0_r(\theta)=\1_2\circ (\theta, \1_r)$ and 
$\iota_0^l(\theta)=\1_2\circ (\1_l, \theta)$, which are 
denoted by $\iota_r$ and $\iota^l$, respectively.

\subsection{Truncation ideals}
\label{yysec1.5}
We recall a key concept defined in \cite{BYZ} that will 
be used in later part of this paper. Let $\ip$ be a 
unitary operad. For each integer $k\ge0$, the 
{\it truncation ideal} $\up{k}_\ip$ is defined by
\begin{equation*}
\label{E1.0.7}\tag{E1.0.7}
^k\iu_\ip(n)=\begin{cases}
\bigcap\limits_{I\subset [n],\, |I|= k-1} \ker\pi^I, 
& \text{if }n\ge k;\\
\quad\ \ 0, & \text{otherwise}.
\end{cases}
\end{equation*}
If no confusion, we write ${^k\iu}={^k\iu_\ip}$ for 
brevity. 

\begin{rmk}
\label{yyrem1.1}
Truncation ideals are useful for constructing a decreasing
sequence of operadic ideals (and the corresponding T-ideals 
when we are working with $\uas$). Similar to Theorem 
\ref{yythm0.4}(a), one can construct a family of codimension 
series as follows: Let $\I_{E}$ be the ideal of 
$\uas$ associated to the Grassmann algebra $E$. It is known
that $\I_{E}=\lr{\up{3}(3)}$, which was denoted by $\T_3$ as 
in Theorem \ref{yythm0.6}. Then, by 
\cite[Remark, p.235]{DrR}, the quotient operad 
$\ip:=\uas/\I_E$ has generating series
$$\sum_{i=0}^{\infty} \frac{t^{2i}}{(1-t)^{2i+1}} 
(=1+\frac{t}{1-2t}).$$
Using the truncation ideals of $\ip$ we obtain codimension 
series of the form
\begin{equation}
\label{E1.1.1}\tag{E1.1.1}
\sum_{i=0}^{w} \frac{t^{2i}}{(1-t)^{2i+1}}
\end{equation}
which correspond to an increasing sequence of minimal 
varieties (with associated grade $2w$). More precisely, 
let $\J_{2w}$ be the ideal $\I_{E}+\up{2w+1}$
(which is also equal to $\I_{E}+\up{2w+2}$). Then 
$G_{\uas/J_{2w}}(t)$ is \eqref{E1.1.1}. By using 
\cite[Theorem 0.2]{BXYZZ}, we obtain algebras with the above 
codimension series. If we take $w=2$, then we obtain the 
codimension series in Theorem \ref{yythm0.4}(a).

Further, it is easy to check that $\gd(\J_{2w})=2w+2$
and $\mdeg(\J_{2w})=3$. So $\gd(\J_{2w})-\mdeg(\J_{2w})
=2w-1$ can be arbitrary large.
\end{rmk}

\subsection{Results from \cite{BYZ} which are needed in this paper}
\label{yysec1.6}
The truncation ideals play an important role in the study of 
growth property of 2-unitary operads. The following key 
lemma is \cite[Theorem 0.1(2)]{BYZ}. If we are working 
with GK-dimension or generating series of an operad $\ip$,
we implicitly assume that $\ip$ is locally finite
(meaning $\dim \ip(n)<\infty$ for all $n$).

\begin{lem}
\label{yylem1.2}
Let $\ip$ be a $2$-unitary operad and $\up{k}$ the $k$-th 
truncation ideal of $\ip$. Then
\[\gkdim \ip=\max\{k\mid \up{k}\neq 0\}+1 
=\min\{k\mid \up{k}=0\}.\]
\end{lem}

We collect some known results on $2$-unitary operads. For 
every subset $I=\{i_1, \cdots, i_s\}\subset [n]$ with 
$i_1<\cdots<i_s$, we denote a permutation
\[c_I\colon=(1, \cdots, i_1-1, i_1+1, 
\cdots, i_s-1, i_s+1, \cdots, n, i_1, \cdots, i_s)\in \S_n.\]

Let $\ip$ be a $2$-unitary operad and $\I$ an ideal of 
$\ip$. Let $n\ge k\ge 0$ be integers. Suppose that 
$\{\theta_i\mid 1\le i\le f_{\I}(k)\}$ be a $\Bbbk$-linear 
basis of $(\up{k}\cap \I)(k)$. Then, by \cite[Theorem 4.5]{BYZ},
\[\{[(\1_2\circ (\theta_i, \1_{n-k}))\ast c_I]
\mid 1\le i\le f_\I(k), I\subset [n], |I|=k\}\]
forms a $\Bbbk$-linear basis of 
$\dfrac{\up{k}\cap \I}{\up{k+1}\cap \I}(n)$. Consequently,
\[\dim \dfrac{\up{k}\cap \I}{\up{k+1}\cap \I}(n)
=f_\I(k){n\choose k}.\]

Clearly, when $\I=\ip$, we have 
\[\{[(\1_2\circ (\theta_i, \1_{n-k}))\ast c_I]
\mid 1\le i\le f_\ip(k), I\subset [n], |I|=k\}\]
is a $\Bbbk$-linear basis of $\up{k}(n)/\up{k+1}(n)$, where 
$\{\theta_i\mid 1\le i\le f_\ip(k)\}$ be a basis for 
$\up{k}(k)$. Hence we have the following consequences.

\begin{thm}
\cite[Theorem 4.6(1)]{BYZ}
\label{yythm1.3}
Let $\ip$ be a $2$-unitary operad. For each $k\ge 0$, let
\[\Theta^k\colon =\{\theta_1^k, \cdots, \theta_{z_k}^k\}\]
be a $\Bbbk$-linear basis for $\up{k}_\ip(k)$. Let 
$\textbf{B}_k(n)$ be the set
\[\{\1_2\circ (\theta_i^k, \1_{n-k})\ast c_I
\mid 1\le i\le z_k, I\subset [n], |I|=n-k\}\]
Then $\ip(n)$ has a $\Bbbk$-linear basis
\[\bigcup\limits_{0\le k\le n} \textbf{B}_k(n)
=\{\1_n\}\cup \bigcup\limits_{1\le k\le n} \textbf{B}_k(n),\]
and for every $k\ge 1$, $\up{k}_\ip(n)$ has a 
$\Bbbk$-linear basis $\bigcup_{k\le i\le n} \textbf{B}_i(n)$.
\end{thm}

\begin{lem}
\cite[Lemma 5.2]{BYZ}
\label{yylem1.4}
Let $\ip$ be a $2$-unitary operad and 
$\gamma_\ip(k):=\dim \up{k}_\ip(k)$ 
for each $k\ge 0$. Then the following hold.
\begin{enumerate}
\item[(1)] 
$G_\ip(t)=\sum\limits_{k=0}^\infty 
\gamma_\ip(k)\dfrac{t^k}{(1-t)^{k+1}}$.
\item[(2)] 
$\gkdim \ip=\max\{k\mid \gamma_\ip(k)\neq 0\}+1$.
\item[(3)] 
If $\gkdim\ip=d$, then 
$\dim\ip(n)=\sum\limits_{k=0}^{d-1}\gamma_\ip(k)
{n\choose k}$.
\end{enumerate}
\end{lem}

On a quotient operad of a 2-unitary operad, we have the following 
results.

\begin{lem}
\cite[Lemma 4.7]{BYZ}
\label{yylem1.5}
Let $\ip$ be a 2-unitary operad and $\I$ be an ideal of $\ip$. 
Then $\up{k}_{\ip/\I}\cong \up{k}_\ip/(\up{k}_\ip\cap \I)$.
\end{lem}

Combining Lemma \ref{yylem1.2} and Lemma \ref{yylem1.5}, 
we have the following fact.

\begin{cor}
\label{yycor1.6}
Let $\ip$ be a 2-unitary operad and $\I$ be a proper ideal of 
$\ip$. Then $\gkdim(\ip/\I)=k$ if and only if 
$\up{k-1}_\ip\not\subset \I$ and $\up{k}_\ip\subset \I$. 
\end{cor}

Next we focus on the truncation ideals of $\uas$. 

\begin{lem}
\cite[Corollary 5.4]{BYZ}
\label{yylem1.7}
Let $\I$ be an ideal of $\uas$. Then
$\gkdim(\uas/\I)\le k$ if and only if $\up{k}\subset \I$ for 
$k\ge 1$. Consequently, 
\[\gkdim(\uas/\up{k})=\begin{cases}
1, & k=1, 2, \\
k, & k\ge 3.
\end{cases}\]
\end{lem}

From  \cite[Section 5]{BYZ}, the binomial transform 
of the dimension sequence $\{\dim\uas(n)\}_{n\ge 1}$ is the   
sequence $\{\dim\up{n}(n)\}_{n\ge 1}$. Consequently, we have 
the following results.

For every $n\geq 1$, define
\begin{equation}
\label{E1.7.1}\tag{E1.7.1}
\gamma_n=\sum_{s=0}^{n} (-1)^{n-s} s! {n \choose s}.
\end{equation}

\begin{cor}
\label{yycor1.8}
Let $\up{k}$ be the $k$-th truncation ideal of $\uas$.	For 
$k\ge 1$, we have
\[\dim\up{k}(n)=
\begin{cases}
0 & n<k,\\
\gamma_k, & n=k, \\
\sum_{i=k}^n {n\choose i}\gamma_i, & n>k,	
\end{cases}\]
where $\gamma_i$ is given in \eqref{E1.7.1}.
\end{cor}

By the proof of \cite[Theorem 5.3 and Corollary 5.4]{BYZ}, 
we have

\begin{prop}\cite{Sp, BYZ}
\label{yypro1.9}
Let $\ip=\uas/\up{d}$ for $d\geq 3$. Then
\begin{equation}
\notag
G_{\ip}(t)=\sum_{n=0}^{d-1} 
\gamma_n \frac{t^n}{(1-t)^{n+1}}.
\end{equation}
\end{prop}

By above, $\uas/\up{d}$, for $d\geq 3$, provides a family of 
codimension series of type $(d-1,1)$. By 
\cite[Proposition 2.1]{GMP}, we have 
$$\Lambda_{(g,1)}\subset
\begin{cases}
\{ \frac{u}{g!} \mid 
1\leq u\leq \gamma_{g}, u\in \N\}, & {\text{$g$ is even}};\\
\{ \frac{u}{g!} \mid 
g-1\leq u\leq \gamma_{g}, u\in \N\}, & {\text{$g$ is odd}}.
\end{cases}
$$
When $g$ is odd, $\Lambda_{(g,1)}$ may not be the set 
$\{ \frac{u}{g!} \mid g-1\leq u\leq \gamma_{g}, u\in \N\}$, 
see comments at the end of \cite{GMP} and Theorem 
\ref{yythm5.15}. If $g\geq 4$ is even, $\Lambda_{(g,1)}$ may 
be a proper subset of $\{\frac{u}{g!} \mid 1\leq u\leq 
\gamma_{g}, u\in \N\}$, see Lemma \ref{yylem4.3}(2).

\subsection{More lemmas}
\label{yysec1.7}

%


\begin{lem}
\label{yylem1.10}
Let $\ip$ be a 2-unitary operad and $\I\subset \J$ 
be two ideals of $\ip$. 
\begin{enumerate}
\item[(1)]{\rm{[Basis Theorem for $\I$]}}
For each $k\ge 1$, let
\[\Theta^k_{\I}\colon =\{\theta_1^k, \cdots, \theta_{s_k}^k\}\]
be a $\Bbbk$-linear basis for $\I(k)\cap \up{k}_\ip(k)$. 
Let $\textbf{B}_{\I,k}(n)$ be the set
\[\{\1_2\circ (\theta_i^k, \1_{n-k})\ast c_I
\mid 1\le i\le s_k, I\subset [n], |I|=n-k\}.\]
Then $\I(n)$ has a $\Bbbk$-linear basis
\[\bigcup\limits_{1\le k\le n} \textbf{B}_{\I,k}(n).\]
\item[(2)]
Suppose that $\I(k)\cap \up{k}_\ip(k)$ is generated by $\zeta_k$ as a $\Bbbk\S_k$-module 
for $1\leq k\leq m$ {\rm{(}}we take $\zeta_k=0$ if $\I(k)\cap \up{k}_\ip(k)=0${\rm{)}}. 
Denote $\zeta^{[m]}\colon=\sum_{i=1}^{m} \1_2\circ(\zeta_i, \1_{m-i})$. 
Then $\lr{\zeta^{[m]}}(k)=\I(k)$ for all $k\leq m$.
As a consequence, if further, $\I$ is generated by $\I(m)$, 
then $\I=\lr{\zeta^{[m]}}$.
\item[(3)]
If $\up{m}_\ip=0$, then $\I$ is generated by $\I(m)$ {\rm{(}}and by $\I(m+1)${\rm{)}}. 
\item[(4)]
Suppose that $\I$ is generated
by $\I(m)$ and that $\J/\I$ is generated by $(\J/\I)(m)$ for
some $m\geq 1$. Then $\J$ is generated by $\J(m)$. 
\end{enumerate}
\end{lem}

\begin{proof}
(1) Let $\overline{\ip}$ denote the quotient operad $\ip/\I$.
Extend the basis $\Theta^k_{\I}$ of $\I\cap \up{k}(k)$ to 
a basis of $\up{k}(k)$, say
$\{\theta_1^k, \cdots, \theta_{s_k}^k, \theta_{s_k+1}^k,
\cdots, \theta_{z_k}^k\}$.
Then images of $\{\theta_{s_k+1}^k,\cdots, \theta_{z_k}^k\}$ serve as a 
basis of $\up{k}_{\overline{\ip}}(k)$ by Lemma \ref{yylem1.5}. 
Let $\textbf{B}_{\I',k}(n)$ be the set
\[\{\1_2\circ (\theta_i^k, \1_{n-k})\ast c_I
\mid s_k+1\le i\le z_k, I\subset [n], |I|=n-k\}\]
for $n\geq 0$. By Theorem \ref{yythm1.3}, 
$\bigcup\limits_{0\le k\le n} \textbf{B}_{\I',k}(n) 
\cup \textbf{B}_{\I,k}(n)$ is a basis of $\ip(n)$. 
On the other hand, then images of 
$\bigcup\limits_{0\le k\le n} \textbf{B}_{\I',k}(n)$ serve
as a basis of $\overline{\ip}(n)$. Therefore 
$\bigcup\limits_{0\le k\le n} \textbf{B}_{\I,k}(n)$
(which is equal to $\bigcup\limits_{1\le k\le n} \textbf{B}_{\I,k}(n)$)
is a basis of $\I(n)$. 

(2) We use induction on $m$. If $m=1$, then $\lr{\zeta_1}(1)=
\I\cap \up{1}(1)$ which has a basis $\Theta^1_{\I}$. By part (1),
$B_{\I,1}(1)=\Theta^1_{\I}$ is also a basis of $\I(1)$. So
the assertion holds. It is clear that 
$\zeta^{[m]}=\zeta^{[m-1]}\ucr{m-1} \1_2+\zeta_m$. Then 
$\zeta^{[m]}\ucr{m} \1_0=(\zeta^{[m-1]}\ucr{m-1} \1_2)\ucr{m} \1_0
+\zeta_m\ucr{m}\1_0=\zeta^{[m-1]}$. This implies that both
$\zeta^{[m-1]}$ and $\zeta_{m}$ are in $\lr{\zeta^{[m]}}$.
So by induction 
hypothesis, for all $k\leq m-1$, $\I(k)=\lr{\zeta^{[m-1]}}(k)
\subset \lr{\zeta^{[m]}}(k)\subset \I(k)$. By part (1),
$B_{\I,k}(m) \subset \lr{\zeta^{[m]}}(m)$ for all
$1\leq k\leq m-1$. Since $B_{\I,m}(m)=\Theta^{m}_{\I}$ is 
generated by $\zeta_{m}$, the assertion follows from part (1).

The consequence is clear.

(3) This follows from part (1).

(4) This is clear.
\end{proof}

If $\ip$ is $\uas$ (or a quotient operad of $\uas$), then
every $(\I\cap \up{k})(k)$ is generated by one element 
as it is a submodule of $\Bbbk\S_{k}$. 

\begin{Def}
\label{yydef1.11}
Let $\I$ be an ideal of an operad. The 
{\it minimal degree} of $\I$ is defined to be
$$\mdeg(\I)=\min\{ n\mid \I(n)\neq 0\}.$$
\end{Def}

\begin{lem}
\label{yylem1.12}
Let $\ip$ be a 2-unitary operad and $\I$ be an ideal of $\ip$.
Then $$\mdeg(\I)=\min\{n \mid \I(n)\cap \up{n}(n)\neq 0\}
=\max\{n\mid \I\subset \up{n}\}.$$
\end{lem}

\begin{proof}
Suppose $\I\neq 0$ and $\mdeg(\I)=m<\infty$. Then 
$\I(m-1)=0$. By the definition of the restriction operator 
$\pi^{I}$, one sees that $\pi^{I}(\I)=0$ when $|I|=m-1$.
Hence $\I\subset \up{m}$, in particular, $\I(m)\cap 
\up{m}(m)=\I(m)\neq 0$. Hence the assertions follow.
\end{proof}

\section{Connections between PI-algebras and quotient 
operads of $\uas$}
\label{yysec2}

In this section, we recall some connections between 
PI-algebras (or T-ideals) and (operadic) ideals of $\uas$.
A part of this section is copied from \cite{BXYZZ}.

Observe that a PI-algebra $A$ always satisfies a multilinear 
polynomial identity of degree $\le d$ if $A$ satisfies an 
identity of degree $d$, see \cite[Proposition 13.1.9]{MR} 
or \cite[Theorem 1.3.7]{GZ6}. A \textit{multilinear polynomial} 
of degree $n$ is a nonzero element $f(x_1, \cdots, x_n)\in 
\Bbbk\langle x_1, \cdots, x_n \rangle$ of the form
\[f(x_1, \cdots, x_n)=\sum_{\sigma\in \S_n} 
c_\sigma x_{\sigma(1)} \cdots x_{\sigma(n)}\]
for some $c_{\sigma}\in \Bbbk$. The method of 
multilinearization actually plays a very important role in 
the study of the identities of a PI-algebra.

\begin{lem}
\cite[Lemma 2.1]{BXYZZ}
\label{yylem2.1}
Let $A$ be an associative algebra. 
Then $A$ is a PI-algebra if and only if $A$ is an $\uas/\I$-algebra 
for some nonzero operadic ideal $\I$ of $\uas$.
\end{lem}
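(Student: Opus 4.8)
The plan is to use the structure morphism $\gamma=(\gamma_n)\colon \uas\to\iend_A$ recorded in Section~\ref{yysec1.1}. The crucial observation is that endowing the associative unital algebra $A$ with a $\uas/\I$-algebra structure is the same as requiring $\gamma$ to factor through the quotient map $\uas\to\uas/\I$, which happens precisely when $\I\subseteq\ker\gamma$; here $\ker\gamma:=(\ker\gamma_n)_n$ is an operadic ideal of $\uas$ because $\gamma$ is a morphism of operads. Moreover, under the identification $\uas(n)\cong\Bbbk\S_n$, an element $\theta=\sum_{\sigma\in\S_n}c_\sigma\sigma$ lies in $\ker\gamma_n$ if and only if $\gamma_n(\theta)(r_1\ot\cdots\ot r_n)=\sum_{\sigma}c_\sigma r_{\sigma^{-1}(1)}\cdots r_{\sigma^{-1}(n)}=0$ for all $r_i\in A$, that is, if and only if the multilinear polynomial $f_\theta=\sum_\sigma c_\sigma x_{\sigma^{-1}(1)}\cdots x_{\sigma^{-1}(n)}$ is a polynomial identity of $A$. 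This dictionary between elements of $\ker\gamma$ and multilinear identities of $A$ drives both implications.

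For the forward direction, suppose $A$ is a PI-algebra. By the multilinearization process recalled at the beginning of this section, $A$ satisfies a nontrivial multilinear identity, say $f_\theta$ with $0\neq\theta\in\uas(n)$ for some $n$. Then $\theta\in\ker\gamma$, so the operadic ideal $\I:=\lr{\theta}$ is contained in the operadic ideal $\ker\gamma$ and is nonzero since $\theta\neq 0$. Consequently $\gamma$ factors through $\uas/\I$, i.e. $A$ is a $\uas/\I$-algebra for this nonzero $\I$.

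For the converse, suppose $A$ is a $\uas/\I$-algebra for some nonzero operadic ideal $\I$; equivalently $\I\subseteq\ker\gamma$. Choose $n$ with $\I(n)\neq 0$ and a nonzero $\theta=\sum_\sigma c_\sigma\sigma\in\I(n)\subseteq\ker\gamma_n$. By the dictionary above, $f_\theta$ is an identity of $A$; since the monomials $x_{\sigma^{-1}(1)}\cdots x_{\sigma^{-1}(n)}$, $\sigma\in\S_n$, are linearly independent in $\Bbbk\langle x_1,\dots,x_n\rangle$ and $\theta\neq 0$, the polynomial $f_\theta$ is nontrivial. Hence $A$ is a PI-algebra. (If the only nonzero part of $\I$ is $\I(0)$ or $\I(1)$, then $\1_0$ or $\1_1$ lies in $\ker\gamma$, forcing $A=0$, which is trivially a PI-algebra; so one may assume $n\geq 2$.)

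There is no serious obstacle here: once the identification $\uas(n)\cong\Bbbk\S_n$ and the morphism $\gamma$ are in place, the argument is a translation exercise, the only external input being the standard characteristic-zero fact that an algebra satisfying a nontrivial identity satisfies a nontrivial multilinear one. The one point requiring care is the bookkeeping of the permutation convention ($x_{\sigma(i)}$ versus $x_{\sigma^{-1}(i)}$), but this only relabels monomials and does not affect the linear independence argument.
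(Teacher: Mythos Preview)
Your argument is correct and is exactly the natural one: identify multilinear identities of $A$ with elements of $\ker\gamma$ via the formula for $\gamma_n(\theta)$ given in Section~\ref{yysec1.1}, and then translate ``$A$ is a $\uas/\I$-algebra'' as ``$\I\subseteq\ker\gamma$''. The paper itself does not prove this lemma but simply cites \cite[Lemma~2.1]{BXYZZ}, and the argument there follows the same route you take; your handling of the degenerate cases $\I(0)\neq 0$ or $\I(1)\neq 0$ (forcing $A=0$) is a nice touch that makes the equivalence literally true without tacit nondegeneracy assumptions.
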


Let $\Bbbk\langle X \rangle$ be the free algebra generated 
by the set $X=\{x_i\}_{i\in \N_+}$ and $S$ a non-empty 
subset of $\Bbbk\lr{X}$. Recall a \textit{variety 
$\mathcal{V}=\mathcal{V}(S)$ determined by $S$} is the 
class of all algebras $A$ such that $f(a_1, \cdots, a_n)=0$
$(\forall a_1, \cdots, a_n\in A )$ for all 
$f(x_1, \cdots, x_n)\in S$. If $\mathcal{V}$ is a variety 
and $A$ is an $\Bbbk$-algebra such that 
$\Id(A)=\Id(\mathcal{V})$, then we say that $\mathcal{V}$ 
is the variety generated by $A$ and write 
$\mathcal{V}=\operatorname{var}(A)$. Clearly, 
$\mathcal{V}=\operatorname{var}(A)$ is exactly the class 
of $\uas/\I_A$-algebras, where $\I_A$ is the kernel of the 
operadic homomorphism $\gamma_A \colon \uas\to \iend_A$.

Recall that an ideal $H$ of $\Bbbk\langle X \rangle$ is 
called a \textit{T-ideal} if 
$\varphi(H)\subset H$ for every endomorphism $\varphi$ of 
$\Bbbk\langle X \rangle$. Let $A$ be a PI-algebra. The set 
$\Id(A)$ of all polynomial identities of $A$ in 
$\Bbbk\langle X \rangle$ is a T-ideal. Here is the famous
result of Kemer on finite generation of T-ideals.

\begin{thm}
\cite[Theorem 2.4]{Ke}
\label{yythm2.2}
Every T-ideal over a field of characteristic zero is finitely
generated as a T-ideal.
\end{thm}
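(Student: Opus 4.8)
The plan is to reduce the problem to the behavior of multilinear identities and then invoke the representation theory of the symmetric group. First I would set up the standard machinery: for a T-ideal $H=\Id(A)$, let $H_n=H\cap V_n$ be the space of multilinear identities of degree $n$. Since $\Bbbk$ has characteristic zero, the entire T-ideal $H$ is recovered from the sequence $(H_n)_{n\ge 1}$ by multilinearization; more precisely, $H$ is generated as a T-ideal by $\bigcup_n H_n$. So it suffices to produce finitely many multilinear identities $f_1,\dots,f_r$ from which every $H_n$ can be recovered by substitution, multilinearization, and multiplication. Translating into operadic language via \eqref{E0.0.1} and Lemma \ref{yylem2.1}, $H_n\cong \I_A(n)$ as $\S_n$-modules, where $\I_A$ is the associated operadic ideal of $\uas$; so the task becomes: show the operadic ideal $\I_A$ is finitely generated as an operadic ideal.

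The key step is Kemer's structural analysis of T-ideals (Kemer's representability theorem), which I would invoke at the level of the $\S_n$-cocharacter sequence $\chi_n(A)=\sum_{\lambda\vdash n} m_\lambda \chi_\lambda$. The crucial input is that the Young diagrams $\lambda$ with $m_\lambda\ne 0$ lie, for large $n$, in a region bounded by a finite union of ``hooks'' — a horizontal strip of bounded height together with a vertical strip of bounded width, i.e. $\lambda$ is contained in the union of a $k\times\infty$ rectangle and an $\infty\times l$ rectangle for fixed $k,l$. This is the combinatorial heart of Kemer's theory (the ``Grassmann hull'' decomposition $A\sim A_1\oplus A_2\otimes E$, Kemer's two finiteness theorems, and the reduction to finitely many ``fundamental'' algebras built from generic matrices and Grassmann envelopes of $\Z/2$-graded algebras). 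Granting this, one controls the ``shape'' of the multilinear identities in high degree by those in bounded degree: an identity whose associated tableau has stabilized shape is obtainable from a lower-degree one by the extension operators of Section \ref{yysec1.4} (inserting the $2$-unit $\1_2$, i.e. the operadic analogue of multiplying a polynomial by a new variable or identifying variables). Iterating, one shows that $\I_A$ is generated, as an operadic ideal, by $\I_A(N)$ for some explicit $N$ depending only on the hook parameters $k,l$; then Lemma \ref{yylem1.12}(2) — or just the fact that $\Bbbk\S_N$ is finite-dimensional — packages this into a single generating element $\zeta^{[N]}$, hence into finitely many multilinear identities.

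The main obstacle is exactly the structural input: proving that the cocharacters are supported on a finite union of hooks is the deep part of Kemer's work and is not something one derives from the operadic formalism of this paper. In a self-contained treatment one would have to develop the theory of $T$-prime and $T$-semiprime ideals, the Specht-type arguments, and Kemer's representability theorem; here I would instead cite \cite{Ke} (and \cite{GZ6, KR} for exposition) for this ingredient and only supply the straightforward translation to operadic language, where the truncation-ideal basis theorems of Section \ref{yysec1} make the final ``bounded degree generation $\Rightarrow$ finite generation'' step transparent. Everything after the hook bound — multilinearization, the passage between $H_n$ and $\I_A(n)$, and the extraction of a single generator in bounded degree — is routine given the results already assembled in the excerpt.
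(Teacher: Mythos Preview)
The paper does not prove this theorem at all: it is stated with the citation \cite[Theorem 2.4]{Ke} and introduced as ``the famous result of Kemer on finite generation of T-ideals,'' with no accompanying argument. So there is no proof in the paper to compare your proposal against; the authors simply invoke Kemer's theorem as a black box from the literature (and in fact they immediately strengthen it for the unital case by citing their own \cite[Theorem 0.1]{BXYZZ}, which says every such T-ideal is generated by a single element).

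As for your sketch itself: you correctly identify that the deep content is Kemer's structure theory and that the rest is bookkeeping, and you are honest that you would cite \cite{Ke} for the hard part. But note that your proposed reduction has a gap even at the soft level. A bound on the \emph{cocharacter} shapes of the quotient $V_n/H_n$ (which is what the hook bound gives) does not by itself tell you that the ideal $\I_A$ is generated in bounded degree; what you need is control over $\I_A(n)$, not over $\uas(n)/\I_A(n)$. Kemer's actual argument does not proceed by reading off generation from cocharacter support --- it goes through the classification of $T$-prime ideals, the Grassmann envelope, and the representability theorem to reduce to finitely generated PI-algebras, and then uses a Noetherianity-type argument on the lattice of T-ideals. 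Your outline conflates ``cocharacters supported on a hook'' with ``ideal generated in bounded degree,'' and the passage from the former to the latter is precisely where the real work lies.
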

 
When we consider PI-algebras with unit, the authors proved
that every T-ideal over a field of characteristic zero is 
generated by a single element as a T-ideal 
\cite[Theorem 0.1]{BXYZZ}. The authors also proved the following result.

\begin{thm}
\cite[Theorem 2.9]{BXYZZ} 
\label{yythm2.3}
There is a natural inclusion-preserving one-to-one 
correspondence between the set of proper T-ideals of 
$\Bbbk\lr{X}$ and the set of proper ideals of $\uas$.
\end{thm}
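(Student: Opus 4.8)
The plan is to exhibit explicit mutually inverse, inclusion-preserving maps between proper T-ideals of $\Bbbk\langle X\rangle$ and proper ideals of $\uas$, the key input being that in characteristic zero a T-ideal is recovered from its multilinear components by full multilinearization.

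First I would set up the forward map. Given a proper T-ideal $H\subseteq \Bbbk\langle X\rangle$, for each $n$ let $\I_H(n)$ be the image of $H\cap V_n$ under the canonical $\S_n$-isomorphism $V_n\cong\uas(n)$ of Section~\ref{yysec0.2}. Because $H$ is a T-ideal it is stable under renaming variables, so each $\I_H(n)$ is a right $\Bbbk\S_n$-submodule of $\uas(n)$. To see that $\I_H=(\I_H(n))_n$ is an operadic ideal, note that a partial composition $\theta\ucr{i}\nu$ in $\uas$ corresponds, under $\uas(n)\cong V_n$, to substituting the multilinear polynomial $f_\nu$ (in a fresh block of variables) into the $i$-th variable of $f_\theta$; such a substitution is the image of $f_\theta$ or $f_\nu$ under an endomorphism of $\Bbbk\langle X\rangle$, hence lies in $H$ whenever $f_\theta\in H$ or $f_\nu\in H$. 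Thus $\I_H(n)\ucr{i}\uas(m)\subseteq \I_H$ and $\uas(m)\ucr{i}\I_H(n)\subseteq \I_H$, so $\I_H$ is an ideal; it is proper because $1\notin H$ forces $\I_H(0)=\I_H(1)=0$, whereas any ideal of $\uas$ meeting $\uas(0)$ or $\uas(1)$ nontrivially is all of $\uas$ (since $\1_2\circ(\1_1,\1_0)=\1_1$ and $\1_1$ generates $\uas$).

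Next I would define the backward map: to a proper ideal $\I$ of $\uas$ assign $H_\I$, the T-ideal of $\Bbbk\langle X\rangle$ generated by the multilinear polynomials $\{f_\theta\mid \theta\in\I(n),\ n\ge 0\}$. Inclusion-preservation of both maps is immediate, and $H_\I$ is proper exactly when $\I\neq\uas$, by the same analysis of $\uas(0),\uas(1)$ as above. It remains to check the maps are mutually inverse. For $H_{\I_H}=H$: the containment $\subseteq$ holds since every multilinear polynomial of $H$ already lies in $H$ and $H$ is a T-ideal; the containment $\supseteq$ is precisely the fact that in characteristic zero every $f\in H$ lies in the T-ideal generated by its complete multilinearizations, which are multilinear elements of $H$, hence of $\I_H$. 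For $\I_{H_\I}=\I$, the containment $\supseteq$ is clear because each $f_\theta$ is multilinear of degree $n$, so $\theta$ reappears in $\I_{H_\I}(n)$.

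The main obstacle is the reverse containment $\I_{H_\I}(n)\subseteq\I(n)$: one must show every multilinear degree-$n$ polynomial obtained from the generators $\{f_\theta\}$ by the T-ideal operations is again an operadic consequence of $\I$. I would argue that each T-ideal operation, after multilinearizing and projecting onto the multilinear degree-$n$ part, is realized by an operad operation on $\uas$: left/right multiplication by new variables corresponds to composing with the units $\1_m$ (this uses the 2a-unitary structure \eqref{E1.1.2}--\eqref{E1.1.3}); setting a variable equal to $1$ corresponds to a restriction operator $\pi^I$; substituting a variable by a monomial in fresh variables corresponds to a further partial composition; and substituting a variable by a \emph{sum} is absorbed by full multilinearization. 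Chaining these identifications shows that the multilinear degree-$n$ consequences of $\{f_\theta\}$ span exactly the operadic ideal generated by $\{\theta\}$ inside $\uas(n)$, namely $\I(n)$, which completes the bijection and its restriction to the proper parts.
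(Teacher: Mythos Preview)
The paper does not give its own proof of this statement; it is quoted verbatim from \cite[Theorem 2.9]{BXYZZ} and left without argument. Your reconstruction is the standard one and is correct in outline: the two maps $H\mapsto\I_H$ and $\I\mapsto H_\I$ are the natural ones, inclusion-preservation is obvious, and the identity $H_{\I_H}=H$ is exactly the characteristic-zero multilinearization principle.

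The only place that merits a remark is the final containment $\I_{H_\I}(n)\subseteq\I(n)$, which you correctly flag as the main obstacle. Your combinatorial sketch (matching each T-ideal operation with an operadic one after multilinearization) works, but is somewhat laborious to make airtight because a general T-ideal consequence involves iterated substitutions of \emph{non}-multilinear polynomials followed by projection to the multilinear part. A cleaner route, already available from the results cited in this paper, is semantic: by \cite[Theorem 0.2]{BXYZZ} (invoked in Remark~\ref{yyrem1.2}) there is a $\uas/\I$-algebra $A$ with $\I_A=\I$; every generator $f_\theta$ of $H_\I$ is then an identity of $A$, so $H_\I\subseteq\Id(A)$ and hence $\I_{H_\I}\subseteq\I_A=\I$. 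This bypasses the case analysis of T-ideal operations entirely.
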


Recall that a variety $\mathcal{V}$ of algebras is said to be 
{\it minimal} of polynomial growth $n^k$ if any subvariety of 
$\mathcal{V}$ of polynomial growth $n^t$ with $t<k$. 
Equivalently, we introduce the corresponding definition on 
operadic ideals.
	
\begin{Def}
\label{yydef2.4}
Let $\I$ be an ideal of an operad $\ip$ and $\gkdim(\ip/\I)=d$.
If for any ideal $\mathcal{J}$ of $\ip$ strictly containing 
$\I$,  $\gkdim(\ip/\mathcal{J})<d$, then we say that $\I$ 
\textit{is maximal with respect to}  $\gkdim$.
\end{Def}
	
\begin{prop}
\label{yypro2.5}
Let $A$ be a PI-algebra with associated to the operadic 
ideal $\I_A$. Then the variety $\mathcal{V}$ generated 
by $A$ is minimal of polynomial growth $n^k$ if and 
only if $\I_A$ is maximal with respect to $\gkdim$
{\rm{(}}with $\gkdim(\ip/\I_A)=k+1${\rm{)}}.
\end{prop}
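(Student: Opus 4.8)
The plan is to establish the equivalence by translating the classical notion of minimal variety into the operadic language already developed in the excerpt, using the correspondences between PI-algebras, T-ideals, and operadic ideals of $\uas$, together with Lemma \ref{yylem1.8} relating GK-dimension to truncation ideals. First I would recall from Section \ref{yysec0.3} and equation \eqref{E0.0.1} that $c_n(A)=\dim(\uas/\I_A)(n)$, so that the codimension growth of $A$ being $n^k$ translates, via $\gkdim(\uas/\I_A) = \limsup_n \log_n(\sum_{i\le n}\dim(\uas/\I_A)(i))$, into $\gkdim(\uas/\I_A) = k+1$. (Here I use that the grade of $A$ is exactly one less than $\gkdim(\uas/\I_A)$, as noted in Section \ref{yysec1.3}.) In particular, $A$ has polynomial growth $n^k$ if and only if $\gkdim(\ip/\I_A)=k+1$, where $\ip = \uas$.

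Next I would set up the order-reversing dictionary between subvarieties of $\mathcal{V}=\operatorname{var}(A)$ and operadic ideals containing $\I_A$. By Theorem \ref{yythm2.3} (and the discussion of $\operatorname{var}(A)$ as the class of $\uas/\I_A$-algebras), a subvariety $\mathcal{W}\subseteq\mathcal{V}$ corresponds to a T-ideal $\Id(\mathcal{W})\supseteq\Id(\mathcal{V})=\Id(A)$, hence to an operadic ideal $\J\supseteq\I_A$ of $\uas$; and $\mathcal{W}$ is a proper subvariety exactly when $\J\supsetneq\I_A$. Moreover $\mathcal{W}$ being generated by some algebra $B$ means $\J=\I_B$, and then $\mathcal{W}$ has polynomial growth $n^t$ if and only if $\gkdim(\uas/\J)=t+1$. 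So "$\mathcal{V}$ is minimal of polynomial growth $n^k$" unwinds to: $\gkdim(\uas/\I_A)=k+1$, and for every operadic ideal $\J\supsetneq\I_A$ one has $\gkdim(\uas/\J)\le k$ (equivalently $<k+1$), which is precisely the statement that $\I_A$ is maximal with respect to $\gkdim(\ip/\I_A)=k+1$ in the sense of Definition \ref{yydef2.4}.

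The one genuinely delicate point — and I expect this to be the main obstacle — is handling the phrase "any subvariety of $\mathcal{V}$" versus "any ideal $\mathcal{J}$ of $\ip$ containing $\I$": a priori one must check that the correspondence of Theorem \ref{yythm2.3} does not lose information, i.e. that every operadic ideal $\J\supseteq\I_A$ actually arises as $\I_B$ for some algebra $B$ (so that it corresponds to an honest subvariety), and conversely. This is exactly guaranteed by Theorem \ref{yythm2.3}, since $\J$, being an operadic ideal of $\uas$, corresponds to a T-ideal $H$, and $H=\Id(\Bbbk\lr X/H)$, so $B=\Bbbk\lr X/H$ witnesses $\J=\I_B$; and the containment $\J\supseteq\I_A$ is equivalent to $\mathcal{W}=\operatorname{var}(B)$ being a subvariety of $\mathcal{V}$ by the inclusion-preserving property. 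One should also note that in Definition \ref{yydef2.4} the ideal $\mathcal{J}$ ranges over ideals of $\ip=\uas$, and a priori $\mathcal{J}=\ip$ (the improper ideal) is allowed; but $\gkdim(\uas/\uas)=\gkdim(0)=0<k+1$ poses no issue, and correspondingly the "zero variety" is a (degenerate) subvariety of polynomial growth $n^0$, so the two formulations remain in lockstep.

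Finally I would assemble the two implications. For the forward direction: assuming $\mathcal{V}$ minimal of polynomial growth $n^k$, the codimension-growth translation gives $\gkdim(\uas/\I_A)=k+1$; and if some $\J\supsetneq\I_A$ had $\gkdim(\uas/\J)=t+1\ge k+1$, the subvariety $\mathcal{W}=\operatorname{var}(\Bbbk\lr X/H_\J)$ would be a proper subvariety of $\mathcal{V}$ with $t\ge k$, contradicting minimality (using that polynomial growth $n^t$ with $t+1=\gkdim$ and the hypothesis forces $t<k$); hence $\gkdim(\uas/\J)<k+1$ for all such $\J$, i.e. $\I_A$ is maximal with respect to $\gkdim(\ip/\I_A)=k+1$. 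The reverse direction runs the same chain of equivalences backwards, using Definition \ref{yydef2.4} and the dictionary above. No new lemma is needed beyond what is quoted in the excerpt; the write-up is essentially a careful unwinding of definitions through the three-way correspondence.
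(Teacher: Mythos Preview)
Your proposal is correct and matches the paper's intended approach: the paper states Proposition~\ref{yypro2.5} without proof, treating it as an immediate consequence of the inclusion-preserving correspondence of Theorem~\ref{yythm2.3} together with the identification $c_n(A)=\dim(\uas/\I_A)(n)$ and the relation between grade and GK-dimension. Your write-up simply makes explicit the dictionary the paper leaves to the reader.
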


For the rest of this section we prove Proposition 
\ref{xxpro0.5}.

\begin{proof}[Proof of Proposition \ref{xxpro0.5}]
Let ${\mathbb F}$ be the set of all finitely generated 
fields over ${\mathbb Q}$ up to isomorphism. Then 
the set ${\mathbb F}$ is countable and every member 
$F\in {\mathbb F}$ is a countable field. For each 
$F\in {\mathbb F}$, let $OI(F)$ be the set of operadic 
ideals of $\uas_F$ over the field $F$. Since $\uas_F$ 
is countable and every operadic ideal is generated 
by one element \cite[Theorem 0.3(2)]{BXYZZ}, $OI(F)$ 
is countable. Let ${\mathbb B}$ be the set of all 
generating series of quotient operads of $\uas_F$ 
for all $F\in {\mathbb F}$. 
Then ${\mathbb B}$ is countable. 

Now let $A$ be any PI-algebra over a base field 
$\Bbbk$ (where $\Bbbk$ is not necessarily countable).
It remains to show that $c(A,t)$ is in ${\mathbb B}$. 
Let $\I_{A}$ be the operadic ideal of $\uas(=\uas_{\Bbbk})$
associated to $A$. Then we have
\begin{equation}
\notag 
c(A,t)=G_{\uas/\I_A}(t).
\end{equation}
Hence it suffices to show that $G_{\uas/\I_A}(t)
\in {\mathbb B}$.

Let $\Id_{\Bbbk}(A)$ be all polynomial identities of $A$ 
in $\Bbbk\lr{X}$ which is a T-ideal. By Kemer's 
representability Theorem \cite{Ke} (also see 
\cite[Theorem 1.1]{AKK}), there exists a field extension 
$L$ of $\Bbbk$ and a finite dimensional algebra $R$ over 
$L$ which is PI equivalent to $A$, that is $Id_{L}(A_L) 
=Id_{L}(R)$, where $A_L=A\otimes_{\Bbbk}L$. Note that
$\I_{A_L}\cong \I_{A}\otimes_{\Bbbk} L$. Since 
we only consider the generating series of
$\uas/\I_A$, we might assume $\Bbbk=L$ without loss of
generality. Then it follows from Kemer's 
representability Theorem that we can assume $A$ is finite
dimensional over $\Bbbk$. In this case $A=S \otimes_F \Bbbk$
where $S$ is a finite dimensional algebra over a field 
$F$ and $\Bbbk\supset F\in {\mathbb F}$. Then 
$\I_{A}=\I_{S}\otimes_{F} \Bbbk$. Thus $G_{\uas/\I_A}(t)
=G_{\uas_{F}/\I_S}\in {\mathbb B}$ as required.
\end{proof}

\section{The generating degrees of truncation ideals}
\label{yysec3}

In general it is extremely difficult to determine a 
generating identity and the generating degree of an 
operadic ideal of $\uas$. The main goal of this 
section is to understand the generating degree of 
the truncation ideals of $\uas$. Throughout this 
section $\up{k}$ denotes the $k$-th truncation ideal of $\uas$. 

\subsection{Specht bases of truncation ideals}
\label{yysec3.1}
Let $V_n$ be the subspace of $\Bbbk\langle x_1, \cdots, 
x_n\rangle$ consisting of all multilinear polynomials of 
degree $n$. Recall that a polynomial $f\in \Bbbk \langle 
x_1, \cdots, x_n\rangle$ is called a 
\textit{proper polynomial} if it is a linear combination 
of products of (left normed) commutators
\[f(x_1, \cdots, x_n)
=\sum \lambda_{i_{1{\ast}}, \cdots, i_{m{\ast}}}
[x_{i_{11}}, \cdots, x_{i_{1k_1}}]\cdots
[x_{i_{m1}}, \cdots, x_{i_{mk_m}}], \quad 
\lambda_{i_{1{\ast}}, \cdots, i_{m{\ast}}}\in \Bbbk,\]
where the commutator of length $k\ge 2$ is defined inductively 
by 
\begin{align*}
[x_{i_1}, x_{i_2}]\colon= & x_{i_1}x_{i_2}-x_{i_2}x_{i_1}, \\
[x_{i_1}, \cdots, x_{i_{k-1}}, x_{i_k}]\colon=
& [[x_{i_1}, \cdots, x_{i_{k-1}}], x_{i_k}], k>2.
\end{align*}
We denote by $B_n$ the set of all proper polynomials in 
$\Bbbk \langle x_1, \cdots, x_n\rangle$, and
\[\Gamma_n=B_n\cap V_n.\]


\begin{lem}
\cite[Theorem 4.3.9]{Dr}
\label{yylem3.1}
A basis of the vector space $\Gamma_n$ of all proper 
multilinear polynomials of degree $n\ge 2$ consists of 
the following product of commutators
\[[x_{i_{11}}, \cdots, x_{i_{1k_1}}]\cdots
[x_{i_{m1}}, \cdots, x_{i_{mk_m}}],\]
satisfying
\begin{enumerate}
\item[(1)] 
$\{i_{11}, \cdots, i_{1k_1}, \cdots, i_{m1}, \cdots, 
i_{mk_m}\}=[n]$; 
\item[(2)] 
$i_{j1}>i_{j2}, \cdots, i_{jk_j}$ for all $j=1, \cdots, m$;
\item[(3)] 
$2\le k_1\le \cdots \le k_m$;
\item[(4)] 
if $k_t=k_{t+1}$ for some $t\in[m-1]$, then 
$i_{t1}<i_{t+1, 1}$.
\end{enumerate}
\end{lem}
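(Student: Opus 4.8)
The plan is to deduce Lemma \ref{yylem3.1} from the Poincar\'e--Birkhoff--Witt theorem together with a dimension count against the integers $\gamma_n$ of \eqref{E1.8.1}. Write $\Gamma_S$ for the space of proper multilinear polynomials in the variables indexed by a finite set $S$, so that $\Gamma_n=\Gamma_{[n]}$, and put $d_n=\dim\Gamma_n$. Two inputs are needed. First, for any $k$-element index set the multilinear component of the free Lie algebra has the basis of left-normed commutators $[x_{i_1},x_{i_2},\dots,x_{i_k}]$ in which $i_1$ is the largest of the indices and $(i_2,\dots,i_k)$ is an arbitrary ordering of the remaining ones; this is precisely \eqref{E1.0.1}, and there are $(k-1)!=\dim\Lie(k)$ of them. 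Second, PBW applied to $\Bbbk\langle x_1,\dots,x_n\rangle=U(L)$, with $L$ the free Lie algebra on $x_1,\dots,x_n$, shows that every multilinear $f\in V_n$ is uniquely a sum of terms $x_{i_1}\cdots x_{i_k}\,w$ with $i_1<\dots<i_k$ and $w\in\Gamma_{[n]\setminus\{i_1,\dots,i_k\}}$ a proper multilinear polynomial; hence $n!=\sum_{k=0}^{n}\binom nk d_{n-k}$, and binomial inversion gives $d_n=\sum_{s=0}^{n}(-1)^{n-s}s!\binom ns=\gamma_n$.

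Next I would prove that the products of commutators described in the lemma span $\Gamma_n$. Fix the order on ``Lie factors'' (left-normed commutators of length $\ge2$ on subsets of $[n]$) that compares length first and then the leading index, which in a left-normed commutator is forced by condition (2) to be the maximum of its index set; since distinct factors of a multilinear product have disjoint index sets, this is already a total order on the factors that can occur together, and conditions (1)--(4) say exactly that such a product is sorted for it. Starting from an arbitrary product of commutators, anticommutativity and the Jacobi identity let me rewrite each factor in the basis recalled above, turning the product into a linear combination of products of basis Lie factors. Whenever two adjacent factors $c,c'$ are out of order, I replace $cc'$ by $c'c+[c,c']$, where $[c,c']$ lies in the free Lie algebra on the union of the index sets of $c$ and $c'$ and is again re-expanded in the basis; the resulting correction terms have strictly fewer factors. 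An induction on the number of factors, and for a fixed factor count on the number of inverted adjacent pairs, then shows that the sorted products span $\Gamma_n$.

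It remains to match the count. The number of sorted products is the number of set partitions of $[n]$ into blocks of size $\ge2$, each block of size $k$ weighted by its $(k-1)!$ choices of left-normed commutator. In exponential generating functions a single block contributes $\sum_{k\ge2}(k-1)!\,t^k/k!=-t-\log(1-t)$, so the number of sorted products has exponential generating function $\exp\big(-t-\log(1-t)\big)=e^{-t}/(1-t)=\sum_{n\ge0}\gamma_n\,t^n/n!$. Therefore there are exactly $\gamma_n=d_n=\dim\Gamma_n$ sorted products, and a spanning set of the correct cardinality is a basis, which is the assertion.

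The step I expect to be delicate is the factor-reordering in the second paragraph: one must verify that re-expanding the Jacobi correction $[c,c']$ in the ``maximum-first'' basis genuinely lowers the number of factors and that the induction on the pair (number of factors, number of inversions) is well founded, i.e.\ that the bookkeeping never cycles. Alternatively one could bypass the generating-function count and establish linear independence directly by another application of PBW to the multilinear component, but that is of comparable length. The remaining ingredients---the free-Lie-algebra rewriting, the binomial inversion, and the generating-function identity---are routine.
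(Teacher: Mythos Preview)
The paper does not prove this lemma at all; it is stated with a bare citation to Drensky's book \cite[Theorem 4.3.9]{Dr} and used as a black box. So there is no ``paper's own proof'' to compare against, and your self-contained argument is a genuine addition.

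Your proof is correct. The PBW decomposition $V_n=\bigoplus_{S\subseteq[n]} x_S\cdot\Gamma_{[n]\setminus S}$ gives $n!=\sum_k\binom{n}{k}d_{n-k}$ and hence $d_n=\gamma_n$; the bubble-sort rewriting with Jacobi corrections gives spanning (the lexicographic induction on (number of factors, number of inversions) is well founded exactly as you say, since each correction drops the factor count by one); and the exponential-formula count $\exp(-t-\log(1-t))=e^{-t}/(1-t)$ matches $\sum\gamma_n t^n/n!$. One stylistic remark: you are in fact invoking PBW twice, once for the dimension and once implicitly for spanning. If you instead fix a \emph{total} order on a Hall/Lyndon basis of the free Lie algebra that refines ``length, then maximal index'' and apply PBW a single time, the multilinear PBW monomials with no degree-one factor are precisely the products satisfying (1)--(4), giving spanning and linear independence in one stroke and making the generating-function step unnecessary. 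That is closer to how the result is usually packaged in \cite{Dr}, but your two-step route is equally valid.
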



The above basis of $\Gamma_n$ is called the 
\textit{Specht basis} \cite[p.45]{Dr}. Observe that
$V_n$ admits the right $\Bbbk\S_n$-action given by
\[(x_{i_1}\cdots x_{i_n})\ast \rho\colon
=x_{\rho^{-1}(i_1)} \cdots x_{\rho^{-1}(i_n)}\]
for $\rho\in \S_n$, and

\begin{align}
\label{E3.1.1}\tag{E3.1.1}
\Phi_n\colon \uas(n)  \to V_n , \quad \quad 
\sigma \mapsto x_{\sigma^{-1}(1)}\cdots 
x_{\sigma^{-1}(n)}
\end{align}
is an isomorphism of right $\Bbbk\S_n$-modules. 
Denote by $\Psi_n$ the inverse of $\Phi_n$. Clearly, 
$\Gamma_n$ is a $\Bbbk\S_n$-submodule of $V_n$, and 
\[\Phi_n(\tau_n)=[x_1, x_2 \cdots, x_n]\]
where $\tau_{n}\in \Lie(n)\subset \uas(n)$ is 
introduced in \eqref{E1.0.2}.

\begin{lem}
\label{yylem3.2}
Retain the above notation. Then the restriction 
of $\Psi_n$ on $\Gamma_n$ induces an isomorphism 
\[\Psi_n|_{\Gamma_n}\colon  \Gamma_n \to \up{n}(n).\]
\end{lem}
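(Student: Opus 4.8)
The plan is to show that $\Psi_n$ maps the Specht basis of $\Gamma_n$ (Lemma \ref{yylem3.1}) into $\up{n}(n)$, and that these images are linearly independent and span a space of the right dimension, which by Lemma \ref{yylem1.8} and Corollary \ref{yycor1.9} must be all of $\up{n}(n)$. Since $\Psi_n$ is already a $\Bbbk\S_n$-module isomorphism $V_n \to \uas(n)$ and $\Gamma_n$ is a submodule, the restriction $\Psi_n|_{\Gamma_n}$ is automatically an injective module map onto a submodule of $\uas(n)$; the entire content is to identify that submodule as $\up{n}(n)$.

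First I would show the inclusion $\Psi_n(\Gamma_n)\subseteq \up{n}(n)$. Recall $\up{n}(n)=\bigcap_{|I|=n-1}\ker\pi^I$, and under the identification $\Phi_n\colon \uas(n)\to V_n$ the restriction operator $\pi^I$ corresponds (up to the obvious relabeling) to the algebra map $\Bbbk\langle x_1,\dots,x_n\rangle \to \Bbbk\langle x_j : j\notin I\rangle$ sending $x_i \mapsto 1$ for $i\in I$ and fixing the other variables, since $\1_0$ is the $0$-ary unit and substituting the unit into a slot is exactly setting that variable to $1$. A product of commutators as in Lemma \ref{yylem3.1} always involves all $n$ variables, and every commutator factor has length $\ge 2$; setting any single variable $x_i$ equal to $1$ kills the (unique) commutator factor containing $x_i$, because a left-normed commutator with one of its entries set to $1$ vanishes (each bracket $[\,\cdot\,,1]=0$, and $[1,\cdot]=0$, and these propagate outward). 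Hence $\pi^I$ annihilates every Specht basis element whenever $|I|=n-1$ (indeed whenever $|I|\ge 1$), so $\Psi_n(\Gamma_n)\subseteq \up{n}(n)$.

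Next, dimension count. By Lemma \ref{yylem3.1}, $\dim\Gamma_n$ equals the number of Specht basis elements. On the other hand $\dim\up{n}(n)=\gamma_n=\sum_{s=0}^n(-1)^{n-s}s!\binom{n}{s}$ by Corollary \ref{yycor1.9}. The standard fact (inclusion–exclusion, or the known statement that $\Gamma_n$ has dimension equal to the number of multilinear proper monomials, see \cite[Chapter 4]{Dr}) is that $\dim\Gamma_n=\gamma_n$ as well; equivalently $n!=\sum_{k}\binom{n}{k}\dim\Gamma_k$, reflecting the decomposition $V_n\cong\bigoplus_k \binom{n}{k}$ copies of $\Gamma_k$ coming from the unital structure. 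Thus $\Psi_n|_{\Gamma_n}$ is an injection of a $\gamma_n$-dimensional space into a $\gamma_n$-dimensional space, hence an isomorphism $\Gamma_n \xrightarrow{\ \sim\ } \up{n}(n)$.

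The main obstacle is being careful about the precise correspondence between $\pi^I$ and the substitution-by-$1$ map under the convention in \S\ref{yysec1.1} — in particular getting the indexing of $\delta_I$, the regular-module identification $\uas(n)\cong\Bbbk\S_n$, and the map $\Phi_n$ all consistent so that "$\pi^I$ vanishes on $\theta$" really does translate to "$f_\theta$ vanishes when the variables indexed by $I$ are set to $1$." Once that dictionary is pinned down, the vanishing of brackets and the dimension count are routine. An alternative that avoids the dimension count entirely would be to exhibit the inverse directly: given $\theta\in\up{n}(n)$, the condition $\pi^i(\theta)=0$ for all $i$ forces $f_\theta$ to lie in the span of proper polynomials by the standard "commutator collection" argument (every multilinear polynomial is congruent modulo proper polynomials to a multiple of $x_1\cdots x_n$ plus lower terms, and the truncation conditions iteratively kill all the non-proper pieces); I would mention this as a remark but carry out the cleaner dimension-count proof in the main text.
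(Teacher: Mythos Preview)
Your proposal is correct and follows essentially the same approach as the paper: show $\Psi_n(\Gamma_n)\subseteq \up{n}(n)$ by checking that substituting the unit into any slot kills a product of commutators, then conclude by the dimension count $\dim\Gamma_n=\gamma_n=\dim\up{n}(n)$. The only cosmetic difference is that the paper phrases the vanishing step operadically (via $\tau\ucr{i}\1_0=0$) rather than in polynomial language, and you should be aware that under the paper's convention $\pi^I$ sets the variables \emph{outside} $I$ to $1$ (not those in $I$), though as you note this does not affect the argument.
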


\begin{proof}
For any $[x_{i_{11}}, \cdots, x_{i_{1k_1}}]\cdots
[x_{i_{m1}}, \cdots, x_{i_{mk_m}}]\in \Gamma_n$, we have 
\[\Psi_n([x_{i_{11}}, \cdots, x_{i_{1k_1}}]\cdots[x_{i_{m1}}, 
\cdots, x_{i_{mk_m}}])=(1_s\circ(\tau_{k_1}, \cdots, 
\tau_{k_m}))\ast 
(i_{11}, \cdots, i_{1k_1},\cdots, i_{m1}, \cdots, i_{mk_m}),\]
where $\tau_{k_i}\in \Lie(k_i)\subset \uas(k_i)$ is 
introduced in Section 1.2.
Therefore, by $\tau\ucr{1}1_0=\tau\ucr{2} 1_0=0$, we get 
\[\Psi_n([x_{i_{11}}, \cdots, x_{i_{1k_1}}]
\cdots[x_{i_{m1}}, \cdots, x_{i_{mk_m}}])\ucr{j} 1_0=0.\]
for any $1\le j\le n$, and $\Psi_n(\Gamma_n)\subset \up{n}(n)$.  
By \cite[Theorem 14]{Sp} and Corollary \ref{yycor1.8}, we know
\[\dim \Gamma_n=\sum\limits_{i=0}^n(-1)^i\dfrac{n!}{i!}
=\dim\up{n}(n) =\gamma_n.\]
Since $\Psi_n$ is an isomorphism, the restriction 
$\Psi_n|_{\Gamma_n}\colon \Gamma_n \to \up{n}(n)$ is injective 
and therefore it is also isomorphism.
\end{proof}

By the isomorphism $\Psi_n|_{\Gamma_n}$ (also denoted by 
$\Psi_n$ for brevity) and the Specht basis of $\Gamma_n$, 
we can give a basis of $\up{n}(n)$. Suppose that 
$n, k_1, \cdots, k_m$ are integers satisfying $k_1+\cdots+k_m=n$.
Denote by $\Sigma_{k_1, \cdots, k_m}$ the subset  of $\S_n$ 
consisting of the permutation 
\[(i_{11}, \cdots, i_{1k_1},\cdots, i_{m1}, \cdots, i_{mk_m})\]
satisfying the conditions (2)-(4) given in Lemma \ref{yylem3.1}.
For brevity, we denote 
\begin{align}\label{E3.2.1}\tag{E3.2.1}
1_m\circ(\tau_{k_1},  \cdots, \tau_{k_m})\colon 
=\tau_{k_1, \cdots, k_m},
\end{align}
and 
\[(1_m\circ(\tau_{k_1},  \cdots, \tau_{k_m}))\ast \sigma\colon 
=\tau_{k_1, \cdots,  k_m}^\sigma\]
for $\sigma\in\S_n$.
Clearly, 
\[\Psi_n([x_{i_{11}}, \cdots, x_{i_{1k_1}}]\cdots
[x_{i_{m1}}, \cdots, x_{i_{mk_m}}])
=\tau_{k_1, \cdots, k_m}^\sigma\]
for $\sigma=(i_{11}, \cdots, i_{1k_1},\cdots, i_{m1}, 
\cdots, i_{mk_m})\in \S_n$.

\begin{prop}
\label{yypro3.3}
Let $\up{n}$ be the $n$-th truncation ideal of $\uas$. 
Then the set 
\[\Theta^n\colon=\{\tau_{k_1, \cdots, k_m}^\sigma\mid 
\sigma\in\Sigma_{k_1, \cdots, k_m}, k_1+\cdots+k_m=n\}\]
is a basis of $\up{n}(n)$, which is also called the 
\textit{Specht basis} of $\up{n}(n)$.
\end{prop}

Therefore, each $\theta\in \up{n}(n)$ can be linearly 
represented by $\Theta^n$. We write
\[\sl(\theta)\colon=\min\{k_m\mid \theta
=\sum_{\tau_{k_1, \cdots, k_m}^\sigma\in \Theta^n}
\la^\sigma_{k_1, \cdots, k_m}\tau_{k_1, \cdots, k_m}^\sigma,  
{\text{ where }} 0\neq \la^\sigma_{k_1, \cdots, k_m}\in \Bbbk\},\]
which is called the \textit{Specht length} of 
$\theta\in \up{n}(n)$, and define 
\[M^n_t\colon =\{\theta\in \up{n}(n)\mid \sl(\theta)\ge t\}.\] 
for $2\le t\le n$.

Let $I$ be a subset of $[n]$ consisting of $f$ elements. 
Denote by
\[\overrightarrow{I}\colon [f]\to I\subset [n], \]
the order-preserving map, namely, $\overrightarrow{I}(k)=i_k$ 
for $1\le k\le f$ if $I=\{i_1,\cdots, i_f\}$ and $i_1<\cdots<i_f$.

\begin{lem}
	\label{yylem3.4}
	Retain the above notation.  If $l_1+\cdots+l_m=n$, $l_j\ge 2$, 
	$j=1, \cdots, m$, and there exists $l_j\ge t$, then 
	\[(1_m\circ (\tau_{l_1}, \cdots, \tau_{l_m}))\ast\rho \in M_t^n\]
	for any $\rho\in \S_n$. 
	Consequently, $M_t^n$ is a $\Bbbk\S_n$-submodule of $\up{n}(n)$.
\end{lem}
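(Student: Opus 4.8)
The plan is to apply the isomorphism $\Phi_n\colon\up{n}(n)\to\Gamma_n$ of Lemma~\ref{yylem3.2} and work inside the space $\Gamma_n$ of proper multilinear polynomials. By the formula for $\Psi_n=\Phi_n^{-1}$ recorded in the proof of Lemma~\ref{yylem3.2} together with $\Phi_n(\tau_n)=[x_1,\cdots,x_n]$, the image $\Phi_n\big((1_m\circ(\tau_{l_1},\cdots,\tau_{l_m}))\ast\rho\big)$ is a product $P$ of $m$ left-normed commutators whose lengths are $l_1,\cdots,l_m$ and whose index sets partition $[n]$; by hypothesis at least one of these blocks has length $\ge t$. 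Since Specht basis elements have non-decreasing block lengths, $\sl(\theta)\ge t$ means precisely that every Specht basis element occurring in the (unique) Specht expansion of $\theta$ has a block of length $\ge t$. So the first assertion reduces to the combinatorial claim: \emph{when a product of left-normed multilinear commutators with at least one block of length $\ge t$ is expanded in the Specht basis of Lemma~\ref{yylem3.1}, every basis element that occurs also has a block of length $\ge t$.}

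To prove this claim I would rewrite $P$ into a linear combination of Specht basis elements using two moves and check that the property ``some block has length $\ge t$'' is an invariant of each move. Move (A): inside a single commutator of length $k$ one brings the largest index to the front, producing a linear combination of left-normed commutators on the same index set and of the same length $k$ --- this is the fact that the left-normed commutators with largest index in front form a basis of the multilinear part of $\Lie(\bigoplus\Bbbk x_i)$ (Section~\ref{yysec1.2}, \cite{Reu}); it changes no block's length. Move (B): one swaps two adjacent commutator blocks by $[u][v]=[v][u]+[[u],[v]]$, where $[[u],[v]]$ is a Lie element of degree $|u|+|v|$ and so is a linear combination of left-normed commutators of length $|u|+|v|$; thus a swap yields either a term with the same multiset of block lengths (the parts $|u|,|v|$ merely transposed) or a term in which $u,v$ are merged into one block of length $|u|+|v|\ge\max(|u|,|v|)$. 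In every case, if some block had length $\ge t$ before the move then some block has length $\ge t$ afterwards. Applying Move (B) to bubble-sort the blocks by length and then (within a fixed length) by largest index, and then Move (A) to each block, terminates and lands in the Specht basis; by the invariant, every basis element occurring has a block of length $\ge t$, proving the claim, hence $\tau^{\rho}_{l_1,\cdots,l_m}\in M^n_t$.

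For the ``consequently'' part: by the definition of $\sl$, the Specht basis elements $\tau^{\sigma}_{k_1,\cdots,k_m}\in\Theta^n$ with $k_m\ge t$ form a $\Bbbk$-basis of $M^n_t$; since such a basis element is of the form $(1_m\circ(\tau_{k_1},\cdots,\tau_{k_m}))\ast\sigma$ with $\max_j k_j=k_m\ge t$, we get $M^n_t\subseteq\operatorname{span}\{(1_m\circ(\tau_{k_1},\cdots,\tau_{k_m}))\ast\sigma\mid k_1+\cdots+k_m=n,\ k_j\ge 2,\ \max_j k_j\ge t,\ \sigma\in\S_n\}$. The right-hand spanning set is stable under right multiplication by $\S_n$ (this only alters $\sigma$), and by the first part of the lemma it is contained in $M^n_t$; hence $M^n_t$ equals its own span and is therefore a $\Bbbk\S_n$-submodule of $\up{n}(n)$.

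The main obstacle is the combinatorial claim, specifically verifying that Moves (A) and (B) really do only preserve or merge block lengths and never split a block, and that the bubble-sort-plus-normalization procedure terminates. The clean justification for the first point is the multidegree grading on the free Lie algebra (the re-expansion of a commutator is homogeneous in the variables it involves, and $[[u],[v]]$ is homogeneous of the combined multidegree), so that all re-expansions land among left-normed commutators of the expected lengths; termination follows because each merge strictly decreases the number of blocks while the sorting within a fixed block count is a finite bubble sort. This is routine bookkeeping rather than a serious difficulty, so the essential content of the lemma is the invariance observation above.
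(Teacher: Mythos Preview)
Your proposal is correct and follows essentially the same route as the paper's proof: both transfer the question to $\Gamma_n$ via the isomorphism of Lemma~\ref{yylem3.2}, then rewrite a product of commutators into the Specht basis using the two moves you call (A) and (B) --- normalizing a single commutator via the Dynkin basis of $\Lie$ and swapping adjacent blocks via $[u][v]=[v][u]+[[u],[v]]$ --- while tracking the invariant ``some block has length $\ge t$''. Your termination and ``consequently'' arguments are slightly more explicit than the paper's, but the content is the same.
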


\begin{proof}
	Applying the isomorphism $\Psi_n \colon \Gamma_n \to \up{n}(n)$, 
	we need only to show that 
	\[\Psi_n([x_{j_{11}}, \cdots, x_{j_{1l_1}}]\cdots 
	[x_{j_{m1}}, \cdots, x_{j_{ml_m}}])\in M_t^n\]
	for any $(j_{11}, \cdots, j_{1l_1}, \cdots, j_{m1}, 
	\cdots, j_{ml_m})\in \S_n$ with some $l_p\ge t$. For convenience, 
	we denote $J_s\colon =\{j_{s1}, \cdots, j_{s l_s}\}$ for 
	$1\le s\le m$.
	
	If $j_{s1}<j_{sr}$ for some $2\le r\le l_s$, we consider the 
	order-preserving map $\overrightarrow{J_s}\colon [l_s]\to J_s$. 
	Since $\{\Psi_{l_s}([x_{l_s}, x_{i_1}, \cdots, x_{i_{l_s-1}}])
	\mid (i_1, \cdots, i_{l_s-1})\in \S_{l_{s}-1}\}$ is a 
	$\Bbbk$-linear basis of $\Lie(l_s)$, there exist scalars 
	$\la_{i_1, \cdots, i_{l_s-1}}$'s such that 
	\begin{align*}
		[x_{\overrightarrow{J_s}^{-1}(j_{s1})}, \cdots, 
		x_{\overrightarrow{J_s}^{-1}(j_{sl_{s}})}]
		=\sum_{(i_1, \cdots, i_{l_s-1})\in \S_{l_{s}-1}}
		\la_{i_1, \cdots, i_{l_s-1}} [x_{l_s}, x_{i_1}, 
		\cdots, x_{i_{l_s-1}}],
	\end{align*}
	and hence 
	\begin{align*}
		[x_{j_{s1}}, \cdots, x_{j_{sl_{s}}}]
		=\sum\la_{i_1, \cdots, i_{l_s-1}} 
		[x_{\overrightarrow{J_s}(l_s)}, x_{\overrightarrow{J_s}(i_1)}, 
		\cdots, x_{\overrightarrow{J_s}(i_{l_s-1})}].
	\end{align*}
	Therefore, we can assume that $j_{s1}>j_{s2}, \cdots, j_{sl_s}$ 
	for all $s=1, \cdots, m$. 
	
	If $l_{s-1}>l_s$, or 
	$l_{s-1}=l_s$ and $j_{s-1\; l_{s-1}}>j_{s\; l_s}$ for some $s$, 
	we have
	\begin{align*}
		[x_{j_{s-1\;1}}, \cdots, &x_{j_{s-1\; l_{s-1}}}]
		[x_{j_{s1}}, \cdots, x_{j_{s\; l_{s}}}]\\
		=& [x_{j_{s1}}, \cdots, x_{j_{sl_{s}}}]
		[x_{j_{s-1\; 1}}, \cdots, x_{j_{s-1\; l_{s-1}}}]+[[x_{j_{s-1\; 1}}, 
		\cdots, x_{j_{s-1\; l_{s-1}}}], 
		[x_{j_{s1}}, \cdots, x_{j_{s\; l_{s}}}] ].
	\end{align*}
	Considering the order-preserving map 
	$\overrightarrow{J_{s-1}\cup J_{s}}\colon 
	[l_{s-1}+l_s]\to J_{s-1}\cup J_{s}$ (which is denoted by 
	$\overrightarrow{J_{s-1,s}}$ in the next expression),
	we have that 
	\[\Psi_{l_{s-1}+l_s}([[x_{\overrightarrow{J_{s-1,s}}^{-1}(j_{s-1\;1})}, 
	\cdots, x_{\overrightarrow{J_{s-1,s}}^{-1}(j_{s\;l_{s-1}})}], 
	[x_{\overrightarrow{J_{s-1,s}}^{-1}(j_{s\;1})}, \cdots, 
	x_{\overrightarrow{J_{s-1,s}}^{-1}(j_{s\;l_{s}})}]])\in 
	\Lie(l_{s-1}+l_s),\]
	and there exist scalars $\la_{i_{s-1\;1}, \cdots, i_{s-1\; l_{s-1}}, 
		i_{s1}, \cdots, i_{s\; l_s}}$'s such that 
	\begin{align*}
		[[x_{j_{s-1\; 1}}, \cdots, & x_{j_{s-1\; l_{s-1}}}], 
		[x_{j_{s1}}, \cdots, x_{j_{s\; l_{s}}}] ]\\
		=& \sum \la_{i_{s-1\;1}, \cdots, i_{s-1\; l_{s-1}}, i_{s1}, 
			\cdots, i_{s\; l_s}} 
		[x_{\overrightarrow{J_{s-1,s}}(l_{s-1}+l_s)},
		x_{\overrightarrow{J_{s-1,s}}(i_1)}, \cdots, 
		x_{\overrightarrow{J_{s-1,s}}(i_{l_{s-1}+l_s-1})}]
	\end{align*}
	Therefore, if
	\begin{align*}
		\Psi_n([x_{j_{11}}, \cdots, x_{j_{1l_1}}]\cdots 
		[x_{j_{m1}}, \cdots, x_{j_{ml_m}}])	
		=\sum_{k_1+\cdots+k_s=n,\atop \sigma\in \Sigma_{k_1, \cdots, k_n}}
		\la^\sigma_{k_1, \cdots, k_s} \tau^\sigma_{k_1, \cdots, k_s}
	\end{align*} 
	and $l_p\ge t$ for some $p$, then we have $k_s\ge t$ for 
	each $\la^\sigma_{k_1, \cdots, k_s}\neq 0$. It follows that 
	\[\Psi_n([x_{j_{11}}, \cdots, x_{j_{1l_1}}]\cdots 
	[x_{j_{m1}}, \cdots, x_{j_{ml_m}}])\in M_t^n.\] 
\end{proof}

Since each $M_t^n$ is a $\Bbbk\S_n$-submodule of $\up{n}(n)$ 
and $M_t^n\subset M_{t-1}^n$, we have the following result. 

\begin{prop}
	\label{yypro3.5}
	Let $\up{n}$ be the $n$-th truncation ideal of $\uas$. Then 
	$\up{n}(n)$ has a chain of submodules of $\Bbbk\S_n$:
	\[\Lie(n)=M^n_n\subset M^n_{n-1}\subset \cdots 
	\subset M^n_{2}=\up{n}(n).\]
\end{prop}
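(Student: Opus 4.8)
The plan is to deduce the proposition directly from Lemma~\ref{yylem3.4} together with two elementary observations about the Specht basis $\Theta^n$ of $\up{n}(n)$ from Proposition~\ref{yypro3.3}. Recall that for $\theta\in\up{n}(n)$ the Specht length $\sl(\theta)$ is the smallest value of the largest part $k_m$ occurring among the basis vectors $\tau^{\sigma}_{k_1,\cdots,k_m}$ that appear with nonzero coefficient in the (unique) expansion of $\theta$ along $\Theta^n$. Three things have to be checked: the chain of inclusions $M^n_t\subseteq M^n_{t-1}$, the top identification $M^n_2=\up{n}(n)$, and the bottom identification $M^n_n=\Lie(n)$; the submodule property of each $M^n_t$ is exactly the content of Lemma~\ref{yylem3.4} and so is taken for granted.

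First I would record the easy facts. The inclusions $M^n_n\subseteq M^n_{n-1}\subseteq\cdots\subseteq M^n_2$ are immediate from the definition of $M^n_t$, since $\sl(\theta)\ge t$ forces $\sl(\theta)\ge t-1$. For $M^n_2=\up{n}(n)$ I would invoke condition~(3) of Lemma~\ref{yylem3.1}: every Specht basis vector $\tau^{\sigma}_{k_1,\cdots,k_m}$ has $2\le k_1\le\cdots\le k_m$, so its largest part $k_m$ is at least $2$; hence $\sl(\theta)\ge 2$ for every $\theta\in\up{n}(n)$, i.e.\ $M^n_2=\up{n}(n)$.

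Next I would handle $M^n_n=\Lie(n)$. Suppose $\theta\in M^n_n$, so every $\tau^{\sigma}_{k_1,\cdots,k_m}$ appearing in $\theta$ has $k_m\ge n$; since $k_1+\cdots+k_m=n$ with all $k_j\ge 2$, this forces $m=1$ and $k_1=n$. Thus $\theta$ lies in the span of $\{\tau^{\sigma}_{n}\mid \sigma\in\Sigma_{n}\}$, where $\Sigma_{n}$ consists of the permutations $(i_{11},\cdots,i_{1n})$ with $i_{11}$ the largest entry, i.e.\ $i_{11}=n$. Under the isomorphism $\Psi_n$ these vectors correspond to the Dynkin elements $[x_n,x_{i_{12}},\cdots,x_{i_{1n}}]$, which form the basis of $\Lie(n)$ recalled in Section~\ref{yysec1.2}; hence $M^n_n\subseteq\Lie(n)$. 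Conversely each such Dynkin element is by construction a $\tau^{\sigma}_{n}$, whose expansion has $\sl=n$, so $\Lie(n)\subseteq M^n_n$, and equality follows (alternatively one may just compare dimensions, $\dim M^n_n=(n-1)!=\dim\Lie(n)$).

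Essentially all of the genuine work sits in Lemma~\ref{yylem3.4}, which is already established: the delicate point there is that rewriting an arbitrary product of commutators into Specht normal form — using the Dynkin rewriting inside each block and the identity $uv=vu+[u,v]$ to sort the blocks — can only reorder or merge blocks, never split a block of size $\ge t$ into smaller pieces, so the Specht length cannot drop below $t$ under the $\Bbbk\S_n$-action. Granting that, Proposition~\ref{yypro3.5} is merely the assembly of the trivial inclusions and the two endpoint computations above, and I would not expect any further obstacle.
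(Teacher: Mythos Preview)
Your proof is correct and follows the same approach as the paper: the paper treats the proposition as an immediate consequence of Lemma~\ref{yylem3.4} (for the submodule property) together with the obvious inclusions $M^n_t\subset M^n_{t-1}$, and you have simply made explicit the endpoint identifications $M^n_2=\up{n}(n)$ and $M^n_n=\Lie(n)$ that the paper leaves to the reader.
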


\subsection{Generators of the truncation ideals} 
\label{yysec3.2}
Concerning a general ideal of $\uas$ 
generated by a submodule of $\uas(n)$ for some $n$, we 
have the following fact. 


\begin{lem}
\label{yylem3.6}
Let $\ip$ be \; $\uas$ or a quotient operad of \; $\uas$ and 
$\I$ be an ideal of $\ip$. If $\I=\lr{\I(n)}$ 
for some $n$, then $\I=\lr{\I(m)}$ for any $m\ge n$.
\end{lem}

\begin{proof} 
Assume that $\I=\langle \I(m-1)\rangle$ for some $m>n$. Then, 
for any $\theta\in \I(m-1)$, $\theta\ucr{1} \1_2\in \I(m)$.
Therefore, $\theta=(\theta\ucr{1} \1_2)\ucr{1} \1_0\in \langle 
\I(m)\rangle(m-1)$. Consequently, $\I=\langle \I(m)\rangle$.
\end{proof}

Let $\I$ be an ideal of $\uas$. By Lemma \ref{yylem3.6} and
\cite[Theorem 0.3(2)]{BXYZZ}, we 
know there is a unique minimal degree $n$ such that 
$\I=\lr{\I(n)}$, which is called the \textit{generating degree} 
of $\I$, denoted by $\gd(\I)$ [Definition \ref{yydef0.7}]. 
Next, we will study the generating degrees of truncation ideals 
of $\uas$. 

Recall from Definition 3.2 in \cite{GMZ} that a polynomial 
$g$ is a consequence of a polymomial $f$ if $g$ lies in 
the T-ideal generated by $f$. In \cite{Ma3}, the author gave 
the generators of the space of the $n$-th multilinear proper 
polynomials of $G$-graded algebras for a finite group $G$. 
Taking $G$ as the trivial group, it immediately follows from 
\cite[Lemma 2.2]{Ma3} that for every $i\ge 1$, $\Gamma_{k+i}$ 
is a consequence of $\Gamma_{k}$ in case $k$ is even
and is a consequence of $\Gamma_{k}$ plus the polynomial 
$[x_1, x_2]\cdots[x_k, x_{k+1}]$ in case $k$ is odd.
Apply the isomorphism between $\Gamma_k$ and $\up{k}(k)$, 
and we have the following fact on the truncation ideals.


\begin{lem}
\label{yylem3.7}
Let $\up{k}$ be the $k$-th truncation ideal of $\uas$. 
\begin{enumerate}
\item[(1)] 
If $k\geq 2$ is even, then $\up{k+i}(k+i)\subset 
\langle \up{k}(k)\rangle (k+i)$ for every $i\geq 1$.
\item[(2)] 
If $k\geq 2$ is odd, then $\up{k+i}(k+i)\subset 
\langle \up{k}(k), \up{k+1}(k+1) \rangle (k+i)
=\lr{\up{k}(k+1)}(k+i)$ for every $i\geq 2$. 	
\end{enumerate}
\end{lem}

In this subsection, we study the generating degree 
of the truncation ideals of $\uas$. From Lemma 
\ref{yylem3.7}, it is easily seen that $\up{k}$ 
can be generated by $\up{k}(k)$ when $k$ is even,
and by $\up{k}(k+1)$ when $k$ is odd. Next, we 
will show that $\up{k}$ can not be generated by 
$\up{k}(k)$ in case $k$ is odd. For this, we first 
prove the following facts.

%

\begin{lem}
\label{yylem3.8}
Let $k\geq 3$ be an odd number. Then 
\begin{align}
\label{E3.8.1}\tag{E3.8.1}
\tau_{2, \cdots, 2}\ast (\sgn(\sigma)\sigma-1_{k+1})
\in \lr{\up{k}(k)}(k+1)
\end{align}
for any $\sigma\in \S_{k+1}$. 
\end{lem}

\begin{proof}
Applying the isomorphism 
$\Psi_{k+1}\colon V_{k+1}\to \Bbbk\S_{k+1}$, we need only 
to show
\[\Psi_{k+1}(\sgn(\sigma)[x_{\sigma^{-1}(1)}, x_{\sigma^{-1}(2)}]
\cdots [x_{\sigma^{-1}(k)}, x_{\sigma^{-1}(k+1)}]
-[x_1, x_2]\cdots[x_{k}, x_{k+1}])\in \lr{\up{k}(k)}(k+1).\]
We first prove the special case when $k=3$. By a direct calculation, 
we have
\begin{align*}
[x_1, x_3]&[x_2, x_4]+[x_1, x_2][x_3, x_4]\\
=& [[x_1, x_3]x_2, x_4]-[x_1, x_3, x_4]x_2
 +[[x_1, x_2]x_3, x_4]-[x_1, x_2, x_4]x_3\\
=& [[x_1x_2, x_3]-x_1[x_2, x_3], x_4]
 -[x_1, x_3, x_4]x_2+[[x_1x_3, x_2]-x_1[x_3, x_2], x_4]
 -[x_1, x_2, x_4]x_3\\
=& [x_1x_2, x_3, x_4]-[x_1, x_3, x_4]x_2
 +[x_1x_3, x_2, x_4]-[x_1, x_2, x_4]x_3.
\end{align*}
It follows that 
\[\tau_{2, 2}\ast (-(23)-1_{4})
=(1_2\ucr{1}\tau_3)\ast((243)+(34))
-(\tau_3\ucr{1}1_2)\ast(1_{4}+(23))\in \lr{\up{3}(3)}(4),\]
where $(23), (243), (34)$ are cycle permutations in 
$\S_{4}$. 

For the general case, we use induction on the inversion 
number $\phi(\sigma)$. By definition, when $\sigma$ is 
written as $(i_1, i_2, \cdots, i_{k+1})$ with 
$i_s=\sigma^{-1}(s)$ for all $s$, $\phi(\sigma)$ is the 
number of pairs $(i_{s},i_{s'})$ where $s<s'$ and $i_s>i_{s'}$. 
(Sometimes this is called the inversion number of 
$\sigma^{-1}$ by some authors).

If $\phi(\sigma)=0$, then \eqref{E3.8.1} holds obviously 
since $\sgn(\sigma)\sigma-(1)=0$.

Assume that \eqref{E3.8.1} holds when $\phi(\sigma)\le l$. 
When $\phi(\sigma)=l+1$, there are two cases. 

Case 1. There exists $s$ with $2s\le k+1$ such that 
$\sigma^{-1}(2s-1)>\sigma^{-1}(2s)$. 
Set \[\sigma'=(\sigma^{-1}(1), \cdots, \sigma^{-1}(2s-2), 
\sigma^{-1}(2s), \sigma^{-1}(2s-1),
\sigma^{-1}(2s+1), \cdots, \sigma^{-1}(k+1)).\] 
Then we have 
$\phi(\sigma')=\phi(\sigma)-1$, and 
\begin{align*}
 \tau_{2, \cdots, 2}\ast (\sgn(\sigma)\sigma-1_{k+1})
=\tau_{2, \cdots, 2}\ast (\sgn(\sigma')\sigma'-1_{k+1})
\in \lr{\up{k}(k)}(k+1)
\end{align*}
by the induction hypothesis.

Case 2. Suppose $\sigma^{-1}(2t-1)<\sigma^{-1}(2t)$
for all $2t\leq k+1$. Then there exists $s$ with 
$2s\le k$ such that $\sigma^{-1}(2s)>\sigma^{-1}(2s+1)$.
Write $\sigma=(i_1, \cdots, i_{k+1})$ and 
$\sigma'=(i_1, \cdots, i_{2s-1}, i_{2s+1}, 
i_{2s}, i_{2s+2}, \cdots, i_{k+1})$. 
Clearly, $\phi(\sigma')=\phi(\sigma)-1$.  
By the proof of the special case $k=3$, we have 
\begin{align*}
\sgn(\sigma)&[x_{i_1}, x_{i_2}]\cdots 
[x_{i_k}, x_{i_{k+1}}]-[x_1, x_2]\cdots[x_{k}, x_{k+1}]\\
=&\sgn(\sigma)[x_{i_1}, x_{i_2}]\cdots 
[x_{i_k}, x_{i_{k+1}}]-\sgn(\sigma')[x_{i_1}, x_{i_2}]
\cdots[x_{i_{2s-1}}, x_{i_{2s+1}}][x_{i_{2s}}, x_{i_{2s+2}}]
\cdots [x_{i_k}, x_{i_{k+1}}]\\
&+\sgn(\sigma')[x_{i_1}, x_{i_2}]
\cdots[x_{i_{2s-1}}, x_{i_{2s+1}}][x_{i_{2s}}, x_{i_{2s+2}}]
\cdots [x_{i_k}, x_{i_{k+1}}]-
[x_1, x_2]\cdots[x_{k}, x_{k+1}]\\
=&\sgn(\sigma)[x_{i_1}, x_{i_2}]\cdots 
([x_{i_{2s-1}}, x_{i_{2s}}][x_{i_{2s+1}}, x_{i_{2s+2}}]
+[x_{i_{2s-1}}, x_{i_{2s+1}}][x_{i_{2s}}, x_{i_{2s+2}}])
\cdots [x_{i_k}, x_{i_{k+1}}]\\
&+\sgn(\sigma')[x_{i_1}, x_{i_2}]
\cdots[x_{i_{2s-1}}, x_{i_{2s+1}}][x_{i_{2s}}, x_{i_{2s+2}}]
\cdots [x_{i_k}, x_{i_{k+1}}]-
[x_1, x_2]\cdots[x_{k}, x_{k+1}]\\
=& \sgn(\sigma)[x_{i_1}, x_{i_2}]
\cdots \left([x_{i_{2s-1}}x_{i_{2s}}, x_{i_{2s+1}}, x_{i_{2s+2}}]
-[x_{i_{2s-1}}, x_{i_{2s+1}},x_{i_{2s+2}}]x_{i_{2s}}\right.\\
&\qquad+[x_{i_{2s-1}}x_{i_{2s+1}}, x_{i_{2s}}, x_{i_{2s+2}}]
-[x_{i_{2s-1}}, x_{i_{2s}},x_{i_{2s+2}}]x_{i_{2s+1}}\left. \right)
\cdots[x_{i_k}, x_{i_{k+1}}]\\
&+\sgn(\sigma')[x_{i_1}, x_{i_2}]\cdots[x_{i_{2s-1}}, x_{i_{2s+1}}]
[x_{i_{2s}}, x_{i_{2s+2}}]\cdots [x_{i_k}, x_{i_{k+1}}]-
[x_1, x_2]\cdots[x_{k}, x_{k+1}].
\end{align*}
Therefore, from the induction hypothesis, it follows that 
\[\Psi_{k+1}(\sgn(\sigma)[x_{\sigma^{-1}(1)}, x_{\sigma^{-1}(2)}]
\cdots [x_{\sigma^{-1}(k)}, x_{\sigma^{-1}(k+1)}]-[x_1, x_2]
\cdots[x_{k}, x_{k+1}])\in \lr{\up{k}(k)}(k+1).\]
This completes the proof.
\end{proof}

Next, we give a basis of $\lr{\up{k}(k)}(k+1)$ for an 
odd $k$, which deduces that $\up{k}$ can not generated 
by $\up{k}(k)$. 

\begin{prop}
\label{yypro3.9}
Let $k\ge 3$ be an odd number and $\Theta^{k}$ the 
Specht basis of $\up{k}(k)$. Set 
\begin{align*}
B_1=& \{\tau_{k_1, \cdots, k_s}^\sigma\mid 
\sigma\in \Sigma_{k_1, \cdots, k_s}, k_1+\cdots+k_s=k+1, k_s\ge 3\}, \\
B_2=& \{\tau_{2, \cdots, 2}\ast (\sgn(\sigma)\sigma-1_{k+1})\mid 
\sigma\in \Sigma'_{2, \cdots, 2} \},\\
B_3=& \{(1_2\ucr{1}\theta_j)\ast c_i\mid \theta_j\in \Theta^{k}\}
\end{align*}
where $c_i=(1, \cdots, i-1, i+1, \cdots, k+1, i)\in \S_{k+1}$
and $\Sigma'_{2,\cdots,2}=\Sigma_{2, \cdots, 2}\backslash 
\{(2, 1, 4, 3, \cdots, k+1, k)\}$. 
Then $B_1\cup B_2 \cup B_3$ is a basis of $\lr{\up{k}(k)}(k+1)$.
Consequently,  $\up{k+1}(k+1)\not\subseteq\lr{\up{k}(k)}(k+1)$. 
\end{prop}

\begin{proof} 
Firstly, we show that $B_1\cup B_2\cup B_3\subset 
\lr{\up{k}(k)}(k+1)$. By definition, $B_3\subset 
\lr{\up{k}(k)}(k+1)$. 
Being similar to the proof of Lemma \ref{yylem3.7}, we see
that $B_1\subset \lr{\up{k}(k)}(k+1)$. By Lemma \ref{yylem3.8}, 
$B_2\subset \lr{\up{k}(k)}(k+1)$.
	
Secondly, by combining Theorem \ref{yythm1.3}
with Proposition \ref{yypro3.3}, one sees that 
$B_1\cup B_2\cup B_3$ are linearly independent.

It remains to show that each element $\theta\in \lr{\up{k}(k)}(k+1)$ 
can be linearly represented by the elements in $B_1\cup B_2\cup B_3$.
To do this, we only need to show that 
$(1_2\ucr{j} \theta)\ast\sigma$ and $(\theta\ucr{r} 1_2)\ast\sigma$ 
can be linearly represented by the elements in 
$B_1\cup B_2\cup B_3$ for all $\theta\in \Theta^{k}$, $j=1, 2$, 
$1\le r\le k$ and $\sigma\in \S_{k+1}$. Take an arbitrary element
\[\theta=\tau^\rho_{l_1,\cdots,l_m}
=\Psi_k([x_{j_{11}}, \cdots, x_{j_{1l_1}}]
\cdots[x_{j_{m1}}, \cdots, x_{j_{ml_m}}]) \]
in $\Theta^k$, where $\rho=(j_{11}, \cdots, j_{1l_1}, \cdots, j_{m1}, 
\cdots, j_{ml_m})\in \Sigma_{l_1, \cdots, l_m}$, 
$l_1+\cdots+l_m=k$, $l_i\ge 2$ $i=1, \cdots, m$. Since $k$ is odd, 
we have that $l_m\ge 3$. 

Case 1. $(1_2\ucr{1} \theta)\ast \sigma$, $\sigma\in \S_{k+1}$. 
Using the fact $\S_{k+1}=\bigcup_{i=1}^{k+1} (\S_k\times \{1\}) c_i$ 
or by Lemma \ref{yylem1.10}(1) for the ideal 
$\I=\lr{\up{k}(k)}$, one sees that 
$(1_2\ucr{1} \theta)\ast \sigma$ can be linearly represented 
by the elements in $B_3$.

Case 2. $(1_2\ucr{2} \theta)\ast \sigma$, $\sigma\in \S_{k+1}$. 
Considering $(1_2\ucr{2} \theta)\ast \sigma$ and setting  
$\varphi=(k+1,1,2,\cdots,k)$, we have
\begin{align*}
\Psi_{k+1}^{-1}&((1_2\ucr{2} \theta)\ast\varphi)
 = x_{k+1}[x_{j_{11}}, 
\cdots, x_{j_{1l_1}}]\cdots[x_{j_{m1}}, \cdots, x_{j_{ml_m}}]\\
=& ([x_{j_{11}}, \cdots, x_{j_{1l_1}}]x_{k+1}-[x_{j_{11}}, 
 \cdots, x_{j_{1l_1}}, x_{k+1}])[x_{j_{21}}, 
 \cdots, x_{j_{2l_2}}]\cdots[x_{j_{m1}}, \cdots, x_{j_{ml_m}}]\\
=& [x_{j_{11}}, \cdots, x_{j_{1l_1}}]([x_{j_{21}}, 
 \cdots, x_{j_{2l_2}}]x_{k+1}-[x_{j_{21}}, \cdots, x_{j_{2l_2}}, 
 x_{k+1}])[x_{j_{31}}, \cdots, x_{j_{3l_3}}]\cdots[x_{j_{m1}}, 
 \cdots, x_{j_{ml_m}}]\\
& -[x_{j_{11}}, \cdots, x_{j_{1l_1}}, x_{k+1}][x_{j_{21}}, 
 \cdots, x_{j_{2l_2}}]\cdots[x_{j_{t1}}, \cdots, x_{j_{tl_t}}]\\
=& [x_{j_{11}}, \cdots, x_{j_{1l_1}}]\cdots[x_{j_{t1}}, 
 \cdots, x_{j_{tl_t}}]x_{k+1}-[x_{j_{11}}, \cdots, 
 x_{j_{1l_1}}, x_{k+1}][x_{j_{21}}, \cdots, x_{j_{2l_2}}]
 \cdots[x_{j_{m1}}, \cdots, x_{j_{ml_m}}]\\
& -\cdots-[x_{j_{11}}, \cdots, x_{j_{1l_1}}]\cdots
 [x_{j_{m-1,1}}, \cdots, x_{j_{m-1,l_{m-1}}}]
 [x_{j_{m1}}, \cdots, x_{j_{ml_m}}, x_{k+1}].
\end{align*}
Therefore,  we have $(1_2\ucr{2} \theta)\ast
\varphi\in (1_2\ucr{1}\theta)+M^{k+1}_3$. This implies that 
\[(1_2\ucr{2} \theta)\ast \sigma \in 
(1_2\ucr{1}\theta)\ast \widetilde{\sigma}+M^{k+1}_3\]
where $\widetilde{\sigma}=\varphi^{-1}\sigma$.
Observe that $B_1$ is a basis of $M^{k+1}_3$. It follows 
that $(1_2\ucr{2} \theta)\ast \sigma$ can be linearly 
represented by the elements in $B_1\cup B_3$.

Case 3. $(\theta\ucr{r}  1_2)\ast \sigma, \sigma\in \S_{k+1}$. 
Considering $\theta=\tau_n$ for $n\leq k+1$ and setting 
$r=p$, by a direct computation, we have 
\begin{align*}
	\Psi_{n+1}^{-1}&(\tau_n\ucr{p} 1_2)=[x_1, \cdots, 
	x_{p-1}, x_px_{p+1}, x_{p+2} \cdots, x_{n+1}] \\  
	=&[[x_1,\cdots, x_p]x_{p+1},x_{p+2},\cdots,x_{n+1}]
	+[x_p[x_1,\cdots,x_{p-1},x_{p+1}],x_{p+2},\cdots, x_{n+1}]
	\\
	=&[[x_1,\cdots,x_p][x_{p+1},x_{p+2}],x_{p+3},\cdots,x_{n+1}]
	+ [[x_1,\cdots,x_p,x_{p+2}]x_{p+1},x_{p+3},\cdots, x_{n+1}] 
	  \\
	 &+x_p[x_1\cdots, \check{x}_p, \cdots, x_{n+1}]
	+[[x_p, x_{p+2}][x_1\cdots, \check{x}_p, x_{p+1}], x_{p+3},\cdots,x_{n+1}]\\
	 =&\cdots\cdots\\
	=& x_p[x_1, \cdots, \check{x}_p, \cdots, x_{n+1}]
	+[x_1, \cdots,\check{x}_{p+1}, \cdots, x_{n+1}]x_{p+1}\\
	&+\sum_{p+2\le i_1<\cdots<i_s\le n+1,\atop 1\le s\le n-p }[x_p, x_{i_1}, 
	\cdots, x_{i_s}][x_1, \cdots,x_{p-1}, x_{p+1}, \cdots,  
	\check{x}_{i_1}, \cdots, \check{x}_{i_s}, \cdots, x_{n+1}]\\
	&+\sum_{p+2\le i_1<\cdots<i_s\le n+1,\atop 0\le s\le n-p-1}[x_1, \cdots, 
	x_p, x_{i_1}, \cdots, x_{i_s}][x_{p+1}, \cdots,  
	\check{x}_{i_1}, \cdots, \check{x}_{i_s}, \cdots, x_{n+1}]\\
	=& -[x_1, \cdots, \check{x}_p, \cdots, x_{n+1}, x_p]
	+[x_1, \cdots, \check{x}_p, \cdots, x_{n+1}]x_p
	+[x_1, \cdots, \check{x}_{p+1}, \cdots, x_{n+1}]x_{p+1}\\
	&+\sum_{p+2\le i_1<\cdots<i_s\le n+1,\atop 1\le s\le n-p }[x_p, x_{i_1}, 
	\cdots, x_{i_s}][x_1, \cdots,x_{p-1}, x_{p+1}, \cdots,  
	\check{x}_{i_1}, \cdots, \check{x}_{i_s}, \cdots, x_{n+1}]\\
	&+\sum_{p+2\le i_1<\cdots<i_s\le n+1,\atop 0\le s\le n-p-1}[x_1, \cdots, 
	x_p, x_{i_1}, \cdots, x_{i_s}][x_{p+1}, \cdots,  
	\check{x}_{i_1}, \cdots, \check{x}_{i_s}, \cdots, x_{n+1}]
\end{align*}
where $\check{x}_i$ means that $x_i$ is omitted. Therefore,  
 we know that
\begin{align*}
\tau_{l_1, \cdots, l_m}\ucr{l_1+\cdots+l_{i-1}+p}1_2
=1_m\circ (\tau_{l_1}, \cdots, \tau_{l_{i-1}}, 
\tau_{l_i}\ucr{p} 1_2, \tau_{l_{i+1}}, \cdots, \tau_{l_m})
\end{align*}
can be linearly represented by $B_1\cup B_3$ if $1\le i< m$ 
or $i=m$, $l_m>3$. Otherwise, we have $l_1=\cdots=l_{m-1}=2, 
l_m=3$ and $i=m$. In this case, using a similar computation
as before, we have 
\begin{align*}
\Psi_k^{-1}&(\tau_{2, \cdots, 2, 3}\ucr{k-2}1_2)=[x_1,x_2]
 \cdots[x_{k-4}, x_{k-3}][x_{k-2}x_{k-1}, x_k, x_{k+1}]\\
=& [x_1,x_2]\cdots[x_{k-4}, x_{k-3}]([x_k, x_{k-1}]
 [x_{k+1}, x_{k-2}]+[x_{k}, x_{k-2}][x_{k+1}, x_{k-1}])\\
&+ [x_1,x_2]\cdots[x_{k-4}, x_{k-3}]([[x_{k+1}, x_{k-2}],
 [x_k, x_{k-1}]]+[x_{k-1}, x_k, x_{k+1}, x_{k-2}])\\
&+[x_1,x_2]\cdots[x_{k-4}, x_{k-3}]([x_{k-1}, x_k, 
 x_{k+1}]x_{k-2}+[x_{k-2}, x_k, x_{k+1}]x_{k-1})\\
\in & [x_1,x_2]\cdots[x_{k-4}, x_{k-3}]
 [x_k, x_{k-1}][x_{k+1}, x_{k-2}]-[x_1,x_2]\cdots
 [x_{k-4}, x_{k-3}][x_{k-2}, x_{k-1}][x_{k}, x_{k+1}]\\
&-(-[x_1,x_2]\cdots[x_{k-4}, x_{k-3}][x_{k}, x_{k-2}]
 [x_{k+1}, x_{k-1}]-[x_1,x_2]\cdots[x_{k-4}, x_{k-3}]
 [x_{k-2}, x_{k-1}][x_{k}, x_{k+1}])\\
&+[x_1,x_2]\cdots[x_{k-4}, x_{k-3}]([x_{k-1}, x_k, 
	x_{k+1}]x_{k-2}+[x_{k-2}, x_k, x_{k+1}]x_{k-1})+M^3_{k+1}
\end{align*}
It follows that $\tau_{2, \cdots, 2, 3}\ucr{k-2}1_2$ can 
be linearly represented by $B_1\cup B_2 \cup B_3$. 
Similarly, by a direct computation, we know that
$(\tau_{2, \cdots, 2, 3}\ucr{i}1_2)\ast \sigma$ 
can be linearly represented by $B_1\cup B_2 \cup B_3$ for 
$i=k-2, k-1, k$ and any $\sigma\in \S_{k+1}$. 

Finally, we have 
$\tau_{2, \cdots, 2}\notin \lr{\up{k}(k)}(k+1)$ and therefore
$\up{k+1}(k+1)\not\subset\lr{\up{k}(k)}(k+1)$.
\end{proof}

Combining Lemma \ref{yylem3.8} and Proposition \ref{yypro3.9},
we immediately have the following fact. 

\begin{prop}
\label{yypro3.10}
Let $\up{k}$ be the $k$-th truncation ideal of $\uas$. 
Then 
\[\gd(\up{k})=\begin{cases}
k+1, & k\ \mbox{\rm is odd};\\
k, & k\ \mbox{\rm is even}.
\end{cases}\]
\end{prop}
%


%

\subsection{Proof of Theorem \ref{yythm0.8}}
\label{yysec3.3}
Now we are ready to prove Theorem \ref{yythm0.8}.

\begin{proof}[Proof of Theorem \ref{yythm0.8}]
Let $m=\gkdim (\uas/\I)$. By Lemma \ref{yylem1.2},
$\up{m}_{\ip}=0$ where $\ip=\uas/\I$. By 
Lemma \ref{yylem1.5}, $\up{m}:=\up{m}_{\uas}
\subset \I$. 

By Proposition \ref{yypro3.10}, $\up{m}$ is generated by 
$\up{m}(m)$ if $m$ is odd, and by $\up{m}(m+1)$ if
$m$ is even. Since $\up{m}_{\iq}=0$ where $\iq=\uas/\up{m}$,
every ideal $\J$ of $\iq$ is generated by $\J(m)$ as well
as $\J(m+1)$ [Lemma \ref{yylem1.10}(3)]. So this is true for 
$\J:=\I/\up{m}$. Now the assertion follows from 
Lemma \ref{yylem1.10}(4).
\end{proof}

\section{On generalized truncation ideals}
\label{yysec4}

\subsection{Definition of a generalized truncation ideal}
\label{yysec4.1}

The notation of a generalized truncation ideal was introduced 
in \cite[Section 3.1]{BYZ}. 

\begin{Def}
\label{yydef4.1}
Let $\ip$ be a unitary operad and $M$ an $\S_m$-submodule 
of $\up{m}(m)$ for some $m$. The $\S$-submodule 
$\up{m}^M$ of $\ip$ is defined by
\begin{align} 
\notag
\up{m}^M(n)=\{\mu\in \up{m}(n)\mid \pi^I(\mu)\in M, 
\forall\; I\subset [n], \; |I|=m\}
\end{align}
for all $n\in \N$. If $\up{m}^M$ is an ideal
of $\ip$, then it is called a {\it generalized truncation ideal}
of $\ip$ associated to $M$. If further, $0\neq M\subsetneq 
\up{m}(m)$, then $M$ is called {\it admissible} and 
$\up{m}^M$ is called a {\it generalized truncation ideal of 
type I}.
\end{Def}

This definition will be extended to a more general setting
by replacing a submodule $M$ by an admissible sequence 
${\mathbb M}:=(M_{m-s},\cdots,M_{m})$, see Definition 
\ref{yydef4.10}.

Note that $\up{m}^M$ is not an operadic  ideal of $\ip$ in general. 
In fact, it is no hard to check that  $\up{m}^M$ is an 
ideal of $\ip$ if and only if the following conditions hold 
\cite[Proposition 3.2]{BYZ}:
\begin{enumerate}
\item[(1)] 
$\nu\circ x\in M$ for all $\nu\in \ip(1)$ and $x\in M$;
\item[(2)]
$x\ucr{i} \nu\in M$ for all $\nu\in \ip(1)$, $x\in M$
and $1\le i\le m$. 
\end{enumerate} 
In particular, if $\ip$ is {\it connected}, i.e. $\ip(1)=\Bbbk\1$, 
then $\up{m}^M$ is always an ideal.

\subsection{Dimension sequence and a basis of $\up{m}^M$}
\label{yysec4.2}
Clearly, we have the following fact.

\begin{lem}
\label{yylem4.2}
Let $\ip$ be a 2-unitary operad and $M$ an $\S_m$-submodule 
of $\up{m}(m)$ for some $m\ge 2$. Assume that $\up{m}^M$ is 
an ideal of $\ip$. Then $\up{m+1}\subset \up{m}^M 
\subset \up{m}$. In particular, 
\begin{align}
\label{E4.2.1}\tag{E4.2.1}
\up{m}^M=\begin{cases}
\up{m+1}, & if \ M=0, \\
\up{m}, & if \ M=\up{m}(m).
\end{cases}
\end{align}
\end{lem}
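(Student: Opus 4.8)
The plan is to establish the two-sided inclusion $\up{m+1}\subset \up{m}^M\subset\up{m}$ directly from the definitions, and then read off the two special cases in \eqref{E4.2.1}. The inclusion $\up{m}^M(n)\subset\up{m}(n)$ is immediate from Definition \ref{yydef4.1}: every element $\mu$ of $\up{m}^M(n)$ lies in $\up{m}(n)$ by construction. For the inclusion $\up{m+1}\subset\up{m}^M$, the key point is the transitivity of restriction operators: for a 2-unitary operad, if $I\subset J\subset[n]$ then $\pi^{I}=\pi^{I'}\circ\pi^{J}$ for the appropriate image $I'$ of $I$ inside $[|J|]$ (this is the standard compatibility of the operators in \eqref{E1.1.4}, as recorded in \cite{Fr} and used throughout Section \ref{yysec1.5}). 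Granting this, take $\mu\in\up{m+1}(n)$ and any $I\subset[n]$ with $|I|=m$. Choose any $J\supset I$ with $|J|=m+1$ (possible once $n\ge m+1$; for $n\le m$ both sides vanish). Then $\pi^{J}(\mu)\in\up{m+1}(m+1)=0$ by the definition of the truncation ideal, hence $\pi^{I}(\mu)=\pi^{I'}(\pi^{J}(\mu))=0\in M$. Since this holds for all such $I$, we get $\mu\in\up{m}^M(n)$.

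Next I would verify $\up{m+1}(n)=0$ forces nothing beyond what is already known, and handle the degenerate ranges: for $n<m$ we have $\up{m}(n)=0$ so $\up{m}^M(n)=0=\up{m+1}(n)$; for $n=m$ we have $\up{m}^M(m)=M$ by Theorem \ref{yythm3.7}(2) (or directly, since $\pi^{[m]}=\id$ on $\ip(m)$), and $\up{m+1}(m)=0\subset M$, consistent with the claimed chain. This already gives \eqref{E4.2.1} when $M=0$: then $\up{m}^M(n)=\{\mu\in\up{m}(n)\mid \pi^I(\mu)=0\ \forall |I|=m\}=\up{m+1}(n)$ by the very definition \eqref{E1.1.5} of $\up{m+1}$. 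When $M=\up{m}(m)$, the condition $\pi^I(\mu)\in M$ is vacuous since $\pi^I(\mu)\in\up{m}(m)$ always (the restriction operator maps $\up{m}(n)$ into $\up{m}(m)$, again by compatibility of $\pi^I$ with the truncation filtration, cf. \cite[Lemma 2.14]{BYZ}), so $\up{m}^M(n)=\up{m}(n)$.

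The only mild subtlety, and the step I would treat most carefully, is the assertion that $\pi^I$ sends $\up{m}(n)$ into $\up{m}(m)$ and that restrictions compose correctly; everything else is bookkeeping. Both facts are standard for unitary (indeed 2-unitary) operads and are implicit in the setup of \cite{BYZ}; I would either cite \cite[Lemma 2.14]{BYZ} and \cite[Section 2.2.1]{Fr} or include the one-line check that $\pi^I(\theta)\circ(\1_{\delta_{I'}(1)},\dots)=\pi^{I''}(\theta)$ follows from associativity of operadic composition together with $\1_2\circ(\1_1,\1_0)=\1_1=\1_2\circ(\1_0,\1_1)$. With these compatibilities in hand the lemma follows with no further computation.
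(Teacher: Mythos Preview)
Your overall strategy matches the paper's, but your argument for $\up{m+1}\subset\up{m}^M$ contains a genuine error: you assert that $\pi^{J}(\mu)\in\up{m+1}(m+1)=0$, and this equality is false. In general $\up{k}(k)\neq 0$; for instance in $\uas$ one has $\dim\up{k}(k)=\gamma_k>0$ for all $k\ge 2$ (Corollary~\ref{yycor1.9}). So the step ``$\pi^J(\mu)=0$'' does not hold, and your chain $\pi^I(\mu)=\pi^{I'}(\pi^J(\mu))=0$ breaks at that point.

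The fix is to drop the intermediate $J$ entirely. By the very definition \eqref{E1.1.5}, $\mu\in\up{m+1}(n)$ means precisely that $\pi^I(\mu)=0$ for every $I\subset[n]$ with $|I|=m$. Hence $\pi^I(\mu)=0\in M$ for all such $I$, so $\mu\in\up{m}^M(n)$ immediately. This is exactly the paper's argument, and it requires no transitivity of restriction operators and no auxiliary $J$. (If you wanted to keep your two-step route, the correct version would be: $\pi^J(\mu)\in\up{m+1}(m+1)$ because $\pi^J$ preserves $\up{m+1}$, and then $\pi^{I'}$ with $|I'|=m$ kills $\up{m+1}(m+1)$ again by \eqref{E1.1.5}; but this is an unnecessary detour.)

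The remaining parts of your proposal---the trivial inclusion $\up{m}^M\subset\up{m}$, the case $M=0$ read straight from \eqref{E1.1.5}, and the case $M=\up{m}(m)$ using that $\pi^I$ sends $\up{m}(n)$ into $\up{m}(m)$---are correct and coincide with the paper's proof.
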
 

\begin{proof} 
By the definition of $\up{m}^M$, we have 
\begin{equation}
\label{E4.2.2}\tag{E4.2.2}
\up{m}^M(n)=\begin{cases}
0, & n<m;\\
M, & n=m;\\
\{\mu\in \up{m}(n)\mid \pi^I(\mu)\in M, 
\forall I\subset [n], |I|=m\}, & n>m.
\end{cases}
\end{equation}
Clearly,  $\up{m}^M \subset \up{m}$. On the other side, 
it is obvious that $\up{m+1}(n)=0\subset \up{m}^M(n)$ for 
$n\le m$. For any $n>m$ and $\theta\in \up{m+1}(n)$, we 
have $\pi^I(\theta)=0$ for any $I\subset [n]$, $|I|=m$. 
Therefore, $\up{m+1}\subset \up{m}^M$. 

If $M=0$, by the definition of truncation ideals, we have 
$\up{m+1}=\up{m}^M$.

If $M=\up{m}(m)$, it is obvious that 
$\up{m}^M(n)=\up{m}(n)$ for all $n\le m$. For any $n>m$ 
and $\theta\in \up{m}(n)$, we have 
$\pi^I(\theta)\in \up{m}(m)$ for all $I\subset [n]$ with 
$|I|=m$ since $\up{m}$ is an ideal of $\ip$. It 
follows that $\up{m}=\up{m}^M$ if $M=\up{m}(m)$. 
\end{proof}

For a general 2-unitary operad $\ip$, one can define 
$\Lambda_{g,p}(\ip)$ to be the set of positive 
numbers $\lambda$ such that $\dim (\ip/\I)(n)
\sim \lambda n^g p^n$ where $\I$ is any operadic 
ideal of $\ip$.

\begin{lem}
\label{yylem4.3}
Let $\ip$ be a 2-unitary operad and $M$ an $\S_m$-submodule 
of $\up{m}(m)$ for some $m\ge 1$. 
\begin{enumerate}
\item[(1)]
Assume that $\up{m}^M$ is an ideal of $\ip$, and let 
$\mathcal{M}=\ip/\up{m}^M$. Then 
\begin{enumerate}
\item[(1a)] 
if $k>m$, $\up{k}_{\mathcal{M}}=0$;
\item[(1b)] 
if $k\le m$, $\up{k}_{\mathcal{M}}(n)\cong\begin{cases}
\up{k}(n), & n<m,\\
\up{k}(m)/M, & n=m,\\
\up{k}(n)/\up{m}^M(n), & n>m.
\end{cases}$
\end{enumerate}
Consequently, $\gkdim(\mathcal{M})=m+1$ if $M$ is a proper 
submodule of $\up{m}(m)$, and $\gkdim(\mathcal{M})=m$ if 
$M=\up{m}(m)$.
\item[(2)]
Suppose $\ip$ is a connected 2-unitary operad with $k$-th 
truncation ideal $\up{k}$. Let $\gamma_k(\ip):=
\dim \up{k}(k)$ and 
$T_k(\ip):=\{\dim M\mid  M 
\ \mbox{is a nonzero proper submodule of the $\Bbbk\S_k$-module  
$\up{k}(k)$}\}$. Then 
$$\Lambda_{k,1}(\ip)=\{ u/k! \mid u=\gamma_k(\ip), {\text{ or }}
u\in T_k(\ip)\}.$$
\end{enumerate}
\end{lem}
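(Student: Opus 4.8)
The plan is to prove part (1) first by directly unwinding Definition \ref{yydef4.1} together with Lemma \ref{yylem1.6}, and then to deduce part (2) from part (1) by counting dimensions. For part (1a), I would argue that if $k>m$ then $\up{k}\subset \up{m+1}\subset \up{m}^M$ by Lemma \ref{yylem4.2}, so $\up{k}_{\mathcal M}=\up{k}/(\up{k}\cap \up{m}^M)=\up{k}/\up{k}=0$ by Lemma \ref{yylem1.6}. For part (1b), fix $k\le m$; Lemma \ref{yylem1.6} gives $\up{k}_{\mathcal M}(n)\cong \up{k}(n)/(\up{k}(n)\cap \up{m}^M(n))$, so the task is to identify the intersection $\up{k}(n)\cap \up{m}^M(n)$ in each of the three ranges of $n$. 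When $n<m$ we have $\up{m}^M(n)=0$ by \eqref{E4.2.2}, so the intersection is $0$ and $\up{k}_{\mathcal M}(n)\cong\up{k}(n)$. When $n=m$ we have $\up{m}^M(m)=M\subset \up{m}(m)\subset \up{k}(m)$ (the last inclusion since $k\le m$), so the intersection is $M$ and the quotient is $\up{k}(m)/M$. When $n>m$ we have $\up{m}^M(n)\subset \up{m}(n)\subset \up{k}(n)$, so the intersection is simply $\up{m}^M(n)$, giving the last case. The GK-dimension statement then follows from Lemma \ref{yylem1.3}: $\up{m}_{\mathcal M}(m)\cong \up{m}(m)/M$, which is nonzero precisely when $M\subsetneq \up{m}(m)$ and zero when $M=\up{m}(m)$, while $\up{k}_{\mathcal M}=0$ for $k>m$ by (1a); hence $\gkdim(\mathcal M)=m+1$ in the proper case and, combined with \eqref{E4.2.1}, $\gkdim(\mathcal M)=m$ when $M=\up{m}(m)$.

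For part (2), the inclusion $\subseteq$ is the one that requires work and the inclusion $\supseteq$ is essentially a construction. I would first establish $\supseteq$: given $u=\gamma_k(\ip)$, the ideal $\up{k+1}$ has $\gkdim(\ip/\up{k+1})=k+1$ with $\gamma_k(\ip/\up{k+1})=\gamma_k(\ip)$ (truncation does not alter the $k$-th layer), so by Lemma \ref{yylem1.5}(3), $\dim(\ip/\up{k+1})(n)=\sum_{j=0}^{k}\gamma_j(\ip)\binom{n}{j}\sim (\gamma_k(\ip)/k!)\,n^k$, putting $\gamma_k(\ip)/k!$ in $\Lambda_{k,1}(\ip)$; and given $u\in T_k(\ip)$, realised by a proper nonzero submodule $M$, connectedness guarantees $\up{k}^M$ is an ideal (Definition \ref{yydef4.1}), and by part (1) the quotient $\mathcal M=\ip/\up{k}^M$ has $\up{j}_{\mathcal M}=0$ for $j>k$ and $\gamma_k(\mathcal M)=\dim(\up{k}(k)/M)=\gamma_k(\ip)-\dim M$; wait—I should instead note directly that $\dim \mathcal M(n)$ for $n$ large equals $\dim(\ip/\up{k})(n)+\dim M\binom{n}{k}+\dim\up{k}^M(n)$-type bookkeeping, more cleanly: by part (1b) with $n>m=k$, the top-degree contribution to $\dim\mathcal M(n)$ beyond that of $\ip/\up{k}$ is governed by $\up{k}(n)/\up{k}^M(n)$, whose dimension I compute below, and its leading term is $(\dim M/k!)n^k$. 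So $\dim M/k!\in\Lambda_{k,1}(\ip)$.

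The crux, then, is computing $\dim\bigl(\up{k}(n)/\up{k}^M(n)\bigr)$ and more generally $\dim\mathcal M(n)$ to leading order, which is where I expect the main obstacle to lie. The tool is the basis theorem, Theorem \ref{yythm1.4} (or Lemma \ref{yylem1.12}(1)) applied to the ideal $\up{k}^M$ of $\ip$: for $n>k$ the layer $\up{k}(n)/\up{k+1}(n)$ has basis indexed by pairs $(\theta_i, I)$ with $\theta_i$ ranging over a basis of $\up{k}(k)$ and $I\subset[n]$ of size $k$, and the sub-layer $\up{k}^M(n)/\up{k+1}(n)$ should correspond to the pairs with $\theta_i\in M$—this is the content of the restriction operator $\pi^I$ landing in $M$ for every $I$. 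Granting this, $\dim\bigl(\up{k}(n)/\up{k}^M(n)\bigr)=\dim(\up{k}(k)/M)\binom{n}{k}=(\gamma_k(\ip)-\dim M)\binom{n}{k}$ for $n>k$, and hence $\dim\mathcal M(n)=\sum_{j<k}\gamma_j(\ip)\binom{n}{j}+(\gamma_k(\ip)-\dim M)\binom{n}{k}$ wait, that has the wrong sign if $M$ is being quotiented away—let me recompute: $\dim\mathcal M(n)=\dim\ip(n)-\dim\up{k}^M(n)$, and $\dim\ip(n)=\sum_{j\le k}\gamma_j(\ip)\binom{n}{j}+(\text{higher }j\text{ terms, all }o(n^k)$ or handled by $\up{k+1}\subset\up{k}^M)$, while $\dim\up{k}^M(n)=\dim M\binom{n}{k}+\dim\up{k+1}(n)$ and $\dim\up{k+1}(n)=o(n^k)$ if $\gkdim\le k+1$ is already forced—more carefully, $\dim\up{k}(n)-\dim\up{k}^M(n)=\dim M\binom nk$ would be the relation were it $\dim\up{k}^M(n)-\dim\up{k+1}(n)=\dim M\binom nk$; in any case the leading term of $\dim\mathcal M(n)$ is $\bigl(\gamma_k(\ip)-(\gamma_k(\ip)-\dim M)\bigr)\!\cdot\!\tfrac1{k!}n^k$ $=\tfrac{\dim M}{k!}n^k$ once the $\up{k+1}$-layer has been correctly absorbed. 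Conversely, any ideal $\I$ with $\dim(\ip/\I)(n)\sim\lambda n^k$ forces $\up{k+1}\subset\I\subsetneq\up{k}$ by Lemma \ref{yylem1.3} and Corollary \ref{yycor1.7}, whence $\I=\up{k}^M$ for $M=\I(k)$ (one checks $\I\supseteq\up{k+1}$ implies $\I$ is determined in each degree $n>k$ by its degree-$k$ part via the $\pi^I$, using Theorem \ref{yythm1.4}), and then the leading coefficient is $\dim M/k!$ with $M$ either all of $\up{k}(k)$ (giving $\gamma_k(\ip)/k!$, but then $\gkdim$ drops unless $\dim M=\gamma_k(\ip)$ with the convention that this is still counted) or a proper nonzero submodule (giving $u/k!$ with $u\in T_k(\ip)$). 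The main difficulty is making the correspondence $\I\leftrightarrow M=\I(k)$ between ideals $\I$ with $\up{k+1}\subset\I\subset\up{k}$ and submodules $M\subset\up{k}(k)$ precise and bijective; this is exactly the kind of statement the basis theorem Theorem \ref{yythm1.4} is designed for, and I would cite it together with the forthcoming Theorem \ref{yythm4.16}(3) rather than reprove it here.
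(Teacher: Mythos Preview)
Your argument for part (1) is correct and is exactly the unwinding of Lemmas \ref{yylem1.6} and \ref{yylem4.2} that the paper has in mind (the paper just cites those two lemmas without detail).

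Part (2), however, has two real problems.

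First, the leading coefficient of $\dim\mathcal M(n)$ is $(\gamma_k(\ip)-\dim M)/k!$, not $\dim M/k!$. You actually obtain the correct formula $\dim\mathcal M(n)=\sum_{j<k}\gamma_j(\ip)\binom{n}{j}+(\gamma_k(\ip)-\dim M)\binom{n}{k}$ midway through, then talk yourself out of it and land on the wrong answer. Your first computation was right: from $0\to \up{k}/\up{k}^M\to \ip/\up{k}^M\to \ip/\up{k}\to 0$ one gets $\dim\mathcal M(n)=\dim(\ip/\up{k})(n)+\dim(\up{k}/\up{k}^M)(n)$, and the second summand is $(\gamma_k(\ip)-\dim M)\binom{n}{k}$. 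The set you produce still happens to equal $\{u/k!\mid u\in T_k(\ip)\}$, but only because $T_k(\ip)$ is palindromic ($u\in T_k(\ip)\Leftrightarrow \gamma_k(\ip)-u\in T_k(\ip)$, via complements in the semisimple module $\up{k}(k)$); you never invoke this, so as written the $\supseteq$ argument does not close.

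Second, and more seriously, your $\subseteq$ argument asserts that $\dim(\ip/\I)(n)\sim\lambda n^k$ forces $\I\subsetneq\up{k}$, hence $\I=\up{k}^M$ with $M=\I(k)$. This is false: Corollary \ref{yycor1.7} gives only $\up{k+1}\subset\I$ and $\up{k}\not\subset\I$; it says nothing about $\I\subset\up{k}$. (Concretely, in $\uas$ the ideal $\T_3+\up{5}$ has $\gkdim(\uas/\I)=5$ but $\I\not\subset\up{4}$.) The paper avoids this entirely: rather than classifying $\I$, it reads off $\lambda$ directly from Lemma \ref{yylem1.5} as $\lambda=\gamma_k(\ip/\I)/k!$, and then Lemma \ref{yylem1.6} gives $\gamma_k(\ip/\I)=\dim\bigl(\up{k}(k)/(\up{k}(k)\cap\I)\bigr)$. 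Setting $M'=\up{k}(k)\cap\I$ (a proper submodule since $\up{k}\not\subset\I$) and using the palindrome property once more finishes $\subseteq$. No classification of $\I$, no appeal to Theorem \ref{yythm4.16}, is needed.
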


\begin{proof} (1) This follows from 
Lemmas \ref{yylem1.5} and \ref{yylem4.2}.

(2) By part (1) and Lemma \ref{yylem1.4}, 
$\Lambda_{k,1}(\ip)\supset \{ u/k! \mid u=\gamma_k(\ip), {\text{ or }}
u\in T_k\}.$

Conversely, if $\dim (\ip/\I)(n)
\sim \lambda n^k 1^n$ for all $n$, $\gkdim \ip/\I=k+1$. 
By Corollary \ref{yycor1.6} and Lemma \ref{yylem1.4},
$\lambda =\dim \up{k}_{\ip/\I}(k)=\dim \up{k}(k)/(\up{k}(k)\cap \I)$
where $\up{k}(k)\cap \I\neq \up{k}(k)$. So $\lambda 
\in \{ u/k! \mid u=\gamma_k(\ip), {\text{ or }}
u\in T_k(\ip)\}$ as desired.
\end{proof}

It is clear that $T_k(\ip):=\{\dim (\up{k}(k)/M)
\mid 0\neq \up{k}(k)/M \neq \up{k}(k)\}$. So $T_{k}(\ip)$ 
is palindrome. 

\begin{thm}	
\label{yythm4.4}
Let $\ip$ be a 2-unitary operad and $M$ an $\S_m$-submodule 
of $\up{m}(m)$ for some $m\ge 1$. Assume that $\up{m}^M$ is 
an ideal of $\ip$. Suppose that $\Theta^k$ is a 
basis for $\up{k}(k)$ for $k\ge m$ and 
$\widehat{\Theta}^m\subset \Theta^m$ is a 
basis for $M$. Denote
\[\widehat{\textbf{B}}_m(n)\colon=\{\1_2\circ 
(\theta, \1_{n-m})\ast c_I\mid \theta\in \widehat{\Theta}^m, 
I\subset [n], |I|=n-m\},\]
for any $n>m$, and 
\[\textbf{B}_k(n)\colon=\{\1_2\circ (\theta, \1_{n-k})
\ast c_I\mid \theta\in \Theta^k, I\subset [n], |I|=n-k\},\]
for all $n\ge k> m$. Then, for $n\geq m$, $\up{m}^M(n)$ has 
a $\Bbbk$-linear basis $\widehat{\textbf{B}}_m(n)\cup 
\bigcup_{m<k\le n}\textbf{B}_k(n)$.
\end{thm}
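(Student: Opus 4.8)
The plan is to leverage the Basis Theorem for ideals in a 2-unitary operad, namely Lemma \ref{yylem1.12}(1), applied to the ideal $\I=\up{m}^M$. To invoke it, I first need to identify, for each $k\ge 1$, the space $\up{m}^M(k)\cap \up{k}_\ip(k)$ and produce a $\Bbbk$-linear basis of it. By Lemma \ref{yylem4.2} we have $\up{m+1}\subset \up{m}^M\subset \up{m}$, so $\up{m}^M(k)=0$ for $k<m$, which forces $\up{m}^M(k)\cap\up{k}(k)=0$ for all $k<m$. For $k=m$ the description \eqref{E4.2.2} gives $\up{m}^M(m)=M$, and since $M\subset\up{m}(m)=\up{m}_\ip(m)$ we get $\up{m}^M(m)\cap\up{m}_\ip(m)=M$, with basis $\widehat{\Theta}^m$. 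For $k>m$ the containment $\up{k}_\ip\subset\up{m+1}_\ip\subset\up{m}^M$ shows $\up{m}^M(k)\cap\up{k}_\ip(k)=\up{k}_\ip(k)$, which has basis $\Theta^k$ by hypothesis.

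Second, I feed these bases into Lemma \ref{yylem1.12}(1). With the notation there, the index $k$ ranges over $1\le k\le n$, but only $k=m$ and $m<k\le n$ contribute nonzero sets $\textbf{B}_{\I,k}(n)$. For $k=m$, the basis chosen for $\I(m)\cap\up{m}_\ip(m)=M$ is $\widehat{\Theta}^m$, so $\textbf{B}_{\I,m}(n)=\{\1_2\circ(\theta,\1_{n-m})\ast c_I\mid \theta\in\widehat{\Theta}^m,\ I\subset[n],\ |I|=n-m\}$, which is exactly $\widehat{\textbf{B}}_m(n)$ as defined in the statement. For each $k$ with $m<k\le n$, the basis chosen for $\I(k)\cap\up{k}_\ip(k)=\up{k}_\ip(k)$ is $\Theta^k$, so $\textbf{B}_{\I,k}(n)=\{\1_2\circ(\theta,\1_{n-k})\ast c_I\mid \theta\in\Theta^k,\ I\subset[n],\ |I|=n-k\}=\textbf{B}_k(n)$. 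Lemma \ref{yylem1.12}(1) then asserts that $\bigcup_{1\le k\le n}\textbf{B}_{\I,k}(n)=\widehat{\textbf{B}}_m(n)\cup\bigcup_{m<k\le n}\textbf{B}_k(n)$ is a $\Bbbk$-linear basis of $\I(n)=\up{m}^M(n)$, which is precisely the claim. The case $n=m$ is immediate since then the union over $m<k\le n$ is empty and $\widehat{\textbf{B}}_m(m)=\widehat{\Theta}^m$ (here $I=\emptyset$, $c_\emptyset=1_m$), a basis of $M=\up{m}^M(m)$.

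I expect no serious obstacle: the whole argument is a bookkeeping translation between the general Basis Theorem and the explicit description of $\up{m}^M$. The one point requiring a little care is the verification that $\up{m}^M(k)\cap\up{k}_\ip(k)$ collapses to $\up{k}_\ip(k)$ for $k>m$ — this uses that truncation ideals are nested, $\up{k}_\ip\subset\up{k-1}_\ip\subset\cdots\subset\up{m+1}_\ip$, together with $\up{m+1}_\ip\subset\up{m}^M$ from Lemma \ref{yylem4.2}. A second minor point is matching conventions for the index set of $I$ ($|I|=n-k$ throughout, so that $\textbf{B}_k(n)$ has $\binom{n}{k}\dim\up{k}(k)$ elements, consistent with Theorem \ref{yythm1.4}). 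Everything else is direct substitution into Lemma \ref{yylem1.12}(1).
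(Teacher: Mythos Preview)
Your proof is correct and is a genuinely different, more streamlined route than the paper's. The paper does not invoke Lemma~\ref{yylem1.12}(1); instead it argues by a dimension count: it passes to the quotient $\mathcal{M}=\ip/\up{m}^M$, uses Lemma~\ref{yylem4.3} and Lemma~\ref{yylem1.5}(3) to show $\dim\up{m}^M(n)=\dim\up{m+1}(n)+(\dim M)\binom{n}{m}$, then checks that the proposed set is linearly independent (as a subset of the basis of $\up{m}(n)$ furnished by Theorem~\ref{yythm1.4}) and lies in $\up{m}^M(n)$, and concludes by cardinality. Your approach instead identifies $\up{m}^M(k)\cap\up{k}(k)$ directly for each $k$ and feeds this into the general Basis Theorem for ideals. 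This is cleaner and avoids the detour through the quotient and the generating-series machinery; on the other hand, the paper's argument is self-contained relative to Theorem~\ref{yythm1.4} and does not rely on Lemma~\ref{yylem1.12}(1), whose own proof already encapsulates essentially the same dimension-counting idea. Either way the substance is the same; you have simply recognized that the bookkeeping was already packaged in Lemma~\ref{yylem1.12}(1).
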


\begin{proof} If $M=\up{m}(m)$, then the assertion follows
from Theorem \ref{yythm1.3} and Lemma \ref{yylem4.2}. So 
we assume that $M$ is a proper $\S_m$-submodule of 
$\up{m}(m)$. Since $\up{m}^M(m)=M$, the assertion holds for
$n=m$. It remains to consider the case when $n>m$. 

Denote $\mathcal{M}:=\ip/\up{m}^M$. By Lemma \ref{yylem4.3}, 
we know $\gkdim \mathcal{M}=m+1$ since $M$ is a proper 
submodule of $\up{m}(m)$. Therefore, by Lemma \ref{yylem1.4}(3), 
we get
\begin{align*}
		\dim \mathcal{M}(n)
		=&\sum_{i=0}^m \gamma_\mathcal{M}(i){n\choose i}\\
		=&\sum_{i=0}^{m-1} \gamma_\ip(i){n\choose i}
		+(\gamma_\ip(m)-\dim M){n\choose m}\\
		=& \dim \frac{\ip}{\up{m+1}}(n)-(\dim M){n\choose m},
\end{align*}
since 
$$\gamma_\mathcal{M}(i)=\dim \up{i}_{\mathcal{M}}(i)
=\dim \up{i}(i)=\gamma_\ip(i)$$ 
for each $0\le i\le m-1$, and 
$$\gamma_\mathcal{M}(m)=\dim \up{m}_{\mathcal{M}}(m)
=\dim (\up{m}(m)/M)=\gamma_\ip(m)-\dim M$$ 
by Lemma \ref{yylem4.3}(1). It follows that 
\[\dim \up{m}^M(n)=\dim \up{m+1}(n)+(\dim M){n\choose m}\]
for each $n>m$. Clearly, $\widehat{\textbf{B}}_m(n)$ is a 
proper subset of $\textbf{B}_m(n)$ by 
$\widehat{\Theta}^m\subsetneq \Theta^m$. By Theorem 
\ref{yythm1.3}, we know $\bigcup_{m<k\le n}\textbf{B}_k(n)$ and 
$\bigcup_{m\le k\le n}\textbf{B}_k(n)$ are $\Bbbk$-linear bases 
for $\up{m+1}(n)$ and $\up{m}(n)$, respectively. Therefore, 
we have that  
$\widehat{\textbf{B}}_m(n)\cup \bigcup_{m<k\le n}\textbf{B}_k(n)$ 
are linearly independent. By definition, 
$\widehat{\textbf{B}}_m(n)\subset \up{m}^M(n)$ and 
$|\widehat{\textbf{B}}_m(n)|=(\dim M){n\choose m}$. From 
$\up{m+1}(n)\subset \up{m}^M(n)$, it follows that 
$\widehat{\textbf{B}}_m(n)\cup \bigcup_{m<k\le n}\textbf{B}_k(n)$ 
is a $\Bbbk$-linear basis for $\up{m}^M(n)$.
\end{proof}

Here is another way of describing generalized truncation ideals.

\begin{cor}	
\label{yycor4.5}
Let $\ip$ be a connected 2-unitary operad. Let $\I$ be an ideal 
of $\ip$ and $m\geq 1$.
\begin{enumerate}
\item[(1)]
If $\up{m+1}\subset \I\subsetneq \up{m}$, 
then $\I=\up{m}^M=\langle M\rangle +\up{m+1}$ 
for some $\S_m$-submodule $M\subsetneq \up{m}(m)$.
\item[(2)]
If $\gkdim (\ip/\I)=m+1$ and $\mdeg(\I)=m$, then 
$\I=\up{m}^M=\langle M\rangle +\up{m+1}$ 
for some $\S_m$-submodule $0\neq M\subsetneq \up{m}(m)$.
\end{enumerate}
\end{cor}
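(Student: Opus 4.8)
The plan is to deduce (1) directly from Theorem \ref{yythm4.4} together with the characterization of when $\up{m}^M$ is an ideal (which is automatic here since $\ip$ is connected), and then to obtain (2) as an easy consequence of (1) via the truncation-ideal description of GK-dimension. For part (1), suppose $\up{m+1}\subset \I\subsetneq \up{m}$. First I would set $M:=\I(m)=\I\cap\up{m}(m)$, which is an $\S_m$-submodule of $\up{m}(m)$. Since $\I\subsetneq\up{m}$ while $\up{m+1}\subset\I$, Lemma \ref{yylem4.2} already forces $M$ to be a proper submodule of $\up{m}(m)$ (if $M=\up{m}(m)$ we would get $\I\supset\up{m}$, contradiction — here I would invoke the equality $\up{m}^{\up{m}(m)}=\up{m}$ and the fact that $\up{m+1}\subset\I$ to climb from degree $m$ up to all degrees). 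Because $\ip$ is connected, Definition \ref{yydef4.1} gives that $\up{m}^M$ is an ideal of $\ip$.

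The heart of part (1) is the containment $\I\subseteq\up{m}^M$ (the reverse containment $\up{m}^M\subseteq\I$ being the more routine direction). For $\I\subseteq\up{m}^M$: take $\mu\in\I(n)$; since $\I\subset\up{m}$ we have $\mu\in\up{m}(n)$, and for any $I\subset[n]$ with $|I|=m$ the restriction $\pi^I(\mu)$ lies in $\I(m)\cap\up{m}(m)=M$ because $\I$ is an ideal (restriction operators are built from operadic composition with $0$- and $1$-units, so they preserve $\I$). Hence $\mu\in\up{m}^M(n)$ by \eqref{E4.2.2}. For $\up{m}^M\subseteq\I$: by Theorem \ref{yythm4.4}, $\up{m}^M(n)$ has the basis $\widehat{\textbf{B}}_m(n)\cup\bigcup_{m<k\le n}\textbf{B}_k(n)$; the second part spans $\up{m+1}(n)\subseteq\I(n)$, and each element of $\widehat{\textbf{B}}_m(n)$ has the form $\1_2\circ(\theta,\1_{n-m})\ast c_I$ with $\theta\in M=\I(m)$, hence lies in $\I(n)$ since $\I$ is an ideal stable under the operations $\1_2\circ(-,\1_{n-m})$ and the $\S_n$-action. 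This gives $\I=\up{m}^M$. Finally, $\up{m}^M=\langle M\rangle+\up{m+1}$: the inclusion $\supseteq$ is clear since $M\subset\up{m}^M$ and $\up{m+1}\subset\up{m}^M$ (Lemma \ref{yylem4.2}); for $\subseteq$, by Theorem \ref{yythm3.7}(2) one has $\langle M\rangle(k)=M$ for $k=m$ and $\langle M\rangle(n)=0$ for $n<m$, while for $n>m$ the basis description of Theorem \ref{yythm4.4} shows $\up{m}^M(n)$ is spanned by $\langle M\rangle(n)+\up{m+1}(n)$ (the elements $\widehat{\textbf{B}}_m(n)$ are precisely of the form appearing in the generating set $X_n$ of $\langle M\rangle$ in Theorem \ref{yythm3.7}(1), built from $\iota$- and $\Delta$-operators applied to elements of $M$).

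For part (2): assume $\gkdim(\ip/\I)=m+1$ and $\mdeg(\I)=m$. By Corollary \ref{yycor1.7}, $\gkdim(\ip/\I)=m+1$ means $\up{m}_\ip\not\subset\I$ and $\up{m+1}_\ip\subset\I$; in particular $\I\subsetneq\up{m}$ is strict while $\up{m+1}\subset\I$. Also $\mdeg(\I)=m$ together with Lemma \ref{yylem1.14} gives $\I\subset\up{m}$ and $\I(m)\cap\up{m}(m)=\I(m)\neq 0$, so the submodule $M=\I(m)$ produced in part (1) is nonzero. Applying part (1) now yields $\I=\up{m}^M=\langle M\rangle+\up{m+1}$ with $0\neq M\subsetneq\up{m}(m)$, as required.

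The main obstacle I anticipate is pinning down the two nontrivial containments in part (1) at the level of a spanning set rather than case-by-case on operad elements — specifically, matching the explicit basis $\widehat{\textbf{B}}_m(n)$ from Theorem \ref{yythm4.4} with the generating set $X_n$ from Theorem \ref{yythm3.7}(1) to get $\up{m}^M=\langle M\rangle+\up{m+1}$ cleanly, and verifying that restriction operators $\pi^I$ send $\I$ into $\I$ (this is where stability of an operadic ideal under composition with the $0$-unit is used). Everything else is bookkeeping with the results already established, and part (2) is then immediate.
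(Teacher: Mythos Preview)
Your proof is correct, but it takes a somewhat different route from the paper's. The paper introduces the intermediate ideal $\J:=\langle M\rangle+\up{m+1}$ and then appeals to the Basis Theorem (Lemma~\ref{yylem1.12}(1)) in one stroke: once one checks that $\I\cap\up{k}(k)=\J\cap\up{k}(k)$ for all $k$ (trivial for $k<m$ and $k>m$ since both contain $\up{m+1}$ and are contained in $\up{m}$, and equal to $M$ for $k=m$), the Basis Theorem forces $\I=\J$, and the same argument gives $\up{m}^M=\J$. You instead argue each containment directly: $\I\subseteq\up{m}^M$ via the defining condition \eqref{E4.2.2} and ideal-stability under $\pi^I$, and $\up{m}^M\subseteq\I$ and $\up{m}^M=\langle M\rangle+\up{m+1}$ via the explicit basis of Theorem~\ref{yythm4.4}. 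Your approach is more hands-on and makes the role of the restriction operators explicit; the paper's is shorter and more uniform, since the single Basis-Theorem comparison handles both equalities at once without ever unpacking what $\up{m}^M(n)$ looks like. Part~(2) is handled identically in both.
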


\begin{proof} (1) Since $\ip$ is connected, $\up{m}^M$ is an 
ideal for any $\S_{m}$-submodule $M\subset \up{m}(m)$. 
Now let $M=\I\cap \up{m}(m)$ which is an $\S_{m}$-submodule 
of $\up{m}(m)$. Let $\J$ be the ideal
$\langle M\rangle+\up{m+1}$. It is clear that 
$\lr{M}\subset \J\subset \I$ and 
$\lr{M}\subset \J\subset \up{m}^M$. It remains to show
that $\J=\I$ and that $\J=\up{m}^M$. 
Since $\I\cap \up{m}(m)=M$ by definition, we have $\J\cap 
\up{m}(m)=M=\I\cap \up{m}(m)$. As a consequence, 
$\I\cap \up{k}(k)=\J\cap \up{k}(k)$ for all $k\geq 1$. 
It follows from the Basis Theorem [Lemma 
\ref{yylem1.10}(1)], $\I=\J$. A similar argument shows that
$\up{m}^M=\J$. Since $\I\subsetneq \up{m}$, $M\subsetneq 
\up{m}(m)$. The assertion follows.

(2) This follows from part (1), Lemma \ref{yylem1.12},
and Corollary \ref{yycor1.6}.
\end{proof}

The next result is a generalization of Proposition \ref{yypro1.9}.

\begin{cor}	
\label{yycor4.6}
Let $\up{m}$ be the $m$-th truncation ideal of $\uas$ and $M$ 
a proper $\S_m$-submodule of $\up{m}(m)$ for some 
$m\ge 2$. Denote $\mathcal{M}=\uas/\up{m}^M$ and let
$\gamma_k$ be given in \eqref{E1.7.1}. Then 
\begin{enumerate}
\item[(1)] 
$G_{\mathcal{M}}(t)=\sum\limits_{k=0}^{m-1} \gamma_k
\dfrac{t^k}{(1-t)^{k+1}}+(\gamma_m-\dim M)
\dfrac{t^m}{(1-t)^{m+1}}$ where $\gamma_m-\dim M>0$.
\item[(2)] 
$\dim \mathcal{M}(n)=\sum\limits_{k=0}^{m-1} 
\gamma_k {n\choose k}+(\gamma_m-\dim M){n\choose m}$.
\item[(3)]
$\gkdim \mathcal{M}=m+1$.
\end{enumerate}
\end{cor}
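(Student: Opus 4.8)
The plan is to read off all three parts from the structure theory of truncation ideals in a $2$-unitary operad, in particular from Lemma \ref{yylem1.5} applied to $\mathcal{M}$ together with the description of $\up{k}_{\mathcal{M}}$ obtained in Lemma \ref{yylem4.3}. First I would record the standing remarks: $\uas$ is a connected $2$-unitary operad, so by the criterion recalled after Definition \ref{yydef4.1} the subobject $\up{m}^M$ is automatically an ideal; hence $\mathcal{M}=\uas/\up{m}^M$ is again a $2$-unitary operad, and it is locally finite since $\dim\uas(n)=n!<\infty$. Thus Lemmas \ref{yylem1.5} and \ref{yylem4.3} both apply to $\mathcal{M}$.

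The one real computation is the sequence $\gamma_{\mathcal{M}}(k):=\dim\up{k}_{\mathcal{M}}(k)$. By Lemma \ref{yylem4.3}(1a) we have $\up{k}_{\mathcal{M}}=0$ for $k>m$, so $\gamma_{\mathcal{M}}(k)=0$ there. For $0\le k\le m-1$, evaluating the isomorphism of Lemma \ref{yylem4.3}(1b) at the argument $n=k<m$ gives $\up{k}_{\mathcal{M}}(k)\cong\up{k}(k)$, so $\gamma_{\mathcal{M}}(k)=\dim\up{k}(k)=\gamma_k$ by Corollary \ref{yycor1.9}. For $k=m$, the same lemma gives $\up{m}_{\mathcal{M}}(m)\cong\up{m}(m)/M$, whence $\gamma_{\mathcal{M}}(m)=\gamma_m-\dim M$; since $M$ is a proper submodule of $\up{m}(m)$ this is strictly positive. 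In particular $\max\{k\mid\gamma_{\mathcal{M}}(k)\neq 0\}=m$, so Lemma \ref{yylem1.5}(2) (equivalently the last sentence of Lemma \ref{yylem4.3}) yields $\gkdim\mathcal{M}=m+1$, which is part (3).

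Part (1) then follows by substituting this $\gamma_{\mathcal{M}}$ into the generating-series formula of Lemma \ref{yylem1.5}(1): the sum $\sum_{k\ge 0}\gamma_{\mathcal{M}}(k)\frac{t^k}{(1-t)^{k+1}}$ truncates to $\sum_{k=0}^{m-1}\gamma_k\frac{t^k}{(1-t)^{k+1}}+(\gamma_m-\dim M)\frac{t^m}{(1-t)^{m+1}}$, exactly the claimed expression (with $\gamma_m-\dim M>0$ as just noted). Part (2) follows from Lemma \ref{yylem1.5}(3) with $d=\gkdim\mathcal{M}=m+1$, giving $\dim\mathcal{M}(n)=\sum_{k=0}^{m}\gamma_{\mathcal{M}}(k){n\choose k}=\sum_{k=0}^{m-1}\gamma_k{n\choose k}+(\gamma_m-\dim M){n\choose m}$. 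As a cross-check one could instead combine Theorem \ref{yythm4.4} with Theorem \ref{yythm1.4} to compute $\dim\up{m}^M(n)$ and subtract from $\dim\uas(n)=n!$.

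I do not expect any genuine obstacle: the statement is essentially a bookkeeping corollary of Lemmas \ref{yylem1.5} and \ref{yylem4.3} and Corollary \ref{yycor1.9}. The only points needing a moment's attention are (i) applying the isomorphisms of Lemma \ref{yylem4.3}(1b) at the right argument $n=k$ within each range of $k$, and (ii) keeping track that $\gamma_m-\dim M>0$ precisely because $M\subsetneq\up{m}(m)$ — this is what forces the top term of $G_{\mathcal{M}}(t)$ to be nonzero and pins $\gkdim\mathcal{M}$ at $m+1$ rather than $m$.
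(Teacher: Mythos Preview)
Your proposal is correct and follows exactly the approach implicit in the paper: the corollary is stated without proof because the computation of $\gamma_{\mathcal{M}}(k)$ via Lemma~\ref{yylem4.3} (carried out explicitly inside the proof of Theorem~\ref{yythm4.4}) together with Lemma~\ref{yylem1.5} immediately yields all three parts. Your write-up is essentially that argument spelled out.
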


\begin{cor}
\label{yycor4.7}
Let $\ip$ be a connected 2-unitary operad and $m\geq 2$.
\begin{enumerate}
\item[(1)]
For every maximal proper submodule $M\subsetneq \up{m}(m)$,
there is an ideal $\I_{M}$ that is maximal with respect 
to $\gkdim$ {\rm{(}}with $\gkdim (\ip/\I_{M})=m+1${\rm{)}}. 
Further, if $M\neq M'$, then $\I_{M}\neq \I_{M'}$.
\item[(2)]
If $\up{m}(m)$ has infinitely many proper maximal 
submodules, or equivalently, 
$\chi_{\up{m}(m)}=\sum_{\la\vdash m}l_\la \chi_\la$  with some 
$l_\la\geq 2$, then there are infinitely many ideals 
$\I$ of $\ip$ that are maximal with respect to
$\gkdim$ {\rm{(}}with $\gkdim (\ip/\I)=m+1${\rm{)}}.
\item[(3)]
Suppose $\ip=\uas$. For $m\ge 5$, there are infinitely 
many maximal ideals $\I$ with respect to $\gkdim$ 
{\rm{(}}with $\gkdim (\uas/\I)=m+1${\rm{)}}.
\end{enumerate}
\end{cor}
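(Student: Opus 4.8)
The plan is to reduce parts (2) and (3) to part (1), and to prove part (1) by producing $\I_M$ as a Zorn‑maximal enlargement of the generalized truncation ideal $\up{m}^M$. First I would record the basic facts. Since $\ip$ is connected, $\up{m}^M$ is an ideal [Definition \ref{yydef4.1}]; by Lemma \ref{yylem4.2} one has $\up{m+1}\subseteq\up{m}^M\subseteq\up{m}$ and $\up{m}^M\cap\up{m}(m)=M$; and by Lemma \ref{yylem4.3}(1), $\gkdim(\ip/\up{m}^M)=m+1$ because $M$ is a proper submodule. I would also use the identity $\up{m}=\langle\up{m}(m)\rangle+\up{m+1}$, which is immediate from the basis of $\up{m}(n)$ in Theorem \ref{yythm1.4} (the block $\mathbf{B}_m(n)$ lies in $\langle\up{m}(m)\rangle$, while the remaining blocks span $\up{m+1}(n)$).

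For part (1), let $\mathcal{S}$ be the set of ideals $\J$ with $\up{m}^M\subseteq\J$ and $\gkdim(\ip/\J)=m+1$; it is nonempty. Given a chain $(\J_\alpha)$ in $\mathcal{S}$ with union $U$, we have $\up{m+1}\subseteq U$, so by Corollary \ref{yycor1.7} either $\gkdim(\ip/U)=m+1$ or else $\up{m}\subseteq U$; the latter is impossible, since $\up{m}$ is finitely generated (by a single element when $\ip=\uas$, by \cite[Theorem 0.3(2)]{BXYZZ}, and by finitely many elements whenever $\ip$ is locally finite of finite Gelfand--Kirillov dimension), so $\up{m}\subseteq U=\bigcup_\alpha\J_\alpha$ would force $\up{m}\subseteq\J_\alpha$ for some $\alpha$, contradicting $\gkdim(\ip/\J_\alpha)=m+1$. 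Hence $U\in\mathcal{S}$, and Zorn's lemma yields a maximal $\I_M\in\mathcal{S}$, which is then maximal with respect to $\gkdim(\ip/\I_M)=m+1$ in the sense of Definition \ref{yydef2.4}. Finally $M=\up{m}^M\cap\up{m}(m)\subseteq\I_M\cap\up{m}(m)$, and $\I_M\cap\up{m}(m)\neq\up{m}(m)$: otherwise $\up{m}(m)\subseteq\I_M$, hence $\up{m}=\langle\up{m}(m)\rangle+\up{m+1}\subseteq\I_M$, hence $\gkdim(\ip/\I_M)\le m$ by Corollary \ref{yycor1.7}, a contradiction. Since $M$ is a maximal proper submodule this forces $\I_M\cap\up{m}(m)=M$, so the assignment $M\mapsto\I_M$ is injective; this is part (1).

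Part (2) is then immediate: distinct maximal proper submodules $M$ give distinct $\I_M$, so infinitely many of the former yield infinitely many of the latter. The stated equivalence is the standard fact that a semisimple $\Bbbk\S_m$‑module has finitely many maximal submodules exactly when it is multiplicity‑free: if $\chi_{\up{m}(m)}$ is multiplicity‑free its submodule lattice is a finite Boolean lattice, whereas if some irreducible character $\chi_i$ occurs with multiplicity $\ge 2$ then, the irreducible $\Bbbk\S_m$‑modules being absolutely irreducible and $\Bbbk$ infinite, the corresponding isotypic summand already carries infinitely many submodules of codimension $\chi_i(1)$, each extending to a maximal proper submodule of $\up{m}(m)$. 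For part (3), with $\ip=\uas$ and $m\ge5$, it suffices by part (2) to show $\chi_{\up{m}(m)}$ is not multiplicity‑free; I would argue by dimensions. A multiplicity‑free $\Bbbk\S_m$‑module has dimension at most $\sum_{\la\vdash m}\chi_\la(1)$, which equals the number $I_m$ of involutions in $\S_m$; the recursion $I_m=I_{m-1}+(m-1)I_{m-2}$, the base values $I_5=26$, $I_6=76$, and the inequality $2(m-1)!/3<m!/3$ for $m\ge3$ give $I_m<m!/3$ for all $m\ge5$. On the other hand $\dim\up{m}(m)=\gamma_m\ge m!/3+1$ for $m\ge5$ by Lemma \ref{yylem1.11}, so $\up{m}(m)$ is not multiplicity‑free (in agreement with the explicit $\chi_{\up{5}(5)}$ in \eqref{E3.6.1} and $\chi_{\up{6}(6)}$ in Proposition \ref{yypro3.6}), and part (3) follows.

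The main obstacle is that $\up{m}^M$ itself is in general \emph{not} maximal with respect to $\gkdim(\ip/\cdot)=m+1$, so the enlargement step is genuinely necessary. For instance, with $\ip=\uas$, $m=4$, and $M$ the maximal submodule of $\up{4}(4)$ obtained by deleting the sign constituent $\chi_{(1^4)}$, the ideal $\I_E$ of the Grassmann algebra satisfies $\I_E\cap\up{4}(4)\subseteq M$ (the standard polynomial $s_4$ is not an identity of $E$), so $\I_E+\up{4}^M$ strictly contains $\up{4}^M$, still meets $\up{4}(4)$ in $M$, and contains $\up{5}$, hence has $\gkdim(\uas/(\I_E+\up{4}^M))=5$. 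Thus the only delicate point in part (1) is ensuring that the Gelfand--Kirillov dimension is preserved along unions of chains, which is exactly where finite generation of $\up{m}$ is used — automatic for $\uas$ and its quotients, the case relevant to part (3).
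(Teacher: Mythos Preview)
Your argument is essentially correct and close in spirit to the paper's, but there is one soft spot in part~(1) and a genuinely different route in part~(3).

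\textbf{Part (1).} The paper avoids Zorn entirely: since $\up{m+1}\subseteq\up{m}^M$, the quotient $\ip/\up{m}^M$ has finite GK-dimension and is therefore both artinian and noetherian by \cite[Theorem 0.3]{BYZ}; noetherianity immediately gives a maximal element of your poset $\mathcal{S}$. Your Zorn argument is fine in outline, but your stated reason why $\up{m}\subseteq U$ forces $\up{m}\subseteq\J_\alpha$ is not quite justified: you claim $\up{m}$ is finitely generated ``whenever $\ip$ is locally finite of finite Gelfand--Kirillov dimension'', yet the corollary is for an arbitrary connected 2-unitary operad (and $\uas$ itself has infinite GK-dimension). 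The fix is already in your hands: every $\J_\alpha$ contains $\up{m+1}$, and you recorded $\up{m}=\langle\up{m}(m)\rangle+\up{m+1}$, so $\up{m}\subseteq U$ is equivalent to $\up{m}(m)\subseteq U(m)=\bigcup_\alpha\J_\alpha(m)$; since $\up{m}(m)$ is finite-dimensional (local finiteness is implicit whenever GK-dimension is discussed), this lands in some $\J_\alpha(m)$. The injectivity step $\I_M\cap\up{m}(m)=M$ is identical to the paper's.

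\textbf{Part (3).} Here the two proofs diverge. The paper uses Proposition~\ref{yypro3.6} to exclude $\chi_{(m)}$ from $\chi_{\up{m}(m)}$, then bounds $\sum_{i\ge 3} d_i$ via $m!=\sum_i d_i^2\ge 2+3\sum_{i\ge 3} d_i$ (using that every nonlinear irreducible of $\S_m$ has degree $\ge 3$ for $m\ge 5$), obtaining $\sum_{i\ge 2} d_i\le (m!+1)/3<\gamma_m$ from Lemma~\ref{yylem1.11}. Your route---bounding a multiplicity-free module by the involution number $I_m=\sum_\lambda\chi_\lambda(1)$ and checking $I_m<m!/3\le\gamma_m-1$ via the recursion $I_m=I_{m-1}+(m-1)I_{m-2}$---is a clean alternative that avoids appealing to Proposition~\ref{yypro3.6} or to the minimal-degree fact about $\S_m$. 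Both yield the same conclusion; yours is arguably more self-contained.
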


\begin{proof}
(1) Since $\ip$ is connected, $\up{m}^{M}$ is an ideal
of $\ip$ and $\gkdim \ip/\up{m}^{M}=m+1$. Since 
$\up{m+1}\subset \up{m}^M$, ${\mathcal M}:
=\ip/\up{m}^M$ is both artinian and noetherian 
\cite[Theorem 0.3]{BYZ}. So there is (at least) one 
ideal, denoted by $\I_{M}$, that is maximal 
with respect to to $\gkdim$ {\rm{(}}with 
$\gkdim (\ip/\I_{M})=m+1${\rm{)}}. Since $\up{m}
\not\subset \I_{M}$ [Corollary \ref{yycor1.6}], $\I_{M}
\cap \up{m}(m)=M$. Hence $\I_{M}\neq \I_{M'}$ 
if $M\neq M'$. 

(2) This follows from part (1) and the hypothesis that 
$\up{m}(M)$ has infinitely many proper maximal submodules. 

(3) 
Suppose that $\chi_{\up{m}(m)}=\sum_{\la\vdash m}l_\la \chi_\la$ 
is the decomposition of the character $\chi_{\up{n}(n)}$ 
as an $\S_n$-representation, where $\chi_\la$ is the 
irreducible character corresponding to the partition $\la$ 
of $n$. By \cite[Proposition 4.7]{JSV}, $l_{(m-2, 2)}=m-3\ge 2$.
It follows that $\up{m}(m)$ has infinitely many proper 
maximal submodules. The assertion follows from part (2). 
\end{proof}

An equivalent result of Corollary \ref{yycor4.7}(3) on minimal 
varieties were obtained in \cite{GMZ}. 

The next corollary also follows from Lemma \ref{yylem4.3}.

\begin{cor}
\label{yycor4.8}
Suppose that $M$ is a proper submodule of the right 
$\Bbbk\S_m$-module $\up{m}(m)\subset \uas(m)$ for $m\geq 2$. 
If $\up{m}^M$ is maximal with respect to $\gkdim$
{\rm{(}}with $\gkdim \uas/\up{m}^M=m+1${\rm{)}}, 
then $M$ is a maximal submodule of $\up{m}(m)$. 
\end{cor}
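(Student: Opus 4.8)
The plan is to argue by contraposition: I will show that if $M$ is \emph{not} a maximal $\Bbbk\S_m$-submodule of $\up{m}(m)$, then $\up{m}^M$ is \emph{not} maximal with respect to $\gkdim(\uas/\up{m}^M)=m+1$, which is exactly the content of the corollary.

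First I would record the standing facts that make this work. The operad $\uas$ is connected, i.e. $\uas(1)=\Bbbk\1_1$, so by the remark following Definition \ref{yydef4.1} the $\S$-submodule $\up{m}^{N}$ is an operadic ideal of $\uas$ for \emph{every} $\Bbbk\S_m$-submodule $N\subseteq\up{m}(m)$; in particular $\up{m}^M$ is an ideal, and since $M$ is a proper submodule, Lemma \ref{yylem4.3}(1) gives $\gkdim(\uas/\up{m}^M)=m+1$, consistent with the hypothesis. Now suppose $M$ is not maximal and pick a $\Bbbk\S_m$-submodule $M'$ with $M\subsetneq M'\subsetneq\up{m}(m)$. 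From the explicit description \eqref{E4.2.2} of $\up{m}^{N}(n)$, if $\pi^I(\mu)\in M$ for all $I\subset[n]$ with $|I|=m$ then a fortiori $\pi^I(\mu)\in M'$, so $\up{m}^M(n)\subseteq\up{m}^{M'}(n)$ for every $n$; and this inclusion is already strict in degree $m$, where $\up{m}^M(m)=M\subsetneq M'=\up{m}^{M'}(m)$. Hence $\up{m}^{M'}$ is an ideal of $\uas$ strictly containing $\up{m}^M$, and since $M'$ is still proper in $\up{m}(m)$, Lemma \ref{yylem4.3}(1) yields $\gkdim(\uas/\up{m}^{M'})=m+1$ as well. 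This contradicts the assumed maximality of $\up{m}^M$ with respect to $\gkdim(\uas/\up{m}^M)=m+1$ in the sense of Definition \ref{yydef2.4}. Therefore $M$ must be a maximal submodule of $\up{m}(m)$.

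There is no genuine obstacle here: the corollary is essentially a one-line consequence of Lemma \ref{yylem4.3}(1) together with connectedness of $\uas$. The only point needing a moment's care is verifying that $M\subsetneq M'$ forces $\up{m}^M\subsetneq\up{m}^{M'}$ as ideals, which is immediate from comparing their degree-$m$ components. (The degenerate case $M=0$, where $\up{m}^0=\up{m+1}$, is covered automatically; the conclusion then asserts that $\up{m}(m)$ is irreducible whenever $\up{m+1}$ is maximal with respect to $\gkdim=m+1$.)
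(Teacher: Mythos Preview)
Your proof is correct and follows exactly the route the paper intends: the paper simply remarks that the corollary ``also follows from Lemma \ref{yylem4.3}'' and gives no further argument, and your contrapositive argument via a strictly larger proper submodule $M'$ and Lemma \ref{yylem4.3}(1) is precisely the intended one-line justification spelled out in detail.
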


\subsection{The generators of $\up{m}^M$}
\label{yysec4.3}
In this subsection, we study the generators of the 
ideal $\up{m}^M$ in $\uas$. 

\begin{prop}
\label{yypro4.9}
Let $\ip=\uas$ and let $M\neq 0$ be a proper submodule of 
$\up{m}(m)$ for some $m\ge 2$. 
\begin{enumerate}
\item[(1)]
Then $\up{m}^M=\langle \up{m}^M(m+1)\rangle$ if 
$m$ is odd, and $\up{m}^M=\langle \up{m}^M(m+2)\rangle$ if 
$m$ is even. Equivalently,
\begin{enumerate}
\item[(1a)] $m\le \gd(\up{m}^M)\le m+1$, if $m$ is odd;
\item[(1b)] $m\le \gd(\up{m}^M)\le m+2$, if $m$ is even.
\end{enumerate}
\item[(2)]
Suppose that $\zeta$ is a 
generator of $M$ as a right $\Bbbk\S_m$-module, and $\beta_{i}$ 
is a generator of the right $\Bbbk\S_{i}$-module $\up{i}(i)$ for 
$i=m+1, m+2$.
\begin{enumerate}
\item[(2a)] 
If $\gd(\up{m}^M)=m$, then $\up{m}^M=\lr{\zeta}$;
\item[(2b)] 
If $\gd(\up{m}^M)=m+1$, then $\up{m}^M=\lr{1_2\circ 
(\zeta, 1_1)+\beta_{m+1}}$; and 
\item[(2c)] 
If $\gd(\up{m}^M)=m+2$, then $\up{m}^M=\lr{1_2\circ 
(\zeta, 1_2)+1_2\circ (\beta_{m+1}, 1_1)+\beta_{m+2}}$. 
\end{enumerate}
\end{enumerate}
\end{prop}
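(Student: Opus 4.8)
The plan is to deduce everything from the Basis Theorem for ideals (Lemma~\ref{yylem1.12}) together with the computation of the generating degrees of the truncation ideals (Theorem~\ref{yythm3.14}). Write $\I=\up{m}^M$; this is a proper ideal of $\uas$ since $\I\subset\up{m}\subsetneq\uas$, and Lemma~\ref{yylem4.2} gives $\up{m+1}\subset\I\subset\up{m}$. As $M\neq 0$ while $\up{m+1}(m)=0$, the ideal $\I$ is not contained in $\up{m+1}$, so $\mdeg(\I)=m$ by Lemma~\ref{yylem1.14}; since clearly $\gd(\J)\ge\mdeg(\J)$ for every nonzero proper ideal $\J$, this already yields the lower bounds $\gd(\I)\ge m$ in (1a) and (1b).

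For the upper bounds in part (1), I would pass to $\iq=\uas/\up{m+1}$. As $m\ge 2$, Lemma~\ref{yylem1.8} gives $\gkdim\iq=m+1$, hence $\up{m+1}_{\iq}=0$ by Lemma~\ref{yylem1.6}; Lemma~\ref{yylem1.12}(3) then shows that every ideal of $\iq$, and in particular $\I/\up{m+1}$, is generated both by its degree-$(m+1)$ part and by its degree-$(m+2)$ part. On the other hand, Theorem~\ref{yythm3.14} says that $\up{m+1}$ is generated by $\up{m+1}(m+1)$ when $m+1$ is even (i.e.\ $m$ odd), and by $\up{m+1}(m+2)$ when $m+1$ is odd (i.e.\ $m$ even). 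Feeding these two facts into Lemma~\ref{yylem1.12}(4), applied to the pair $\up{m+1}\subset\I$ in degree $m+1$ (resp.\ $m+2$), shows that $\I$ is generated by $\I(m+1)$ if $m$ is odd and by $\I(m+2)$ if $m$ is even. This proves the first assertion of (1), and together with $\gd(\I)\ge m$ and Lemma~\ref{yylem3.8} it gives (1a) and (1b).

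For part (2) I would apply Lemma~\ref{yylem1.12}(2) to $\I=\up{m}^M$, the point being to identify the $\Bbbk\S_k$-modules $\I(k)\cap\up{k}(k)$ for $k\le m+2$: these vanish for $k<m$ (since $\I(k)=0$); they equal $M$ for $k=m$ (since $M\subset\up{m}(m)$); and they equal all of $\up{k}(k)$ for $k=m+1$ and $k=m+2$, because $\up{m+1}\subset\I$ forces $\up{k}(k)\subset\up{m+1}(k)\subset\I(k)$. So one may take module generators $\zeta_m=\zeta$, $\zeta_{m+1}=\beta_{m+1}$, $\zeta_{m+2}=\beta_{m+2}$ (and $\zeta_i=0$ for $i<m$), and then, using $\1_2\circ(-,\1_0)=\id$, the elements $\zeta^{[\ell]}$ of Lemma~\ref{yylem1.12}(2) are precisely $\zeta^{[m]}=\zeta$, $\zeta^{[m+1]}=\1_2\circ(\zeta,\1_1)+\beta_{m+1}$, and $\zeta^{[m+2]}=\1_2\circ(\zeta,\1_2)+\1_2\circ(\beta_{m+1},\1_1)+\beta_{m+2}$. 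By Lemma~\ref{yylem1.12}(2), $\lr{\zeta^{[\ell]}}(k)=\I(k)$ for all $k\le\ell$, and whenever $\I=\lr{\I(\ell)}$ the ``consequence'' clause of that lemma upgrades this to $\I=\lr{\zeta^{[\ell]}}$. Since $\gd(\I)=\ell$ means exactly $\I=\lr{\I(\ell)}$, taking $\ell=m$, $m+1$, $m+2$ gives (2a), (2b), (2c) respectively; for (2a) one may equally note $\lr{\zeta}=\lr{M}=\I$ directly.

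No step here is a genuine obstacle; the whole argument is an assembly of Theorem~\ref{yythm3.14} and Lemma~\ref{yylem1.12}. The only care required is the parity bookkeeping: it is the parity of $m+1$, not of $m$, that governs via Theorem~\ref{yythm3.14} whether $\up{m+1}$ is generated in degree $m+1$ or $m+2$, which is exactly why an even $m$ forces $\gd(\I)\le m+2$ in (1b) and why the case $\gd(\I)=m+2$ in (2c) can occur only for even $m$. I expect no further computation to be needed.
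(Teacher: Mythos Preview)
Your proof is correct and follows essentially the same route as the paper. The only cosmetic difference is in part~(1): the paper obtains the upper bounds by citing Theorem~\ref{yythm0.8} together with Corollary~\ref{yycor4.6}(3) (which gives $\gkdim(\uas/\up{m}^M)=m+1$), whereas you unpack the proof of Theorem~\ref{yythm0.8} directly via Theorem~\ref{yythm3.14} and Lemma~\ref{yylem1.12}(3,4); for part~(2) both arguments are a straightforward application of Lemma~\ref{yylem1.12}(2).
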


\begin{proof} 
(1) The upper bounds follow from Theorem \ref{yythm0.8}
and Corollary \ref{yycor4.6}(3). 
Let $\I= \up{m}^M$. By definition $\I\subset \up{m}$. So 
$\I(m-1)=0$. The lower bounds follows.

(2) This follows from part (1) and Lemma \ref{yylem1.10}(2).
\end{proof}

For the rest of the section, we study generalized truncation
ideals associated to an admissible sequence.

\subsection{Extended definition of a generalized truncation 
ideal}
\label{yysec4.4}
For simplicity we only consider connected 2-unitary operads 
here. Throughout the rest of this section, let $\ip$ be a 
connected 2-unitary operad, $m\geq 1$, and let $s<m$ be a nonnegative 
integer. Let $M_i$ be a $\Bbbk \S_i$-submodule of $\up{i}_\ip(i)$ 
for $i=m-s,\cdots,m$ and let ${\mathbb M}$ denote the sequence
$(M_{m-s},M_{m-s+1}, \cdots, M_{m})$.

\begin{Def}
\label{yydef4.10}
Retain the above notation. We say that 
${\mathbb M}:=(M_{m-s}, M_{m-s+1}, \cdots, M_{m})$ is an 
{\it $m$-admissible sequence} if $M_{m-s}\neq 0$, 
$M_{m}\neq \up{m}(m)$ and 
\[(\lr{M_{m-s}}(j)+\cdots+\lr{M_{j-1}}(j))\cap \up{j}(j)
\subset M_j\] 
for all $j=m-s+1, \cdots, m$. And the ideal 
$\sum_{0\le i\le s}\lr{M_{m-i}}+\up{k+1}$ is called the 
{\it generalized truncation ideal associated to 
${\mathbb M}$}, denoted by $\upa{m}^{\mathbb M}$. 
\end{Def}

\begin{rmk}
\label{yyrem4.11}
If $s=0$, ${\mathbb M}$ is a singleton $\{M\}$, and the above 
definition recovers the definition of a generalized truncation 
ideal of type I given in Definition \ref{yydef4.1}, see 
Corollary \ref{yycor4.5}. As in Definition \ref{yydef4.1}, 
this generalized truncation ideal is denoted by $\up{m}^M$ and 
$M$ is called {\it admissible}. 

If $s=1$, then an $m$-admissible sequence ${\mathbb M}:
=(M_{m-1},M_m)$ is also called an {\it $m$-admissible pair} 
and $\up{m}^{\mathbb M}$ is called a {\it generalized 
truncation ideal of type II}.
\end{rmk}

\subsection{Basic properties of $\upa{m}^{\mathbb M}$}
\label{yysec4.5}
In this subsection we briefly discuss some basic properties 
of generalized truncation ideals $\upa{m}^{\mathbb M}$. Some 
non-essential details are omitted.

\begin{lem}
\label{yylem4.12}
Let ${\mathbb M}:=(M_{m-s}, \cdots, M_m)$ be an 
$m$-admissible sequence for $m\ge 2$ and 
$\up{m}^{\mathbb{M}}$ the associated generalized 
truncation ideal. Then
\begin{enumerate}
\item[(1)] 
$\up{m}^{\mathbb{M}}(j)\cap \up{j}(j)=M_j$ for all 
$j=m-s, \cdots, m$.
\item[(2)] 
$\gkdim(\up{m}^\mathbb{M})=m+1$.
\end{enumerate}
\end{lem}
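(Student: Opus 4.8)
The plan is to prove both parts of Lemma \ref{yylem4.12} by unwinding Definition \ref{yydef4.10} and leveraging the Basis Theorem (Lemma \ref{yylem1.12}(1)) together with the machinery already developed for generalized truncation ideals of type I.

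For part (1), write $\I=\upa{m}^{\mathbb M}=\sum_{0\le i\le s}\lr{M_{m-i}}+\up{m+1}$. The inclusion $M_j\subset \I(j)\cap \up{j}(j)$ is immediate since $M_j\subset \lr{M_j}(j)\subset \I(j)$ and $M_j\subset \up{j}(j)$. For the reverse inclusion, the key point is that for $i>j$ the ideal $\lr{M_i}$ contributes nothing in degree $j$: indeed $M_i\subset \up{i}_\ip(i)$, so by Theorem \ref{yythm3.7}(2) we have $\lr{M_i}(n)=0$ for $n<i$, in particular $\lr{M_i}(j)=0$; likewise $\up{m+1}(j)=0$ for $j\le m$. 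Hence $\I(j)=\sum_{i\,:\,m-s\le i\le j}\lr{M_i}(j)$ for $j\le m$. Now intersect with $\up{j}(j)$. One has $\lr{M_j}(j)=M_j$ by Theorem \ref{yythm3.7}(2) again, so $\I(j)\cap \up{j}(j)=\bigl(M_j+\sum_{m-s\le i<j}\lr{M_i}(j)\bigr)\cap \up{j}(j)$. Since $M_j\subset \up{j}(j)$, the modular law gives this equals $M_j+\bigl(\sum_{m-s\le i<j}\lr{M_i}(j)\bigr)\cap\up{j}(j)$, and the admissibility condition in Definition \ref{yydef4.10} says precisely that $\bigl(\lr{M_{m-s}}(j)+\cdots+\lr{M_{j-1}}(j)\bigr)\cap\up{j}(j)\subset M_j$. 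Therefore $\I(j)\cap\up{j}(j)\subset M_j$, proving equality. (For $j=m-s$ there is nothing below, so the statement is just $\I(m-s)\cap\up{m-s}(m-s)=M_{m-s}$, which follows from $\lr{M_{m-s}}(m-s)=M_{m-s}$ and the vanishing of higher generators in low degree.)

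For part (2), apply Corollary \ref{yycor1.7}: since $\ip$ is connected (hence $\up{m}^{\mathbb M}$ is an ideal) it suffices to show $\up{m}_\ip\not\subset\I$ and $\up{m+1}_\ip\subset\I$. The latter is built into the definition of $\I$. For the former, by part (1) applied with $j=m$ we get $\I(m)\cap\up{m}(m)=M_m$, and $M_m\neq\up{m}(m)$ by the definition of an $m$-admissible sequence; hence $\up{m}(m)\not\subset\I(m)$, so $\up{m}_\ip\not\subset\I$. Thus $\gkdim(\ip/\I)=m+1$; writing $\gkdim(\up{m}^{\mathbb M})$ for $\gkdim$ of the quotient operad $\ip/\upa{m}^{\mathbb M}$ (as is consistently done in this paper for statements like Lemma \ref{yylem4.3}), we conclude $\gkdim(\up{m}^{\mathbb M})=m+1$.

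The main obstacle is purely bookkeeping: one must be careful that "$\cap\up{j}(j)$" interacts correctly with the sum of ideals, i.e. that the modular law may be applied because $M_j$ itself already lies inside $\up{j}(j)$, and that the admissibility hypothesis is invoked with exactly the right index range. There is also a small subtlety in that $\I(m)$ could a priori contain contributions from $\lr{M_i}$ with $i<m$ that are not in $\up{m}(m)$ — but that is harmless, since we only ever intersect with $\up{j}(j)$ and the inequality $M_m\neq\up{m}(m)$ is about the intersection, not about $\I(m)$ itself. No deep new input is needed beyond Theorems \ref{yythm3.7} and \ref{yythm1.4}/Lemma \ref{yylem1.12} and Corollary \ref{yycor1.7}.
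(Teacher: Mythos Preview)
Your proof is correct and follows essentially the same approach as the paper's: both arguments bound $\I(j)\cap\up{j}(j)$ from above by separating the contribution of $\lr{M_j}$ (which gives exactly $M_j$ in degree $j$) from the contributions of $\lr{M_i}$ with $i<j$ (handled by admissibility) and $i>j$ or $\up{m+1}$ (which vanish in degree $j$), and then invoke Corollary~\ref{yycor1.7} for part~(2). Your use of the modular law makes the intersection step more transparent than the paper's slightly compressed inequality $(\J_j+\up{j}^{M_j})\cap\up{j}(j)\subseteq(\J_j\cap M_j)+M_j$, but the underlying argument is identical.
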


\begin{proof} 
(1) Denote $\J_j=\sum_{m-s\le i\le j-1}\lr{M_i}$ for 
$j=m-s, \cdots, m$. So $\J_{m-s}=0$. By definition, 
we have $\J_j(j)\cap \up{j}(j)\subseteq M_j$. For 
any $m-s\le j\le m$, we have 
$$\langle M_j \rangle \subset \upa{m}^\mathbb{M} \subset 
\J_{j}+ \langle M_{j}\rangle +\up{j+1}=\J_j+
\up{j}^{M_{j}}.$$
It is clear that $(\langle M_j \rangle\cap \up{j})(j)=M_j$. 
It remains to show that $((\J_j+\up{j}^{M_{j}}) 
\cap \up{j})(j)=M_j$. By Theorem \ref{yythm4.4},
$\up{j}^{M_{j}} \cap \up{j}(j)=M_j$. Since $\J_j
\cap \up{j}(j)\subseteq M_j$, we obtain that 
$$ (\J_j+\up{j}^{M_{j}}) \cap \up{j}(j)
\subseteq (\J_{j} \cap M_j)+M_j=M_j$$
as required. 

(2) By Part (1), we know $\up{m}^\mathbb{M}(m)\cap 
\up{m}(m)=M_m\subsetneq \up{m}(m)$. Clearly, 
$\up{m}(m)\not\subset \up{m}^\mathbb{M}(m)$ and 
$\up{m}\not\subset \up{m}^\mathbb{M}$. By definition, 
we have $\up{m+1}\subset \up{m}^\mathbb{M}$. From 
Corollary \ref{yycor1.6}, it follows that 
$\gkdim(\ip/\up{m}^\mathbb{M})=m+1$.
\end{proof}

\begin{exm}
\label{yyexa4.13} 
If $\ip=\uas$. We claim that the only $4$-admissible pair is 
${\mathbb M}=(\up{3}(3), \langle \up{3}(3)\rangle \cap \up{4}(4))$.
As a consequence, there is a unique generalized truncation ideal
$\I$ of type II with $\gkdim (\uas/\I)=5$, which is $\upa{4}^{\mathbb M}$ 
(and also equals to $\lr{\up{3}(3)}+\up{5}$).

Let ${\mathbb M}=(M_3,M_4)$ be any $4$-admissible pair. Then 
$0\neq M_3\subset \up{3}(3)$. Since $\up{3}(3)$ is a 2-dimensional
simple module, $M_3=\up{3}(3)$ which is generated by $\tau_{3}$,
see \eqref{E1.0.2}. Let $N=\lr{M_3}(4)\cap \up{4}(4)$.
Since $\Lie(n)$ is generated by Dynkin elements \eqref{E1.0.1},
$\Lie(4)\subset N$. By Lemma \ref{yylem3.8}, $\tau_{2,2}\ast 
(\sgn(\sigma)\sigma-1_{4})\in N$. It follows from Lemma \ref{yylem3.1}
that $\up{4}(4)/N$ has dimension 0 or 1 (generated by 
$\tau_{2,2}$ as a $\Bbbk$-vector space). By the proof of 
Proposition \ref{yypro3.9}, 
$\tau_{2,2}\not\in\lr{\up{3}(3)}$,
so it is not in $N$. This implies that $\dim \up{4}(4)/N=1$. 
By definition, $M_4\subsetneq \up{4}(4)$ and $N \subset M_4$. 
These imply that there is only one choice for $M_4$, which is 
$N=\lr{M_3}(4)\cap \up{4}(4)$. 

It remains to show that ${\mathbb M}:
=(\up{3}(3), \langle \up{3}(3)\rangle \cap \up{4}(4))$ is indeed a 
$4$-admissible pair. By the previous paragraph, it suffices to show
that $\gkdim \uas/\I=5$ where $\I=\lr{\up{3}(3)}+\up{5}$.
Let $\ip=\uas/\I$. Then the last paragraph implies that
$$\up{k}_{\ip}(k)=\begin{cases}
 \up{4}(4)/N\cong \Bbbk , & k=4,\\
0 & k\geq 5.
\end{cases}
$$
Therefore $\gkdim \uas/\I=5$ by Corollary \ref{yycor1.6}. 
The assertion follows.
\end{exm}

Similar to Corollary \ref{yycor4.6} we have the following.
Its proof is obvious.

\begin{lem}	
\label{yylem4.14}
Let $\ip=\uas$.  Suppose that ${\mathbb M}:=(M_{m-s},\cdots, M_{m})$ is an $m$-admissible 
sequence and $\upa{m}^{\mathbb M}$ is the associated generalized
ideal. Denote $\mathcal{M}=\uas/\upa{m}^{\mathbb M}$ 
and let $\gamma_k$ be given in \eqref{E1.7.1}. Then 
\begin{enumerate}
\item[(1)] 
$G_{\mathcal{M}}(t)=\sum\limits_{k=0}^{m-s-1} \gamma_k
\dfrac{t^k}{(1-t)^{k+1}}+
\sum\limits_{k=m-s}^{m} (\gamma_{k}-\dim M_{k})
\dfrac{t^{k}}{(1-t)^{k+1}}$,  where $\gamma_m-\dim M_m>0$.
\item[(2)] 
$\dim \mathcal{M}(n)=\sum\limits_{k=0}^{m-s-1} 
\gamma_k {n\choose k}+\sum\limits_{k=m-s}^{m} 
(\gamma_{k}-\dim M_{k}){n\choose k}$.
\end{enumerate}
\end{lem}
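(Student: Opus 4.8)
The plan is to compute the truncation dimensions $\gamma_{\mathcal M}(k):=\dim\up{k}_{\mathcal M}(k)$ for $\mathcal M=\uas/\I$, where $\I=\upa{m}^{\mathbb M}=\sum_{0\le i\le s}\lr{M_{m-i}}+\up{m+1}$, and then to substitute these values into Lemma \ref{yylem1.5}. By Lemma \ref{yylem1.6} we have $\up{k}_{\mathcal M}\cong\up{k}/(\up{k}\cap\I)$, so $\gamma_{\mathcal M}(k)=\dim\bigl(\up{k}(k)/(\up{k}(k)\cap\I(k))\bigr)$, and the whole computation reduces to understanding $\up{k}(k)\cap\I(k)$ in three ranges of $k$.

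First, for $k<m-s$: since $M_{m-i}\subset\up{m-i}(m-i)$ with $m-i\ge m-s$, Theorem \ref{yythm3.7}(2) gives $\lr{M_{m-i}}(k)=0$, and $\up{m+1}(k)=0$, so $\I(k)=0$ and $\gamma_{\mathcal M}(k)=\dim\up{k}(k)=\gamma_k$ by Corollary \ref{yycor1.9}. Second, for $m-s\le k\le m$: Lemma \ref{yylem4.12}(1) gives $\I(k)\cap\up{k}(k)=M_k$, hence $\gamma_{\mathcal M}(k)=\dim\bigl(\up{k}(k)/M_k\bigr)=\gamma_k-\dim M_k$; in particular $\gamma_{\mathcal M}(m)=\gamma_m-\dim M_m$, which is positive because $M_m\subsetneq\up{m}(m)$ by the definition of an $m$-admissible sequence. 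Third, for $k\ge m+1$: by Lemma \ref{yylem4.12}(2) we have $\gkdim\mathcal M=m+1$, and $\gkdim\mathcal M=\max\{k\mid\up{k}_{\mathcal M}\neq 0\}+1$ by Lemma \ref{yylem1.3}, so $\up{k}_{\mathcal M}=0$ and $\gamma_{\mathcal M}(k)=0$ there.

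With these values in hand, part (1) is immediate from Lemma \ref{yylem1.5}(1): the series $G_{\mathcal M}(t)=\sum_{k\ge 0}\gamma_{\mathcal M}(k)\,t^k/(1-t)^{k+1}$ terminates at $k=m$ and splits into the two displayed sums, with $\gamma_m-\dim M_m>0$ as noted above. Part (2) is immediate from Lemma \ref{yylem1.5}(3): since $\gkdim\mathcal M=m+1$, $\dim\mathcal M(n)=\sum_{k=0}^{m}\gamma_{\mathcal M}(k){n\choose k}$, which is exactly the claimed formula after inserting the values of $\gamma_{\mathcal M}(k)$. There is no genuine obstacle here; the only point needing care is the bookkeeping at the two ends of the range $[m-s,m]$ — that $\I$ vanishes in degrees $<m-s$ and that $\up{m+1}$ absorbs all truncation ideals in degrees $>m$ — and both follow directly from the definition of $\upa{m}^{\mathbb M}$ together with Theorem \ref{yythm3.7}(2) and Lemma \ref{yylem1.3}. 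This is why the authors call the proof obvious.
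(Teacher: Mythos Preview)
Your proof is correct and is exactly the argument the paper has in mind: the authors omit the proof, noting only that it is ``obvious'' and ``similar to Corollary \ref{yycor4.6}'', and your computation of $\gamma_{\mathcal M}(k)$ via Lemmas \ref{yylem1.6}, \ref{yylem4.12}, and Corollary \ref{yycor1.9}, followed by substitution into Lemma \ref{yylem1.5}, is precisely that routine.
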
 

The next lemma is similar to Proposition \ref{yypro4.9}.

\begin{lem}
\label{yylem4.15}
Let $\ip=\uas$. Suppose that ${\mathbb M}:=(M_{m-s},\cdots,M_{m})$ is an 
$m$-admissible sequence  for some $m\ge 3$ and $s\geq 1$, and 
$\upa{m}^{\mathbb M}$ be the associated generalized truncation 
ideal. 
\begin{enumerate}
\item[(1)]
Then $\upa{m}^{\mathbb M}=\langle \upa{m}^{\mathbb M}(m+1)\rangle$ if 
$m$ is odd, and $\upa{m}^{\mathbb M}=\langle \upa{m}^{\mathbb M}(m+2)\rangle$ if 
$m$ is even. Equivalently,
\begin{enumerate}
\item[(1a)] $m-s\le \gd(\upa{m}^{\mathbb M})\le m+1$, if $m$ is odd;
\item[(1b)] $m-s\le \gd(\upa{m}^{\mathbb M})\le m+2$, if $m$ is even.
\end{enumerate}
\item[(2)]
Suppose that $\zeta_{j}$ is a generator of $M_{j}$ as a 
right $\Bbbk\S_{j}$-module for $j=m-s,\cdots,m$, and 
$\beta_{i}$ is a generator of the right $\Bbbk\S_{i}$-module 
$\up{i}(i)$ for $i=m+1, m+2$.
\begin{enumerate}
\item[(2a)] 
If $\gd(\upa{m}^{\mathbb M})=w\leq m$, then $\upa{m}^{\mathbb M}=
\lr{\sum_{j=m-s}^{w} 1_{w-j+1} \ucr{1}\zeta_{j}}$;
\item[(2b)] 
If $\gd(\upa{m}^{\mathbb M})=m+1$, then $\upa{m}^{\mathbb M}=
\lr{\sum_{j=m-s}^{m} 1_{m-j+2} \ucr{1}\zeta_{j}+\beta_{m+1}}$; and 
\item[(2c)] 
If $\gd(\upa{m}^{\mathbb M})=m+2$, then $\upa{m}^{\mathbb M}=
\lr{\sum_{j=m-s}^{m} 1_{m-j+3} \ucr{1}\zeta_{j}+1_{2}\ucr{1} \beta_{m+1}
+\beta_{m+2}}$.
\end{enumerate}
\end{enumerate}
\end{lem}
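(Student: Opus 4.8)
The plan is to follow the pattern of the proof of Proposition \ref{yypro4.9}, using Lemma \ref{yylem4.12} in place of Corollary \ref{yycor4.6}(3) to pin down $\gkdim$ and $\mdeg$, and Lemma \ref{yylem1.12}(2) to turn a degree bound into explicit generators.

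I would first prove part (1). Write $\I:=\upa{m}^{\mathbb M}=\sum_{0\le i\le s}\lr{M_{m-i}}+\up{m+1}$. By Theorem \ref{yythm3.7}(2) each $\lr{M_{m-i}}$ vanishes in degrees below $m-i$, and $\up{m+1}$ vanishes in degrees $\le m$; since $M_{m-s}\neq 0$, this gives $\I(n)=0$ for $n<m-s$ while $\I(m-s)\supseteq M_{m-s}\neq 0$, so $\mdeg(\I)=m-s$. As any nonzero ideal $\I$ with $\I=\lr{\I(n)}$ forces $\I(n)\neq 0$, we obtain $\gd(\I)\ge m-s$. For the upper bound, Lemma \ref{yylem4.12}(2) gives $\gkdim(\uas/\I)=m+1$, whence Theorem \ref{yythm0.8} yields $\gd(\I)\le m+1$ when $m$ is odd and $\gd(\I)\le m+2$ when $m$ is even. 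Finally, since $\I=\lr{\I(n)}$ persists for all $n\ge\gd(\I)$ by Lemma \ref{yylem3.8}, the bound $\gd(\I)\le m+1$ (resp. $\le m+2$) is the same as the equality $\I=\lr{\I(m+1)}$ (resp. $\I=\lr{\I(m+2)}$); this gives the first assertion of (1), and hence (1a), (1b).

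Next I would deduce part (2) from part (1) and Lemma \ref{yylem1.12}(2). The essential step is to compute, for each $k\ge 1$, the $\Bbbk\S_k$-submodule $\I(k)\cap\up{k}(k)$ of $\Bbbk\S_k$: by the vanishing above it is $0$ for $k<m-s$; by Lemma \ref{yylem4.12}(1) it equals $M_k$ for $m-s\le k\le m$; and since $\up{m+1}\subseteq\I$ forces $\up{k}\subseteq\I$ for every $k\ge m+1$, it equals all of $\up{k}(k)$ for $k>m$. Hence, in the notation of Lemma \ref{yylem1.12}(2), generators of these modules may be chosen as $\zeta_k=0$ for $k<m-s$, the given $\zeta_k$ for $m-s\le k\le m$, and $\beta_{m+1},\beta_{m+2}$ for $k=m+1,m+2$. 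Applying Lemma \ref{yylem1.12}(2) with internal bound $w:=\gd(\I)$ --- which by part (1) lies in $\{m-s,\ldots,m\}$, equals $m+1$, or (for $m$ even) equals $m+2$, and in every case satisfies $\I=\lr{\I(w)}$ --- gives $\I=\lr{\zeta^{[w]}}$ with $\zeta^{[w]}=\sum_{i=1}^{w}1_2\circ(\zeta_i,1_{w-i})$. Deleting the zero summands and rewriting via the identities $1_2\circ(\zeta,1_l)=1_{l+1}\ucr{1}\zeta$ (associativity of $\uas$, with the convention $1_1\ucr{1}\zeta=\zeta$), $1_2\circ(\beta_{m+2},1_0)=\beta_{m+2}$, and $1_2\circ(\beta_{m+1},1_1)=1_2\ucr{1}\beta_{m+1}$, one reads off exactly the generators in (2a), (2b), (2c) according to whether $w\le m$, $w=m+1$, or $w=m+2$.

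I do not anticipate a serious obstacle, as the whole argument mirrors Proposition \ref{yypro4.9}; the only genuinely new inputs are Lemma \ref{yylem4.12} (already proved) and the case analysis of $\I(k)\cap\up{k}(k)$ above. The point needing a little care is the legitimacy of invoking Lemma \ref{yylem1.12}(2) with its internal parameter pushed past $m$ to $m+1$ or $m+2$: this demands that the modules $\I(k)\cap\up{k}(k)$ in degrees $k=m+1$ (resp. $m+1,m+2$) be the full truncation modules $\up{k}(k)$ and be generated by the chosen generators $\beta_{m+1},\beta_{m+2}$, which is precisely the third clause of the case analysis, and that $\I$ be generated in the relevant degree, which is supplied by part (1).
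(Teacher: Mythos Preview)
Your proposal is correct and follows exactly the approach the paper indicates: the paper omits the proof, stating only that it is similar to that of Proposition~\ref{yypro4.9}, and your argument is precisely that adaptation, with Lemma~\ref{yylem4.12} replacing Corollary~\ref{yycor4.6}(3) for the $\gkdim$ input and Lemma~\ref{yylem1.12}(2) supplying the explicit generators. The case analysis of $\I(k)\cap\up{k}(k)$ and the rewriting of $\1_2\circ(\zeta_j,\1_{w-j})$ as $\1_{w-j+1}\ucr{1}\zeta_j$ are the only details one has to spell out beyond the type~I case, and you handle both correctly.
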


The proof of above is similar to the proof of 
Proposition \ref{yypro4.9}, so it is omitted.
Similar to Corollary \ref{yycor4.5} we have the
following.

\begin{thm}
\label{yythm4.16}
Let $\ip$ be a connected 2-unitary operad and $m\ge 1$. 
\begin{enumerate}
\item[(1)] 
Let $\J$ be an ideal of $\ip$ such that 
$\gkdim (\ip/\J) =m+1$ and $\mdeg(\J)=m-s$ for some 
$s\geq 0$. Let $M_{j}$ be $\J(j)\cap \up{j}(j)$ for 
all $j$. Then the following hold.
\begin{enumerate}
\item[(1a)]
${\mathbb M}:=(M_{m-s},\cdots, M_m)$ is an $m$-admissible 
sequence.
\item[(1b)]
$\J=\up{m}^{\mathbb M}$.
\item[(1c)]
There is a unique ideal $\I$ of $\ip$ with
$\mdeg(\I)=m-s$ and $\gkdim(\ip/\I)=m+1$ such that 
$\I(j)\cap \up{j}(j)=M_j$ for all $j=m-s,\cdots,m$.
\end{enumerate}
\item[(2)]
Given any $m$-admissible sequence ${\mathbb M}'
=(M'_{m-s},\cdots,M'_{m})$, 
there is a unique ideal $\I$ $($which is $\up{m}^{{\mathbb M}'})$ 
of $\ip$ with $\mdeg(\I)=m-s$ and $\gkdim(\ip/\I)=m+1$ 
such that $\I(j)\cap \up{j}(j)=M'_j$ for all $j=m-s,\cdots,m$.
\item[(3)]
There is a one-to-one correspondence between the set of 
$m$-admissible sequences and the set of ideals $\J\neq 
\up{m+1}$ such that $\gkdim (\ip/\J)=m+1$.
\end{enumerate}
\end{thm}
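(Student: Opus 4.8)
The plan is to reduce the entire statement to parts (1a)--(1b), with the Basis Theorem [Lemma \ref{yylem1.12}(1)] as the main engine, together with Lemmas \ref{yylem1.14}, \ref{yylem4.2}, \ref{yylem4.12} and Corollary \ref{yycor1.7} for bookkeeping. First I would prove (1a). Writing $M_j=\J(j)\cap\up{j}(j)$, the requirement $M_{m-s}\neq0$ is exactly $\mdeg(\J)=m-s$ read through Lemma \ref{yylem1.14}. For $M_m\neq\up{m}(m)$, note that $\up{m}=\lr{\up{m}(m)}+\up{m+1}$ (immediate from the basis description in Theorem \ref{yythm1.4}), while $\up{m+1}\subset\J$ by Corollary \ref{yycor1.7} since $\gkdim(\ip/\J)=m+1$; hence $\up{m}(m)\subset\J$ would force $\up{m}\subset\J$, contradicting Corollary \ref{yycor1.7}. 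Finally, for each $j=m-s+1,\cdots,m$ and each $i<j$ one has $\lr{M_i}\subset\J$ because $M_i\subset\J(i)$ and $\J$ is an ideal, so $\bigl(\lr{M_{m-s}}(j)+\cdots+\lr{M_{j-1}}(j)\bigr)\cap\up{j}(j)\subset\J(j)\cap\up{j}(j)=M_j$; thus ${\mathbb M}=(M_{m-s},\cdots,M_m)$ is $m$-admissible.

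For (1b), since each $\lr{M_{m-i}}\subset\J$ and $\up{m+1}\subset\J$, we already have $\up{m}^{\mathbb M}=\sum_{i=0}^{s}\lr{M_{m-i}}+\up{m+1}\subseteq\J$, so it suffices to match dimensions degree by degree. By Lemma \ref{yylem1.12}(1), for any ideal $\mathcal N$ of $\ip$ one has $\dim\mathcal N(n)=\sum_{k=1}^{n}\dim\bigl(\mathcal N(k)\cap\up{k}(k)\bigr)\binom{n}{k}$. For $\mathcal N=\J$ the $k$-th coefficient equals $\dim M_k$ when $m-s\le k\le m$, equals $0$ when $k<m-s$ (since $\mdeg(\J)=m-s$), and equals $\dim\up{k}(k)$ when $k>m$ (since $\up{m+1}\subset\J$ gives $\up{k}(k)\subset\J(k)$). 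By Lemma \ref{yylem4.12}(1), together with $\up{m+1}\subset\up{m}^{\mathbb M}$ and the elementary fact that $\up{m}^{\mathbb M}\subset\up{m-s}$ (so it vanishes below degree $m-s$), the ideal $\up{m}^{\mathbb M}$ has exactly the same coefficients. Hence $\dim\J(n)=\dim\up{m}^{\mathbb M}(n)$ for all $n$, and combined with $\up{m}^{\mathbb M}\subseteq\J$ this yields $\J=\up{m}^{\mathbb M}$.

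The remaining assertions follow formally. For (1c): $\J$ provides existence, and any ideal $\I$ with $\mdeg(\I)=m-s$, $\gkdim(\ip/\I)=m+1$ and $\I(j)\cap\up{j}(j)=M_j$ satisfies $\I=\up{m}^{\mathbb M}=\J$ by (1b) applied to $\I$. For (2): put $\I=\up{m}^{{\mathbb M}'}$; Lemma \ref{yylem4.12} gives $\gkdim(\ip/\I)=m+1$ and $\I(j)\cap\up{j}(j)=M'_j$, while $M'_{m-s}\neq0$ together with vanishing in lower degrees gives $\mdeg(\I)=m-s$, and uniqueness is (1b) once more. For (3): the assignment ${\mathbb M}\mapsto\up{m}^{\mathbb M}$ lands among the ideals $\J$ with $\gkdim(\ip/\J)=m+1$ [Lemma \ref{yylem4.12}(2)] and $\J\neq\up{m+1}$ (by Lemma \ref{yylem4.2} when $s=0$, and because $\up{m}^{\mathbb M}(m-s)=M_{m-s}\neq0$ when $s\ge1$); it is injective because $\mdeg(\up{m}^{\mathbb M})=m-s$ recovers $s$ and then Lemma \ref{yylem4.12}(1) recovers each $M_j$; and it is surjective by (1a)--(1b), using that $\J\neq\up{m+1}$ together with $\up{m+1}\subset\J$ forces $\mdeg(\J)\le m$, so that $s:=m-\mdeg(\J)\ge0$.

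The main obstacle is (1b). A general connected 2-unitary operad need not have its truncation ideals generated in a single degree, so one cannot compare $\J$ with $\up{m}^{\mathbb M}$ by manipulating generators; instead equality must be forced through the exact dimension count of the Basis Theorem. This is precisely where connectedness is used: it guarantees that $\up{m}^{\mathbb M}$ is genuinely an ideal, so that Lemmas \ref{yylem1.12}(1) and \ref{yylem4.12} apply to it. Everything else is routine once Lemma \ref{yylem4.12} is available.
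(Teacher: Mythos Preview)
Your proposal is correct and follows essentially the same approach as the paper: establish admissibility directly from the definitions and Corollary \ref{yycor1.7}/Lemma \ref{yylem1.14}, then prove (1b) by showing the containment $\up{m}^{\mathbb M}\subseteq\J$ and matching dimensions via the Basis Theorem [Lemma \ref{yylem1.12}(1)] together with Lemma \ref{yylem4.12}(1), after which (1c), (2), and (3) are formal. Your write-up is in fact slightly more explicit than the paper's in spelling out the dimension coefficients and in justifying $\mdeg(\J)\le m$ for surjectivity in (3).
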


\begin{proof}
(1) Since $\mdeg(\J)=m-s$, $M_{m-s}=\J(m-s)\neq 0$. 
We claim that $M_m\subsetneq \up{m}(m)$. Suppose 
to the contrary that that $M_m=\up{m}(m)$. Then 
$\up{m}=\up{m}^{M_m} =\lr{M_m}+\up{m+1}\subset \J(m)$. 
This contradicts $\gkdim(\ip/\J)=m+1$ from 
Corollary \ref{yycor1.6}. Therefore the claim is proved.
Furthermore, it is easily seen that 
\[\left(\sum_{m-s\le i\le j-1}\lr{M_i}(j)\right)\cap \up{j}(j)
\subset \J(j)\cap \up{j}(j)=M_j.\]
By Definition \ref{yydef4.10}, ${\mathbb M}$ is an 
$m$-admissible sequence. Thus (1a) is proved. 

By the above proof and Lemma \ref{yylem4.12}, 
$\up{m}^\mathbb{M}\cap \up{j}(j)=\J\cap \up{j}(j)$
for all $j=m-s,\cdots,m$. It is obvious that 
$\up{m}^\mathbb{M}\cap \up{j}(j)=\J\cap \up{j}(j)$ for 
$j<m-s$ and $j>m$. By the Basis Theorem 
[Lemma \ref{yylem1.10}(1)], 
$\J$ and $\up{m}^\mathbb{M}$ have the same generating 
series. Since $\up{m}^\mathbb{M}\subseteq \J$, we 
have $\up{m}^\mathbb{M}=\J$. Thus (1b) is proved. 
Finally part (1c) follows from part (1b).

(2) Let $\J=\up{m}^{{\mathbb M}'}$. The assertion 
follows from part (1c).

(3) Given an $m$-admissible sequence ${\mathbb M}:=
(M_{m-s},\cdots, M_m)$, $\up{m}^{\mathbb M}$ is a 
generalized truncation ideal associated to ${\mathbb M}$.
By construction, $\gkdim (\ip/\up{m}^{\mathbb M})=m+1$ and
$\up{m}^{\mathbb M}\neq \up{m+1}$ since ${\mathbb M}$ 
is admissible. Conversely, given an ideal $\J\neq \up{m+1}$ 
with $\gkdim (\ip/\J)=m+1$, let $M_{j}=\J(j)\cap \up{j}(j)$ 
for all $j$. Then ${\mathbb M}:=(M_{m-s},\cdots, M_m)$, 
where $m-s=\mdeg(\J)$, is an $m$-admissible sequence by 
part (1a). By parts (1,2), these give rise to a 
one-to-one correspondence. 
\end{proof}

By the above theorem, the classification of ideals $\J\subseteq \uas$ 
with $\gkdim (\uas/\J)=m+1$ is equivalent to the classification of 
$m$-admissible sequences. 

\section{$\uas/\I$ of low GK-dimension and the related PI-algebras}
\label{yysec5}

The quotient operads of $\uas$ of GK-dimension less than $5$ 
have been classified in \cite[Theorem 0.6]{BYZ}. In \cite{OV}, 
the authors obtained a complete list of PI-algebra varieties 
with polynomial growth $n^4$. In Subsection \ref{yysec5.1}, 
we will give a new proof of an equivalent result using 
ideals of $\uas$. As an application, we further 
compute the generating identities for the PI-algebras with 
polynomial $n^k$ for $k\le 4$.

\subsection{$\uas/\I$ of GK-dimension 5}
\label{yysec5.1}
By a direct computation, we have the following facts on the 
truncation ideals $\up{3}$ and $\up{4}$. 

\begin{lem}
\label{yylem5.1}
Let $\up{k}$ the $k$-th truncation ideal of $\uas$. Then the 
following hold.
\begin{itemize}
\item[(1)] 
$\up{3}(3)=\Lie(3)$ is a $2$-dimensional simple 
$\Bbbk \S_3$-module, and the character 
$\chi_{\up{3}(3)}=\chi_{(2, 1)}$.
\item[(2)] 
$\up{4}(4)$ is a $9$-dimensional right module over 
$\Bbbk\S_4$, and the character
\[\chi_{\up{4}(4)}=\chi_{(1^4)}+\chi_{(2^2)}
+\chi_{(2, 1^2)}+\chi_{(3, 1)}.\]
\item[(3)] 
$\up{3}(4)$ is a $17$-dimensional right module over 
$\Bbbk \S_4$, and the character
\[\chi_{\up{3}(4)}=\chi_{(1^4)}+2\chi_{(2^2)}
+2\chi_{(2, 1^2)}+2\chi_{(3, 1)}.\]
\end{itemize}
\end{lem}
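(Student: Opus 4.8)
The plan is to treat the three items in order, combining Corollary \ref{yycor1.9} for the dimensions with standard facts about Lie representations and the truncation-ideal filtration for the characters.

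For (1): by Corollary \ref{yycor1.9}, $\dim \up{3}(3)=\gamma_3=-1+3-6+6=2$. Since a Lie monomial in three variables vanishes when any single variable is specialized to the unit, $\Lie(3)\subseteq\up{3}(3)$, and $\dim\Lie(3)=(3-1)!=2$, so $\up{3}(3)=\Lie(3)$ (this is also the bottom of the chain in Proposition \ref{yypro3.5}). A $2$-dimensional $\Bbbk\S_3$-module is either $\chi_{(2,1)}$ or a sum of the linear characters $\chi_{(3)},\chi_{(1^3)}$; by Theorem \ref{yythm1.1} neither $(3)$ nor $(1^3)$ occurs in $\Lie(3)$, hence $\chi_{\up{3}(3)}=\chi_{(2,1)}$. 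For (2): by Corollary \ref{yycor1.9}, $\dim\up{4}(4)=\gamma_4=1-4+12-24+24=9$. By Proposition \ref{yypro3.6}(2) every $\chi_\la$ with $\la\vdash 4$ except $\la=(4)$ appears in $\up{4}(4)$, so $m_{(3,1)},m_{(2^2)},m_{(2,1^2)},m_{(1^4)}\geq 1$; since $3m_{(3,1)}+2m_{(2^2)}+3m_{(2,1^2)}+m_{(1^4)}=9$ and the minimum of the left side subject to all multiplicities being $\geq 1$ is $3+2+3+1=9$, every multiplicity equals $1$, giving $\chi_{\up{4}(4)}=\chi_{(1^4)}+\chi_{(2^2)}+\chi_{(2,1^2)}+\chi_{(3,1)}$ (the same character is obtained by the direct computation in the proof of Proposition \ref{yypro3.6}).

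For (3): by Corollary \ref{yycor1.9}, $\dim\up{3}(4)=\binom{4}{3}\gamma_3+\binom{4}{4}\gamma_4=8+9=17$. For the character I would use the short exact sequence of $\Bbbk\S_4$-modules $0\to\up{4}(4)\to\up{3}(4)\to\up{3}(4)/\up{4}(4)\to 0$. By Theorem \ref{yythm1.4} applied with $\ip=\uas$, $k=3$, $n=4$, the quotient $\up{3}(4)/\up{4}(4)$ has a basis $\{\1_2\circ(\theta,\1_1)\ast c_I\}$ indexed by a basis $\theta$ of $\up{3}(3)$ and a one-element subset $I\subset[4]$; tracking the $\S_4$-action through the coset representatives $c_I$ identifies this quotient with the induced module $\operatorname{Ind}_{\S_3\times\S_1}^{\S_4}\bigl(\up{3}(3)\otimes\chi_{(1)}\bigr)=\operatorname{Ind}_{\S_3\times\S_1}^{\S_4}\bigl(\chi_{(2,1)}\otimes\chi_{(1)}\bigr)$. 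Pieri's rule gives $s_{(2,1)}s_{(1)}=s_{(3,1)}+s_{(2^2)}+s_{(2,1^2)}$, so $\chi_{\up{3}(4)/\up{4}(4)}=\chi_{(3,1)}+\chi_{(2^2)}+\chi_{(2,1^2)}$. Adding the character from (2) yields $\chi_{\up{3}(4)}=\chi_{(1^4)}+2\chi_{(2^2)}+2\chi_{(2,1^2)}+2\chi_{(3,1)}$, of dimension $1+4+6+6=17$, in agreement with the count above.

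The main obstacle is the identification of $\up{3}(4)/\up{4}(4)$ with the induced module $\operatorname{Ind}_{\S_3\times\S_1}^{\S_4}(\up{3}(3)\otimes\chi_{(1)})$: one must check that the "tail'' $\S_1$-factor acts by the trivial (not some twisted) character and that the orbit of the distinguished element $\1_2\circ(\theta,\1_1)$ under the $c_I$ really realizes the induced-module structure. This is precisely where the explicit basis of Theorem \ref{yythm1.4} is needed. Alternatively, as the paper indicates, all three characters can be obtained by a direct computation with the character tables of $\S_3$ and $\S_4$ (or by computer algebra), which sidesteps this representation-theoretic bookkeeping entirely; I would present the filtration argument for conceptual clarity and note the direct computation as a check.
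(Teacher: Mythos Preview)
Your argument is correct, and it is considerably more detailed than what the paper provides: the paper simply prefaces Lemma~\ref{yylem5.1} with ``By a direct computation'' and gives no proof at all. Your treatment of (1) via $\Lie(3)\subseteq\up{3}(3)$ and Theorem~\ref{yythm1.1}, and especially of (3) via the short exact sequence $0\to\up{4}(4)\to\up{3}(4)\to\up{3}(4)/\up{4}(4)\to 0$ together with the identification $\up{3}(4)/\up{4}(4)\cong\operatorname{Ind}_{\S_3\times\S_1}^{\S_4}\bigl(\up{3}(3)\boxtimes\chi_{(1)}\bigr)$ and Pieri's rule, is a genuinely different and more conceptual route than brute-force character computation. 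The induced-module identification you flag as the ``main obstacle'' is in fact correct: the $c_I$ with $|I|=1$ are a complete set of right coset representatives for $\S_3\times\S_1$ in $\S_4$, and operadic equivariance gives $(\1_2\circ(\theta,\1_1))\ast(\rho,1)=\1_2\circ(\theta\ast\rho,\1_1)$ exactly (not just modulo $\up{4}(4)$), so the quotient carries precisely the induced-module structure with the trivial character on the $\S_1$-factor.

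One small caveat on part (2): your dimension-count argument hinges on knowing $m_{(2^2)}\ge 1$, which you obtain by citing Proposition~\ref{yypro3.6}(2). But for $n=4$ that proposition is itself established in the paper \emph{by directly computing} the character $\chi_{\up{4}(4)}$---the very quantity you are determining. So your argument for (2) does not actually avoid the direct computation; it repackages it. (Without $m_{(2^2)}\ge 1$, the constraints $m_{(3,1)},m_{(2,1^2)},m_{(1^4)}\ge 1$ and total dimension $9$ admit the spurious solution $m_{(1^4)}=3$, $m_{(2^2)}=0$.) You are aware of this, since you note the direct computation as a check; it would be cleaner simply to say that (2) is already contained in the proof of Proposition~\ref{yypro3.6}.
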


By Lemma \ref{yylem5.1}(2) and (3), we can assume the irreducible 
decompositions 
\begin{align}
\label{E5.1.1}\tag{E5.1.1}
\up{4}(4)=V_{(1^4)}\oplus V_{(2^2)}\oplus V_{(2, 1^2)}
\oplus V_{(3, 1)},
\end{align}
and 
\begin{equation}
\label{E5.1.2}\tag{E5.1.2}
\up{3}(4) = V_{(1^4)}\oplus V_{(2^2)}\oplus W_{(2^2)} 
\oplus V_{(2, 1^2)}\oplus W_{(2, 1^2)}\oplus V_{(3, 1)}
\oplus W_{(3, 1)}
\end{equation}
where $V_\la$ (and $W_\la\cong V_\la$) are the irreducible 
components with the character $\chi_\la$ for $\la\vdash 4$.

Now we are ready to prove both Theorems \ref{yythm0.4} and 
\ref{yythm0.6} as follow.

\begin{thm}
\label{yythm5.2}
Let $\mathcal{A}_\I:=\uas/\I$ be a quotient operad of 
$\uas$ of GK-dimension $5$. Then either $\I(3)=\up{3}(3)$ or 
$\I(3)=0$ and the following hold.
\begin{enumerate}
\item[(1)] 
If $\I(3)=\up{3}(3)$, then $\I$ is $\up{4}^{\mathbb M}$ 
where ${\mathbb M}$ is the unique $4$-admissible pair 
given in Example {\rm{\ref{yyexa4.13}}}. Further, the 
codimension sequence is 
\[\dim(\mathcal{A}_\I(n))=1+{n\choose 2}+{n\choose 4},
\ for\ all  \ n\in \N.\]
\item[(2)] 
If $\I(3)=0$, then $\I$ is a generalized truncation 
ideal $\up{4}^{M}$ where $M:=\I(4)$ is a proper submodule 
of $\up{4}(4)$, 
and	the codimension sequences are
\[\dim(\mathcal{A}_\I(n))=1+{n\choose 2}+2{n\choose 3}
+(9-\dim \I(4)){n\choose 4}, \ for\  all \ n\in \N.\]
\end{enumerate}
\end{thm}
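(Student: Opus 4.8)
The plan is to pin down $\I$ from its minimal degree, using the bijection between ideals and admissible sequences (Theorem \ref{yythm4.16}) and the structure theory of Section \ref{yysec4}. Write $\ip=\uas/\I$. Since $\gkdim\ip=5$, Lemma \ref{yylem1.8} gives $\up{5}\subseteq\I$ and $\up{4}\not\subseteq\I$; in particular $\I\neq\uas$, and $\up{5}(5)\neq0$ (Corollary \ref{yycor1.9}) forces $\I(5)\neq0$, so $\mdeg(\I)\le5$.

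First I would show $\I(n)=0$ for $n\le2$. As $\I\neq\uas$ we have $\I(0)=\I(1)=0$, and $\I(2)=0$ because a nonzero $\Bbbk\S_2$-submodule of $\uas(2)$ either contains $\tau=[x_1,x_2]$ — whence $\up{2}=\lr{\tau}\subseteq\I$ by Theorem \ref{yythm3.14} and $\gkdim\ip\le1$ — or meets the trivial summand, so that composing with $\1_0$ in a slot puts $\1_1$ in $\I$ and $\I=\uas$; both contradict $\gkdim\ip=5$. Hence $\I\subseteq\up{3}$ (Lemma \ref{yylem1.14}), so $\I(3)=\I\cap\up{3}(3)$ is a submodule of the $2$-dimensional simple $\Bbbk\S_3$-module $\up{3}(3)=\Lie(3)$ (Lemma \ref{yylem5.1}(1)); therefore $\I(3)$ is $0$ or $\up{3}(3)$, which is the asserted dichotomy.

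Next I would treat the two cases. If $\I(3)=\up{3}(3)$, then $\mdeg(\I)=3$, and Theorem \ref{yythm4.16}(1) (with $m=4$, $s=1$) shows ${\mathbb M}=(\I\cap\up{3}(3),\,\I\cap\up{4}(4))$ is a $4$-admissible pair with $\I=\up{4}^{\mathbb M}$; by Example \ref{yyexa4.13} this is the unique $4$-admissible pair, $\I=\lr{\up{3}(3)}+\up{5}$, and $\up{4}(4)/(\lr{\up{3}(3)}\cap\up{4}(4))$ is $1$-dimensional, so $\dim M_3=2$ and $\dim M_4=8$. Feeding these and $\gamma_0=1$, $\gamma_1=0$, $\gamma_2=1$, $\gamma_3=2$, $\gamma_4=9$ (from \eqref{E1.8.1}) into Lemma \ref{yylem4.14}(2) yields $\dim\ip(n)=1+\binom{n}{2}+\binom{n}{4}$. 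If instead $\I(3)=0$, then $\I\subseteq\up{4}$; and $\I(4)=\up{4}(4)$ is impossible since it would force $\up{4}=\lr{\up{4}(4)}\subseteq\I$ by Theorem \ref{yythm3.14}, against $\up{4}\not\subseteq\I$. So $M:=\I(4)$ is a proper (possibly zero) submodule of $\up{4}(4)$, and since $\up{5}\subseteq\I\subsetneq\up{4}$ (properly, as $\gkdim(\uas/\up{4})=4$), Corollary \ref{yycor4.5}(1) gives $\I=\up{4}^{M}=\lr{M}+\up{5}$; then Corollary \ref{yycor4.6}(2) gives $\dim\ip(n)=\sum_{k=0}^{3}\gamma_k\binom{n}{k}+(\gamma_4-\dim M)\binom{n}{4}=1+\binom{n}{2}+2\binom{n}{3}+(9-\dim\I(4))\binom{n}{4}$.

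The only substantial inputs are Example \ref{yyexa4.13} (uniqueness of the $4$-admissible pair and the codimension-$1$ computation) and the bijection of Theorem \ref{yythm4.16}, both already established. I expect the one mildly delicate step to be excluding $\mdeg(\I)\le2$, i.e. handling the submodules of $\uas(2)$; after that the argument is a clean dichotomy on whether $\I(3)=\up{3}(3)$ or $\I(3)=0$, together with routine assembly of the codimension sequence from the $\gamma_k$.
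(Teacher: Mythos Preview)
Your proof is correct and follows essentially the same route as the paper's: both use Corollary \ref{yycor1.7}/Lemma \ref{yylem1.8} to get $\up{5}\subseteq\I\not\supseteq\up{4}$, the simplicity of $\up{3}(3)$ for the dichotomy, Theorem \ref{yythm4.16} with Example \ref{yyexa4.13} for case (1), and Corollary \ref{yycor4.5} with Corollary \ref{yycor4.6} for case (2). The only difference is that you spell out the exclusion of $\mdeg(\I)\le 2$ in more detail than the paper, which simply asserts ``it is clear that $\I(k)=0$ for all $k\le 2$'' (the well-known fact that every proper ideal of $\uas$ lies in $\up{2}$).
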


\begin{proof}
It is clear that $\I(k)=0$ for all $k\leq 2$. As a 
consequence, $\mdeg(\I)\geq 3$ and $\I\subset \up{3}$. 
Observe that $\up{3}(3)$ is a 2-dimensional simple 
$\Bbbk \S_3$-module. So there are only two cases, namely, 
either $\I(3)=0$ or $\I(3)=\up{3}(3)$. Since 
$\gkdim (\uas/\I)=5$, by Corollary \ref{yycor1.6}, 
$\up{5}\subset \I$ and $\up{4}\not\subset \I$.

(1) Case 1: $\I(3)=\up{3}(3)$. Since $\up{4} \not\supset \I$,
$\I$ is not a generalized truncation ideal of type I. Since 
$\up{5}\subset \I \subset \up{3}$, $\I$ is a generalized 
truncation ideal $\up{4}^{\mathbb M}$ for some $4$-admissible 
pair ${\mathbb M}$ [Theorem \ref{yythm4.16}]. So $\I$ is 
given in Example \ref{yyexa4.13} and $\I=\lr{\up{3}(3)}+\up{5}$. 
Now $M_3=\I(3)=\up{3}(3)$ and $M_4=\langle \up{3}(3)\rangle 
\cap \up{4}(4)$. By Remark \ref{yyrem1.1} (then 
$\I=\I_{E}+\up{5}$) or a direct computation, one has that
$\dim \up{4}(4)/M_4=1$. Hence codimension sequence formula 
follows from Lemma \ref{yylem4.14}(1).

(2) Case 2: $\I(3)=0$. Then $\up{5}\subset \I \subset \up{4}$, 
consequently, $\I$ is either $\up{5}$ or a generalized 
truncation ideal of type I [Corollary \ref{yycor4.5}]. The 
codimension sequence follows from Corollary \ref{yycor4.6}(1).
\end{proof}

\begin{rmk}
\label{yyrem5.3} 
It is worth mentioning that $\up{5}(6)\not\subset \T_3(6)$
where $\T_3$ is the ideal $\lr{\up{3}(3)}$ in $\uas$. By 
\cite[Theorem 2.16]{BYZ}, we know that $\T_3(6)$ is 
a $\Bbbk\S_6$-module generated by the following elements
\begin{itemize}
\item 
$\theta\circ (1_{k_1}, 1_{k_2}, 1_{k_3})$ with $k_1+k_2+k_3=6$, 
$k_i\ge 1$, 
\item 
$1_2\circ (\theta\circ (1_{k_1}, 1_{k_2}, 1_{k_3}), 1_1)$,
$1_2\circ (1_1, \theta\circ (1_{k_1}, 1_{k_2}, 1_{k_3}))$ 
with  $k_1+k_2+k_3=5$, $k_i\ge 1$, 
\item 
$1_2\circ (\theta\circ (1_{k_1}, 1_{k_2}, 1_{k_3}), 1_2)$,
$1_2\circ (1_2, \theta\circ (1_{k_1}, 1_{k_2}, 1_{k_3}))$, 
$1_3\circ (1_1, \theta\circ(1_{k_1}, 1_{k_2}, 1_{k_3}), 1_1)$ 
with $k_1+k_2+k_3=4$, $k_i\ge 1$, 
\item 
$1_2\circ (\theta, 1_3)$, $1_2\circ (1_3, \theta)$, 
\item $1_3\circ (1_2, \theta,  1_1)$, $1_3\circ (1_1, \theta,  1_2)$  
\end{itemize}
for all $\theta\in \up{3}(3)$, where $1_n$ is the identity 
element of $\S_n$. By a computation assisted by a 
computer, we have $\dim\T_3(6)=688$. Note that we have 
$\dim \frac{\uas}{\T_3+\up{5}}(6)=31$ by Theorem 
\ref{yythm5.2}(1) and 
hence $\dim (\T_3(6)+\up{5}(6))=689$. Therefore, 
$\T_3(6)\subsetneq \T_3(6)+\up{5}(6)$ and 
$\up{5}(6)\not\subset \T_3(6)$. 
\end{rmk} 

Next, we give more details about ideals $\I$ of 
$\uas$ such that $\gkdim(\uas/\I)=5$.

By Theorem \ref{yythm5.2}(2), each quotient operad 
$\uas/\I$ with $\I(3)=0$ 
is of form ${\uas}/{\up{4}^M}$. Observe that there are 15 
kinds of proper submodules of $\up{4}(4)$ since any two 
irreducible components are not isomorphic. There are 15 different
kinds of quotient operads $\uas/\I$ with $\I(3)=0$ of GK-dimension 5. 
By Theorem \ref{yythm5.2}(2), there is a unique ideal 
$\I$ with $\I(3)\neq 0$ 
($\I(3)=\up{3}(3)$) such that $\gkdim(\uas/\I)=5$. Therefore, 
there are total 16 quotient operads $\uas/\I$ of GK-dimension 5.
Here is an operadic version of a result in \cite{GMZ}.

\begin{cor}
\label{yycor5.4}
Let $\I$ be an ideal of $\uas$ and $\gkdim\uas/\I=5$.
Then there are $4$ maximal ideals $\I$ with respect to 
$\gkdim$, namely,  
\begin{enumerate}
\item[(1)] 
$\up{4}^{M_1}$ with 
$\chi_{M_1}=\chi_{(1^4)}+\chi_{(2, 1^2)}+\chi_{(3, 1)}$,
\item[(2)] 
$\up{4}^{M_2}$ with 
$\chi_{M_2}=\chi_{(1^4)}+\chi_{(2^2)}+\chi_{(3, 1)}$,
\item[(3)] 
$\up{4}^{M_3}$ with 
$\chi_{M_3}=\chi_{(1^4)}+\chi_{(2^2)}+\chi_{(2, 1^2)}$,
\item[(4)] 
$\upa{4}^{\mathbb M}=\T_3+\up{5}$.
\end{enumerate}
\end{cor}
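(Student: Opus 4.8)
The plan is to begin from the complete classification in Theorem \ref{yythm5.2} of ideals $\I\subseteq\uas$ with $\gkdim(\uas/\I)=5$: either $\I(3)=0$ and $\I=\up{4}^{M}$ for a proper $\S_4$-submodule $M\subsetneq\up{4}(4)$ (with $M=0$ giving $\up{5}$), or $\I(3)=\up{3}(3)$ and $\I=\upa{4}^{\mathbb M}=\T_3+\up{5}$, the unique type-II ideal. From this finite list I would extract the ones that are maximal with respect to $\gkdim(\uas/\cdot)=5$.

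In the case $\I(3)=0$: by Corollary \ref{yycor4.8}, if $\up{4}^{M}$ is maximal then $M$ is a maximal $\S_4$-submodule of $\up{4}(4)$ (in particular $M\neq0$, so $\up{5}$ is discarded). By \eqref{E5.1.1}, $\up{4}(4)=V_{(1^4)}\oplus V_{(2^2)}\oplus V_{(2,1^2)}\oplus V_{(3,1)}$ is a direct sum of four pairwise non-isomorphic irreducibles, so its maximal submodules are exactly the four obtained by deleting one summand. Together with $\T_3+\up{5}$ this leaves five candidates.

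The crucial step is to pin down $M_4:=\langle\up{3}(3)\rangle(4)\cap\up{4}(4)$, and hence the position of $\T_3+\up{5}$ in this picture. By Example \ref{yyexa4.13}, $\up{4}(4)/M_4$ is one-dimensional; by Lemma \ref{yylem5.1}(2) combined with Proposition \ref{yypro3.6}(2), the only one-dimensional irreducible constituent of $\up{4}(4)$ is $V_{(1^4)}$, so $M_4$ must be the maximal submodule $M_0$ obtained by deleting $V_{(1^4)}$, i.e. $\chi_{M_0}=\chi_{(2^2)}+\chi_{(2,1^2)}+\chi_{(3,1)}$. Since $\up{4}^{M_0}=\langle M_0\rangle+\up{5}$ (Corollary \ref{yycor4.5}(1)) and $M_0=M_4\subseteq\T_3$, we get $\up{4}^{M_0}\subseteq\T_3+\up{5}$, and the inclusion is strict because $\up{4}^{M_0}(3)=0$ while $(\T_3+\up{5})(3)=\up{3}(3)$. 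As $\gkdim(\uas/(\T_3+\up{5}))=5$, the ideal $\up{4}^{M_0}$ is \emph{not} maximal. So the surviving candidates are the three ideals $\up{4}^{M_i}$, $i=1,2,3$, with $M_i$ the maximal submodule still containing $V_{(1^4)}$ (giving the characters in (1)--(3)), together with $\T_3+\up{5}$.

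It remains to confirm that each of these four is genuinely maximal, which is a short inspection of the list in Theorem \ref{yythm5.2}. If $\J\supsetneq\up{4}^{M_i}$ with $\gkdim(\uas/\J)=5$, then either $\J=\up{4}^{M'}$ with $M_i\subsetneq M'\subsetneq\up{4}(4)$, impossible by maximality of $M_i$, or $\J=\T_3+\up{5}$, impossible since then $M_0=\J(4)\cap\up{4}(4)\supseteq M_i$ would force $V_{(1^4)}\subseteq M_0$. If $\J\supsetneq\T_3+\up{5}$ with $\gkdim(\uas/\J)=5$, then $\J=\up{4}^{M'}$ by Theorem \ref{yythm5.2}, contradicting $\up{3}(3)=(\T_3+\up{5})(3)\subseteq\J(3)=0$. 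Hence exactly these four ideals are maximal with respect to $\gkdim(\uas/\I)=5$. The main obstacle is the identification $M_4=M_0$, i.e. recognizing the one-dimensional cokernel of $\langle\up{3}(3)\rangle(4)\hookrightarrow\up{4}(4)$ as the sign representation; the remainder is careful bookkeeping against Theorem \ref{yythm5.2}.
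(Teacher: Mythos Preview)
Your proof is correct and follows essentially the same route as the paper: start from the Theorem \ref{yythm5.2} classification, use Corollary \ref{yycor4.8} to restrict to the four maximal submodules of $\up{4}(4)$, eliminate the one lying below $\T_3+\up{5}$, and verify maximality of the remaining four.  The one noteworthy difference is at the key step of identifying $\langle\up{3}(3)\rangle(4)\cap\up{4}(4)$: the paper simply asserts ``by a direct computation'' that this is the submodule with character $\chi_{(2^2)}+\chi_{(2,1^2)}+\chi_{(3,1)}$, whereas you obtain this conceptually from Example~\ref{yyexa4.13} (the quotient is one-dimensional) together with the observation that $V_{(1^4)}$ is the unique one-dimensional irreducible constituent of $\up{4}(4)$; your verification that $\up{4}^{M_i}\not\subset\T_3+\up{5}$ for $i=1,2,3$ (via $V_{(1^4)}\subseteq M_i$ but $V_{(1^4)}\not\subseteq M_0$) is likewise cleaner than the paper's unexplained ``while $\up{4}^{M_i}\not\subset \T_3+\up{5}$''.
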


\begin{proof} 
By Theorem \ref{yythm5.2} and Corollary \ref{yycor4.8}, 
it is easily seen that $\up{4}^M$ is not maximal with 
respect to $\gkdim$ if $M$ has at most two 
irreducible components. Moreover, by a direct computation,
$\up{4}^{M_4}$ is not maximal with respect to 
$\gkdim$ since $\up{4}^{M_4} \subsetneq \T_3+\up{5}$, 
where $\chi_{M_4}=\chi_{(2^2)}+\chi_{(2, 1^2)}+\chi_{(3, 1)}$, 
while $\up{4}^{M_i}\not\subset \T_3+\up{5}$ for $i=1, 2, 3$. 
It follows that $\up{4}^{M_i}$, for $i=1, 2, 3$, and 
$\T_3+\up{5}$ are maximal with respect to $\gkdim$.
\end{proof}

From decomposition \eqref{E5.1.1}, we observe that any two 
distinct irreducible components of $\up{4}(4)$ are not 
isomorphic. From \cite[Theorem 3.6.2]{We}, we can obtain 
a basis of the component $V_{\la}$ by computing the image 
of the linear map 
\[\varphi_{\lambda}=\sum_{\sigma\in \S_4} 
\chi_{\lambda}(\sigma)\rho_\sigma\colon \up{4}(4)\to \up{4}(4)\] 
for each $\la\vdash 4$, where $\rho_\sigma$ is the right 
multiplication transformation given by $\sigma$. 
Furthermore, using the Specht basis of $\up{4}(4)$
\begin{center}
\begin{tabular}{lll}
$\theta_1=\tau_{2, 2}\ast (2, 1, 4, 3)$,  & 
$\theta_2=\tau_{2, 2}\ast (3, 1, 4, 2)$, & 
$\theta_3=\tau_{2, 2}\ast (3, 2, 4, 1)$,\\
$\theta_4=\tau_4\ast(4, 1, 2, 3)$, & 
$\theta_5=\tau_4\ast(4, 1, 3, 2)$, & 
$\theta_6=\tau_4\ast(4, 2, 1, 3)$,\\
$\theta_7=\tau_4\ast(4, 2, 3, 1)$, & 
$\theta_8=\tau_4\ast(4, 3, 1, 2)$, & 
$\theta_9=\tau_4\ast(4, 3, 2, 1)$,
\end{tabular}
\end{center}
we obtain the generator $\zeta_{\la}$ of each 
irreducible component $V_{\la}$ as follows
\begin{equation}
\label{E5.4.1}\tag{E5.4.1}
\text{Table}
\end{equation}
\begin{center}
\begin{tabular}{|c|c|}
\hline 
Irreducible Component & Generator\\ \hline
$V_{(1^4)}$ & $\zeta_{(1^4)}=2\theta_1-2\theta_2
+2\theta_3-\theta_4+\theta_5+\theta_6-\theta_7-\theta_8+\theta_9$ \\ \hline
$V_{(2^2)}$ & $\zeta_{(2^2)}=2\theta_1+4\theta_2
+2\theta_3-\theta_4+\theta_5-2\theta_6+2\theta_7
-\theta_8+\theta_9$ \\ \hline
$V_{(3, 1)}$ & $\zeta_{(3, 1)}=\theta_4+3\theta_5
-2\theta_6+2\theta_7-3\theta_8-\theta_9$ \\ \hline
$V_{(2, 1^2)}$ &
$\zeta_{(2, 1^2)}=\theta_4-\theta_5-\theta_6
+\theta_7+\theta_8-\theta_9$ \\ \hline
\end{tabular}
\end{center}

Using this table and Proposition \ref{yypro4.9}, 
we obtain a generator for each $\up{4}^M$ as follows:

\begin{prop}
\label{yypro5.5}
\begin{enumerate}
\item[(1)]
Here is a list of generators of $\up{4}^M$:
\begin{equation}
\label{E5.5.1}\tag{E5.5.1}
\text{Table}
\end{equation}
\begin{center}
\begin{tabular}{|c|c|c|}
\hline
$M$ & Generator  & $\gd(\up{4}^M)$ \\ \hline 
$V_{(1^4)}+V_{(3,1)}+V_{(2,1^2)}$ & 
$\zeta_{(1^4)}+\zeta_{(3, 1)}+\zeta_{(2, 1^2)}$ & $4$ \\ \hline
$V_{(1^4)}+V_{(2^2)}+V_{(3,1)}$   & 
$\zeta_{(1^4)}+\zeta_{(2^2)}+\zeta_{(3, 1)}$    & $4$ \\ \hline
$V_{(1^4)}+V_{(3,1)}$             &  
$\zeta_{(1^4)}+\zeta_{(3,1)}$    & $4$ \\ \hline
$V_{(1^4)}+V_{(2^2)}+V_{(2,1^2)}$ & 
$1_2\circ(\zeta_{(1^4)}+\zeta_{(2^2)}+\zeta_{(2,1^2)}, 1_1)+\beta_5$  & $5$ \\ \hline 
$V_{(1^4)}+V_{(2,1^2)}$           &
$1_2\circ (\zeta_{(1^4)}+\zeta_{(2,1^2)}, 1_1)+\beta_5$    & $5$ \\ \hline
$V_{(1^4)}+V_{(2^2)}$             &    
$1_2\circ(\zeta_{(1^4)}+\zeta_{(2^2)}, 1_1)+\beta_5$   & $5$ \\ \hline
$V_{(1^4)}$                       &    
$1_2\circ(\zeta_{(1^4)}, 1_1) +\beta_5$         & $5$ \\ \hline
$V_{(2^2)}+V_{(3,1)}+V_{(2,1^2)}$ &  
$1_2\circ (\zeta_{(2^2)}+\zeta_{(3,1)}+\zeta_{(2,1^2)}, 1_2)
+1_2\circ (\beta_5, 1_1)+\beta_6$ & $6$ \\ \hline
$V_{(2^2)}+V_{(2,1^2)}$          &  
$1_2\circ (\zeta_{(2^2)}+\zeta_{(2,1^2)}, 1_2)
+1_2\circ (\beta_5, 1_1)+\beta_6$  & $6$ \\ \hline
$V_{(3,1)}+V_{(2,1^2)}$          & 
$1_2\circ (\zeta_{(3,1)}+\zeta_{(2,1^2)}, 1_2)
+1_2\circ (\beta_5, 1_1)+\beta_6$   & $6$ \\ \hline
$V_{(2^2)}+V_{(3,1)}$            &
$1_2\circ (\zeta_{(2^2)}+\zeta_{(3,1)}, 1_2)
+1_2\circ (\beta_5, 1_1)+\beta_6$   & $6$ \\ \hline
$V_{(3,1)}$                      & 
$1_2\circ (\zeta_{(3,1)}, 1_2)
+1_2\circ (\beta_5, 1_1)+\beta_6$       & $6$ \\ \hline
$V_{(2,1^2)}$  & $1_2\circ (\zeta_{(2,1^2)}, 1_2)
+1_2\circ (\beta_5, 1_1)+\beta_6$      & $6$ \\ \hline
$V_{(2^2)}$  & $1_2\circ (\zeta_{(2^2)}, 1_2)
+1_2\circ (\beta_5, 1_1)+\beta_6$       & $6$ \\ \hline
$0$      & $\beta_6$ & $6$ \\ \hline  
\end{tabular}
\end{center}
where 
\begin{equation}
\label{E5.5.2}\tag{E5.5.2}
\beta_5:=\tau_{2, 3}\ast (5, 4, 3, 2, 1)
+\tau_5\ast (5, 4, 3, 2, 1)
\end{equation} 
is a generator of $\up{5}(5)$ and 
\begin{equation}
\label{E5.5.3}\tag{E5.5.3}
\beta_6:=\tau_{2, 2, 2}\ast (2, 1, 4, 3, 6, 5)
+\tau_{2, 4}\ast (6, 5, 4, 3, 2, 1)
+\tau_{3, 3}\ast (3, 2, 1, 6, 5, 4)
+\tau_6\ast (6, 5, 4, 3,2, 1)
\end{equation}
is a generator of $\up{6}(6)$. 
\item[(2)]
$\gd(\T_3+\up{5})=6$ and 
\[\alpha=1_2\circ (\tau_3, 1_3)+\sum_{\sigma\in \S_6}\sgn(\sigma)\sigma\]
is a generator of the ideal $\T_3+\up{5}$.
\end{enumerate}
\end{prop}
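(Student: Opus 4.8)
The plan is to reduce both parts to a computation of generating degrees and then produce the generators mechanically. Since $m=4$ is even, Proposition~\ref{yypro4.9}(1) already confines $\gd(\up{4}^{M})$ to $\{4,5,6\}$ for every proper submodule $M\subsetneq\up{4}(4)$, and Proposition~\ref{yypro4.9}(2) writes a generator of $\up{4}^{M}$ in exactly the three shapes of Table~\eqref{E5.5.1} once $\gd$ is known, given a $\Bbbk\S_4$-generator $\zeta$ of $M$ and $\Bbbk\S_5$- and $\Bbbk\S_6$-generators $\beta_5,\beta_6$ of $\up{5}(5)$ and $\up{6}(6)$. By Lemma~\ref{yylem5.1}(2) the four irreducible summands of $\up{4}(4)$ are pairwise non-isomorphic, so each $M$ is the direct sum of a subset of $V_{(1^4)},V_{(2^2)},V_{(3,1)},V_{(2,1^2)}$, one has $\lr{M}=\sum_{V_\la\subseteq M}\lr{V_\la}$, and $\zeta:=\sum_{V_\la\subseteq M}\zeta_\la$ with the $\zeta_\la$ from Table~\eqref{E5.4.1} generates $M$; I would separately check, by a direct calculation with the Specht basis of Proposition~\ref{yypro3.3}, that the particular elements $\beta_5,\beta_6$ of \eqref{E5.5.2}--\eqref{E5.5.3} are module generators (these modules being cyclic because in each the multiplicity of every irreducible is at most its dimension). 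So everything hinges on $\gd(\up{4}^{M})$.

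To compute it I would use $\up{4}^{M}=\lr{M}+\up{5}$ (Corollary~\ref{yycor4.5}), $\up{5}=\lr{\up{5}(6)}$ (Theorem~\ref{yythm3.14}), the basis of $\up{4}^{M}(5)$ from Theorem~\ref{yythm4.4} (which gives $\lr{\up{4}^{M}(5)}=\lr{M}+\lr{\up{5}(5)}$), and the element $s_n:=\sum_{\sigma\in\S_n}\sgn(\sigma)\sigma$. Since one contraction of $s_n$ vanishes when $n$ is even (proof of Proposition~\ref{yypro3.6}(2)) and $\uas(6)\cong\Bbbk\S_6$ contains the sign representation $S^{(1^6)}$ with multiplicity one, spanned by $s_6$, one gets $s_6\in\up{6}(6)\subseteq\up{5}(6)\subseteq\up{4}(6)$ with $S^{(1^6)}$ occurring in each with multiplicity one. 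By Proposition~\ref{yypro3.11} and Lemma~\ref{yylem3.10} (with $k=5$), $\lr{\up{5}(5)}(6)$ has codimension one in $\up{5}(6)$ with quotient $\cong S^{(1^6)}$; uniqueness of the sign copy in $\up{5}(6)$ then forces $s_6\notin\lr{\up{5}(5)}(6)$ and $\up{5}(6)=\lr{\up{5}(5)}(6)\oplus\Bbbk s_6$. From these facts, together with the fact that $\up{4}(6)$ houses only one copy of $S^{(1^6)}$, I would deduce: $\gd(\up{4}^{M})\le5$ iff $s_6\in\lr{M}(6)$; and $\gd(\up{4}^{M})=4$ iff $s_6\in\lr{M}(6)$ and $\up{5}(5)\subseteq\lr{M}(5)$.

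The first condition is conceptual. If $V_{(1^4)}\not\subseteq M$, then $M$ lies in the unique codimension-one submodule $N=V_{(2^2)}\oplus V_{(3,1)}\oplus V_{(2,1^2)}$, which by Example~\ref{yyexa4.13} equals $\I_{E}(4)\cap\up{4}(4)$, so $\lr{M}\subseteq\I_{E}=\T_3$; as the Grassmann algebra satisfies no standard identity, $s_6\notin\T_3(6)$, hence $s_6\notin\lr{M}(6)$ and $\gd(\up{4}^{M})=6$. If $V_{(1^4)}\subseteq M$, then $s_4\in M$ (it generates the sign component $V_{(1^4)}$) and the classical relation $s_6=\sum_i(-1)^{i-1}s_4(x_1,\dots,\widehat{x_i},\dots,x_6)\,x_i$ puts $s_6\in\lr{s_4}\subseteq\lr{M}$, so $\gd(\up{4}^{M})\le5$. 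The remaining dichotomy, for $M\supseteq V_{(1^4)}$, is $\gd(\up{4}^{M})=4\iff V_{(3,1)}\subseteq M$, equivalently $\up{5}(5)\subseteq\lr{M}(5)\iff V_{(3,1)}\subseteq M$; this is the step I expect to require genuine computation: using Theorem~\ref{yythm3.7}(1) to present the $\S_5$-submodule $\lr{M}(5)$ of $\uas(5)\cong\Bbbk\S_5$, it reduces by monotonicity in $M$ to the two checks $\up{5}(5)\subseteq\lr{V_{(1^4)}\oplus V_{(3,1)}}(5)$ and $\up{5}(5)\not\subseteq\lr{V_{(1^4)}\oplus V_{(2^2)}\oplus V_{(2,1^2)}}(5)$, which (like the verification of $\beta_5,\beta_6$) are finite-dimensional linear algebra I would carry out by computer. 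Substituting the resulting $\gd$-values into Proposition~\ref{yypro4.9}(2) then yields Table~\eqref{E5.5.1}.

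For part~(2): by Example~\ref{yyexa4.13}, $\T_3+\up{5}=\upa{4}^{\mathbb M}$ for the unique $4$-admissible pair, so $\gkdim(\uas/(\T_3+\up{5}))=5$ and Theorem~\ref{yythm0.8} gives $\gd\le6$. To see $\gd=6$ I would argue by contradiction: the generating series of $\uas/\T_3$ in Remark~\ref{yyrem1.2} has no degree-$5$ contribution in the sense of Lemma~\ref{yylem1.5}, i.e. $\up{5}(5)\subseteq\T_3(5)$, so $\gd\le5$ would force $\T_3+\up{5}=\lr{\T_3(5)+\up{5}(5)}=\lr{\T_3(5)}\subseteq\T_3$, contradicting $\up{5}(6)\not\subseteq\T_3(6)$ (Remark~\ref{yyrem5.3}). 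For $\alpha=1_2\circ(\tau_3,1_3)+s_6$: since $1_2\circ(\tau_3,1_3)\in\lr{\tau_3}=\T_3$ and $s_6\in\up{5}(6)$, we have $\lr{\alpha}\subseteq\T_3+\up{5}$; conversely three contractions $\ucr{i}1_0$ carry $\alpha$ to $\tau_3$ (the $s_6$-summand dying at the first, being of even degree), so $\T_3\subseteq\lr{\alpha}$, then $s_6=\alpha-1_2\circ(\tau_3,1_3)\in\lr{\alpha}$, whence $\lr{\alpha}=\T_3+\lr{s_6}$. Finally $\lr{s_6}(6)=\Bbbk s_6$ by Theorem~\ref{yythm3.7}(1) (every contraction of $s_6$ vanishes), and $\dim(\T_3(6)+\Bbbk s_6)=\dim\T_3(6)+1=\dim(\T_3(6)+\up{5}(6))$ by Remark~\ref{yyrem5.3}; since $\Bbbk s_6\subseteq\up{5}(6)$, this forces $\up{5}(6)\subseteq(\lr{\alpha})(6)$, so $\up{5}=\lr{\up{5}(6)}\subseteq\lr{\alpha}$ and $\lr{\alpha}=\T_3+\up{5}$. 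As $\gd(\T_3+\up{5})=6$, $\alpha$ is a generating element. The only non-conceptual inputs are the two submodule containments above (together with the checks on $\beta_5,\beta_6$) and the count $\dim\T_3(6)=688$ from Remark~\ref{yyrem5.3}; these are the parts I expect to be the main obstacle and would handle by computer algebra.
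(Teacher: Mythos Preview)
Your approach is correct and, for part~(1), considerably more structured than the paper's, which simply states that the verifications are ``tedious (but elementary) computations'' and omits them. You isolate two clean criteria---$\gd(\up{4}^M)\le5\iff s_6\in\lr{M}(6)\iff V_{(1^4)}\subseteq M$, and then (given $V_{(1^4)}\subseteq M$) $\gd(\up{4}^M)=4\iff \up{5}(5)\subseteq\lr{M}(5)\iff V_{(3,1)}\subseteq M$---and settle the first conceptually via the Grassmann ideal $\T_3=\I_E$ and the uniqueness of the sign copy in $\Bbbk\S_6$, leaving only two targeted containment checks plus the cyclicity of $\beta_5,\beta_6$ for the computer. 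This buys a genuine explanation of the trichotomy in Table~\eqref{E5.5.1} rather than fifteen separate verifications. For part~(2) your argument is essentially the paper's: both extract $\tau_3$ by three contractions, use $\up{5}(5)\subseteq\T_3(5)$, and invoke the dimension count from Remark~\ref{yyrem5.3} to conclude $\up{5}(6)\subseteq\T_3(6)+\Bbbk s_6$.

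One small correction: the ``classical relation'' you write as $s_6=\sum_i(-1)^{i-1}s_4(x_1,\dots,\widehat{x_i},\dots,x_6)\,x_i$ has a degree mismatch (the right-hand side would feed five arguments into $s_4$). What you need is the one-step Laplace expansion $s_{n+1}=\sum_{i=1}^{n+1}(-1)^{n+1-i}s_n(x_1,\dots,\widehat{x_i},\dots,x_{n+1})\,x_i$ applied twice, giving $s_5\in\lr{s_4}$ and then $s_6\in\lr{s_5}\subseteq\lr{s_4}$. With that fix the argument goes through. Also, the identification $N=V_{(2^2)}\oplus V_{(3,1)}\oplus V_{(2,1^2)}=\T_3(4)\cap\up{4}(4)$ is not literally stated in Example~\ref{yyexa4.13}; it follows because $\up{4}(4)$ is multiplicity-free, $\dim(\T_3(4)\cap\up{4}(4))=8$, and $s_4\notin\T_3$ (the Grassmann algebra satisfies no standard identity), so the missing $1$-dimensional summand must be $V_{(1^4)}$. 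You use this implicitly and should make it explicit.
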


\begin{proof}
To save space, tedious (but elementary) computations 
are omitted. We only give a proof of (2). 

(2) Clearly, $\alpha\in (\T_3+\up{5})(6)$. Observe that 
\[\tau_3=\alpha\circ (1_1, 1_1, 1_1, 1_0, 1_0, 1_0)\in 
\lr{\alpha}(3), \; {\rm and}\ \sum_{\sigma\in \S_6}
\sgn(\sigma)\sigma\in \lr{\alpha}(6).\]
Since $\up{3}(3)$ can be generated by $\tau_3$ as an 
$\S_3$-module, we know that $\T_3$ can be generated by 
$\tau_3$ as an ideal of $\uas$ and $\T_3\subset \lr{\alpha}$. 
By Proposition \ref{yypro3.3}, each element in the Specht 
basis of $\up{5}(5)$ is of form $\tau_{2, 3}^\sigma$ or 
$\tau_5^\sigma$ for some $\sigma\in \S_5$. Therefore, 
$\up{5}(5)\subset \T_3(5)$. By Remark 5.3, we know 
$\up{5}(6)\subset \T_3(6)+\Bbbk \sum_{\sigma\in \S_6}
\sgn(\sigma)\sigma$ since $\dim \T_3(6)=688$ and 
$\dim (\T_3(6)+\up{5}(6))=689$. Therefore, $\up{5}(6)$ 
can be generated by $\sum_{\sigma\in \S_6}\sgn(\sigma)\sigma$ 
and some element in $\T_3(6)$. By Proposition 
\ref{yypro3.9}, 
we have $\up{5}\subset \lr{\tau_3, \sum_{\sigma\in \S_6}
\sgn(\sigma)\sigma}=\lr{\alpha}$, and therefore 
$\T_3+\up{5}=\lr{\alpha}$. 

On the other hand, we have that
\[(\T_3+\up{5})(3)=\up{3}(3),
\quad (\T_3+\up{5})(4)=\T_3(4), 
\quad {\rm and}
\quad (\T_3+\up{5})(5)=\T_3(5).\]
Observe that $\up{5}(6)\not\subset \T_3(6)$. It follows that 
$\gd(\T_3+\up{5})=6$ since $\T_3+\up{5}$ can be generated by 
$\alpha$.
\end{proof}

\subsection{PI-Algebras having low polynomial growth}
\label{yysec5.2}

In this subsection we give some results on the PI-algebras of 
low grade, some of which were proved in \cite{BYZ, GMZ, OV}.

\begin{prop}
\label{yypro5.6}
Let $A$ be a PI-algebra. For $k=0, 2$ or $3$, the 
grade of $A$ is $k$ if and only if $A$ is a PI-algebra 
associated to $\up{k+1}$, and the grade of $A$ cannot 
be $1$. 
\begin{enumerate}
\item[(1)] 
If $k=0$, then the generating degree of $A$ is $2$, the 
polynomial 
\[f(x_1, x_2)=[x_1, x_2]\] 
is a generating identity of $A$, and the codimension sequence 
is $c_n(A)=1$. 
\item[(2)] 
If $k=2$, then the generating degree of $A$ is $4$, the 
polynomial 
\begin{align*}
f(x_1, x_2, x_3, x_4)
=[x_2, x_1][x_4, x_3]+[x_4, x_3, x_2, x_1]+[x_3, x_2, x_1]x_4
\end{align*}
is a generating identity of $A$, and the codimension sequence 
is $c_n(A)=1+{n\choose 2}$.
\item[(3)] 
If $k=3$, then the generating degree of $A$ is $4$, the polynomial
\[f(x_1, x_2, x_3, x_4)
=[x_2, x_1][x_4, x_3]+[x_4, x_3, x_2, x_1]\]
is a generating identity of $A$, and the codimension 
$c_n(A)=1+{n\choose 2}+2{n\choose 3}$.
\end{enumerate}
\end{prop}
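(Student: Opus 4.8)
The plan is to combine the structural results already established in the excerpt: Proposition~\ref{yypro5.6} should follow directly from Proposition~\ref{yylem1.8} (the classification of $\gkdim(\uas/\up{k})$), Theorem~\ref{yythm3.14} (the generating degree of $\up{k}$), Proposition~\ref{yypro1.10} (the generating series of $\uas/\up{k}$), and the dictionary between PI-algebras and operadic ideals from Section~\ref{yysec2}. First I would recall from Lemma~\ref{yylem1.8} that $\gkdim(\uas/\up{1})=1$, $\gkdim(\uas/\up{3})=3$, and $\gkdim(\uas/\up{4})=4$, so that the corresponding grades are $0$, $2$, $3$; and that $\gkdim(\uas/\up{2})=1=\gkdim(\uas/\up{1})$, which with Lemma~\ref{yylem1.3} forces the grade never to equal $1$ (since $\up{2}\ne 0$ but $\gkdim(\uas/\up{2})=1$ means no quotient operad has GK-dimension exactly $2$). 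The ``only if'' direction requires showing that an operad of GK-dimension $k+1$ for $k\in\{0,2,3\}$ with $\I(k-1)=0$ (automatic since $\up{k-1}\not\subset\I$ would be needed, but these truncation ideals are simple or low-dimensional) must equal $\up{k+1}$; for $k=0,2$ this is because $\up{1}$ and $\up{3}$ are generated by single simple modules, so by Corollary~\ref{yycor1.7} combined with Corollary~\ref{yycor4.5} there is no room for a proper generalized truncation ideal of type I, and for $k=3$ one must additionally rule out the type-II case, but Example~\ref{yyexa4.13} shows the unique $4$-admissible pair gives $\gkdim=5$, not $4$, so no type-II ideal intervenes.

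For part~(1), since $\gd(\up{1})$: here $\up{1}(1)=0$ and $\up{2}(2)=\Lie(2)=\Bbbk\tau$ is one-dimensional, so $\up{1}=\lr{\up{2}(2)}=\lr{\tau}$ and $\gd(\up{1})=2$; the generating element corresponds under $\Phi_2$ to $[x_1,x_2]$. The codimension sequence $c_n=\dim(\uas/\up{1})(n)=1$ for all $n$ follows from Proposition~\ref{yypro1.10} (or directly: $\uas/\up{1}$ encodes commutative algebras). For part~(2), Lemma~\ref{yylem1.8} and Theorem~\ref{yythm3.14} give that $\up{3}$ is odd-index, so $\gd(\up{3})=4$, and Proposition~\ref{yypro1.10} with $d=3$ gives $G_{\uas/\up{3}}(t)=\frac{1}{1-t}+\frac{t^2}{(1-t)^3}$, i.e.\ $c_n=1+\binom{n}{2}$. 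For part~(3), $\up{4}$ has even index $4$, so $\gd(\up{4})=4$, and Proposition~\ref{yypro1.10} with $d=4$ gives the extra term $\gamma_3\frac{t^3}{(1-t)^4}=\frac{2t^3}{(1-t)^4}$, hence $c_n=1+\binom{n}{2}+2\binom{n}{3}$.

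The only genuine computation is to verify that the three explicitly displayed polynomials are in fact generating identities, i.e.\ that the corresponding elements of $\Bbbk\S_n$ lie in and generate the appropriate truncation ideals. Using the isomorphism $\Psi_n\colon\Gamma_n\to\up{n}(n)$ of Lemma~\ref{yylem3.2} and the Specht basis of Proposition~\ref{yypro3.3}, one checks: for $k=2$, $\Psi_4$ of the given polynomial is a nonzero element of $\up{4}(4)$ whose restriction $\pi^I$ for $|I|=3$ lands in $\up{3}(3)$ and actually hits the generator $\tau_3$ (the summand $[x_3,x_2,x_1]x_4$ contributes this), so it generates $\lr{\up{3}(3)}+\up{4}=\up{3}$ via Lemma~\ref{yylem1.12}(2)—here I would invoke the explicit generator forms in Proposition~\ref{yypro4.9}(2) and Lemma~\ref{yylem4.15}(2), noting $\up{3}=\upa{3}^{\mathbb M}$ with $M_2=0$, $M_3=\up{3}(3)$. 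For $k=3$, the polynomial $[x_2,x_1][x_4,x_3]+[x_4,x_3,x_2,x_1]$ maps under $\Psi_4$ into $\up{4}(4)$ (it is a proper polynomial of degree $4$, all commutators of total degree $4$), and one verifies it generates all of $\up{4}(4)$ as an $\S_4$-module—this is the content that can be checked against the character decomposition $\chi_{\up{4}(4)}=\chi_{(1^4)}+\chi_{(2^2)}+\chi_{(2,1^2)}+\chi_{(3,1)}$ of Lemma~\ref{yylem5.1}(2), confirming the cyclic generator exists. The main obstacle, and the place where care is needed, is precisely this last verification that the specific displayed polynomials generate (not merely belong to) the target ideals; everything else is assembly of quoted results. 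Since the paper elsewhere says such elementary computations are omitted, I would state the verification succinctly, pointing to the Specht-basis machinery and the character counts, and leave the bracket arithmetic to the reader.
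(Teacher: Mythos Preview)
Your assembly of structural results (Lemma~\ref{yylem1.8}, Theorem~\ref{yythm3.14}, Proposition~\ref{yypro1.10}, Lemma~\ref{yylem1.12}(2)) matches the paper's approach, and the codimension and generating-degree claims follow exactly as you say. Two points, however, are misframed.

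First, your invocation of Example~\ref{yyexa4.13} to rule out type-II ideals at $k=3$ is irrelevant: that example concerns $4$-admissible pairs, which produce ideals of GK-dimension~$5$, not~$4$. For $\gkdim(\uas/\I)=4$ you have $\up{4}\subset\I\subsetneq\up{3}$, and Corollary~\ref{yycor4.5}(1) (with $m=3$) already forces $\I=\up{3}^M$ for a proper submodule $M\subsetneq\up{3}(3)$; since $\up{3}(3)$ is simple, $M=0$ and $\I=\up{4}$. No type-II discussion is needed. (The paper itself just cites \cite[Theorem~0.7]{BYZ} for the whole classification.)

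Second, your identification $\up{3}=\upa{3}^{\mathbb M}$ with $M_2=0$, $M_3=\up{3}(3)$ violates Definition~\ref{yydef4.10} on both ends ($M_{m-s}$ must be nonzero and $M_m$ must be proper), so Proposition~\ref{yypro4.9} and Lemma~\ref{yylem4.15} do not apply. The correct reference is your earlier one, Lemma~\ref{yylem1.12}(2), applied directly to $\I=\up{3}$ with $m=4$: take $\zeta_3$ a generator of $\up{3}(3)$ and $\zeta_4$ a generator of $\up{4}(4)$; then $\zeta^{[4]}=1_2\circ(\zeta_3,1_1)+\zeta_4$ generates $\up{3}$. This is exactly what the paper does, proving part~(3) first to obtain the generator $\zeta_4$ of $\up{4}(4)$, then using it in part~(2). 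The paper's verification that $\zeta_4=\tau_{2,2}\ast(2,1,4,3)+\tau_4\ast(4,3,2,1)$ generates $\up{4}(4)$ is by an explicit formula expressing $\tau_4$ in the $\S_4$-span of $\zeta_4$; your character-decomposition argument (all four irreducibles are distinct, so cyclicity reduces to checking nonzero projection onto each) is a legitimate alternative, but you would still need to verify those four projections are nonzero.
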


\begin{proof}
By \cite[Theorem 0.7]{BYZ}, $\I_{A}=\up{k+1}$ for $k=0,2,3$. We
translate results in operad $\uas$ to the setting of PI-algebras.

(1) In this case $\I_A=\up{1}=\up{2}$ \cite[Lemma 3.8]{BYZ}.
By Proposition \ref{yypro3.10}, we have $\gd(\up{2})=2$ which forces
that $\up{2}=\lr{\tau}$. Therefore, $\Phi_2(\tau)=x_1x_2-x_2x_1$ is a 
generating identity of $A$ where $\Phi_n$ is defined in \eqref{E3.1.1}.

Next we consider (3).

(3) In this case $\I_A=\up{4}$. By Proposition 
\ref{yypro3.9}, it suffices to show that 
$\zeta_4=\tau_{2, 2}\ast (2, 1, 4, 3)+\tau_4\ast (4, 3, 2, 1)$ is 
a generator of $\up{4}(4)$ as an $\Bbbk\S_4$-module.
By a direct computation, we know that $\tau_{2, 2}\ast (2, 1, 4, 3)$ 
and $\tau_4\ast (4, 3, 2, 1)$ can be generated by $\zeta_4$ since
\begin{align*}
\tau_4=& \frac{1}{4}(\zeta_4+\zeta_4\ast 
(2, 1, 3, 4))\ast((1,4,2,3)-(1,3,2,4)+(2,3,1,4)-(2,4,1,3)-2(3, 4, 1, 2)).
\end{align*}
From the Specht basis of $\up{4}(4)$, it follows that $\up{4}(4)$ 
can be generated by $\zeta_4$ as an $\Bbbk\S_4$-module.

(2) In this case $\I_{A}=\up{3}$. The assertion basically follows 
from the proof Proposition \ref{yypro4.9}(2), but we give some 
details below. Denote $\zeta'_4=\tau_{2, 2}\ast (2, 1, 4, 3)
+\tau_{4}\ast (4, 3, 2, 1)+(1_2\ucr{1}\tau_3)\ast (3, 2, 1, 4)$. 
Observe that $\zeta'_4\ucr{4} 1_0=\tau_3\ast (3, 2, 1)\in 
\lr{\up{3}(4)}(3)$ and $\tau_3\in \lr{\up{3}(4)}(3)$.
Therefore, we have 
\begin{align*}
\tau_4\ast (4, 3, 2, 1)=(\tau_3\ucr{1}\tau_2)\ast (4, 3, 2, 1)
  \in \lr{\up{3}(4)}(4),\\
(1_2\ucr{1}\tau_3)\ast (3, 2, 1, 4) \in \lr{\up{3}(4)}(4).
\end{align*}
since $\lr{\up{3}(4)}$ is an ideal. Moreover, 
\[\tau_{2, 2}\ast (2, 1, 4, 3)=\zeta'_4-\tau_4\ast (4, 3, 2, 1)
-(1_2\ucr{1}\tau_3)\ast (3, 2, 1, 4)\in \lr{\up{3}(4)}(4).\]
It follows that $\lr{\up{3}(4)}(4)=\up{3}(4)$ since 
$\lr{\up{3}(4)}(4)$ contains all of the Specht basis of $\up{4}(4)$.
By Proposition \ref{yypro3.10}, 
we know that $\gd(\up{3})=\gd(\up{4})=4$ and therefore 
$\up{3}=\lr{\zeta'_4}$. 
Consequently, 
\[\Phi_4(\zeta'_4)=[x_2, x_1][x_4, x_3]
+[x_4, x_3, x_2, x_1]+[x_3, x_2, x_1]x_4\]
is a generating identity of $A$.
\end{proof}

By Theorem \ref{yythm5.2} and Proposition \ref{yypro4.9}, we 
have the following facts, and results on the codimension 
sequences and the codimension series were also given in \cite{OV}.

\begin{prop}
\label{yypro5.7}
Let $A$ be a PI-algebra. Then the grade of $A$ is $4$ 
if and only if $A$ is a PI-algebra associated to $\I_A$
as in Theorem {\rm{\ref{yythm5.2}}}. 
\begin{enumerate}
\item[(1)]
If $\I_A=\T_3+\up{5}$, then the codimension sequence is	
\[c_n(A)=1+{n\choose 2}+{n\choose 4},\]
and the codimension series is 
\[c(A,t)=\frac{1}{1-t}+\frac{t^2}{(1-t)^3}+
\frac{t^4}{(1-t)^5}.\]
In this case, $\gd(A)=6$, and the polynomial
\begin{equation}
\label{E5.7.1}\tag{E5.7.1}
f=[x_3, x_1, x_2]x_4x_5x_6+\sum_{\sigma\in \S_6}
\sgn(\sigma)x_{\sigma(1)}\cdots x_{\sigma(6)}
\end{equation}
is a generating identity of $A$.
\item[(2)] 
If $\I_A=\up{4}^M$ with $M$ a proper submodule of $\up{4}(4)$, 
then the codimension sequence is
\[c_n(A)=1+{n\choose 2}+2{n\choose 3}+u{n\choose 4}\]
and the codimension series is 
\[c(A,t)=\frac{1}{1-t}+\frac{t^2}{(1-t)^3}+
\frac{2t^3}{(1-t)^4}+\frac{ut^4}{(1-t)^5},\]
where $u$ is an integer in the set $\{1,2,3,4,5,6,7,8,9\}$.
Generating identities of $A$ are listed as follows:
\begin{equation}
\label{E5.7.2}\tag{E5.7.2}
Table
\end{equation}
~\begin{center}
\begin{tabular}{|c|c|c|c|}
\hline
$M$                               & $u$ & {\rm generating identity}             & $\gd(\up{4}^M)$ \\ \hline 
$V_{(1^4)}+V_{(3,1)}+V_{(2,1^2)}$ & $2$ & $f_1+f_3+f_4$                     & $4$ \\ \hline
$V_{(1^4)}+V_{(2^2)}+V_{(3,1)}$   & $3$ & $f_1+f_2+f_3$                     & $4$ \\ \hline
$V_{(1^4)}+V_{(3,1)}$             & $5$ & $f_1+f_3$                         & $4$ \\ \hline
$V_{(1^4)}+V_{(2^2)}+V_{(2,1^2)}$ & $3$ & $(f_1+f_2+f_4)x_5+g_5$            & $5$ \\ \hline 
$V_{(1^4)}+V_{(2,1^2)}$           & $5$ & $(f_1+f_4)x_5+g_5$                & $5$ \\ \hline
$V_{(1^4)}+V_{(2^2)}$             & $6$ & $(f_1+f_2)x_5+g_5$                & $5$ \\ \hline
$V_{(1^4)}$                       & $8$ & $f_1x_5+g_5$                      & $5$ \\ \hline
$V_{(2^2)}+V_{(3,1)}+V_{(2,1^2)}$ & $1$ &  $(f_2+f_3+f_4)x_5x_6+g_5x_6+g_6$ & $6$ \\ \hline
$V_{(3,1)}+V_{(2,1^2)}$           & $3$ & $(f_3+f_4)x_5x_6+g_5x_6+g_6$      & $6$ \\ \hline
$V_{(2^2)}+V_{(2,1^2)}$           & $4$ &  $(f_2+f_4)x_5x_6+g_5x_6+g_6$     & $6$ \\ \hline
$V_{(2^2)}+V_{(3,1)}$             & $4$ & $(f_2+f_3)x_5x_6+g_5x_6+g_6$      & $6$ \\ \hline
$V_{(3,1)}$                       & $6$ &  $f_3x_5x_6+g_5x_6+g_6$           & $6$ \\ \hline
$V_{(2,1^2)}$                     & $6$ & $f_4x_5x_6+g_5x_6+g_6$            & $6$ \\ \hline
$V_{(2^2)}$                       & $7$ & $f_2x_5x_6+g_5x_6+g_6$            & $6$ \\ \hline
$0$                               & $9$ & $g_6$                             & $6$ \\ \hline
\end{tabular}
\end{center}
where the polynomials
\begin{align*}
f_1=& 2h_1-2h_2+2h_3-h_4+h_5+h_6-h_7-h_8+h_9\\
f_2=& 2h_1+4h_2+2h_3-h_4+h_5-2h_6+2h_7-h_8+h_9\\
f_3=& h_4+3h_5-2h_6+2h_7-3h_8-h_9\\
f_4=& h_4-h_5-h_6+h_7+h_8-h_9\\
g_5=& [x_5, x_4][x_3, x_2, x_1]+[x_5, x_4, x_3, x_2, x_1]\\
g_6=& [x_2, x_1][x_4, x_3][x_6, x_5]+[x_6, x_5][x_4, x_3, x_2, x_1]+[x_3, x_2, x_1][x_6, x_5, x_4]+[x_6, x_5, x_4, x_3, x_2, x_1]
\end{align*}
and 
\begin{align*}
	\begin{split}
		h_1=& [x_2, x_1][x_4, x_3], \quad 
		h_2= [x_3, x_1][x_4, x_2], \quad 
		h_3= [x_3, x_2][x_4, x_1], \\
		h_4=& [x_4, x_1, x_2, x_3], \quad
		h_5= [x_4, x_1, x_3, x_2], \quad 
		h_6= [x_4, x_2, x_1, x_3], \\
		h_7=& [x_4, x_2, x_3, x_1], \quad
		h_8= [x_4, x_3, x_1, x_2], \quad
		h_9= [x_4, x_3, x_2, x_1]. 
	\end{split}
\end{align*}
is the Specht basis of $\Gamma_4$. 
\end{enumerate}
\end{prop}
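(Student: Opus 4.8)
The plan is to transport the operadic classification in Theorem \ref{yythm5.2} and the explicit generators of Proposition \ref{yypro5.5} to the PI-algebra side, using the two dictionaries $c_n(A)=\dim(\uas/\I_A)(n)$ from \eqref{E0.0.1} and the right $\Bbbk\S_n$-module isomorphism $\Phi_n\colon \uas(n)\to V_n$ of \eqref{E3.1.1} (with inverse $\Psi_n$). Since the grade of $A$ equals $\gkdim(\uas/\I_A)-1$ (Subsection \ref{yysec1.3}) and every operadic ideal of $\uas$ is $\I_A$ for some PI-algebra $A$ (combine Lemma \ref{yylem2.1} and Theorem \ref{yythm2.3}, taking $A$ to be the relatively free algebra of the corresponding T-ideal), ``the grade of $A$ is $4$'' is equivalent to ``$\gkdim(\uas/\I_A)=5$'', and Theorem \ref{yythm5.2} then forces $\I_A$ to be one of the listed ideals: $\T_3+\up{5}$ when $\I_A(3)=\up{3}(3)$, and $\up{4}^M$ for a proper $\S_4$-submodule $M\subsetneq\up{4}(4)$ when $\I_A(3)=0$. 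These two families are disjoint, since $(\up{4}^M)(3)=0$ while $(\T_3+\up{5})(3)=\up{3}(3)\neq 0$; this yields the ``if and only if'' statement.

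For the codimension data, Theorem \ref{yythm5.2}(1) gives $c_n(A)=1+{n\choose 2}+{n\choose 4}$ in the first case, and Theorem \ref{yythm5.2}(2) gives $c_n(A)=1+{n\choose 2}+2{n\choose 3}+(9-\dim M){n\choose 4}$ in the second. Writing $u=9-\dim M$ and running through the $15$ proper submodules of the $9$-dimensional module $\up{4}(4)=V_{(1^4)}\oplus V_{(2^2)}\oplus V_{(2,1^2)}\oplus V_{(3,1)}$ of \eqref{E5.1.1}---whose four irreducible summands are pairwise non-isomorphic of dimensions $1,2,3,3$---one checks that $\dim M$ realizes every value in $\{0,1,\dots,8\}$, so $u\in\{1,\dots,9\}$. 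The codimension series follow by summing term by term via $\sum_{n\ge 0}{n\choose k}t^n=\frac{t^k}{(1-t)^{k+1}}$ (equivalently from Corollary \ref{yycor4.6}(1) and Lemma \ref{yylem4.14}(1)); in particular $c(A,t)=\frac{1}{1-t}+\frac{t^2}{(1-t)^3}+\frac{t^4}{(1-t)^5}$ in the first case and $c(A,t)=\frac{1}{1-t}+\frac{t^2}{(1-t)^3}+\frac{2t^3}{(1-t)^4}+\frac{ut^4}{(1-t)^5}$ in the second.

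Finally, for the generating degrees and generating identities I apply $\Phi_n$ to the generators of Proposition \ref{yypro5.5}. In the first case $\gd(\T_3+\up{5})=6$ (hence $\gd(A)=6$) with generator $\alpha=1_2\circ(\tau_3,1_3)+\sum_{\sigma\in\S_6}\sgn(\sigma)\sigma$, so $\Phi_6(\alpha)$ is a generating identity of $A$; here $\Phi_6(\sum_\sigma\sgn(\sigma)\sigma)=\sum_\sigma\sgn(\sigma)x_{\sigma(1)}\cdots x_{\sigma(6)}$, and since generating identities are not unique and $\up{3}(3)=\Lie(3)$ is simple (so any of its nonzero elements, such as $\Psi_3([x_3,x_1,x_2])$, generates it as an $\S_3$-module), the leading commutator term may be taken to be $[x_3,x_1,x_2]x_4x_5x_6$, giving \eqref{E5.7.1}. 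In the second case, for each $M$ the generator exhibited in Table \eqref{E5.5.1} has generating degree $\gd(\up{4}^M)\in\{4,5,6\}$ (hence $\gd(A)=\gd(\up{4}^M)$), and applying the appropriate $\Phi_n$ converts it into the entry of Table \eqref{E5.7.2}: one has $\Phi_4(\theta_i)=h_i$ (hence $\Phi_4(\zeta_\lambda)=f_\lambda$), $\Phi_5(\beta_5)=g_5$, $\Phi_6(\beta_6)=g_6$, and $\Phi$ carries the operadic composition $1_2\circ(-,1_j)$ to right multiplication by the word $x_{n-j+1}\cdots x_n$. The only genuine work is this bookkeeping inside the $\Phi_n$-translations---keeping the $\Bbbk\S_n$-action conventions straight, recomputing $\Phi_5(\beta_5)$ and $\Phi_6(\beta_6)$, and verifying the $15$ table entries---since the substantive classification and the generator computations have already been supplied by Theorem \ref{yythm5.2} and Proposition \ref{yypro5.5}.
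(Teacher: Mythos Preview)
Your proposal is correct and follows exactly the paper's approach: the paper's proof consists of the single line ``In each case, the assertion follows by applying $\Phi_n$ to a generator of $\I_A$ given in Subsection \ref{yysec5.1}'', and you have unpacked this, invoking Theorem~\ref{yythm5.2} for the classification and codimension data and Proposition~\ref{yypro5.5} for the operadic generators, then translating via $\Phi_n$. Your remark that the leading commutator $[x_3,x_1,x_2]$ in \eqref{E5.7.1} need not literally be $\Phi_3(\tau_3)=[x_1,x_2,x_3]$ because $\up{3}(3)$ is simple (so any nonzero element plays the role of $\tau_3$ in the proof of Proposition~\ref{yypro5.5}(2)) correctly accounts for the only apparent discrepancy between $\Phi_6(\alpha)$ and the stated $f$.
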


\begin{proof}
In each case, the assertion follows by applying $\Phi_n$ to
a generator of $\I_A$ given in Subsection \ref{yysec5.1}.
\end{proof}

Theorem \ref{yythm0.9} follows from Propositions 
\ref{yypro5.6} and \ref{yypro5.7}.

\subsection{A partial classification of varieties of grade $5$}
\label{yysec5.3}

The minimal varieties of growth $n^5$ were given in \cite[Section 5]{GMZ}, 
and the varieties of growth $n^5$ such that the associated operadic 
ideal $\I$ with $\dim(\up{5}(5)/(\up{5}(5)\cap \I(5)))=4$ or $44$ were 
obtained in \cite{OV}. In this subsection we provide a partial 
classification of all varieties of growth $n^5$ using operadic 
language. The next lemma is Theorem \ref{yythm0.10}.

\begin{lem}
\label{yylem5.8}
Let $\ip=\uas$. Let $\I$ be an ideal of $\uas$ such that 
$\gkdim \uas/\I=6$. Then one of the following occurs.
\begin{enumerate}
\item[(1)]
$\I=\up{5}^M$ where $M$ is a proper $\Bbbk \S_5$-submodule 
$M\subset \up{5}(5)$.
\item[(2)]
$\I=\upa{5}^{\mathbb M}$ where ${\mathbb M}$ is a $5$-admissible 
pair as defined in Definition {\rm{\ref{yydef4.10}}}.
\end{enumerate}
\end{lem}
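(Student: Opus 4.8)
The plan is to reduce the statement to Theorem \ref{yythm4.16} and Corollary \ref{yycor4.5} by pinning down $m:=\mdeg(\I)$. By Lemma \ref{yylem1.8} (equivalently Corollary \ref{yycor1.7}) the hypothesis $\gkdim(\uas/\I)=6$ is equivalent to $\up{6}\subseteq\I$ and $\up{5}\not\subseteq\I$; in particular $\I$ is proper and $\I(6)\supseteq\up{6}(6)\neq 0$, so $m\leq 6$. If $m=6$ then $\I\subseteq\up{6}$ by Lemma \ref{yylem1.14}, whence $\I=\up{6}=\up{5}^{0}$ by \eqref{E4.2.1}, which is case (1) with $M=0$. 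A short inspection of the low-degree pieces shows $\I(0)=\I(1)=\I(2)=0$: otherwise $\I=\uas$, or else $\I(2)=\up{2}(2)$ is the sign module, in which case $\up{2}=\langle\up{2}(2)\rangle\subseteq\I$ by Theorem \ref{yythm3.14} and $\gkdim(\uas/\I)\leq 1$ by Lemma \ref{yylem1.8}, both excluded. Thus $m\in\{3,4,5\}$, and the task is to rule out $m=3$ and to read off the cases $m=4,5$.

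The crux --- and the step I expect to be the main obstacle to phrase cleanly --- is to exclude $m=3$. Assume $m=3$. Then $\I\subseteq\up{3}$ by Lemma \ref{yylem1.14}, so $\I(3)$ is a nonzero $\Bbbk\S_3$-submodule of $\up{3}(3)=\Lie(3)$, which is $2$-dimensional and simple by Lemma \ref{yylem5.1}(1); hence $\I(3)=\up{3}(3)$ and $\T_3:=\langle\up{3}(3)\rangle\subseteq\I$. I would then use the fact, established inside the proof of Proposition \ref{yypro5.5}(2), that $\up{5}(5)\subseteq\T_3(5)$: by Proposition \ref{yypro3.3} every Specht basis element of $\up{5}(5)$ has the form $\tau_5^{\sigma}$ or $\tau_{2,3}^{\sigma}$, and both $\tau_5$ and $\tau_{2,3}$ are obtained from the generator $\tau_3\in\up{3}(3)$ by partial compositions (see \eqref{E1.0.2}), so they lie in the ideal $\T_3$; as $\T_3(5)$ is an $\S_5$-submodule it then contains all of $\up{5}(5)$. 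Since $\gd(\up{5})=6$ by Theorem \ref{yythm3.14} we have $\up{5}=\langle\up{5}(6)\rangle$, and unwinding the basis of $\up{5}(6)$ from the Basis Theorem (Theorem \ref{yythm1.4}) gives $\up{5}(6)\subseteq\langle\up{5}(5)\rangle+\up{6}$; combining $\up{5}(5)\subseteq\T_3(5)\subseteq\I(5)$ with $\up{6}\subseteq\I$ then yields $\up{5}\subseteq\I$, i.e. $\T_3+\up{5}\subseteq\I$. But $\gkdim(\uas/(\T_3+\up{5}))=5$ by Example \ref{yyexa4.13} (equivalently Theorem \ref{yythm5.2}(1)), so $\gkdim(\uas/\I)\leq 5$, contradicting the hypothesis. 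Hence $m\neq 3$.

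It then remains to record the two alternatives for $m\in\{4,5\}$. If $m=5$, Corollary \ref{yycor4.5}(2) applies verbatim and gives $\I=\up{5}^{M}=\langle M\rangle+\up{6}$ for some $\S_5$-submodule $0\neq M\subsetneq\up{5}(5)$; this is case (1). If $m=4$, apply Theorem \ref{yythm4.16}(1) with the parameters $m=5$ and $s=1$: setting $M_j=\I(j)\cap\up{j}(j)$, the sequence ${\mathbb M}=(M_4,M_5)$ is $5$-admissible, and it is in fact a $5$-admissible pair since $M_5\neq\up{5}(5)$ automatically (otherwise $\up{5}\subseteq\I$ by the argument above), and $\I=\upa{5}^{\mathbb M}$; this is case (2). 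This exhausts all possibilities. The remaining, largely computational, work --- namely deciding which submodules $M\subsetneq\up{5}(5)$ (using the decomposition recorded in \eqref{E3.6.1}) and which $5$-admissible pairs $(M_4,M_5)$ actually occur, and computing the associated generating degrees and codimension series via Lemma \ref{yylem4.14} and Lemma \ref{yylem4.15} --- is then what Lemma \ref{yylem5.10} and the results around it address.
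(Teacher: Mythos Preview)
Your proof is correct and follows essentially the same route as the paper: pin down $\mdeg(\I)$ using Corollary~\ref{yycor1.7} and Lemma~\ref{yylem1.14}, rule out $m\le 3$ via the simplicity of $\up{3}(3)$ and the inclusion $\up{5}(5)\subset\T_3(5)$, and then invoke Corollary~\ref{yycor4.5}(2) for $m=5$ and Theorem~\ref{yythm4.16}(1) for $m=4$. The only cosmetic difference is that in the $m=3$ step the paper concludes $\up{5}\subseteq\I$ in one line from $\up{5}=\langle\up{5}(5)\rangle+\up{6}$ (which is \eqref{E4.2.1}/Corollary~\ref{yycor4.5}), whereas you detour through $\up{5}(6)$ and the Basis Theorem; your version works but is slightly longer than necessary.
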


\begin{proof} Since $\gkdim \uas/\I=6$, 
$\up{6}\subset \I$ and $\up{5}\not \subset 
\I$ by Corollary \ref{yycor1.6}. By 
Lemma \ref{yylem1.12}, $\mdeg(\I)\leq 6$. 

First we assume that $\mdeg(\I)\leq 3$. Since $\I$ is an 
ideal of $\uas$, it is well-known that $\I\subset \up{2}$.
So $\mdeg(\I)$ is $2$ or $3$. If $\mdeg(\I)=2$, then 
$\tau_{2}\in \I(2)$ and $\lr{\tau_2}=\up{2}$, yielding 
a contradiction. If $\mdeg(\I)=3$, then $\I(3)=\I(3)
\cap \up{3}(3)\neq 0$. Since $\up{3}(3)$ is 
2-dimensional simple, we have $\I(3)=\up{3}(3)$ and
consequently, $\tau_3\in \I(3)$. As a consequence,
$\I\supset \lr{\tau_3}$. By Proposition 
\ref{yypro3.3}, $\up{5}(5)$ is generated by $\tau_3$.
Hence $\up{5}(5)\subset \I$. Hence $\I\supset
\up{5}(5)+\up{6}=\up{5}$. This contradicts with
the hypothesis $\gkdim (\uas/\I)=6$. Therefore $\mdeg(\I)
\neq 3$.

It remains to consider the case when $\mdeg(\I)=4,5,6$.

(1) Suppose $\mdeg(\I)=6$. Then $\I\subset \up{6}$
[Lemma \ref{yylem1.12}], consequently, $\I=\up{6}$
which is also equal to $\up{5}^M$ for $M=0$.
Suppose $\mdeg(\I)=5$. Then the assertion follows from 
Corollary \ref{yycor4.5}(2).

(2) Suppose $\mdeg(\I)=4$. The assertion follows from 
Theorem \ref{yythm4.16}(1).
\end{proof}

For the rest of this section we would like to understand 
all $5$-admissible pairs ${\mathbb M}=(M_4,M_5)$ for the 
operad $\uas$. First of all there are 14 choices of $M_4$ 
as a nonzero proper submodules of $\up{4}(4)$. By definition, 
for each $M_4$, we need to work out all $M_5$ such that 
\begin{equation}
\label{E5.8.1}\tag{E5.8.1}
\langle M_4\rangle \cap \up{5}(5)\subset M_5
\subsetneq \up{5}(5).
\end{equation} 
We have the following Lemmas.

\begin{lem}
\label{yylem5.9}
Let $\mathbb{M}=(M_4, M_5)$ be a 5-admissible pair. Then 
$\gd(\up{4}^{M_4})\ge 5$.
\end{lem}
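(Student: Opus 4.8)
The plan is to argue by contradiction: suppose $\gd(\up{4}^{M_4})=4$, and derive that $(M_4,M_5)$ cannot be a $5$-admissible pair. By Proposition \ref{yypro4.9}(1a), the generating degree of $\up{4}^{M_4}$ is either $4$ or $5$ (since $m=4$ is even, the proposition gives the range $4\le \gd\le 6$; but in fact the relevant sharper statement is that it is $4$, $5$, or $6$, and I want to rule out $4$). If $\gd(\up{4}^{M_4})=4$, then $\up{4}^{M_4}=\lr{\up{4}^{M_4}(4)}=\lr{M_4}$ by Proposition \ref{yypro4.9}(2a). In particular, $\up{5}\subset \up{4}^{M_4}=\lr{M_4}$, so $\up{5}(5)\subset \lr{M_4}(5)$.

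The next step is to extract the consequence for the admissibility condition. Recall that for $(M_4,M_5)$ to be a $5$-admissible pair we need (Definition \ref{yydef4.10} with $m=5$, $s=1$) that $M_5\subsetneq \up{5}(5)$ and $\lr{M_4}(5)\cap \up{5}(5)\subset M_5$. But if $\up{5}(5)\subset \lr{M_4}(5)$, then $\lr{M_4}(5)\cap \up{5}(5)=\up{5}(5)$, forcing $M_5\supset \up{5}(5)$, which contradicts $M_5\subsetneq \up{5}(5)$. Hence no $5$-admissible pair can have first component $M_4$ with $\gd(\up{4}^{M_4})=4$, i.e. whenever $(M_4,M_5)$ is a $5$-admissible pair we must have $\gd(\up{4}^{M_4})\ge 5$.

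The main obstacle is making rigorous the claim ``$\up{5}(5)\subset \lr{M_4}(5)$ whenever $\gd(\up{4}^{M_4})=4$''. The clean way is: if $\gd(\up{4}^{M_4})=4$ then $\up{4}^{M_4}=\lr{M_4}$ (Proposition \ref{yypro4.9}(2a)), and since by Lemma \ref{yylem4.2} we always have $\up{5}\subset \up{4}^{M_4}$, we get $\up{5}=\up{5}\cap\up{4}^{M_4}\subset\lr{M_4}$, so evaluating at arity $5$ gives $\up{5}(5)\subset \lr{M_4}(5)$. This is a purely formal manipulation once Proposition \ref{yypro4.9} is in hand, so there is no serious computational content; the only care needed is to track which of the two definitions of $5$-admissible pair (via Definition \ref{yydef4.10}, or equivalently via Theorem \ref{yythm4.16}) one uses, and to confirm that the inclusion $\lr{M_4}(5)\cap\up{5}(5)\subset M_5\subsetneq\up{5}(5)$ really is part of the definition, which it is (this is exactly the case $j=m=5$ of the displayed condition in Definition \ref{yydef4.10}). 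Hence the contradiction is immediate and the lemma follows.
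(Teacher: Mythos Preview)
Your proof is correct and follows essentially the same approach as the paper: both assume $\gd(\up{4}^{M_4})<5$, use Proposition~\ref{yypro4.9}(2a) to conclude $\up{4}^{M_4}=\lr{M_4}$, and then invoke Lemma~\ref{yylem4.2} to get $\up{5}\subset\lr{M_4}$. The only cosmetic difference is in how the contradiction is phrased: the paper observes that $\up{5}\subset\lr{M_4}\subset\upa{5}^{\mathbb M}$ forces $\gkdim(\uas/\upa{5}^{\mathbb M})\le 5$, contradicting Lemma~\ref{yylem4.12}(2), whereas you go directly to the admissibility condition $\lr{M_4}(5)\cap\up{5}(5)\subset M_5\subsetneq\up{5}(5)$ and derive the contradiction there---which is precisely what underlies Lemma~\ref{yylem4.12}(2) anyway.
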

	
\begin{proof} 
Assume to the contrary that $\gd(\up{4}^{M_4})<5$. Then 
$\up{4}^{M_4}=\lr{M_4}$. Since $\upa{5}^{\mathbb{M}}
\supset \lr{M_4}=\up{4}^{M_4}\supset \up{5}$, we have 
$\gkdim (\uas/\upa{5}^{\mathbb{M}})\le 5$. This is a 
contradiction. Consequently, $\gd(\up{4}^{M_4})\ge 5$. 
\end{proof}

In the setting of Lemma \ref{yylem5.9}, by Proposition 
\ref{yypro5.5}(1), $M_4$ cannot be $V_{(1^4)}+V_{(3,1)}+V_{(2,1^2)}$, 
or $V_{(1^4)}+V_{(2^2)}+V_{(3,1)}$, or $V_{(1^4)}+V_{(3,1)}$ 
for a 5-admissible pair $\mathbb{M}=(M_4, M_5)$. By a direct 
computation, we have the following table.
	
\begin{center}
\begin{tabular}{|c|c|c|c|}
\hline
			$M_4$                             & $\gd(\up{4}^{M_4})$   & The character of $(\langle M_4\rangle\cap \up{5})(5)$ \\ \hline 
			$V_{(1^4)}+V_{(2^2)}+V_{(2,1^2)}$ & 5                     & $2\chi_{(2,1^3)}+2\chi_{(3,2)}+2\chi_{(2^2,1)}+2\chi_{(3,1^2)}$ \\ \hline 
			$V_{(1^4)}+V_{(2,1^2)}$           & 5                     & $2\chi_{(2,1^3)}+2\chi_{(3,2)}+2\chi_{(2^2,1)}+2\chi_{(3,1^2)}$ \\ \hline
			$V_{(1^4)}+V_{(2^2)}$             & 5                     & $2\chi_{(2,1^3)}+2\chi_{(3,2)}+2\chi_{(2^2,1)}+2\chi_{(3,1^2)}$ \\ \hline
			$V_{(1^4)}$                       & 5                     & $2\chi_{(2,1^3)}+\chi_{(2^2,1)}+\chi_{(3,1^2)}$ \\ \hline
			$V_{(2^2)}+V_{(3,1)}+V_{(2,1^2)}$ & 6                     & $\chi_{\up{5}(5)}$ \\ \hline
			$V_{(3,1)}+V_{(2,1^2)}$           & 6                     & $\chi_{\up{5}(5)}$  \\ \hline
			$V_{(2^2)}+V_{(2,1^2)}$           & 6                     & $2\chi_{(2,1^3)}+2\chi_{(3,2)}+2\chi_{(2^2,1)}+2\chi_{(3,1^2)}$ \\ \hline
			$V_{(2^2)}+V_{(3,1)}$             & 6                     & $\chi_{\up{5}(5)}$ \\ \hline
			$V_{(3,1)}$                       & 6                     & $\chi_{(4,1)}+\chi_{(2,1^3)}+2\chi_{(3,2)}+2\chi_{(2^2,1)}+2\chi_{(3,1^2)}$ \\ \hline
			$V_{(2,1^2)}$                     & 6                     & $2\chi_{(2,1^3)}+2\chi_{(3,2)}+2\chi_{(2^2,1)}+2\chi_{(3,1^2)}$ \\ \hline
			$V_{(2^2)}$                       & 6                     & $\chi_{(2,1^3)}+2\chi_{(3,2)}+2\chi_{(2^2,1)}+2\chi_{(3,1^2)}$  \\ \hline
		\end{tabular}
	\end{center}

Therefore, we obtain the following result. 
\begin{lem}
\label{yylem5.10}
Retain the above notation.
\begin{enumerate}
\item[(1)]
If $M$ is $V_{(1^4)}+V_{(3,1)}+V_{(2,1^2)}$, or 
$V_{(1^4)}+V_{(2^2)}+V_{(3,1)}$, or $V_{(1^4)}+V_{(3,1)}$, 
or $V_{(2,2)}+V_{(3,1)}+V_{(2,1^2)}$, 
or $V_{(3,1)}+V_{(2,1^2)}$, or $V_{(2^2)}+V_{(3,1)}$, 
then there is no $5$-admissible pair such that $M_4=M$.
\item[(2)]
If $M$ is  $V_{(1^4)}+V_{(2^2)}+V_{(2,1^2)}$, or 
$V_{(1^4)}+V_{(2,1^2)}$, or $V_{(1^4)}+V_{(2^2)}$, 
or $V_{(2^2)}+V_{(2,1^2)}$, or $V_{(3,1)}$, or 
$V_{(2,1^2)}$, there is exactly one $5$-admissible 
pair such that $M_4=M$.	
\item[(3)]
If $M$ is  $V_{(2^2)}$,  there are  three $5$-admissible 
pairs such that $M_4=M$.
\item[(4)]
If $M$ is  $V_{(1^4)}$,  there are infinitely  $5$-admissible 
pairs such that $M_4=M$.	
\end{enumerate} 		
\end{lem}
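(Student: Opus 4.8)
The plan is to convert the classification of $5$-admissible pairs with a fixed first entry $M_4$ into a count of submodules of a single semisimple $\Bbbk\S_5$-module. By Definition \ref{yydef4.10} (with $m=5$, $s=1$) and \eqref{E5.8.1}, since $M_4\neq 0$ for all fourteen candidates, the $5$-admissible pairs of the form $(M_4,M_5)$ are exactly those in which $M_5$ is a $\Bbbk\S_5$-submodule of $\up{5}(5)$ satisfying $\langle M_4\rangle(5)\cap\up{5}(5)\subseteq M_5\subsetneq\up{5}(5)$. Set $N_{M_4}:=\langle M_4\rangle(5)\cap\up{5}(5)$ and $Q_{M_4}:=\up{5}(5)/N_{M_4}$; such $M_5$ are in inclusion-preserving bijection with the proper submodules of $Q_{M_4}$. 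Since $\Bbbk\S_5$ is semisimple, these are governed entirely by the isotypic structure of $Q_{M_4}$: there is no proper submodule when $Q_{M_4}=0$; exactly one (namely $0$) when $Q_{M_4}$ is simple; exactly three when $Q_{M_4}$ is a direct sum of two non-isomorphic simples; and, since $\Bbbk$ is infinite, infinitely many once some isotypic component of $Q_{M_4}$ has multiplicity at least $2$.

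First I would discard the three modules $V_{(1^4)}+V_{(3,1)}+V_{(2,1^2)}$, $V_{(1^4)}+V_{(2^2)}+V_{(3,1)}$, and $V_{(1^4)}+V_{(3,1)}$ appearing in part (1): by Proposition \ref{yypro5.5}(1) each has $\gd(\up{4}^{M_4})=4$, so Lemma \ref{yylem5.9} forbids any $5$-admissible pair with that first entry. For each of the remaining eleven choices of $M_4$ I would compute $\chi_{Q_{M_4}}=\chi_{\up{5}(5)}-\chi_{N_{M_4}}$, reading $\chi_{\up{5}(5)}$ off \eqref{E3.6.1} and $\chi_{N_{M_4}}=\chi_{(\langle M_4\rangle\cap\up{5})(5)}$ off the last column of the table above. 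The three remaining part-(1) modules give $\chi_{N_{M_4}}=\chi_{\up{5}(5)}$, hence $Q_{M_4}=0$; the six part-(2) modules give $Q_{M_4}$ simple, equal to $V_{(4,1)}$ (or to $V_{(2,1^3)}$ when $M_4=V_{(3,1)}$); the module $V_{(2^2)}$ gives $Q_{M_4}\cong V_{(4,1)}\oplus V_{(2,1^3)}$, a sum of two non-isomorphic simples; and $V_{(1^4)}$ gives a $Q_{M_4}$ in which $V_{(3,2)}$ occurs with multiplicity $2$. The dichotomy of the first paragraph then produces the counts $0$, $1$, $3$, $\infty$ respectively, and these fourteen modules exhaust all nonzero proper submodules of $\up{4}(4)=V_{(1^4)}\oplus V_{(2^2)}\oplus V_{(2,1^2)}\oplus V_{(3,1)}$ by \eqref{E5.1.1}.

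The one substantive input is the table itself, namely the determination of $\chi_{(\langle M_4\rangle\cap\up{5})(5)}$ for the eleven relevant $M_4$. I would realize $\langle M_4\rangle(5)$ as the $\Bbbk\S_5$-submodule of $\uas(5)$ generated by the images, under the operators $\iota_r^l$, $\Delta_i$, $\pi^I$ of Theorem \ref{yythm3.7}(1), of the generators $\zeta_\lambda$ of the irreducible components of $\up{4}(4)$ from Table \eqref{E5.4.1}; then intersect with $\up{5}(5)$ using its Specht basis, and decompose the result into irreducibles by a character computation. This finite but sizeable piece of linear algebra is where the main difficulty — computational rather than conceptual — lies, and it is the step naturally delegated to computer algebra, as indicated in Remark \ref{yyrem0.11}. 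Once the table is granted, the remainder of the argument is the routine bookkeeping described above.
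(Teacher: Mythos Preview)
Your proposal is correct and follows essentially the same approach as the paper: eliminate the three $M_4$ with $\gd(\up{4}^{M_4})=4$ via Lemma \ref{yylem5.9} and Proposition \ref{yypro5.5}(1), then read off $\chi_{(\langle M_4\rangle\cap\up{5})(5)}$ from the computer-assisted table and count proper submodules of the quotient $\up{5}(5)/N_{M_4}$ using semisimplicity. Your write-up is in fact more explicit than the paper's, which simply presents the table and states ``Therefore, we obtain the following result,'' leaving the submodule count implicit.
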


Next, we consider the generating degree (and the generators) 
of the operadic ideals of $\uas$ of type $(5, 1)$.

\begin{lem}
\label{yylem5.11}
Let $\I=\up{5}^{M}$ be as in Lemma {\rm{\ref{yylem5.8}(1)}} 
where $M$ is a proper submodule of $\up{5}(5)$. Then 
$\gd(\I)=6$ and a generating element of $\I$ is
$$1_2\circ(\zeta, 1_1)+\beta_6$$
where $\zeta$ is a generator of $M$ as an $\Bbbk \S_5$-module 
and $\beta_6$ is a generator of $\up{6}(6)$ given in 
\eqref{E5.5.3}. 
\end{lem}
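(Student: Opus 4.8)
The plan is to prove Lemma \ref{yylem5.11} by reducing it to the machinery developed for generalized truncation ideals of type I. First I would observe that $\I=\up{5}^M$ with $0\neq M\subsetneq \up{5}(5)$ is a generalized truncation ideal of type I with $m=5$, so Proposition \ref{yypro4.9} applies directly. Since $m=5$ is odd, part (1) of that proposition gives $\up{5}^M=\langle \up{5}^M(6)\rangle$ and the bounds $5\le \gd(\up{5}^M)\le 6$. Thus the only thing to settle is that $\gd(\I)\neq 5$, i.e. that $\I$ is \emph{not} generated by $\I(5)=M$. Once that is done, part (2b) of Proposition \ref{yypro4.9} (the case $\gd=m+1$) immediately yields the stated generator $1_2\circ(\zeta,1_1)+\beta_6$, where $\zeta$ generates $M$ as a $\Bbbk\S_5$-module and $\beta_6=\beta_{m+1}$ generates $\up{6}(6)$, which is exactly the element given in \eqref{E5.5.3}.

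So the heart of the argument is showing $\gd(\I)\ge 6$. I would argue this exactly as in the proof of Theorem \ref{yythm3.14}(1): since $M\subseteq \up{5}(5)$, we have $\langle M\rangle(6)\subseteq \langle \up{5}(5)\rangle(6)$, and by Proposition \ref{yypro3.11} (with $k=5$) the element $\tau_{2,2,2}$ does not lie in $\langle \up{5}(5)\rangle(6)$ — indeed $\tau_{2,\cdots,2}\notin \lr{\up{5}(5)}(6)$ is precisely what is recorded there via the basis $B_1\cup B_2\cup B_3$. On the other hand $\tau_{2,2,2}\in \up{6}(6)\subseteq \up{5}^M(6)=\I(6)$ by Lemma \ref{yylem4.2}. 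Therefore $\I(6)\not\subseteq \langle \I(5)\rangle(6)=\langle M\rangle(6)$, so $\I\neq \langle \I(5)\rangle$ and $\gd(\I)\ge 6$. Combined with the upper bound $\gd(\I)\le 6$ this gives $\gd(\I)=6$.

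The main obstacle, such as it is, is making sure that the non-membership statement $\tau_{2,2,2}\notin\langle M\rangle(6)$ goes through for an \emph{arbitrary} proper submodule $M$ rather than just for $M=\up{5}(5)$; but this is automatic since $\langle M\rangle(6)\subseteq\langle\up{5}(5)\rangle(6)$ and $\tau_{2,2,2}$ fails to be in the larger set. One should also double-check the degenerate extreme $M=0$: then $\up{5}^0=\up{6}$ by \eqref{E4.2.1}, $\zeta=0$, and the asserted generator degenerates to $\beta_6$, which indeed generates $\up{6}$ by Theorem \ref{yythm3.14} (as $6$ is even, $\gd(\up{6})=6$), so the statement remains correct. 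Assembling these pieces, the proof is short: identify $\I$ as a type-I generalized truncation ideal, invoke Proposition \ref{yypro4.9}(1) for the bracketing $5\le\gd\le 6$, rule out $\gd=5$ via Proposition \ref{yypro3.11} and Lemma \ref{yylem4.2}, and read off the generator from Proposition \ref{yypro4.9}(2b).
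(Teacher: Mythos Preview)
Your proof is correct and follows essentially the same strategy as the paper: both reduce to Proposition~\ref{yypro4.9} for the bounds $5\le\gd(\I)\le 6$ and the form of the generator, and both rule out $\gd(\I)=5$ by using that $\langle\up{5}(5)\rangle$ is strictly smaller than $\up{5}$ (Theorem~\ref{yythm3.14}/Proposition~\ref{yypro3.11}). The only cosmetic difference is that the paper phrases the exclusion of $\gd=5$ as a GK-dimension contradiction (if $\I=\langle M\rangle\subset\langle\up{5}(5)\rangle$ then $\up{6}\not\subset\I$, forcing $\gkdim(\uas/\I)>6$), whereas you exhibit the explicit witness $\tau_{2,2,2}\in\I(6)\setminus\langle M\rangle(6)$; these are two packagings of the same fact. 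Your separate check of the degenerate case $M=0$ is a nice touch, since Proposition~\ref{yypro4.9} is stated only for $M\neq 0$.
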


\begin{proof}
By Proposition \ref{yypro4.9}, $\gd(\I)$ is either 5 or 6. 
If $\gd(\I)=5$, then $\I=\lr{M}\subset \lr{\up{5}(5)}$,
which does not contain $\up{6}$ by Proposition 
\ref{yypro3.9}.
So $\gkdim (\uas/\I)>6$, yielding a contradiction. Therefore 
$\gd(\I)=6$. By Proposition \ref{yypro4.9}(2), 
$1_2\circ(\zeta, 1_1)+\beta_6$ is a generator of $\I$.
\end{proof}

We are not going to list all submodules $M\subset \up{5}(5)$ and 
the corresponding generators $\zeta$ in Lemma \ref{yylem5.9}.

\begin{lem}
\label{yylem5.12}
Let $\I=\upa{5}^{\mathbb M}$ be as in Lemma 
{\rm{\ref{yylem5.8}(2)}} where ${\mathbb M}=(M_4,M_5)$ is 
a $5$-admissible pair.  If $\gd(\up{4}^{M_4})=6$, then 
$\gd(\upa{5}^{\mathbb M})=6$.
\end{lem}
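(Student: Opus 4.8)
The plan is to reduce the statement to the already-established upper and lower bounds for $\gd(\upa{5}^{\mathbb M})$ coming from Lemma \ref{yylem4.15}, and then to rule out the intermediate value using the hypothesis $\gd(\up{4}^{M_4})=6$. First I would invoke Lemma \ref{yylem4.15}(1b): since $m=5$ is odd, wait---$m=5$ is odd, so actually Lemma \ref{yylem4.15}(1a) applies and gives $m-s\le \gd(\upa{5}^{\mathbb M})\le m+1=6$, where here $s=1$ so $m-s=4$. Thus a priori $\gd(\upa{5}^{\mathbb M})\in\{4,5,6\}$. So it suffices to exclude the cases $\gd(\upa{5}^{\mathbb M})=4$ and $\gd(\upa{5}^{\mathbb M})=5$.

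For the case $\gd(\upa{5}^{\mathbb M})=4$: if $\upa{5}^{\mathbb M}$ were generated by $\upa{5}^{\mathbb M}(4)=M_4$ (using Lemma \ref{yylem4.12}(1)), then $\upa{5}^{\mathbb M}=\lr{M_4}=\up{4}^{M_4}$ by Corollary \ref{yycor4.5}(1) or directly by Theorem \ref{yythm3.7}(2) together with the fact that $\up{5}\subset\up{4}^{M_4}$ (Lemma \ref{yylem4.2}). But then $\gkdim(\uas/\upa{5}^{\mathbb M})=\gkdim(\uas/\up{4}^{M_4})=5$ by Lemma \ref{yylem4.3}(1), contradicting $\gkdim(\uas/\upa{5}^{\mathbb M})=6$ from Lemma \ref{yylem4.12}(2). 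So $\gd(\upa{5}^{\mathbb M})\neq 4$.

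For the case $\gd(\upa{5}^{\mathbb M})=5$: suppose $\upa{5}^{\mathbb M}$ is generated by its degree-$5$ part. Then in particular $\up{4}^{M_4}=\lr{M_4}\subseteq \upa{5}^{\mathbb M}=\lr{\upa{5}^{\mathbb M}(5)}$, and restricting to degree $4$ via the operator $\pi^{[4]}$ (or $\ucr{5}\1_0$) shows $\up{4}^{M_4}(4)=M_4\subseteq \lr{\upa{5}^{\mathbb M}(5)}(4)$; moreover since $\upa{5}^{\mathbb M}(5)\subseteq\up{4}^{M_4}(5)$ and $\lr{M_4}=\up{4}^{M_4}$ is an ideal, we get $\lr{\upa{5}^{\mathbb M}(5)}\subseteq\up{4}^{M_4}$, hence equality $\upa{5}^{\mathbb M}=\up{4}^{M_4}$ because both agree in all degrees $\le 5$ and $\upa{5}^{\mathbb M}$ is then generated in degree $5$ while $\up{4}^{M_4}$ is generated in degrees $\le 5$ as well (here one uses $\up{6}_{\uas/\up{6}}=0$ and Lemma \ref{yylem1.12}(1), the Basis Theorem, to compare them degree by degree). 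But $\up{4}^{M_4}$ is generated in degree $\gd(\up{4}^{M_4})=6$ and not in degree $5$ by hypothesis, so it cannot coincide with an ideal generated in degree $5$. This contradiction shows $\gd(\upa{5}^{\mathbb M})\neq 5$.

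Combining the two exclusions with the a priori bound, $\gd(\upa{5}^{\mathbb M})=6$, as claimed. The main obstacle is the case $\gd(\upa{5}^{\mathbb M})=5$: one must argue carefully that $\upa{5}^{\mathbb M}$ being generated in degree $5$ forces it to equal $\up{4}^{M_4}$, which requires comparing the two ideals in every degree. The cleanest way is via the Basis Theorem (Lemma \ref{yylem1.12}(1)): both ideals have $\I(j)\cap\up{j}(j)$ equal to $M_j$ for $j=4,5$ (for $\upa{5}^{\mathbb M}$ by Lemma \ref{yylem4.12}(1), and for $\up{4}^{M_4}$ by Theorem \ref{yythm4.4} together with the fact that $\lr{M_4}(5)\cap\up{5}(5)=M_5$, which holds precisely because $M_5\supseteq\lr{M_4}(5)\cap\up{5}(5)$ and one checks the reverse inclusion from the admissibility), so they have the same generating series, and the inclusion $\upa{5}^{\mathbb M}\subseteq\up{4}^{M_4}$ then forces equality---contradicting the hypothesis on the generating degree of $\up{4}^{M_4}$.
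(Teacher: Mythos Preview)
Your argument for excluding $\gd(\upa{5}^{\mathbb M})=5$ has a genuine error. You repeatedly assert $\lr{M_4}=\up{4}^{M_4}$, but this is false: by Corollary~\ref{yycor4.5}(1) one has $\up{4}^{M_4}=\lr{M_4}+\up{5}$, and under the hypothesis $\gd(\up{4}^{M_4})=6$ the ideal $\up{4}^{M_4}$ is \emph{not} generated in degree~$4$, so $\lr{M_4}\subsetneq\up{4}^{M_4}$. More seriously, the conclusion you are aiming for, $\upa{5}^{\mathbb M}=\up{4}^{M_4}$, is impossible: by Lemma~\ref{yylem4.12}(2) we have $\gkdim(\uas/\upa{5}^{\mathbb M})=6$, while $\gkdim(\uas/\up{4}^{M_4})=5$ by Lemma~\ref{yylem4.3}(1). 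Your Basis Theorem comparison in the final paragraph fails for the same reason: since $\up{5}\subset\up{4}^{M_4}$, one has $\up{4}^{M_4}(5)\cap\up{5}(5)=\up{5}(5)$, not $M_5$, so the two ideals do \emph{not} have the same $\I(j)\cap\up{j}(j)$ for $j=5$.

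The paper's proof avoids this trap by not trying to identify $\upa{5}^{\mathbb M}$ with $\up{4}^{M_4}$ directly. Instead it observes that
\[
\upa{5}^{\mathbb M}+\lr{\up{5}(5)}=\lr{M_4}+\lr{M_5}+\up{6}+\lr{\up{5}(5)}=\lr{M_4}+\up{5}=\up{4}^{M_4},
\]
using $\lr{\up{5}(5)}+\up{6}=\up{5}$ (from Theorem~\ref{yythm1.4}). If $\upa{5}^{\mathbb M}$ were generated in degree~$\le 5$, then since $\lr{\up{5}(5)}$ is also generated in degree~$5$, their sum $\up{4}^{M_4}$ would be generated in degree~$5$, contradicting $\gd(\up{4}^{M_4})=6$. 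The missing idea in your attempt is precisely this: rather than forcing equality of the two ideals, one passes to $\up{4}^{M_4}$ by \emph{adding} $\lr{\up{5}(5)}$, which preserves ``generated in degree~$\le 5$''.
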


\begin{proof}
 Suppose to the contrary that $\gd(\upa{5}^{\mathbb M})<6$.
Then $\I=\lr{\I(5)}$. Note that 
$$\I+\lr{\up{5}(5)}=
\lr{M_4}+\lr{M_5}+\up{6}+\lr{\up{5}(5)}=\lr{M_4}+\lr{M_5}+\up{5}
=\lr{M_4}+\up{5}=\up{4}^{M_4}.$$
Since both $\I$ and $\lr{\up{5}(5)}$ are generated in degree
$5$, so is $\up{4}^{M_4}$. This contradicts the hypothesis.
\end{proof}

From Lemmas	\ref{yylem5.9} and	\ref{yylem5.12}, it remains  
to consider the case of $\gd(\up{4}^{M_4})=5$.

\begin{lem}
\label{yylem5.13}
Retain the above notation.
\begin{enumerate}
\item[(1)]
If $M_4$ is $V_{(1^4)}+V_{(2^2)}+V_{(2,1^2)}$, or 
$V_{(1^4)}+V_{(2,1^2)}$, or $V_{(1^4)}+V_{(2^2)}$, and  
${\mathbb M}=(M_4,M_5)$ be the corresponding $5$-admissible 
pair. Then $\gd(\upa{5}^{\mathbb M})=6$.
\item[(2)]
If $M_4$ is  $V_{(1^4)}$, and  $M_5$ does not contain the 
irreducible representation corresponding to partition 
$(4,1)$, then $\gd(\upa{5}^{\mathbb M})=6$.
\end{enumerate} 		
\end{lem}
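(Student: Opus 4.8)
The plan is to prove that $\upa{5}^{\mathbb M}$ cannot be generated in degree $\le 5$; since $4 \le \gd(\upa{5}^{\mathbb M}) \le 6$ by Lemma \ref{yylem4.15}(1a) (as $5$ is odd), this forces $\gd(\upa{5}^{\mathbb M}) = 6$. First I would record a reduction. By the definition of $\upa{5}^{\mathbb M}$ and Theorem \ref{yythm3.7}(2) one has $\upa{5}^{\mathbb M}(5) = \lr{M_4}(5) + M_5$, and since $\lr{M_4} = \lr{\lr{M_4}(4)}$, Lemma \ref{yylem3.8} gives $\lr{\lr{M_4}(5)} = \lr{M_4}$; hence $\lr{\upa{5}^{\mathbb M}(5)} = \lr{M_4} + \lr{M_5}$. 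As $\upa{5}^{\mathbb M} = \lr{M_4} + \lr{M_5} + \up{6}$ and $\up{6}$ is generated in degree $6$ by Theorem \ref{yythm3.14}, it follows (using $\up{6} = \lr{\up{6}(6)}$) that $\gd(\upa{5}^{\mathbb M}) \le 5$ if and only if $\up{6}(6) \subseteq (\lr{M_4} + \lr{M_5})(6)$. So it suffices to show $\up{6}(6) \not\subseteq (\lr{M_4} + \lr{M_5})(6)$.

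Next I would pin down the common feature of the cases in (1) and (2): in each of them the character of $M_5$ does not involve $\chi_{(4,1)}$. For (2) this is the hypothesis. For (1), the table preceding Lemma \ref{yylem5.10} shows that $\lr{M_4}\cap\up{5}$ has degree-$5$ character $\chi_{\up{5}(5)} - \chi_{(4,1)}$; since by \eqref{E3.6.1} the multiplicity of $\chi_{(4,1)}$ in $\up{5}(5)$ is $1$, the quotient $\up{5}(5)/(\lr{M_4}(5)\cap\up{5}(5))$ is the simple module $V_{(4,1)}$, so the admissibility constraint $\lr{M_4}(5)\cap\up{5}(5)\subseteq M_5\subsetneq\up{5}(5)$ forces $M_5 = \lr{M_4}(5)\cap\up{5}(5)$, which again omits $\chi_{(4,1)}$. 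Thus in all cases $M_5 \subseteq M'$, where $M'\subsetneq\up{5}(5)$ is the sum of all isotypic components of $\up{5}(5)$ other than the $(4,1)$-component, and it is enough to prove $\up{6}(6) \not\subseteq (\lr{M_4} + \lr{M'})(6)$.

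The heart of the argument is this last containment. Using Theorem \ref{yythm3.7} one describes $(\lr{M_4} + \lr{M'})(6)$ explicitly as a $\Bbbk\S_6$-submodule of $\up{4}(6)$, intersects it with $\up{6}(6)$, and compares the resulting $\S_6$-module with $\up{6}(6)$ (whose character is computed in the proof of Proposition \ref{yypro3.6}); the point is that the $(4,1)$-component of $\up{5}(5)$, absent from $M'$, is needed to reach part of $\up{6}(6)$, while $\lr{M_4}(6)$ cannot supply it because $M_4$ is built only from $V_{(1^4)}$ (and, in case (1), possibly $V_{(2^2)}$ and/or $V_{(2,1^2)}$) and never involves $V_{(3,1)}$. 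This verification is done by computer algebra, and the routine bookkeeping is omitted. The main obstacle is precisely this step: unlike in Lemma \ref{yylem5.12}, one cannot simply replace $M_5$ by all of $\up{5}(5)$, because $\gd(\up{4}^{M_4}) = 5$ (Proposition \ref{yypro5.5}) gives $\lr{M_4} + \lr{\up{5}(5)} = \up{4}^{M_4} \supseteq \up{6}$ always; so one genuinely has to track the $(4,1)$-deficiency through the degree-$6$ module, which is where the computation enters.
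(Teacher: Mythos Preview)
Your reduction to the criterion $\up{6}(6)\not\subseteq(\lr{M_4}+\lr{M_5})(6)$ is correct and is exactly what the paper does. Your observation that in all cases $M_5$ misses the $(4,1)$-component, so $M_5\subseteq M'$ with $\chi_{M'}=2\chi_{(2,1^3)}+2\chi_{(3,2)}+2\chi_{(2^2,1)}+2\chi_{(3,1^2)}$, is also right.

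Where the paper is sharper is in the final reduction. From the table preceding Lemma~\ref{yylem5.10}, for $M'_4:=V_{(1^4)}+V_{(2^2)}+V_{(2,1^2)}$ one has $(\lr{M'_4}\cap\up{5})(5)=M'$ exactly; hence $M'\subseteq\lr{M'_4}(5)$ and therefore $\lr{M'}\subseteq\lr{M'_4}$. Since in every case of (1) and (2) one has $M_4\subseteq M'_4$ and $M_5\subseteq M'$, it follows that $\lr{M_4}+\lr{M_5}\subseteq\lr{M'_4}$. Thus a \emph{single} computation suffices: the paper computes
\[
\chi_{\lr{M'_4}(6)}=\chi_{(1^6)}+5\chi_{(2,1^4)}+4\chi_{(3^2)}+5\chi_{(2^3)}+6\chi_{(4,2)}+9\chi_{(2^2,1^2)}+10\chi_{(3,1^3)}+6\chi_{(4,1^2)}+14\chi_{(3,2,1)},
\]
and since $\chi_{(5,1)}$ appears in $\chi_{\up{6}(6)}$ (Proposition~\ref{yypro3.6}) but not here, $\up{6}(6)\not\subseteq\lr{M'_4}(6)$, finishing all cases at once. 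Your version keeps $\lr{M'}$ as a separate summand and leaves $M_4$ varying, so you would either repeat the computation or have to argue case by case; this is not wrong, but it misses the simplification that $M'$ is already swallowed by $\lr{M'_4}$. Making that one extra observation collapses your computation to the paper's.
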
	

\begin{proof} To save some space we only give some details
in part (1).

We only need to consider the case 
$M_4=V_{(1^4)}+V_{(2^2)}+V_{(2,1^2)}$, since it corresponds 
to the largest generalized truncation ideals of type II 
among above two parts. Through computation assisted by 
computer, we get
\[\chi_{\lr{M_4}(6)}=\chi_{(1^6)}
+5\chi_{(2, 1^4)}+4\chi_{(3^2)}+5\chi_{(2^3)}
+6\chi_{(4, 2)}+9\chi_{(2^2, 1^2)}+10\chi_{(3, 1^3)}
+6\chi_{(4, 1^2)}+14\chi_{(3, 2, 1)}.\]
Note that it follows from a direct 
computation,
\[\chi_{\up{6}(6)}=\chi_{(1^6)}+\chi_{(5, 1)}
+2\chi_{(2, 1^4)}+2\chi_{(3^2)}+2\chi_{(2^3)}
+3\chi_{(4, 2)}+4\chi_{(2^2, 1^2)}+4\chi_{(3, 1^3)}
+3\chi_{(4, 1^2)}+6\chi_{(3, 2, 1)}\]
(which is also given in \cite{JSV}). 
So we obtain that $\up{6}(6)\not\subset \lr{M_4}(6)$, and 
consequently, $\gd(\upa{5}^{\mathbb M})=6$. 	
\end{proof}

\begin{rmk}
\label{yyrem5.14}
It remains to consider the case when $M_4$ is $V_{(1^4)}$ and  
$M_5$  contains the irreducible representation corresponding 
to the partition $(4,1)$. In the first table below, the module 
$M_5$ is uniquely determined, and we  can calculate the 
corresponding generating degrees.		
\begin{center}	
\begin{tabular}{|c|c|c|}
\hline
The character of $ M_5$                                                       
&$\gd(\upa{5}^{\mathbb M})$ \\ 
\hline 
$\chi_{(4,1)}+2\chi_{(2,1^3)}+\chi_{(2^2,1)}+\chi_{(3,1^2)}$                  
& $6$ \\ 
\hline			 
$\chi_{(4,1)}+2\chi_{(2,1^3)}+\chi_{(2^2,1)}+\chi_{(3,1^2)}+2\chi_{(3,2)}$    
& $6$ \\ 
\hline
$\chi_{(4,1)}+2\chi_{(2,1^3)}+\chi_{(2^2,1)}+2\chi_{(3,1^2)}$                 
& $6$ \\ 
\hline
$\chi_{(4,1)}+2\chi_{(2,1^3)}+\chi_{(2^2,1)}+2\chi_{(3,1^2)}+2\chi_{(3,2)}$   
& $5$\\ 
\hline
$\chi_{(4,1)}+2\chi_{(2,1^3)}+2\chi_{(2^2,1)}+\chi_{(3,1^2)}$                 
& $6$ \\ 
\hline
$\chi_{(4,1)}+2\chi_{(2,1^3)}+2\chi_{(2^2,1)}+\chi_{(3,1^2)}+2\chi_{(3,2)}$   
& $5$ \\ 
\hline
$\chi_{(4,1)}+2\chi_{(2,1^3)}+2\chi_{(2^2,1)}+2\chi_{(3,1^2)}$                
& $5$  \\ 
\hline		
\end{tabular}
\end{center}
\end{rmk}

In the second table,  the module $M_5$ has infinitely many choices 
and it is difficult to determine the corresponding generating 
degrees. But when the character of $M_5$ is 
$\chi_{(4,1)}+2\chi_{(2,1^3)}+\chi_{(2^2,1)}+\chi_{(3,1^2)}
+\chi_{(3,2)}$ or 
$\chi_{(4,1)}+2\chi_{(2,1^3)}+2\chi_{(2^2,1)}+2\chi_{(3,1^2)}
+\chi_{(3,2)}$, from the preceding table, we can determine 
their generating degrees.

\begin{center}	
\begin{tabular}{|c|c|c|}
\hline
The character of $ M_5$                                                       
&$\gd(\upa{5}^{\mathbb M})$ \\ 
\hline 
$\chi_{(4,1)}+2\chi_{(2,1^3)}+\chi_{(2^2,1)}+\chi_{(3,1^2)}+\chi_{(3,2)}$     
& $6$ \\ 
\hline
$\chi_{(4,1)}+2\chi_{(2,1^3)}+\chi_{(2^2,1)}+2\chi_{(3,1^2)}+\chi_{(3,2)}$    
& $5$ or $6$ \\ 
\hline
$\chi_{(4,1)}+2\chi_{(2,1^3)}+2\chi_{(2^2,1)}+\chi_{(3,1^2)}+\chi_{(3,2)}$    
& $5$ or $6$ \\ 
\hline
$\chi_{(4,1)}+2\chi_{(2,1^3)}+2\chi_{(2^2,1)}+2\chi_{(3,1^2)}+\chi_{(3,2)}$   
& $5$ \\ 
\hline			
\end{tabular}
\end{center}

Combining Lemmas \ref{yylem5.11}, \ref{yylem5.12}, 
and \ref{yylem5.13} with Remark \ref{yyrem5.14}, we have 
the following conclusion on the generating degree for 
generalized truncation ideals $\upa{5}^{\mathbb M}$ for
all possible $5$-admissible pairs ${\mathbb M}$.
\begin{equation}
\label{E5.14.1}\tag{E5.14.1}
{\text{Table of generating degree of $\upa{5}^{\mathbb M}$ in terms of
${\mathbb M}$}}
\end{equation}
~\begin{center}
\begin{tabular}{|c|c|c|c|}
\hline
		$M_4$                               & The number of $M_5$                & The character of $M_5$                                                      & $\gd(\upa{5}^{\mathbb M})$ \\ \hline
		$V_{(2^2)}+V_{(2,1^2)}$             & 1                                  & $2\chi_{(2,1^3)}+2\chi_{(3,2)}+2\chi_{(2^2,1)}+2\chi_{(3,1^2)}$             & 6     \\ \hline
		$V_{(3,1)}$                         & 1                                  & $\chi_{(4,1)}+\chi_{(2,1^3)}+2\chi_{(3,2)}+2\chi_{(2^2,1)}+2\chi_{(3,1^2)}$ & 6     \\ \hline
		$V_{(2,1^2)}$                       & 1                                  & $2\chi_{(2,1^3)}+2\chi_{(3,2)}+2\chi_{(2^2,1)}+2\chi_{(3,1^2)}$             & 6     \\ \hline
		\multirow{3}{*}{$V_{(2^2)}$}        & \multirow{3}{*}{3}                 & $\chi_{(2,1^3)}+2\chi_{(3,2)}+2\chi_{(2^2,1)}+2\chi_{(3,1^2)}$             & 6     \\
		\cline{3-4}
		&                                    & $2\chi_{(2,1^3)}+2\chi_{(3,2)}+2\chi_{(2^2,1)}+2\chi_{(3,1^2)}$ & 6     \\ 
		\cline{3-4}
		&                                    & $\chi_{(4,1)}+\chi_{(2,1^3)}+2\chi_{(3,2)}+2\chi_{(2^2,1)}+2\chi_{(3,1^2)}$ & 6     \\ \hline
		$V_{(1^4)}+V_{(2^2)}+V_{(2,1^2)}$   & 1                                  & $2\chi_{(2,1^3)}+2\chi_{(3,2)}+2\chi_{(2^2,1)}+2\chi_{(3,1^2)}$             & 6     \\ \hline
		$V_{(1^4)}+V_{(2,1^2)}$             & 1                                  & $2\chi_{(2,1^3)}+2\chi_{(3,2)}+2\chi_{(2^2,1)}+2\chi_{(3,1^2)}$             & 6     \\ \hline
		$V_{(1^4)}+V_{(2^2)}$               & 1                                  & $2\chi_{(2,1^3)}+2\chi_{(3,2)}+2\chi_{(2^2,1)}+2\chi_{(3,1^2)}$             & 6     \\ \hline
		\multirow{23}{*}{$V_{(1^4)}$}       & \multirow{23}{*}{Infinitely many } & $2\chi_{(2,1^3)}+\chi_{(2^2,1)}+\chi_{(3,1^2)}$                             & 6       \\
		\cline{3-4}
		&                                    & $2\chi_{(2,1^3)}+\chi_{(2^2,1)}+2\chi_{(3,1^2)}$                              & 6       \\
		\cline{3-4}
		&                                    & $2\chi_{(2,1^3)}+2\chi_{(2^2,1)}+\chi_{(3,1^2)}$                              & 6       \\
		\cline{3-4}
		&                                    & $2\chi_{(2,1^3)}+2\chi_{(2^2,1)}+2\chi_{(3,1^2)}$                             & 6       \\
		\cline{3-4}
		&                                    & $2\chi_{(2,1^3)}+\chi_{(2^2,1)}+\chi_{(3,1^2)}+2\chi_{(3,2)}$               & 6       \\
		\cline{3-4}
		&                                    & $2\chi_{(2,1^3)}+\chi_{(2^2,1)}+2\chi_{(3,1^2)}+2\chi_{(3,2)}$              & 6       \\
		\cline{3-4}
		&                                    & $2\chi_{(2,1^3)}+2\chi_{(2^2,1)}+\chi_{(3,1^2)}+2\chi_{(3,2)}$                & 6       \\
		\cline{3-4}
		&                                    & $2\chi_{(2,1^3)}+2\chi_{(2^2,1)}+2\chi_{(3,1^2)}+2\chi_{(3,2)}$               & 6       \\
		\cline{3-4}
		&                                    & $2\chi_{(2,1^3)}+\chi_{(2^2,1)}+\chi_{(3,1^2)}+\chi_{(3,2)}$                & 6       \\
		\cline{3-4}
		&                                    & $2\chi_{(2,1^3)}+\chi_{(2^2,1)}+2\chi_{(3,1^2)}+\chi_{(3,2)}$               & 6       \\
		\cline{3-4}
		&                                    & $2\chi_{(2,1^3)}+2\chi_{(2^2,1)}+\chi_{(3,1^2)}+\chi_{(3,2)}$                 & 6       \\
		\cline{3-4}
		&                                    & $2\chi_{(2,1^3)}+2\chi_{(2^2,1)}+2\chi_{(3,1^2)}+\chi_{(3,2)}$                & 6       \\
		\cline{3-4}
		&                                    & $\chi_{(4,1)}+2\chi_{(2,1^3)}+\chi_{(2^2,1)}+\chi_{(3,1^2)}$                & 6       \\
		\cline{3-4}
		&                                    & $\chi_{(4,1)}+2\chi_{(2,1^3)}+\chi_{(2^2,1)}+2\chi_{(3,1^2)}$               & 6       \\
		\cline{3-4}
		&                                    & $\chi_{(4,1)}+2\chi_{(2,1^3)}+2\chi_{(2^2,1)}+\chi_{(3,1^2)}$               & 6       \\
		\cline{3-4}
		&                                    & $\chi_{(4,1)}+2\chi_{(2,1^3)}+2\chi_{(2^2,1)}+2\chi_{(3,1^2)}$                & 5       \\ 									
		\cline{3-4}
		&                                    & $\chi_{(4,1)}+2\chi_{(2,1^3)}+\chi_{(2^2,1)}+\chi_{(3,1^2)}+2\chi_{(3,2)}$  & 6       \\
		\cline{3-4}
		&                                    & $\chi_{(4,1)}+2\chi_{(2,1^3)}+\chi_{(2^2,1)}+2\chi_{(3,1^2)}+2\chi_{(3,2)}$ & 5       \\
		\cline{3-4}
		&                                    & $\chi_{(4,1)}+2\chi_{(2,1^3)}+2\chi_{(2^2,1)}+\chi_{(3,1^2)}+2\chi_{(3,2)}$ & 5       \\									
		\cline{3-4}
		&                                    & $\chi_{(4,1)}+2\chi_{(2,1^3)}+\chi_{(2^2,1)}+\chi_{(3,1^2)}+\chi_{(3,2)}$   & 6       \\
		\cline{3-4}
		&                                    & $\chi_{(4,1)}+2\chi_{(2,1^3)}+\chi_{(2^2,1)}+2\chi_{(3,1^2)}+\chi_{(3,2)}$  & 5 or 6  \\
		\cline{3-4}
		&                                    & $\chi_{(4,1)}+2\chi_{(2,1^3)}+2\chi_{(2^2,1)}+\chi_{(3,1^2)}+\chi_{(3,2)}$  & 5 or 6  \\
		\cline{3-4}
		&                                    & $\chi_{(4,1)}+2\chi_{(2,1^3)}+2\chi_{(2^2,1)}+2\chi_{(3,1^2)}+\chi_{(3,2)}$   & 5       \\ \hline
	\end{tabular}	
\end{center}

Except for two cases, $\gd(\upa{5}^{\mathbb M})$ is 
completely determined. Note that if $M_5$ is unique, then 
$M_5=\lr{M_4}\cap \up{5}(5)\subsetneq \up{5}(5)$. Otherwise 
$\lr{M_4}\cap \up{5}(5)\subseteq M_5\subsetneq \up{5}(5)$
(see \eqref{E5.8.1}).

Finally we give a complete list of the codimension series 
of type $(5, 1)$. 
	
\begin{thm}
\label{yythm5.15}
There are exactly 55 distinct codimension series of type 
$(5,1)$ as follows.
\begin{enumerate}
\item[(1)]
If $\I_A$ is of the form $\up{5}^{M}$ where $M$ is a proper 
submodule of $\up{5}(5)$, we have the following sublist:
$$c(A,t)=\frac{1}{1-t}+\frac{t^2}{(1-t)^3}+\frac{2t^3}{(1-t)^4}
+\frac{9t^4}{(1-t)^5}+\frac{ut^5}{(1-t)^6}$$
or
\[c_n(A)= 1+{n\choose 2}+2{n\choose 3}+9{n\choose 4}+u{n\choose 5}\]
where $u$ is an integer in the set 
$[44] \backslash \{1,2,3,7,37,41,42,43\}$.
\item[(2)]
If $\I_A$ is of the form $\up{5}^{\mathbb M}$ where ${\mathbb M}$ 
is a $5$-admissible pair, we have the following sublist:
\begin{enumerate}
\item[(2a)]
$$c(A,t)=\frac{1}{1-t}+\frac{t^2}{(1-t)^3}+
\frac{2t^3}{(1-t)^4}+\frac{8t^4}{(1-t)^5}+\frac{xt^5}{(1-t)^6}$$
or
\[c_n(A)= 1+{n\choose 2}+2{n\choose 3}+8{n\choose 4}+x{n\choose 5}\]
where $x$ is an integer in the set 
$\{ 4,5,6,9,10,11,14,15,16,19,20,21,25 \}$.
\item[(2b)]
$$c(A,t)=\frac{1}{1-t}+\frac{t^2}{(1-t)^3}+\frac{2t^3}{(1-t)^4}
+\frac{7t^4}{(1-t)^5}+\frac{yt^5}{(1-t)^6}$$
or
\[c_n(A)= 1+{n\choose 2}+2{n\choose 3}+7{n\choose 4}+y{n\choose 5}\]
where $y$ is an integer in the set $\{ 4,8 \}$.
\item[(2c)]
$$c(A,t)=\frac{1}{1-t}+\frac{t^2}{(1-t)^3}+\frac{2t^3}{(1-t)^4}
+\frac{zt^4}{(1-t)^5}+\frac{4t^5}{(1-t)^6}$$
or
\[c_n(A)= 1+{n\choose 2}+2{n\choose 3}+z{n\choose 4}+4{n\choose 5}\]
where $z$ is an integer in the set $\{ 3,4,5,6 \}$.
\end{enumerate}
\end{enumerate}
As a consequence, 
$\Lambda_{(5,1)}=\{\frac{u}{5!}\mid  u\in [44] 
\backslash \{1,2,3,7,37,41,42,43\}\}$.
\end{thm}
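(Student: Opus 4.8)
The plan is to run through the two families of ideals $\I\subseteq\uas$ with $\gkdim(\uas/\I)=6$ produced by Theorem \ref{yythm0.10} (equivalently Lemma \ref{yylem5.8}), compute $G_{\uas/\I}(t)=c(A,t)$ in each case, and count the distinct outcomes. Grade $5$ means exactly $\gkdim(\uas/\I_A)=6$, and by the correspondence of Section \ref{yysec2} together with \cite[Theorem 0.2]{BXYZZ} every such quotient operad is realized by a PI-algebra, so this exhausts type $(5,1)$. Moreover, for any such $\I$ the function $\dim(\uas/\I)(n)=\sum_{k=0}^{5}\gamma_k(\uas/\I)\binom nk$ is a polynomial in $n$ of degree $5$ by Lemma \ref{yylem1.5}(3), so $c_n(A)\sim\lambda n^5$ with $\lambda=\gamma_5(\uas/\I)/5!$; this computes $\Lambda_{(5,1)}$ at the end.

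First I would handle the type I ideals $\I=\up{5}^{M}$ with $M\subsetneq\up{5}(5)$. By Corollary \ref{yycor4.6},
\[
c(A,t)=\frac{1}{1-t}+\frac{t^2}{(1-t)^3}+\frac{2t^3}{(1-t)^4}+\frac{9t^4}{(1-t)^5}+(44-\dim M)\frac{t^5}{(1-t)^6},
\]
since $\gamma_5=44$ by \eqref{E1.8.1}; thus the only invariant is $u=44-\dim M$, and I must determine the set of dimensions of proper submodules of $\up{5}(5)$. By \eqref{E3.6.1} the character $\chi_{\up{5}(5)}$ contains $\chi_{(4,1)}$ with multiplicity one and $\chi_{(2,1^3)},\chi_{(3,2)},\chi_{(2^2,1)},\chi_{(3,1^2)}$ each with multiplicity two, the five isotypic components being pairwise non-isomorphic; hence the submodule lattice is the product of the lattices of the isotypic pieces, and a submodule has dimension $4\varepsilon+4a+5b+5c+6e$ for some $\varepsilon\in\{0,1\}$ and $a,b,c,e\in\{0,1,2\}$, every such combination except the maximal one being realized by a proper $M$. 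An elementary computation shows this set of dimensions is exactly $\{0,1,\dots,43\}\setminus\{1,2,3,7,37,41,42,43\}$ (palindromic, as noted after Lemma \ref{yylem4.3}), which gives the $36$ values of $u$ in part (1).

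Next I would treat the type II ideals $\I=\upa{5}^{\mathbb M}$, where ${\mathbb M}=(M_4,M_5)$ is a $5$-admissible pair. Lemma \ref{yylem4.14} gives
\[
c(A,t)=\frac{1}{1-t}+\frac{t^2}{(1-t)^3}+\frac{2t^3}{(1-t)^4}+(9-\dim M_4)\frac{t^4}{(1-t)^5}+(44-\dim M_5)\frac{t^5}{(1-t)^6},
\]
so I only need the pairs $(\dim M_4,\dim M_5)$ occurring among $5$-admissible pairs. These are read off from Lemma \ref{yylem5.10} together with the tables of Subsection \ref{yysec5.3} and Remark \ref{yyrem5.14}: although there are infinitely many admissible pairs (all with $M_4=V_{(1^4)}$), only finitely many dimensions of $M_5$ occur, and converting each listed character of $M_5$ into a dimension yields $\dim M_4=1$ with $44-\dim M_5$ running through the $13$ values of $x$ in (2a), $\dim M_4=2$ with the two values of $y$ in (2b), and the six remaining choices of $M_4$ — each with its unique $M_5$ of dimension $40$ — with $9-\dim M_4$ running through the four values of $z$ in (2c). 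Since every type I series has $t^4$-coefficient $9$ whereas every type II series has $t^4$-coefficient $\le 8$, the two families are disjoint; and within each of (1),(2a),(2b),(2c) distinct parameters give distinct $(t^4,t^5)$-coefficient pairs, so no series is counted twice. Summing, $36+13+2+4=55$. Finally $\Lambda_{(5,1)}$ follows: $\lambda=\gamma_5(\uas/\I)/5!$ equals $u/5!$ in the type I case and $(44-\dim M_5)/5!$ — one of the numbers $x$, $y$ or $4$ — in the type II case, and all the latter values already lie in $[44]\setminus\{1,2,3,7,37,41,42,43\}$, which is exactly the range of $u$.

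I expect the main obstacle to be the type II enumeration: establishing that the list of $5$-admissible pairs in Lemma \ref{yylem5.10} is complete — in particular, for $M_4=V_{(1^4)}$, that the module $\langle M_4\rangle\cap\up{5}(5)$ and the admissible $M_5\supseteq\langle M_4\rangle\cap\up{5}(5)$ are precisely those tabulated, with no further character of $M_5$ occurring — which rests on the computer-assisted computations of $\langle M_4\rangle(5)$ and $\langle M_4\rangle(6)$ as $\S_5$- and $\S_6$-modules. By contrast the type I part is routine once the $\S_5$-module structure \eqref{E3.6.1} of $\up{5}(5)$ is in hand.
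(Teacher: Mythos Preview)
Your proposal is correct and follows essentially the same approach as the paper's own proof: split via Lemma \ref{yylem5.8} into the type~I case $\I=\up{5}^{M}$ (handled by Corollary \ref{yycor4.6}) and the type~II case $\I=\upa{5}^{\mathbb M}$ (handled by Lemma \ref{yylem4.14} together with Table \eqref{E5.14.1}), then read off and count the distinct codimension series. The paper's proof is extremely terse (``computation is straightforward, details omitted''), whereas you actually carry out the type~I step explicitly---computing the realizable values of $\dim M$ from the decomposition \eqref{E3.6.1} as $4\varepsilon+4a+5b+5c+6e$ with $\varepsilon\in\{0,1\}$ and $a,b,c,e\in\{0,1,2\}$---and you verify disjointness of the families (1),(2a),(2b),(2c) via the $t^4$-coefficient; both additions are welcome and correct.
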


\begin{proof} By Lemma \ref{yylem5.8}, there
are two cases to consider: (1) $\I_A$ is $\up{5}^M$ 
where $M$ is a proper submodule of $\up{5}(5)$, or
(2) $\I_A=\upa{5}^{\mathbb M}$ where ${\mathbb M}$ is 
a $5$-admissible pair. In the first case, the 
codimension series appear as in part (1) by
Corollary \ref{yycor4.6}. In the second case,
the codimension series appear as in parts (2a,2b,2c)
by Lemma \ref{yylem4.14} and Table \eqref{E5.14.1}.
For each $(M_4,M_5)$ in the Table \eqref{E5.14.1},
the computation of the codimension series is 
straightforward by Lemma \ref{yylem4.14}, so 
details are omitted. 
\end{proof}

\noindent\textbf{Acknowledgments}.
We would like to thank Xiao-Wu Chen and Ji-Wei He for many 
useful suggestions. Y.-H. Bao was supported by NSFC (Grant
No. 12371015),  the Science Fundation for Distinguished 
Young Scholars of Anhui Province (No. 2108085J01) and 
Excellent University Research and Innovation Team in 
Anhui Province (No. 2024AH010002). Y. Ye was partially 
supported by the National Key R\&D Program
of China (No. 2024YFA1013802), the National Natural Science Foundation of 
China (Nos. 12131015 and 12371042) 
and the Innovation Program for Quantum Science and 
Technology (No. 2021ZD0302902). 
J.J. Zhang was partially supported by the US National
Science Foundation (Nos. DMS-2001015 and DMS-2302087).


\begin{thebibliography}{9999}



\bibitem[AJK]{AJK}
E. Aljadeff, G. Janssens, and Y. Karasik, 
The polynomial part of the codimension growth of affine PI-algebras,
Adv. Math. {\bf 309} (2017), 487--511. 


\bibitem[AGPR]{AGPR}
E. Aljadeff, A. Giambruno, C. Procesi, and A. Regev, 
Rings with polynomial identities and finite dimensional representations 
of algebras, 
American Mathematical Society Colloquium Publications, 66. 
AMS, Providence, RI, 2020. 


\bibitem[AKK]{AKK}
E. Aljadeff, A. Kanel-Belov, and Y. Karasik, 
Kemer's theorem for affine PI-algebras over a field of 
characteristic zero, 
J. Pure Appl. Algebra {\bf 220} (2016), no. 8, 2771--2808. 
	
\bibitem[AL]{AL}
S. A. Amitsur and J. Levitzki, 
Minimal identities for algebras, 
Proc. Amer. Math. Soc.	\textbf{1} (1950), 449--463.

\bibitem[BXYZZ]{BXYZZ}
Y.-H. Bao, X.-N. Xu, Y. Ye, J.J. Zhang, and Y.-F. Zhang,
Ideals of the associative algebra operad, Available at arXiv: 2307.07669.

\bibitem[BYZ]{BYZ}
Y.-H. Bao, Y. Ye and J. J. Zhang, 
Truncation of unitary operads, Adv. Math., {\bf 372} (2020), 107290.

\bibitem[Be]{Be}
A. Berele, 
Properties of hook Schur functions with applications to p.i. algebras,
Adv. in Appl. Math. {\bf 41} (2008), no. 1, 52--75. 
	
\bibitem[BR1]{BR}
A. Berele and A. Regev, 
Asymptotic behaviour of codimensions of p. i. algebras satisfying 
Capelli identities,
Trans. Amer. Math. Soc. {\bf 360} (2008), no. 10, 5155--5172.

\bibitem[BR2]{BR2}
A. Berele and A. Regev, 
Asymptotic codimensions of $M_k(E)$, 
Adv. Math. {\bf 363} (2020), 106979, 23 pp. 

\bibitem[BD]{BD}
S. Boumova and V. Drensky, 
Algebraic properties of codimension series of PI-algebras, 
Israel J. Math. {\bf 195} (2013), no. 2, 593--611.


\bibitem[De]{De}
M. Dehn, Uber die Grundlagen der projektiven Geometrie und
allgemeine Zahlsysteme, (German) Math. Ann. \textbf{85} (1922), 184--193.
	
\bibitem[Dr]{Dr}
V. Drensky, 
Free Algebras and PI-Algebras,
Graduate course in algebra
Springer-Verlag Singapore, Singapore, 2000. xii+271 pp.

	
\bibitem[DF]{DF}
V. Drensky and E. Formanek, 
Polynomial identity rings. Advanced Courses in Mathematics. 
CRM Barcelona. Birkh{\" a}user Verlag, Basel, 2004. viii+200 pp.
	
	
\bibitem[DR]{DrR}
V. Drensky and A. Regev, 
Exact asymptotic behaviour of the codimensions of some P.I. algebras,
Israel J. Math. {\bf 96} (1996), part A, 231--242.

\bibitem[Fr]{Fr}
B. Fresse,
Homotopy of Operads \& Grothendieck-Teichmller Groups,
volume I, volume II, book in preparaion.
	
\bibitem[GMP]{GMP}
A. Giambruno, D. La Mattina, and V.M. Petrogradsky, 
Matrix algebras of polynomial codimension growth, 
Israel J. Math. {\bf 158} (2007), 367--378.

\bibitem[GMZ]{GMZ}
A. Giambruno, D. La Mattina, and M. Zaicev, 
Classifying the minimal varieties of polynomial growth, 
Canad. J. Math. {\bf 66} (2014), no. 3, 625--640.
	
	
\bibitem[GS]{GS}
M. Gerstenhaber and S. Schack,
Algebras, bialgebras, quantum groups, and algebraic
deformation, Deformation theory and quantum groups with applications to mathematical
physics (Amherst, MA, 1990), Contemp. Math. vol. 134, Amer. Math. Soc. 1992, pp. 51–-92.
	
\bibitem[GZ1]{GZ1}
A. Giambruno and M. Zaicev,
PI-algebras and codimension growth.
Methods in ring theory (Levico Terme, 1997), 115--120,
Lecture Notes in Pure and Appl. Math., {\bf 198}, Dekker, New York, 1998.
	
\bibitem[GZ2]{GZ2}
A. Giambruno and M. Zaicev,
On codimension growth of finitely generated associative algebras,
Adv. Math. {\bf 140} (2) (1998), 145--155.
	
\bibitem[GZ3]{GZ3}
A. Giambruno and M. Zaicev,
Exponential Codimension Growth of PI Algebras: An Exact Estimate,
Adv. Math. {\bf 142}(2) (1999),  221--243.
	
\bibitem[GZ4]{GZ4}
A. Giambruno, M. Zaicev, 
A characterization of algebras with polynomial growth of the codimensions,
Proc. Amer. Math. Soc. \textbf{129} (2001), 59–67.
	
\bibitem[GZ5]{GZ5}
A. Giambruno, M. Zaicev,
Codimensions of algebras and growth functions,
Adv. Math. \textbf{217} (2008), 1027--1052.
	
\bibitem[GZ6]{GZ6}
A. Giambruno, M. Zaicev,
Polynomial identities and asymptotic methods.
Mathematical Surveys and Monographs, 122. American Mathematical Society, Providence, RI, 2005.


\bibitem[GZ7]{GZ7}
A. Giambruno, M. Zaicev,
Codimension growth and minimal superalgebras, 
Trans. Amer. Math. Soc. {\bf 355} (2003), no. 12, 5091--5117. 

\bibitem[GZ8]{GZ8}
A. Giambruno, M. Zaicev,
Minimal varieties of algebras of exponential growth, 
Adv. Math. {\bf 174} (2003), no. 2, 310--323. 

\bibitem[IKM]{IKM}
A. Ioppolo, P. Koshlukov, and D. La Mattina, 
Trace identities and almost polynomial growth, 
J. Pure Appl. Algebra {\bf 225} (2021), no. 2, 
Paper No. 106501, 20 pp.



\bibitem[Ja]{Ja}
N. Jacobson, PI-algebras. An introduction, Lecture Notes in Mathematics, Vol. 441.
Springer-Verlag, Berlin-New York, 1975.
 

\bibitem[JSV]{JSV}
S. M. Jorge, V. R. da Silva and A. C. Vieira, Combinatorial aspects in the computation of proper multiplicities, Contemp. Math. vol. 499, Providence, RI, AMS, 2009,  149--163.

\bibitem[KR]{KR}
A. Kanel-Belov and L.H. Rowen, 
Computational aspects of polynomial identities. 
Research Notes in Mathematics, 9. A K Peters, Ltd., 
Wellesley, MA, 2005. 

\bibitem[Ka]{Ka1}
I. Kaplansky, 
Rings with a polynomial identity.
Bull. Amer. Math. Soc. 
{\bf 54} (1948), 575--580.

\bibitem[Ke]{Ke}
A. R. Kemer,
Ideals of identities of associative algebra, 
Translated from the Russian by C. W. Kohls. 
Translations of Mathematical Monographs, 87. 
American Mathematical Society, Providence, RI, 1991. 
	
\bibitem[Ko]{Ko}
N. Kowalzia,
Gerstenhaber and Batalin–Vilkovisky structures on modules
over operads,
Int. Math. Res. Not. IMRN (2015) (22), 11694–11744.
	
	
\bibitem[LV]{LV}
J.L. Loday and B. Vallette,
Algebraic operads,
Grundlehren der Mathematischen Wissenschaften [Fundamental Principles of Mathematical Sciences],
{\bf 346}, Springer, Heidelberg, 2012.

\bibitem[Ma1]{Ma}
D. La Mattina, 
Varieties of almost polynomial growth: classifying their subvarieties,
Manuscripta Math. {\bf 123} (2007), no. 2, 185--203. 
	
\bibitem[Ma2]{Ma2}
D. La Mattina, 
Varieties of algebras of polynomial growth, 
Boll. Unione Mat. Ital. (9) {\bf 1} (2008), no. 3, 525--538.

\bibitem[Ma3]{Ma3}
D. La Mattina,
Polynomial codimension growth of graded algebras. 
In: Groups, Rings and Group Rings. Contemp. Math. vol. 499, Providence, RI, AMS, 2009, 189–197.


\bibitem[Me]{Me}
L. Menichi,
Batalin-Vilkovisky algebras and cyclic cohomology of Hopf algebras,
K-Theory \textbf{32} (2004), 231--251.
	
	
	
\bibitem[MR]{MR}
J.C. McConnell, and J.C. Robson,
Noncommutative Noetherian rings With the cooperation of L. W. Small.
Pure and Applied Mathematics (New York). A Wiley-Interscience Publication.
John Wiley \& Sons, Ltd., Chichester, 1987.

\bibitem[OSV]{OSV}
M. A. de Oliveira, R. B. dos Santos and A. C. Vieira,
Polynomial growth of the codimensions sequence of algebras with group graded involution,
Israel J. Math., \textbf{261}(2024), 445--471.
	
\bibitem[OV]{OV}
M. A. de Oliveira and A. C. Vieira,
Varieties of unitary algebras with small growth of codimensions,
Internat. J. Algbra Comput. \textbf{31}(2021), no.2, 257--277.
	
\bibitem[Pr]{Pr}
C. Procesi, 
Rings with polynomial identities, Pure and Applied Mathematics, 17, Marcel
Dekker, Inc., New York, 1973.
	
	


\bibitem[Re]{Re}
A. Regev, 
Asymptotics of codimensions of some P.I. algebras, 
CMS Conference Proc. 22
(1998), 159--172.





\bibitem[Reu]{Reu}
C. Reutenauer, 
Free Lie algebras.
London Math. Soc. Monogr. (N.S.), 7
Oxford Sci. Publ.
The Clarendon Press, Oxford University Press, New York, 1993. xviii+269 pp.


\bibitem[Ro]{Ro}
L. H. Rowen, 
Polynomial Identities in Ring Theory, Academic Press, New York, 1980.
	

\bibitem[Sp]{Sp}  
W. Specht, 
Gesetze in Ringen. I. (German) 
Math. Z. {\bf 52} (1950), 557--589.

\bibitem[Wa]{Wa}
W. Wagner, 
Uber die Grundlagen der projektiven Geometrie und allgemeine Zahlensysteme,
(German) Math. Ann. \textbf{113} (1936), 528--567 .


\bibitem[We]{We}
P. Webb, A course in finite group representation theory,
Cambridge Studies in Advanced Mathematics, 161. 
Cambridge University Press, Cambridge, 2016.

\end{thebibliography}
\end{document}